\definecolor{darkblue}{rgb}{0,0,0.8}
\definecolor{alizarin}{rgb}{0.82, 0.1, 0.26}
\newcommand{\cmt}[1]{{\textbf{\color{darkblue}#1}}}
\newcommand{\PGammaL}{\mathrm{P}\Gamma\mathrm{L}}
\newcommand{\PGammaSp}{\mathrm{P}\Gamma\mathrm{Sp}}
\newcommand{\GammaSp}{\Gamma\mathrm{Sp}}
\newcommand{\PGammaU}{\mathrm{P}\Gamma\mathrm{U}}
\newcommand{\GammaU}{\Gamma\mathrm{U}}
\newcommand{\GammaL}{\Gamma\mathrm{L}}
\newcommand{\GammaO}{\Gamma\mathrm{O}}
\newcommand{\PGammaO}{\mathrm{P}\Gamma\mathrm{O}}
\newcommand{\PGL}{\mathrm{PGL}}
\newcommand{\PSp}{\mathrm{PSp}}
\newcommand{\CSp}{\mathrm{CSp}}
\newcommand{\Sz}{\mathrm{Sz}}
\newcommand{\G}{\mathrm{G}}
\newcommand{\GO}{\mathrm{GO}}
\newcommand{\CO}{\mathrm{CO}}
\newcommand{\PCO}{\mathrm{PCO}}
\newcommand{\Sp}{\mathrm{Sp}}
\newcommand{\PSU}{\mathrm{PSU}}
\newcommand{\SU}{\mathrm{SU}}
\newcommand{\POmega}{\mathrm{P}\Omega}
\newcommand{\PGU}{\mathrm{PGU}}
\newcommand{\GU}{\mathrm{GU}}
\newcommand{\CU}{\mathrm{CU}}
\newcommand{\PCU}{\mathrm{PCU}}
\newcommand{\PSL}{\mathrm{PSL}}
\newcommand{\PSO}{\mathrm{PSO}}
\newcommand{\SO}{\mathrm{SO}}
\newcommand{\Or}{\mathrm{O}}
\newcommand{\GL}{\mathrm{GL}}
\newcommand{\SL}{\mathrm{SL}}
\newcommand{\Sym}{\mathrm{Sym}}
\newcommand{\Alt}{\mathrm{Alt}}
\newcommand{\Aut}{\mathrm{Aut}}
\newcommand{\C}{\textup{C}}
\newcommand{\cN}{\mathcal{N}}
\newcommand{\cP}{\mathcal{P}}
\newcommand{\cU}{\mathcal{U}}
\newcommand{\eps}{\varepsilon}
\newcommand{\F}{\mathbb{F}}
\newcommand{\nonsplit}[2]{#1\raisebox{0.6ex}{$\cdot$} #2}
\newcommand{\oB}{\overline{B}}
\newcommand{\oG}{\overline{G}}
\newcommand{\oH}{\overline{H}}
\renewcommand{\P}{\mathcal{P}}
\newcommand{\soc}{\mathrm{soc}}
\newcommand{\Wr}{\mathrm{wr}}
\newcommand{\Z}{\mathrm{\sf Z}}
\newcommand{\Hbar}{\overline{H}}
\newcommand{\Vbar}{\overline{V}}
\newcommand{\la}{\langle}
\newcommand{\ra}{\rangle}
\renewcommand{\ge}{\geqslant}
\renewcommand{\le}{\leqslant}
\renewcommand{\geq}{\geqslant}
\renewcommand{\leq}{\leqslant}
\newcommand{\lhdeq}{\trianglelefteqslant}    %normaleq          <=     <\lhd
\newcommand{\rhdeq}{\trianglerighteqslant}   %normalizeseq      =>     > \rhd
\newtheorem{theorem}{Theorem}[section]
\newtheorem*{conj*}{Conjecture}
\newtheorem{lemma}[theorem]{Lemma}
\newtheorem{proposition}[theorem]{Proposition} 
\newtheorem{corollary}[theorem]{Corollary}
\theoremstyle{definition}
\newtheorem{remark}[theorem]{Remark}
\newcommand\xqed[1]{%
  \leavevmode\unskip\penalty9999 \hbox{}\nobreak\hfill\quad\hbox{#1}}
\numberwithin{table}{section}
\def\@adminfootnotes{%
  \let\@makefnmark\relax  \let\@thefnmark\relax
  \ifx\@empty\@date\else \@footnotetext{\@setdate}\fi%%   <-- added
  \ifx\@empty\@subjclass\else \@footnotetext{\@setsubjclass}\fi
  \ifx\@empty\@keywords\else \@footnotetext{\@setkeywords}\fi
  \ifx\@empty\thankses\else \@footnotetext{%
    \def\par{\let\par\@par}\@setthanks}%
  \fi}\makeatother   % end hack so date appears
\title[Subgroups of Classical Groups that are Transitive on Subspaces]{Subgroups of Classical Groups\\ that are Transitive on Subspaces}
\author{Michael Giudici, S.\,P. Glasby and Cheryl E. Praeger}
\date{\currenttime\ \today}                   % Activate to display current time/date or a given date
\address{\newline
\noindent Centre for the Mathematics of Symmetry and Computation\newline
Department of Mathematics and Statistics\newline
The University of Western Australia\newline
35 Stirling Highway, Crawley, WA 6009, Australia.\newline
{\sc E-mail addresses:} {\tt \{michael.giudici,\,stephen.glasby,\,cheryl.praeger\}@uwa.edu.au}}
\thanks{The authors gratefully acknowledge the support of the Australian Research Council (ARC) Discovery Project Grants DP160102323 and DP190100450.}
\subjclass[2010]{primary 20G40, 20B10; secondary 57S17, 05B25}
\begin{document}

\begin{abstract}
For each finite classical group $G$, we classify the subgroups of $G$ which act transitively on a $G$-invariant set
of subspaces of the natural module, where the subspaces are either totally isotropic or nondegenerate. 
Our proof uses the classification of the maximal factorisations of almost simple groups. As a first application of these results we classify all  
point-transitive subgroups of automorphisms of finite thick generalised quadrangles.
\end{abstract}

\maketitle

\date{\today}

\section{Introduction}\label{S:intro}

%%%%%%%%%%%%%%%%%%%%%%%%%%%%%%%%%%%%%%%%%%%%%%%%%%%%%%%%%%%%%%%%%%%%%%%%%%%%%%%%%%%%%%%%%%%%%%%%%%%%%%

Work on classifying subgroups of finite classical groups that are transitive on a family of subspaces goes back to the seminal results of Christoph Hering~\cites{H1, H2} in the 1970s and 1980s.  These results led to a complete classification of all subgroups of semilinear transformations of a finite vector space $V$ that are transitive on nonzero vectors. For each such subgroup, the corresponding subgroup of the projective group is transitive on $1$-subspaces. Conversely, for each subgroup of the projective group which is transitive on $1$-subspaces, its full preimage in $\GammaL(V)$ is transitive on nonzero vectors.  Adjoining the group of translations to a transitive subgroup $H\leq \GammaL(V)$ yields a $2$-transitive subgroup of the affine group on $V$. Indeed  Hering's classification of transitive semilinear groups is equivalent to the classification of finite $2$-transitive permutation groups of affine type. His work also led Hering to discover two new sporadic linear spaces~\cite{H3}, and inspired others to  construct new geometries and designs~\cites{B, Pf}.
 
 Over the years, subspace actions have been studied in relation to problems in group theory, geometry and combinatorics. For example, they are a natural source of primitive actions. Indeed, in the 1980s,  Oliver King~\cites{King1981,King1981a,King1982} showed that, with a few specified exceptions, each action by a finite classical group on a family of totally isotropic or nondegenerate subspaces is primitive.
 
 Our aim in this paper is to classify all subgroups of finite classical groups that act transitively on the set of all subspaces of a given isometry type. We classify all subgroups of $\GammaL_n(q)$ that are transitive on the set of all $k$-dimensional subspaces for some fixed $k$ (Theorem \ref{T:SLbigN}).  
 For unitary and symplectic groups we concentrate on the nondegenerate and totally isotropic subspaces while for orthogonal groups we focus on the nondegenerate and totally singular subspaces, and also the nonsingular subspaces of dimension one in the even characteristic case.  This completes the program begun in~\cite{Reg}.  Some low dimensional cases were dealt with by Kantor and Liebler \cite{KLrank3} but our proofs are independent of theirs.

 \begin{theorem}
 Let $V$ be a vector space over a finite field equipped with a nondegenerate quadratic, hermitian or alternating form. Let $\cU$ be the set of all totally isotropic (or totally singular) subspaces of a given dimension,  or the set of all nondegenerate subspaces of $V$ of a given isometry type. If $H$ is a group of semisimilarities of the form and acts transitively on $\cU$ then $H$ is known (see Table~$\ref{T:summary}$).
 \end{theorem}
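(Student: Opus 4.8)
The plan is to reduce the classification to an application of the theorem on maximal factorisations of almost simple groups, and then to work through each classical type in turn. First I would pass from the group $H$ of semisimilarities to the subgroup $H_0 = H \cap G$ where $G$ is the corresponding isometry group (or a fixed quasisimple classical group), noting that transitivity of $H$ on $\cU$ forces a factorisation of the relevant almost simple group. Indeed, if $G^*$ denotes the almost simple group with socle $T = \mathrm{soc}(\PSL_n(q))$, $\PSU_n(q)$, $\PSp_n(q)$, or $\POmega^\epsilon_n(q)$, and $M$ is the stabiliser in $G^*$ of a subspace $U \in \cU$, then transitivity of $H$ on $\cU$ is equivalent to $G^* = H^* M$ where $H^*$ is the image of $H$. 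So the first main step is to establish this factorisation carefully, handling the precise relationship between $H$, $H_0$, scalars, and the projective images.

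**Applying the classification of factorisations.**

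The second step is to invoke the classification of maximal factorisations of almost simple groups (Liebeck--Praeger--Saxl) together with its extensions handling non-maximal factors. Since $M$ is (contained in) a maximal parabolic subgroup in the totally isotropic/singular case, or a subgroup of type $\mathrm{O}_k \perp \mathrm{O}_{n-k}$, $\mathrm{Sp}_k \perp \mathrm{Sp}_{n-k}$, $\mathrm{GU}_k \perp \mathrm{GU}_{n-k}$, etc.\ in the nondegenerate case, the factorisation $G^* = H^* M$ pins down the possibilities for $H^*$ to a short explicit list for each $(T, k)$. One then has to climb back down: for each candidate factorisation, determine exactly which subgroups $H$ of the semisimilarity group actually arise, i.e.\ which subgroups of the listed isomorphism type $H^*$ are genuinely transitive on $\cU$ (not merely candidates from the factorisation list), and record the answer in Table~\ref{T:summary}. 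This bookkeeping — translating each abstract factorisation into the concrete transitive subgroups, and checking transitivity in borderline cases — is where most of the labour lies.

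**Organisation by type and the main obstacle.**

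I would organise the argument type by type: linear (where Theorem~\ref{T:SLbigN} already does the $k$-subspace case and only needs to be quoted), then unitary, symplectic, and orthogonal, treating totally isotropic/singular subspaces and nondegenerate subspaces separately, plus the special nonsingular $1$-subspaces in even characteristic for orthogonal groups. Small-dimensional and small-field cases — where the relevant group is not quasisimple, or where there are exceptional isomorphisms (e.g.\ $\mathrm{PSp}_4(q)$ in characteristic $2$, $\mathrm{P}\Omega^+_8(q)$ triality, $\mathrm{PSL}_2$, $\mathrm{PSU}_3$, low $q$), or where the factorisation tables have extra entries — must be dispatched by hand, often with direct computation or ad hoc geometric arguments; these exceptions are exactly the ones that end up listed explicitly in the statement and in~\cite{Reg}.

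The hard part will be the orthogonal case, especially $\mathrm{O}^+_{2m}(q)$: the stabiliser of a totally singular $m$-subspace is one of two classes fused by a graph automorphism, triality in dimension $8$ produces extra factorisations, and the nondegenerate-subspace stabilisers of type $\mathrm{O}^+ \perp \mathrm{O}^-$ versus $\mathrm{O}^{\pm} \perp \mathrm{O}^{\pm}$ interact subtly with the factorisation tables. Correctly matching every factorisation to a transitive subgroup, eliminating the factorisations that do not yield transitivity on the full set $\cU$, and ensuring no case is double-counted or missed across the boundary with~\cite{Reg}, is the real obstacle; the rest is a systematic, if lengthy, march through the tables.
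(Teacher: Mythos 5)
Your plan is essentially the paper's own strategy: convert transitivity on $\cU$ into a factorisation $G=HG_U$ of the relevant almost simple group, reduce via Lemma~\ref{lem:maxmin} to the Liebeck--Praeger--Saxl classification of maximal core-free factorisations, then work back down type by type (linear, unitary, symplectic, orthogonal), using Lemma~\ref{lem:derived} to pass between matrix and projective groups and dispatching the exceptional isomorphisms, triality in dimension $8$, and small $(n,q)$ by ad hoc arguments or computation. You have correctly identified both the method and where the real labour lies (the orthogonal plus-type case and the borderline verifications of actual transitivity), so this matches the paper's proof in structure and substance.
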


 More details about the exact collections of subspaces $\cU$ considered in the case where $V$ is equipped with a nondegenerate quadratic form are given in Section~\ref{sub:spaces} and Table~\ref{T-defn}. For example we do not treat the case where $V$ has dimension $4$ and is equipped with a quadratic form of $+$ type.  We also determine all groups which are transitive on each of the two types of maximal totally singular subspaces for quadratic forms of $+$ type, and all groups that are transitive on all subspaces of the same similarity type for quadratic forms.
Table~\ref{T:summary} provides the relevant result numbers for each type of form and type of subspaces considered.  
As was the case in \cite{Reg}*{Chapter 4}, we often specify that a subgroup transitive on a family of subspaces $\cU$ should have a certain property and we do not guarantee that every subgroup with this property is transitive on $\cU$.
 
 \renewcommand{\arraystretch}{1.1}
 \begin{table}
 \caption{Glossary of results (we do not treat $\Or^+$ in dimension $4$).}\label{T:summary}
 \begin{tabular}{llcll}
 \toprule
Type   &Dimension $n$& $q$ & Type of $k$-subspaces & Result\\
 \hline
 ${\rm U}$& $n=2$& all  & totally isotropic &Theorem~\ref{T2}\\
 ${\rm U}$& $n=2$& all  & nondegenerate & Theorem~\ref{T2}\\
 ${\rm U}$& $n\geq 3$& all  & totally isotropic & Theorem~\ref{T1}\\
 ${\rm U}$& $n\geq 3$& all  & nondegenerate& Theorem~\ref{T1}\\
 $\Sp$&  $n=2m\geq 4$& all   & totally isotropic& Theorem~\ref{T3}\\
 $\Sp$&  $n=2m\geq 4$& all  & nondegenerate& Theorem~\ref{T3}\\
 $\Or^\circ$& $n=3$& all  & as in Table~\ref{T-defn} & Theorem~\ref{T:Omega3}\\
 $\Or^-$& $n=4$& all & as in Table~\ref{T-defn} & Theorem~\ref{T:KleinPSL2q2}\\
 $\Or^\circ$& $n=5$&   odd& as in Table~\ref{T-defn} & Theorem~\ref{T:KleinSp4}\\
 $\Or^+$& $n=6$& all & as in Table~\ref{T-defn} & Theorem~\ref{T:KleinSL4}\\
 $\Or^-$& $n=6$& all  & as in Table~\ref{T-defn} & Theorem~\ref{T:GOminus6q}\\
 $\Or^-$& $n=2m\ge8$& all  & as in Table~\ref{T-defn} & Theorem~\ref{T9}\\
 $\Or^-$& $n=2m\ge4$&  odd & nondegenerate with $k$ odd & Proposition~\ref{P1}\\
 $\Or^\circ$& $n=2m+1\ge7$&  odd & as in Table~\ref{T-defn} & Theorem~\ref{T4}\\
 $\Or^\circ$& $n=2m+1\ge5$&  even & as in Table~\ref{T-defn} & Theorem~\ref{T:Ooddqeven}\\
 $\Or^+$& $n=2m\ge8$& all  & as in Table~\ref{T-defn} & Theorem~\ref{T10}\\
 $\Or^+$& $n=2m\ge6$& all  & totally singular with $k=n/2$ & Proposition~\ref{P:Both}\\
$\Or^+$& $n=2m\ge6$&  odd &  nondegenerate with $k$ odd  & Proposition~\ref{P2}\\
\bottomrule
 \end{tabular}
 \end{table}

 A similar problem for classical algebraic groups has been studied by Liebeck, Saxl and Seitz in~\cite{LSS}. Not only are there far fewer possibilities for algebraic classical groups than for finite classical groups, some examples have no finite analogue, for example in even characteristic the group  $\Sp_{2m}$ contains the subgroup $\SO_{2m}$ which acts transitively on the set of nondegenerate subspaces of dimension $2k$ for any $k$ satisfying $1\leqslant k\leqslant m-1$, for details see~\cite{LSS}.

Our main method will be to use group factorisations. 
Whenever we have a finite almost simple classical group $G$ on $V$, a $G$-invariant family $\cU$ of proper subspaces of $V$, and a subgroup $H<G$ transitive on $\cU$, we obtain a factorisation $G=AH$ where $A$ is the stabiliser $G_U$ of a subspace $U\in\cU$. In fact $G=AH$ if and only if $H$ is transitive on $\cU$.  If both factors $A, H$ are maximal and core-free, then $G=AH$ is called a \emph{maximal core-free factorisation}.  
Such factorisations of the almost simple classical groups were classified by Liebeck, Saxl and the third author in~\cite{Factns}, their major motivation, and first application, being to classify the maximal subgroups of the finite alternating and symmetric groups~\cite{LPS4}.  Hence if $G$ acts primitively on $\mathcal{U}$ then the classification in \cite{Factns} provides us with the \emph{maximal core-free} subgroups of $G$ that are transitive on $\mathcal{U}$.  All possibilities for $H$ (that is, with the maximality assumption removed) for some of the subspace actions of finite classical groups were determined by Liebeck, Saxl and the third author in~\cite{Reg}*{Chapter~4} as part of their strategy to classify all  exact factorisations  of almost simple groups $G$  with one factor maximal and core-free. (A factorisation  $G=AB$  is called \emph{exact} if $A\cap B=1$.) This resulted firstly, in the 
 classification of  all almost simple primitive permutation groups containing a regular subgroup~\cite{Reg}*{Theorem~1.1} and secondly in the classification of certain families of so-called ‘$B$-groups’~\cite{Reg}*{Corollary 1.5} and Cayley graphs~\cite{Reg}*{Theorem~1.6}. 

We expect that our classification will be useful in many contexts. As a first application we classify all point-transitive subgroups acting on the (finite thick) classical generalised quadrangles in Theorem~\ref{C:GQ}. Previously only the point regular subgroups were known, as an application of~\cite{Reg} by Bamberg and the first author~\cite{BamG}.

The outline of this paper is as follows. Section~\ref{S:notation} describes our strategy and
establishes the notation we use. We discuss the linear case, and some small dimensional orthogonal groups, in Section~\ref{S:PGLn}.
The unitary case is considered in Sections~\ref{S:PGU2} (for $n=2$) and~\ref{S:PGUn} (for $n>2$), and the symplectic case in Section~\ref{S:Sp}. The odd dimensional
orthogonal case is dealt with in Section~\ref{S:OOdd}, and the even dimensional case in Sections~\ref{S:OEven} (minus type) and~\ref{S:OPlus} (plus type). Our classification of point-transitive groups on 
classical generalised quadrangles is proved in Section~\ref{S:ExceptIsos2}.

\begin{remark}\label{rem:Reg}
Some of the subspace actions we consider were already dealt with in \cite{Reg}*{Chapter 4}, as the results were needed for the classification in \cite{Reg} of the regular subgroups of the finite almost simple primitive permutation groups. Our study covered more subspace actions of classical groups, and we needed both to use and to extend the results in \cite{Reg}. In so-doing we uncovered a few issues:
 \begin{itemize}
 \item[$\circ$] In some of the results in \cite{Reg}*{Chapter 4} the list of possibilities is a ‘superset’: not all the listed groups mentioned are actually examples. This is mentioned in \cite{Reg}*{text and Remark on p. 16}. However we needed more detailed information about some of these cases.
\item[$\circ$] In some of the results in \cite{Reg}*{Chapter 4}, the groups listed are actually subgroups of the projective classical groups, rather than the classical groups themselves. We have clarified some of these cases.
 \item[$\circ$] In one case we found a missing family of examples.
\end{itemize}
 
Here we summarise details of these issues and point to where they are addressed in our paper. We organise this according to the relevant results in \cite{Reg}*{Chapter 4}.

\begin{enumerate}[(a)]
\item \cite{Reg}*{Lemma 4.1} determining types of subspaces admitting transitive proper subgroups of classical groups. For \cite{Reg}*{Lemma 4.1(iv)}, that is the case `$G_0=\Omega_{2m+1}(q)$ ($q$ odd, $m\geq 3$)’, the possibility `$M=P_1$ ($m=3$)’ should be added (see Theorem~\ref{T4}(a) and its proof, especially Table~\ref{T:OOddCases}).  For \cite{Reg}*{Lemma 4.1(vii)}, that is, the `exceptional cases’, the possibility `$G_0=\PSL_4(2)$, $M=P_2$’ should be added (see the proof of Theorem \ref{T:SLbigN}).

 \item \cite{Reg}*{Lemma 4.2} for $\GammaSp_{2m}(q)$ acting on nondegenerate 2-subspaces.  The  possibilities `$\Sp_{m/2}(q^4) $ ($m/2$ even,  $q=2$)' in \cite{Reg}*{Lemma 4.2(i)}, and  `$\G_{ 2}(q^4)$ ($m=12$, $q=2$)' in \cite{Reg}*{Lemma 4.2(ii)}, as subgroups of $\GammaSp_{2m}(2)$, lead to no examples. See Lemma~\ref{L:temp}.  
  
\item  \cite{Reg}*{Lemma 4.3} for $\GammaU_{2m}(q)$ acting on nondegenerate 1-subspaces. Two listed groups in this result are the images in $\PGammaU_{2m}(q)$, namely the groups  $\PSU_4(3)$ and $M_{22}$ listed in \cite{Reg}*{Lemma 4.3(iv)} as subgroups of $\GammaU_6(2)$ should be $\nonsplit{3}{\PSU_4(3)}$ and $\nonsplit{3}{M_{22}}$, respectively. (See Theorem~\ref{T1}, especially Table~\ref{T6}  and Case 7 in the proof. See also \cite{BHRD}*{Table 8.27}.)
 
\item  \cite{Reg}*{Lemma 4.4} for $\GammaO_{2m}^-(q)$ acting on nondegenerate 1-subspaces.  The group $\PSL_3(4)$ listed in \cite{Reg}*{Lemma 4.4(iii)} as a subgroup of $\GammaO_6^-(3)$ should be $\nonsplit{2}{\PSL_3(4)}$. (See Theorem~\ref{T:KleinSU4}(c).)

\item \cite{Reg}*{Lemma 4.5}  for $\GammaO_{2m}^+(q)$ acting on nondegenerate 1-subspaces. Several groups given in \cite{Reg}*{Lemma 4.5(v), (vii) and (vii)} as subgroups of $\GammaO_{2m}^+(q)$ are the projective versions of these groups, and in particular there are no examples corresponding to the group $\Alt_6$ listed there. See Theorem~\ref{T10}, especially Table~\ref{T:OPlusN1}, and also case (e) of the proof of Lemma~\ref{lem:m4q3}.  The relevant part of \cite{Reg}*{Lemma 4.5(v)} should state:
$$m=4: B\rhdeq {}^\wedge\Omega_7(q)^I \textrm{ or } \nonsplit{2}{\Omega_8^-(q^{1/2})}$$
where $ {}^\wedge\Omega_7(q)^I=\Omega_7(q)$ when $q$ is even and $\nonsplit{2}{\Omega_7(q)}$ when $q$ is odd. The relevant parts of
\cite{Reg}*{Lemma 4.5(vii)} should state:
\begin{align*}
  &m=4, q=3 : B \rhdeq  \nonsplit{2}{\Omega_8^+(2)}, \nonsplit{2}{\Sp_6(2)}, \nonsplit{2}{\SU_4(2)}, \textrm{ or }2.\Alt_9\\
  &m=6, q=2: B\rhdeq \SU_4(3) \textrm{ or } \nonsplit{3}{M_{22}}.
\end{align*}

\item  \cite{Reg}*{Lemma 4.6} for $\GammaSp_{2m}(q)$ acting on the coset space of $\Sp_{2m}(q)/\GO_{2m}^-(q)$.  There is an infinite family of examples missing from \cite{Reg}*{Lemma 4.6{iv}}, namely subgroups containing $\Omega_{2m}^+(q)$ as a normal subgroup when $q=2$ or $4$. See Theorem~\ref{T:Ooddqeven}(e) (and Case $\cU=N_{2m}^\eps$ of the proof). The relevant part \cite{Reg}*{Lemma 4.6(iv)} should state:
  \[
  q=2 \textrm{ or }4, \textrm{ and either }\Omega_{2m}^+(q)\lhdeq B, \textrm{ or $B$ is as in (i)--(vii) of Lemma 4.5.}
  \]
\end{enumerate}
\end{remark}

\begin{remark}\label{rem:KL}
In their work on rank three permutation representations of classical groups, Kantor and Liebler determined the subgroups $K$ of $\GammaSp_4(q)$, $\GammaU_4(q)$ and $\GammaU_5(q)$ that are transitive on the totally isotropic 1-dimensional or 2-dimensional subspaces \cite{KLrank3}*{Corollary 5.12} and the subgroups of $\GammaU_3(q)$ that are transitive on totally isotropic 1-subspaces. They also determined the subgroups of $\GammaO^+_6(q)$ that are transitive on totally singular 1-subspaces \cite{KLrank3}*{Lemma 5.15}. We note the following corrections/clarifications:

\begin{enumerate}[(a)]
\item  \cite{KLrank3}*{Corollary 5.12(vi)} lists $\SU_3(2)''\lhdeq K\leqslant \GammaU_4(2)$. This is our $3^{1+2}_+$ in Theorem~\ref{T1}(c) and Table~\ref{T8}. 
\item  \cite{KLrank3}*{Corollary 5.12(vii)} lists $\C_3^3\rtimes \Alt_4\lhdeq K\leqslant \GammaU_4(2)$. This corresponds with our case $\C_3^3\lhdeq K$ in Theorem~\ref{T1}(c) and Table~\ref{T8}.
\item \cite{KLrank3}*{Corollary 5.12} is missing $3_-^{1+2}\lhdeq K\leqslant \GammaU_4(2)$ acting transitively on totally isotropic 2-subspaces as in Theorem~\ref{T1}(c) and Table~\ref{T8}. This example has previously been observed in \cite{Reg}*{Table 16.1}
\item  \cite{KLrank3}*{Corollary 5.12(x)} lists $E\rtimes \C_5\lhdeq K\leqslant \GammaSp_4(3)$ where $E$ is extraspecial of order $2^5$. This should read as $E\rtimes \C_5\leqslant K$ to allow the possibility that $E\rtimes \Alt_5\lhdeq K$ as in Theorem~\ref{T3}(a) Table~\ref{T:Spa}.
\item  \cite{KLrank3}*{Corollary 5.12} is missing $\C_{19}\lhdeq K\leqslant \GammaU_4(8)$ acting transitively on totally isotropic 2-subspaces as in Theorem~\ref{T1}(c) and Table~\ref{T8}. This example has previously been observed in \cite{Reg}*{Table 16.1}. A similar example is missing in the list of subgroups of $\GammaU_3(8)$ acting transitively on totally isotropic 1-subspaces in \cite{KLrank3}*{Lemma 2.8}, see Theorem~\ref{T1}(b) and Table~\ref{T7}. This example has previously been observed in \cite{Factns}*{Table 3}.
\item \cite{KLrank3}*{Lemma 2.8(iv)} lists $K'=\Alt_7\leqslant \GammaU_3(5)$. This should be $K'=\nonsplit{3}{\Alt_7}$ as in  Theorem~\ref{T1}(b) Table~\ref{T7}. See also \cite{BHRD}*{Table 8.6}. 
\end{enumerate}
\end{remark}

\begin{remark}
We would like to take this opportunity to note an example of a regular subgroup of an almost simple group that is missing from \cite{Reg}*{Table 16.1}. There should be an extra row as follows:

\bigskip
\begin{center}
\begin{tabular}{|l|l|l|l|l|}
\hline
$L$ & $G_\alpha\cap L$ & $B$ & no. of classes & Remark\\\hline
$L_3(8)$ &$73:3$ & $[8^2]P_1(\GammaL_2(8))$ & 1& *, $G=L_.3=\PGammaL_3(8)$\\\hline
\end{tabular}
\end{center}

\bigskip
The proof should be corrected on the first line of  \cite{Reg}*{p25}. The possibility $n=3$ and $q=8$ should also be listed  as $\SL_2(8)$ has a proper subgroup of index dividing $n\log q$. The table following the first line should then include the row

\begin{center}
 \begin{tabular}{ccc}
$G$ & $A$ &$B$ \\\hline
$L_3(8).3$ &  $73:9$ & $[8^2]P_1(\GammaL_2(8))$
\end{tabular}
\end{center}

Comparing orders we see that $|G|=|A||B|$ and $|A\cap B|\leqslant 3$. However, 3 does not divide $|B\cap \PSL_3(8)|$ while all elements of order 3 in A lie in $\PSL_3(8)$. Hence $A\cap B=1$ and $G=AB$, as desired.
\end{remark}

\begin{remark}\label{rem:pablo}
We have been informed by Pablo Spiga \cite{Spiga} that there is a factorisation $\Omega_8^+(4).2=N_2^-\GO_8^-(2)$ which is missing from \cite{Factns}*{Table 4}. Here $\GO_8^-(2)=N_{\GammaO^+_8(4)}(A)$ where $A$ is an image of the subfield subgroup $\Omega_8^-(2)$ under triality. The existence of this factorisation has been confirmed in \textsc{Magma}. The gap in the argument occurs in \cite{Factns}*{(5.1.15)(b)(iii) on pp.\,106--107}  and suggests that  there is at most one other missing factorisation of this form, namely  $\Omega_8^+(16).4=N_2^-(\GO_8^-(4).2)$. However, its existence is yet to be determined. In both cases, that is for $q=4$ and $16$, we have that $N_2^- \cap \GO_8^-(q^{1/2}) \cap \Omega_8^+(q) =\SL_2(q^{1/2})\times \GO_2^-(q)$ and is contained in the stabiliser in  $\Omega_8^-(q^{1/2})$ of an elliptic 4-subspace of the 8-dimensional vector space over $\F_{q}$  equipped with a nondegenerate elliptic quadratic form. In particular, comparing orders yields  that $\Omega_8^-(q^{1/2})$ is not transitive on $\cN_2^-$ in either case and $\GO_8^-(4)$ is not transitive when $q=16$.
\end{remark}

 \section{Our strategy and notation}\label{S:notation}
 
 Let $V$ be an $n$-dimensional vector space over the field $\F_q$.  For $1\leqslant k<n$, our first goal is to determine all subgroups of $\GammaL_n(q)$ that act transitively on the set $\cU$ of all subspaces of dimension $k$. We do this in Section~\ref{S:PGLn}. 
   
In the rest of this section we discuss the families of classical groups, and their subspace actions, that we will consider. 

\subsection{Some notation for classical groups and their subgroups} 

Our notation for the classical groups is recorded in Tables~\ref{T0} and~\ref{T00}. It differs from both~\cite{KL}
  and~\cite{BHRD} but is influenced by both. Type $\Or^\circ$ refers to odd-dimensional orthogonal groups while types $\Or^-$ and $\Or^+$ refer to even-dimensional orthogonal groups. Column $\Gamma$ refers to the group of all semisimilarities of the form, column $C$ refers to the group of all similarities, column $I$ the group of all isometries and column $S$ the group of all isometries of determinant one.  We follow \cite{Taylor}*{p.\,136} and define $\Omega_n(q)$ to be the derived subgroup of $\GO_n(q)$.  We use the standard term `general linear' and
  the standard notation `$\GL$', instead of `conformal linear' and `$\textup{CL}$'
  even though the latter would be more consistent in Table~\ref{T0}.   Finally, for a 
  statement $P$, 
we use the Iverson bracket notation $[P]$ to mean 1 if $P$ is true, and 0
if $P$ is false. Hence $(q-1,2)^{[n\ge3]}$ equals 2 if $n\ge3$ and $q$ is odd,
and equals 1 otherwise.

  \begin{table}[!ht]
    \caption{Classical group notation where the groups are ordered from large to small, and indices are listed. See footnote $\tnote{(*)}$, and here $q=p^f$.}\label{T0}
  \begin{threeparttable}
  \begin{tabular}{clclclclcl}
  \toprule
  type&$\Gamma$&$|\Gamma: C|$&$C$&$|C: I|$&$I$&$|I:S|$&$S$&$|S:\Omega|$&$\Omega$\\
  \midrule
  {\bf L}&$\GammaL$&$f$&$\GL$&$1$&$\GL$&$q-1$&$\SL$&$1$&$\SL$\\
  {\bf U}&$\GammaU$&$2f$&$\textup{CU}$&$q-1$&$\GU$&$q+1$&$\SU$&$1$&$\SU$\\
  {\bf S}&$\GammaSp$&$f$&$\textup{CSp}$&$q-1$&$\Sp$&$1$&$\Sp$&$1$&$\Sp$\\
  ${\bf O}^\circ$&$\GammaO^\circ$&$f$&$\textup{CO}^\circ$&$\frac{q-1}{(q-1,2)}$&$\textup{GO}^\circ$&$(q-1,2)$&$\SO^\circ$&$(q-1,2)^{[n\ge3]}$&$\Omega^\circ$\\
  ${\bf O}^\eps$&$\GammaO^\eps$&$f$&$\textup{CO}^\eps$&$q-1$&$\textup{GO}^\eps$&$(q-1,2)$&$\SO^\eps$&$2$&$\Omega^\eps$\\
  \bottomrule
  \end{tabular}
  \begin{tablenotes}\footnotesize
  \item [$(*)$] In row $\Or^\circ$ the formula for $|S:\Omega|$ is valid
    when $n$ is odd with $q$ even or odd with two exceptions: when
    $n\in\{3,5\}$ and $q=2$, we have
    $|S:\Omega|=|\Sp_{n-1}(2):\Sp_{n-1}(2)'|=2\ne1$.
  \end{tablenotes}
  \end{threeparttable}
  \end{table}

  \begin{table}[!ht]
  \caption{Projective classical group notation and projective indices. 
    If $n$ is even, then $a_{+}=a_{-}=2$ if $q$ is even, otherwise $a_{+}=2^{[n(q-1)/4\;\;{\rm even}]}$ and $a_{-}=2^{[n(q-1)/4\;\;{\rm odd}]}$ if $q$ is odd. Let $\delta_2=(2,q-1)$.
  }\label{T00}
  \begin{tabular}{clclclclcl}
  \toprule
  type&$\overline{\Gamma}$&$|\overline{\Gamma}:\overline{C}|$&$\overline{C}$&$|\overline{C}:\overline{I}|$&$\overline{I}$&$|\overline{I}:\overline{S}|$&$\overline{S}$&$|\overline{S}:\overline{\Omega}|$&$\overline{\Omega}$\\
  \midrule
  ${\bf L}$&$\PGammaL$&$f$&$\PGL$&$1$&$\PGL$&$(q-1,n)$&$\PSL$&$1$&$\PSL$\\
  {\bf U}&$\PGammaU$&$2f$&$\textup{PCU}$&$1$&$\PGU$&$(q+1,n)$&$\PSU$&$1$&$\PSU$\\
  {\bf S}&$\PGammaSp$&$f$&$\textup{PCSp}$&$\delta_2$&$\PSp$&$1$&$\PSp$&$1$&$\PSp$\\
  ${\bf O}^\circ$&$\PGammaO^\circ$&$f$&$\textup{PCO}^\circ$&$1$&$\textup{PGO}^\circ$&$1$&$\PSO^\circ$&$\delta_2^{\;[n\ge3]}$&$\POmega^\circ$\\
  ${\bf O}^\eps$&$\PGammaO^\eps$&$f$&$\textup{PCO}^\eps$&$\delta_2$&$\textup{PGO}^\eps$&$(q-1,2)$&$\PSO^\eps$&$a_\eps$&$\POmega^\eps$\\
  \bottomrule
  \end{tabular}
  \end{table}

   We denote an extension of $N$ by $Q$ by $N\ldotp Q$, a split extension
   by $N\colon Q$ or $N\rtimes Q$, and a nonsplit extension by $\nonsplit{N}{Q}$. Finally, if $X\leqslant \GammaL_n(q)$ then we use the notation $X^R$ to indicate that $X$ is reducible on $V$.

\subsection{Formed spaces: types of subspaces}
Next we assume that $V$ is equipped with a nondegenerate hermitian, alternating or quadratic form and  $G$ is the group of all semisimilarities of the form. 
This means, in the case of a hermitian or alternating form that the radical $V^\perp=0$.
The case of a quadratic form needs more comment: a vector is \emph{singular} if its value under the quadratic form is zero, and otherwise it is \emph{nonsingular}.    Also a quadratic form determines a symmetric bilinear form, called its \emph{polar form}, that is also $G$-invariant, and the quadratic form is said to be nondegenerate if the radical $V^\perp$ of its polar form is either zero, or $q$ is even, $n$ is odd, and $V^\perp$ is a nonsingular $1$-subspace.

Thus in all cases $G$ preserves a sesquilinear form on $V$. A subspace $U$ is called \emph{nondegenerate} if $U\cap U^\perp=0$, and \emph{totally isotropic} if   $U\cap U^\perp=U$, that is to say, $U\subseteq U^\perp$. Again we comment further on the quadratic form case: a subspace $U$ is \emph{totally singular} if all its vectors are singular.  In a totally isotropic subspace $U$, the set $U^*$ of all singular vectors forms a subspace of $U$, and either $U^*=U$, or $q$ is even and $U^*$ has codimension $1$ in $U$. Moreover, the stabiliser $G_U$ leaves  $U^*$ invariant.
Thus in the quadratic form case we will usually deal with totally singular subspaces. 

Broadly speaking, our goal is to describe all subgroups of $G$ that are transitive on the set $\cU$ of all proper nonzero subspaces of $V$ of a given isometry type. For $U\in\cU$, the stabiliser $G_U$ also leaves invariant $U^\perp$ and $U\cap U^\perp$. If $U\cap U^\perp=0$ then $U$ is nondegenerate. On the other hand, if $U\cap U^\perp\ne 0$, then $U\cap U^\perp$ is totally isotropic and $G_U\leq G_{U\cap U^\perp}\leq G$. In the cases of an hermitian or alternating form, or a quadratic form in odd characteristic, we have proper containment $G_{U\cap U^\perp}< G$ and,  if we know all the subgroups $H$ of $G$ that are transitive on the set of $G$-translates of $U\cap U^\perp$, then we need to identify among these subgroups $H$ the ones which are transitive on $\cU$, or equivalently, those subgroups $H$ for which $H_{U\cap U^\perp}$ is transitive on the subspaces in $\cU$ containing $U\cap U^\perp$. The case of  a quadratic form with $q$ even is different. If $G_{U\cap U^\perp}< G$, then $G_U$ leaves invariant the unique subspace $(U\cap U^\perp)^*$ of singular vectors in $U\cap U^\perp$. Either this totally singular subspace is nonzero, or $U=U\cap U^\perp$ is a nonsingular $1$-subspace (not equal to $V^\perp$ since $G_{U\cap U^\perp}< G$), and the same comments apply for subgroups $H$ transitive on  the set of $G$-translates of the totally singular  $(U\cap U^\perp)^*$ or nonsingular $U\cap U^\perp$, respectively.  On the other hand if $G_{U\cap U^\perp}= G$ then $U\cap U^\perp=V^\perp$ and, provided $U\ne V^\perp$, the quotient $U/V^\perp$ is a proper nonzero nondegenerate subspace of the symplectic space $V/V^\perp$. 
 As $G$ is isomorphic to the corresponding symplectic group, the possible transitive subgroups are given by our results for symplectic groups.

Because of this discussion, and our need to undertake a manageable classification, we therefore restrict our attention to $G$-invariant families $\cU$ of nondegenerate subspaces and of totally isotropic subspaces; and in the case of orthogonal spaces with $q$ even, families of nondegenerate subspaces, totally singular subspaces, and nonsingular $1$-subspaces (distinct from the radical).

\subsection{Formed spaces: dimensions of subspaces}\label{sub:spaces}
We denote the set of all nondegenerate subspaces of dimension $k$ by $\cN_k$, and the set of all totally isotropic (or totally singular) subspaces of dimension $k$ by $\cP_k$. 
 In the case of orthogonal spaces with $q$ even, we also denote the set of nonsingular $1$-subspaces by $\cN_1$. We will sometimes use the notation $\cP_k(V)$ or $\cN_k(V)$ if we wish to specify the underlying vector space~$V$.  In a classical group we often use $P_k$ to denote the stabiliser of a totally isotropic subspace of dimension $k$ (or a totally singular $k$-subspace in the orthogonal case),  and $N_k$ to denote the stabiliser of a nondegenerate subspace of dimension $k$. Again, we will sometimes use the notation $P_k(G)$ or $N_k(G)$ if we wish to specify the classical group $G$. Our notation for $P_k$ is consistent with \cite{KL} but inconsistent with \cite{Factns} and \cite{Reg}, which we explain below.
  
For a totally isotropic (or totally singular) subspace $U$, since $U\subseteq U^\perp$ and $\dim(U)+\dim(U^\perp)=n$, the subspace $U$ has dimension at most $n/2$.  For a nondegenerate subspace $U$, the stabiliser $G_U$ also fixes the orthogonal complement $U^\perp$. In the hermitian and alternating cases $U^\perp$ is also nondegenerate and $G_U=G_{U^\perp}$. The groups $\SU_n(q)$ and $\Sp_n(q)$ act transitively on $\cP_k$ and $\cN_k$, so we only need to find all transitive subgroups $H$ not containing these groups, and moreover for both  $\cP_k$ and $\cN_k$,  we may assume that the dimension $k\leqslant n/2$.  
 
The remaining comments are about the orthogonal (or quadratic form) case.  It is more complicated than the other cases, and so we record in 
Table~\ref{T-defn} the families $\cU$ of subspaces that we consider. Let $U\in\cU$. First we deal with totally isotropic subspaces. If $U$ is not totally singular, then, as explained above, $q$ is even,  $U$ is restricted to be a nonsingular $1$-subspace (not $V^\perp$), and $\cU$ is as in row $6$ of Table~\ref{T-defn}. Suppose now that $U$ is totally singular. The group $\Omega_n^\eps(q)$, for $\eps\in\{\circ,+,-\}$, acts transitively on $\cU=\cP_k$ if $k=\dim(U)<n/2$, as in row $1$ of Table~\ref{T-defn}. Totally singular $n/2$-subspaces exist only if $\eps=+$, and $\Omega_n^+(q)$ has two orbits on $\cP_{n/2}$ which we denote by $\cP_{n/2}^+$ and $\cP_{n/2}^-$, as in row $2$ of Table~\ref{T-defn}. (This notation differs from that in \cite{Factns} and \cite{Reg} where $\cP_{n/2}$ and $\cP_{n/2-1}$ are used to denote the two $\Omega_n^+(q)$-orbits, but is more consistent with \cite{KL}.) When $q$ is odd these two orbits are fused by an element of $\GO^+_n(q)$ that is not in $\SO_n^+(q)$, while when $q$ is even they are fused by an element of $\GO^+_n(q)=\SO^+_n(q)$ that is not in $\Omega_n^+(q)$. Also $U,W\in\cP_{n/2}$ lie in the same $\Omega_n^+(q)$-orbit if and only  the codimension of $U\cap W$ in~$U$ (and hence in $W$) is even. 
 
Now we consider nondegenerate subspaces $U$. Here either $k$ is even, or both $k$ and $q$ are odd, and in both cases there are two isometry types of nondegenerate $k$-subspaces $U$.  If $k$ is even, these are the subspaces where the restriction of the quadratic form is hyperbolic or elliptic, and we denote the set of all such subspaces by $\cN_k^+$ or $\cN_k^-$ respectively.   If $k, q$ and $n$ are all odd then a nondegenerate $k$-subspace $U$ can be distinguished by the isometry type of $U^\perp$, and thus we use $\cN_k^\eps$ for $\eps\in\{+,-\}$ to denote the set of all nondegenerate $k$-subspaces $U$ such that $U^\perp$ is of type $\eps$.     Finally, when $k$ and $q$ are odd and $n$ is even, there are two isometry types  of nondegenerate $k$-subspaces and one can be mapped to  the other by a similarity of the whole space that is not an isometry. We still denote the two classes by $\cN_k^\eps$, where $\eps\in\{+,-\},$ and they can be distinguished by the discriminant of the restriction of the quadratic form. This is most easily seen when $k=1$: here the two isometry types are those for which the quadratic form is a square and those where it is a nonsquare. We will use $N_k^\eps$ to denote the stabiliser of an element of $\cN_k^\eps$ when we want to distinguish its isometry type.

If $k>n/2$ then $G_U$ is also the stabiliser of the $(n-k)$-subspace $U^\perp$, and moreover $U^\perp$ is nondegenerate if either $n-k$ is even or $n-k$ and $q$ are both odd. It follows that one of $U$ or $U^\perp$ satisfies the conditions of row 3, 4 or 5 of Table~\ref{T-defn}, except in the case where $q$ is even and one of $U, U^\perp$ has odd dimension less than $n/2$.  However, for $q$ even, an odd dimensional subspace is not nondegenerate, and hence in this exceptional case the nondegenerate subspace $U$ has even dimension $k>n/2$ and $n$ is odd. Here $U^\perp$ contains the nonsingular $1$-subspace 
$V^\perp$. If $U^\perp=V^\perp$ then $U$ is as in row~7 of Table~\ref{T-defn}, and $G_U$ corresponds to the stabiliser of a quadratic form in the 
$G$-action on the symplectic space $V/V^\perp$.   If $U^\perp\ne V^\perp$ then $U^\perp/V^\perp$ is a proper nonzero nondegenerate subspace of the symplectic space $V/V^\perp$. As $G$ is isomorphic to the corresponding symplectic group, the possible transitive subgroups are given by our results for symplectic groups, and so we do not consider this case.

 Thus Table~\ref{T-defn} summarises the main families $\cU$ that we consider for the orthogonal groups with `socle' $\Omega_{n}^\eps(q)$.
 In all rows of this table the group $\Omega_n^\eps(q)$, for $\eps\in\{\circ,+,-\}$, acts transitively on $\cU$, 
 \cite{BG}*{Lemma 2.5.10}.  
Unfortunately, our methods do not work for non-simple groups such as $\POmega_4^{+}(q)\cong\PSL_2(q)^2$. Hence we omit the groups $\textup{O}_{4}^+(q)$. 
 In addition, when $n$ is even, 
 we also identify subgroups of $\GammaO_n^+(q)$ that act transitively on 
$\cP_{n/2}=\cP_{n/2}^+\cup\cP_{n/2}^-$ (Proposition~\ref{P:Both}), 
and if $k$ and $q$ are both odd,  subgroups of  $\GammaO_n^\eps(q)$ that are transitive on $\cN_k=\cN_k^+\cup\cN_k^-$ (Propositions~\ref{P1} and~\ref{P2}).

   \begin{table}[!ht]
    \caption{Families $\cU$ of $k$-subspaces for $n$-dimensional orthogonal groups  $\GO_n^\eps(q)$, $\eps\in\{\circ,+,-\}$. }\label{T-defn}
  \begin{threeparttable}
  \begin{tabular}{lll}
  \toprule
  $\cU$ & $k$ & Conditions\\
  \midrule
  $\cP_k$                    &$1\leq k<n/2$&all  totally singular $k$-subspaces.\\
  $\cP_{n/2}^{\eps'}$&$n/2$            &$\eps=+$, an $\Omega_n^+(q)$-orbit on totally singular $n/2$-subspaces, $\eps' \in\{+,-\}.$\\
  $\cN_{k}^{\eps'}$&even $k\leq n/2$            & all nondegenerate $k$-subspaces of type $\eps' \in\{+,-\}$.\\
  $\cN_{k}^{\eps'}$&odd $k<n/2$            & $n, q$ odd, all nondegenerate $k$-subspaces with $U^\perp$ of  type $\eps' \in\{+,-\}$.\\
  $\cN_{k}^{\eps'}$&odd $k\leq n/2$        & $n$ even, $q$ odd, all nondegenerate $k$-subspaces of a given type, see${}^*$. \\ 
    $\cN_{1} $               &$1$            & $q$ even, all nonsingular $1$-subspaces (apart from $V^\perp$ if $n$ is odd). \\
  $\cN_{n-1}^{\eps'}$&$n-1$        & $n$ odd, $q$ even, all nondegenerate $(n-1)$-subspaces of a given  type${}^{**}$. \\   
  \bottomrule
  \end{tabular}
  \begin{tablenotes}\footnotesize
    \item[$*$] $\eps'\in\{+,-\}$ and the two classes $\cN_k^{\eps'}$ can be distinguished by the discriminant of the restriction of the quadratic form.
    \item[$**$] These subspaces correspond to quadratic forms on the symplectic space $V/V^\perp$ of type  $\eps'\in\{+,-\}$.
  \end{tablenotes}
  \end{threeparttable}
  \end{table}

  \subsection{Some useful lemmas}\label{S:prelim}
  
  We begin with the following lemma to aid switching between matrix groups and their projective versions.
  For a finite group $G$, by $G^{(\infty)}$ we mean the last term in the derived series; and we say that $G$ is \emph{quasisimple} if $G=G^{(\infty)}$ and $G/Z(G)$ is a nonabelian simple group.
  
  \begin{lemma}\label{lem:derived}
    Let $G$ and $G_0$ be groups with $G^{(\infty)}$ quasisimple and let $\pi:G\rightarrow G_0$ be a homomorphism such that $\pi(G^{(\infty)})=G_0^{(\infty)}\neq 1$. Suppose that $M\leqslant G$ satisfies $G_0^{(\infty)}\leqslant \pi(M)$. Then $G^{(\infty)}\leqslant M$.
  \end{lemma}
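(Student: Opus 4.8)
Write $L:=G^{(\infty)}$ and $K:=\ker\pi$; it suffices to prove $L\leqslant M^{(\infty)}$, since $M^{(\infty)}\leqslant M$. The plan is to compare $M^{(\infty)}$ with $L$, using two elementary facts about the last term of the derived series of a finite group, each immediate by induction along the derived series: (i) if $H_1\leqslant H_2$ then $H_1^{(\infty)}\leqslant H_2^{(\infty)}$; and (ii) for any homomorphism $\phi$, one has $\phi(H^{(\infty)})=\phi(H)^{(\infty)}$. I also use that a last derived term is perfect, and that a perfect subgroup $P$ of any group satisfies $P=P^{(\infty)}$.

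First I would determine $L\cap K$. It is a normal subgroup of the quasisimple group $L$, so either $L\cap K\leqslant Z(L)$ or $L\cap K=L$; the latter would give $\pi(L)=1$, contradicting $\pi(L)=G_0^{(\infty)}\neq 1$. Hence $Z:=L\cap K\leqslant Z(L)$. Next I would compute $\pi(M)^{(\infty)}$. By hypothesis $\pi(L)=\pi(G^{(\infty)})=G_0^{(\infty)}\leqslant\pi(M)$, and $\pi(L)=G_0^{(\infty)}$ is perfect, so $\pi(L)=\pi(L)^{(\infty)}\leqslant\pi(M)^{(\infty)}$ by (i). Conversely $\pi(M)\leqslant G_0$, so $\pi(M)^{(\infty)}\leqslant G_0^{(\infty)}=\pi(L)$, again by (i). Therefore $\pi(M)^{(\infty)}=\pi(L)$, and so by (ii), $\pi(M^{(\infty)})=\pi(M)^{(\infty)}=\pi(L)$.

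Finally I would combine these. By (i), $M^{(\infty)}\leqslant G^{(\infty)}=L$, so $\pi$ restricts to a homomorphism $L\to\pi(L)$ with kernel $L\cap K=Z$, under which $M^{(\infty)}$ has full image $\pi(L)$; hence $M^{(\infty)}Z=L$. Since $Z\leqslant Z(L)$ is central in $M^{(\infty)}Z$, we have $[M^{(\infty)}Z,\,M^{(\infty)}Z]=[M^{(\infty)},M^{(\infty)}]$, and so
\[
L=[L,L]=[M^{(\infty)}Z,\,M^{(\infty)}Z]=[M^{(\infty)},M^{(\infty)}]=M^{(\infty)},
\]
the first equality because $L$ is quasisimple and hence perfect, the last because $M^{(\infty)}$ is perfect. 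Thus $L=M^{(\infty)}\leqslant M$, as required.

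No step is genuinely hard: the argument is bookkeeping with the derived series together with the fact that a quasisimple group has no normal subgroups other than central ones and itself. The only real decision — and the thing that makes the proof go through — is to argue with $M^{(\infty)}$ rather than with $M$ itself: one cannot expect $M\leqslant L$ (for instance $M$ could be all of $G$), but $M^{(\infty)}$ automatically lies in $L$, and it still surjects onto $\pi(L)$ precisely because $\pi(L)=G_0^{(\infty)}$ already sits at the bottom of the derived series of $G_0$.
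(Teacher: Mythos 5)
Your proof is correct, and it runs on the same engine as the paper's: quasisimplicity forces $\ker\pi\cap G^{(\infty)}$ to be central, and the commutator identity $[AB,AB]=[A,A]$ for central $B$ upgrades ``a perfect subgroup of $M$ inside $G^{(\infty)}$ surjecting onto $\pi(G^{(\infty)})$'' to ``$G^{(\infty)}\leqslant M$''. The only (cosmetic) difference is that you take that perfect subgroup to be $M^{(\infty)}$ and verify its surjectivity via functoriality of the derived series, whereas the paper works with $M\cap\pi^{-1}(G_0^{(\infty)})\cap G^{(\infty)}$ and the equivalent observation that $G/G^{(\infty)}$ is soluble while $G_0^{(\infty)}$ has no nontrivial soluble quotients.
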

  \begin{proof}
  Let $K=\ker \pi$, $H=G^{(\infty)}$ and $H_0=G_0^{(\infty)}$.
    Let $Y=M\cap \pi^{-1}(H_0)$ and observe that 
    $\pi(Y)=\pi(M)\cap H_0=H_0=\pi(H)$.  Clearly $K\cap H\lhdeq H$ and $Y\cap H\lhdeq Y$. Since $G/H$ is soluble, so too is $YH/H\cong Y/(Y\cap H)$. However, $\pi(Y)=H_0$ has no nontrivial soluble quotients. Thus $\pi(H)=\pi(Y)=\pi(Y\cap H)$ and hence $H=(Y\cap H)(K\cap H)$. Since $H$ is quasisimple and $K\cap H\neq H$, it follows that $K\cap H\leqslant \Z(H)$. 
    
    Suppose that $H=AB$ where $B\le \Z(H)$. If $a\in A$, $b\in B$ and $c\in H$, then the identity $[ab,c]=[a,c]^b[b,c]=[a,c]$, shows that
    $H'=[AB,H]=[A,H]=[AB,A]=A'$. Taking $A=Y\cap H$ and $B=K\cap H$ shows that $H=H'=A'\le Y\cap H$. Therefore $H\leqslant Y\leqslant M$, as desired.
  \end{proof}
  
  We also state the following variation of \cite{Reg}*{Lemma~2.6}.
  
  \begin{lemma}\label{lem:maxmin}
Let $G=AB$ be a factorisation of an almost simple classical group with $A$ maximal and core-free in $G$. Let $L=\soc(G)$ and suppose that $B$ is maximal among core-free subgroups of $G$. Define $G^*=BL$. Then $G^*=(G^*\cap A)B$. Moreover, this is a maximal core-free factorisation of $G^*$ 
with $B=N_G(B\cap L)$ unless $L=\POmega_8^+(3)$, or $(L,A\cap L, B\cap L)$ is one of the following:
\begin{enumerate}[{\rm (a)}]
    \item $L=\PSL_{2m}(q)$ with $\gcd(q-1,m)>1$, $A\cap L={\rm Stab}(V_1\oplus V_{2m-1})$, and $B\cap L=N_L(\PSp_{2m}(q))$.
    \item $L=\POmega^+_{2m}(q)$ with $mq$ odd, $A\cap L=N_L(\GL_{m}(q)/\langle -I\rangle)$ and $B\cap L=N_1$.
\end{enumerate}
  \end{lemma}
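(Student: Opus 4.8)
The plan is to deduce Lemma~\ref{lem:maxmin} from \cite{Reg}*{Lemma~2.6} (the result it is a ``variation'' of) together with the classification of maximal core-free factorisations in \cite{Factns}, by a careful bookkeeping of which subgroup of which overgroup is being normalised. First I would set up notation: write $B_0=B\cap L$, note $L\le G\le \Aut(L)$, and recall that since $A$ is maximal and core-free we have $G=AL$ (otherwise $AL$ would be a proper subgroup containing $A$), so $A\cap L$ is maximal in $L$ up to the usual caveats. The factorisation $G=AB$ restricts, via the standard argument (intersecting with $L$ and using $G=AL=BL$ which follows from $G=AB$ and cores being trivial), to a factorisation $L=(A\cap L)(B\cap L)$ of the simple group.

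Next I would verify the two displayed conclusions. For $G^*=(G^*\cap A)B$: since $G=AB$ we have $G^*=BL=B L=(A\cap BL)B=( G^*\cap A)B$ by Dedekind's modular law, using $B\le G^*$. That this is a maximal core-free factorisation of $G^*$ requires showing $G^*\cap A$ is maximal in $G^*$ and core-free: core-freeness is automatic since $\soc(G^*)=L$ and $A$ is core-free in $G$; maximality of $G^*\cap A$ in $G^*$ follows because $A\cap L=A\cap G^*\cap L$ is maximal in $L$ (with the listed exceptions where $L=\POmega_8^+(3)$ by triality, or the two families (a), (b) where the ``geometric'' maximal subgroup $A\cap L$ fails to be maximal — these are precisely the novelty/non-maximal cases recorded in Tables of \cite{KL} and \cite{Factns}), and then Lemma~\ref{lem:derived} or a direct argument lifts this to $G^*$. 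The identity $B=N_G(B\cap L)$ is where the hypothesis ``$B$ maximal among core-free subgroups'' is used: $B\le N_G(B_0)$ always (as $B_0\lhdeq B$), $N_G(B_0)$ is core-free (its core would be normal in $L$ hence trivial or $L$, and $B_0<L$ forces it into $B_0$'s normaliser which is proper), so maximality of $B$ among core-free subgroups forces equality — except when $N_G(B_0)$ fails to be core-free or $B_0$ is not the unique such normal subgroup, which again is governed by the exceptional list.

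The main obstacle I expect is the exceptional list itself: one must check that the enumerated exceptions $L=\POmega_8^+(3)$ and cases (a), (b) are exactly the cases where some step fails, neither too many nor too few. This means cross-referencing \cite{Factns}*{Tables 1--4} for every maximal core-free factorisation $L=(A\cap L)(B_0)$ and asking: (i) is $A\cap L$ actually maximal in $L$ (it fails in case (a), a symplectic-type subgroup inside $\PSL_{2m}(q)$, and more subtly needs the parabolic $\mathrm{Stab}(V_1\oplus V_{2m-1})$ description rather than two separate parabolics); (ii) does $N_{\Aut(L)}(B_0)$ remain core-free and does it contain a unique conjugacy class representative (this is where $\POmega_8^+(3)$ with its exceptional triality-related factorisations, and case (b) $B\cap L=N_1$ in $\POmega^+_{2m}(q)$ with $mq$ odd — where $N_1$ sits inside two different maximal subgroups — cause trouble). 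I would handle (a) and (b) by direct inspection of the order formulae and subgroup structure in \cite{KL}*{Chapter~4} and \cite{BHRD}, and dispose of $\POmega_8^+(3)$ by citing its well-documented exceptional behaviour in \cite{Factns}. The argument for the generic case is short; essentially all the work is confirming the boundary of the exceptional set, and I would present that as a finite check rather than spelling out each line.
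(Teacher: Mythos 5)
Your route is in essence the paper's own: the identity $G^*=G^*\cap AB=(G^*\cap A)B$ via the modular law is exactly the mechanism behind \cite{Reg}*{Lemma~2.6}, the maximality of the factorisation and the exceptional list are quoted from that lemma (with the observation that the $M_{12}$ exception cannot occur because $L$ is classical), and $B=N_G(B\cap L)$ is to come from maximality of $B$ among core-free subgroups. The proposed re-verification of the exceptional list against the tables of \cite{Factns} is redundant once \cite{Reg}*{Lemma~2.6} is invoked. However, there is a genuine gap at the final step. You assert that $N_G(B\cap L)$ is core-free because ``its core would be normal in $L$ hence trivial or $L$, and $B_0<L$ forces it into $B_0$'s normaliser which is proper''; this does not rule out the one situation in which the argument actually fails, namely $B\cap L=1$. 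If $B\cap L=1$ then $N_G(B\cap L)=G$, which is not core-free, and maximality of $B$ among core-free subgroups yields nothing. So you must first prove $B\cap L\neq 1$, and this is precisely where the hypothesis that $G^*=(G^*\cap A)B$ is a \emph{maximal} core-free factorisation is used: the paper applies the argument of \cite{Wilson1985}*{Lemma~2.1} (a maximal core-free subgroup of an almost simple group meets the socle nontrivially) to $B\le G^*$, and only then does $L\not\le N_G(B\cap L)$, hence core-freeness of $N_G(B\cap L)$, follow. Your hedge ``except when $N_G(B_0)$ fails to be core-free'' leaves open exactly the possibility the lemma asserts cannot happen outside the listed exceptions.

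A secondary misstatement: $G=BL$ does \emph{not} follow from $G=AB$ and core-freeness (for instance $B$ may lie inside $L$, giving $BL=L<G$); this is the entire reason the lemma works with $G^*=BL$ rather than with $G$. The restricted factorisation $L=(A\cap L)(B\cap L)$ is still correct, but it should be extracted from $G^*=(G^*\cap A)B$ together with $G^*=BL=(G^*\cap A)L$, not from the false identity $G=BL$. Neither of these issues affects your first displayed conclusion, but the missing $B\cap L\neq 1$ step must be supplied before the normaliser claim is proved.
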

  \begin{proof}
  By~\cite{Reg}*{Lemma~2.6} with $B^*=B$ it follows that  $G^*=(G^*\cap A)B$. Suppose now that $L\ne\POmega_8^+(3)$. 
  Since $L$ is a classical group, and $M_{12}$ is not, \cite{Reg}*{Lemma~2.6} shows that either
 the triple $(L,A\cap L, B\cap L)$ is given by parts~(a) or~(b), or $G^*=(G^*\cap A)B$ is a maximal core-free 
  factorisation. Assume that the latter holds.  It remains to prove that $B=N_G(B\cap L)$. Since $L\lhdeq G$ we have that $B\leqslant N_G(B\cap L)$. Since $B$ is maximal in $G^*$ it follows from the argument in the proof of \cite{Wilson1985}*{Lemma~2.1} that $B\cap L\neq 1$. Hence $L\not\leqslant N_G(B\cap L)$. Thus $N_G(B\cap L)$ is core-free in $G$ and so the maximality of $B$ among core-free subgroups of $G$ implies that $B=N_G(B\cap L)$.
  \end{proof}

We also need the following number theoretic lemma.

  \begin{lemma}\label{L:NT}
 If $q=p^f$ where $p$ is prime, $m\ge2$ and $f\ge1$, then either $m=2$, $f=1$ and $p\in\{3,7\}$ or
  $(q^m-1)\nmid 2mf(q-1)^2$.
\end{lemma}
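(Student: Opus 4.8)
The plan is to prove the contrapositive in the form: assuming $(q^m-1)\mid 2mf(q-1)^2$, deduce that $m=2$, $f=1$, $p\in\{3,7\}$. The first step is to get a clean lower bound on $q^m-1$ relative to $(q-1)^2$. Since $q=p^f\ge 2^f$ and $m\ge 2$, one has $q^m-1 = (q-1)(q^{m-1}+\cdots+1) \ge (q-1)(q^{m-1}+1)$, so the divisibility forces $q^{m-1}+1 \le \dfrac{2mf(q-1)^2}{q-1} = 2mf(q-1) < 2mfq$. Thus $q^{m-2} < 2mf$ (roughly), which already bounds $q$ and $m$ severely; more carefully, $q^{m-1} < 2mfq$, i.e. $q^{m-2}<2mf$. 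The second step is to also bring in $f$: since $q=p^f\ge 2^f$, the inequality $q^{m-2}<2mf$ with $q\ge 2^f$ gives $2^{f(m-2)}<2mf$, which for $m\ge 3$ restricts $(m,f)$ to a tiny finite list, and for $m=2$ it is vacuous so $f$ must be controlled by a different argument.

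For the case $m\ge 3$: the inequality $q^{m-2} < 2mf \le 2m q$ (using $f\le q$, in fact $f< q$ for $q\ge 2$ except trivial cases, or just $f\le \log_2 q$) forces $q^{m-3} < 2m$. When $m=3$ this says $1<6$, always true, so I must be more precise: going back, for $m=3$ we have $q^2-1 \mid 6f(q-1)^2$, hence $(q+1)\mid 6f(q-1)$; since $\gcd(q+1,q-1)\mid 2$ we get $(q+1)\mid 12f$, and combined with $f\le\log_2 q$ this leaves only small $q$ to check by hand (and one verifies none works, or the surviving ones reduce to the $m=2$ conclusion via $\PSL$-type coincidences — but actually one expects no solutions with $m\ge 3$). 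For $m\ge 4$, $q^{m-2}<2mf\le 2m\log_2 q$ forces $q$ and $m$ into a very small box, finitely many pairs, each dispatched by direct check. This case analysis is routine but needs care.

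The heart of the matter is $m=2$. Here the hypothesis is $(q^2-1)\mid 4f(q-1)^2$, equivalently $(q+1)\mid 4f(q-1)$. Since $\gcd(q+1,q-1)=\gcd(q+1,2)=\delta$, with $\delta=2$ if $q$ odd and $\delta=1$ if $q$ even, we get $(q+1)\mid 4f\delta$, so $q+1\le 4f\delta \le 8f = 8\log_p q$ (using $f=\log_p q$). For $p\ge 2$ this forces $q=p^f$ to be small: one checks $p^f+1\le 8f$ has only the solutions $p^f\in\{2,3,4,5,7,8,9,16,\dots\}$ up to a finite bound, and then imposes the sharper constraint $(q+1)\mid 4f\delta$ directly. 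The claim is that exactly $q\in\{3,7\}$ with $f=1$ survive: for $q=3$, $q+1=4\mid 4\cdot1\cdot2=8$; for $q=7$, $q+1=8\mid 4\cdot1\cdot2=8$; and every other candidate ($q=2,4,5,8,9,\dots$) fails the divisibility. The main obstacle is simply being exhaustive and not missing a candidate in the inequality $p^f+1\le 8f$ — the bound is generous enough that $f\le 5$ or so, leaving a short explicit list, but one must enumerate it carefully and confirm each non-solution, including ruling out $f\ge 2$ entirely.
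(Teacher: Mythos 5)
Your overall strategy --- extract a bound from the divisibility, reduce to a finite list, and check --- is the same in spirit as the paper's, but your key reduction step is different and weaker. The paper works with $N=(q^m-1)/(q-1)$ and observes $N\equiv m\pmod{q-1}$, so $(N,q-1)=(m,q-1)$ divides $m$ and the hypothesis $N\mid 2mf(q-1)$ upgrades to $N\mid 2m^2f$; this yields the single inequality $2^{(m-1)f}\le q^{m-1}<2m^2f$, which handles all $m\ge2$ uniformly ($13$ pairs $(m,f)$, then $p\in\{2,3,5,7\}$ and a check of $52$ triples). You use only the size bound $q^m-1\le 2mf(q-1)^2$, i.e. $q^{m-2}<2mf$, which is vacuous at $m=2$ and forces a case split. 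Your separate $m=2$ argument is correct and complete: $(q+1)\mid 4f(q-1)$ together with $q-1\equiv-2\pmod{q+1}$ gives $(q+1)\mid 8f$, hence $q+1\le 8f$, and the resulting short list of prime powers leaves exactly $q\in\{3,7\}$ with $f=1$. Your $m\ge4$ analysis also closes up correctly.

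The one genuine flaw is the $m=3$ step. You write ``for $m=3$ we have $q^2-1\mid 6f(q-1)^2$'', but the hypothesis there is $(q^3-1)\mid 6f(q-1)^2$, and $q^2-1$ does not divide $q^3-1$ (for $q=2$ this would say $3\mid 7$), so the deduction $(q+1)\mid 12f$ has no basis. The correct factorisation gives $(q^2+q+1)\mid 6f(q-1)$, and since $q^2+q+1\equiv 3\pmod{q-1}$ one obtains $(q^2+q+1)\mid 18f$, which is far more restrictive than what you wrote. The hedge that follows (``the surviving ones reduce to the $m=2$ conclusion \dots but actually one expects no solutions'') signals that you have not verified that $m=3$ contributes no exceptions, which is exactly the content of the lemma in that case. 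Fortunately the faulty refinement is dispensable: your own bound $q<2mf=6f$ combined with $q\ge 2^f$ already confines $m=3$ to the seven pairs $(q,f)\in\{(2,1),(3,1),(5,1),(4,2),(9,2),(8,3),(16,4)\}$, each of which visibly fails $(q^3-1)\mid 6f(q-1)^2$. With that repair (or by adopting the paper's uniform gcd step) your proof goes through.
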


\begin{proof}
Let $N=(q^m-1)/(q-1)$ and $d=(N,q-1)$.  Suppose that $N\mid 2mf(q-1)$.  %Clearly $N>1$ as $m>1$.
  Since $N/d$ divides $2mf(q-1)/d$ and
  $N/d$ and $(q-1)/d$ are coprime, we see that
  $N/d$ divides $2mf$. However, $N=\sum_{i=0}^{m-1}q^i\equiv m\mod (q-1)$,
  so it follows that $d=(m,q-1)$ and $N$ divides $2m^2f$.  Thus
  \[2^{(m-1)f}\le p^{(m-1)f}= q^{m-1}<\frac{q^m-1}{q-1}\le 2m^2f.\]
  However, $2^{(m-1)f}\le 2m^2f$ has only 13 solutions for $(m,f)$. Since $11^{(m-1)f}\le 2m^2f$ has no solutions, we see that $p\in\{2,3,5,7\}$. Of the $52$ possibilities for $(m,f,p)$, the only possibilities that satisfy $(p^{fm}-1)\mid 2mf(p^f-1)^2$ have $m=2$, $f=1$ and $p\in\{3,7\}$. 
\end{proof}

\section{Linear groups}\label{S:PGLn}

We begin with the following generalisation of Hering's Theorem. For more information about the case $H\le \GammaL_1(q^n)$ in Theorem~\ref{T:SLa}(a), see Foulser~\cite{Foulser}.
We note that the exceptional case Theorem~\ref{T:SLa}(b) was missed in \cite{Reg}*{Lemma 4.1}.

\begin{theorem}\label{T:SLbigN}
  Suppose that $H\leqslant \GammaL_n(q)$ where $n\ge2$ such that $\SL_n(q)\not\leqslant H$. Let $V=(\F_q)^n$ and suppose that $H$ acts transitively on the set of all $k$-subspaces of $V$. Then
\begin{enumerate}[{\rm (a)}]
\item $k=1$ or $n-1$, and either $H\le\GammaL_1(q^n)$ or $H_0\lhdeq H$ where $(H_0,n,q)$ are listed in Table~$\ref{T:SLa}$; or
\begin{table}[!ht]
\caption{Theorem~\ref{T:SLbigN}(a) subgroups $H_0\lhdeq H\le\GammaL_n(q)$ where $H$ is transitive on $1$-subspaces or $(n-1)$-subspaces.}\label{T:SLa}
\begin{tabular}{rlccccc}
\toprule
$H_0$ &&$\SL_a(q^b)$& $\Sp_a(q^b)$& $\G_2(q^b)'$& $Q_8=2^{1+2}_{-}$&$\SL_2(5)$\\
\textup{$n$}&&$ab>b$&$ab$,\textup{ $a$ even}&$6b$&$2$&$2$ \\
\textup{$q$}&&${\rm all}$&${\rm all}$&\textup{even}&$\{5,7,11,23\}$&$\{9,11,19,29,59\}$\\
\midrule
$H_0$ && $\{\Alt_6, \Alt_7\}$& $2^{1+4}_{-}\ldotp\C_5$&$2^{1+4}_{-}\ldotp\Alt_5$& $\SL_2(5)$&$\SL_2(13)$ \\
\textup{$n$}&&$4$&$4$&$4$&$4$&$6$\\
\textup{$q$}&&$2$&$3$&$3$&$3$&$3$\\
\bottomrule
\end{tabular}
\end{table}
\item $k=2$ and either  $(n,q)=(4,2)$ with $H=\Alt_7$, or $(n,q)=(5,2)$ and $H=\GammaL_1(2^5)$; or
\item $k=3$ and $(n,q)=(5,2)$ with $H=\GammaL_1(2^5)$.
\end{enumerate}
\end{theorem}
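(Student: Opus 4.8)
The plan is to reduce Theorem~\ref{T:SLbigN} to Hering's Theorem on transitive linear groups together with the classification of the subspace-stabiliser factorisations of $\GammaL_n(q)$, handling the easy cases directly and invoking the known list only for the hardest one. First I would set up the factorisation framework from Section~\ref{S:notation}: if $H\leqslant\GammaL_n(q)$ is transitive on the set of $k$-subspaces, then $\GammaL_n(q)=P_k H$ where $P_k$ is the stabiliser of a $k$-subspace, and passing to the projective group we get a core-free factorisation $\PGammaL_n(q)=\overline{P_k}\,\overline{H}$. By the main factorisation theorem of \cite{Factns} (together with \cite{Reg}*{Chapter~4}), such factorisations with one factor a parabolic $P_k$ force severe restrictions on $k$: essentially $k\in\{1,n-1\}$ except for a short explicit list of small cases, which is exactly what gives the trichotomy (a)/(b)/(c). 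So the first real step is to invoke the factorisation classification to conclude that $k\in\{1,n-1\}$ unless $(n,q,k)$ is one of the sporadic triples $(4,2,2)$, $(5,2,2)$, $(5,2,3)$.

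For part (a), $k=1$ (and $k=n-1$ by duality, since $H$ is transitive on $1$-subspaces of $V$ iff the contragredient $H^{-\top}$ is transitive on $1$-subspaces of $V^*$, equivalently on hyperplanes of $V$). Here I would appeal to Hering's Theorem in the form catalogued by Liebeck: a subgroup $H\leqslant\GammaL_n(q)$ transitive on nonzero vectors of $(\F_q)^n$ — equivalently, whose image is transitive on $1$-subspaces after taking the full preimage in $\GammaL$ — either lies in $\GammaL_1(q^n)$, or contains a normal subgroup $H_0$ from the classical list ($\SL_a(q^b)$, $\Sp_a(q^b)$, $\G_2(q^b)'$ in even characteristic) or one of the finitely many exceptional $H_0$ (the extraspecial-normaliser cases $Q_8$, $2^{1+4}_-\!\cdot\C_5$, $2^{1+4}_-\!\cdot\Alt_5$, the $\SL_2(5)$ cases, $\SL_2(13)$, $\Alt_6$, $\Alt_7$) with the stated constraints on $n$ and $q$. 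The hypothesis $\SL_n(q)\not\leqslant H$ is precisely what removes the trivial possibility and leaves Table~\ref{T:SLa}. One subtlety to address carefully: transitivity on $1$-\emph{subspaces} is a priori weaker than transitivity on nonzero vectors, but as noted in the introduction the full preimage in $\GammaL$ of a projectively transitive group \emph{is} vector-transitive, so Hering applies to $H\langle Z\rangle$ and hence pins down $H_0=(H\langle Z\rangle)_0\leqslant H$ up to the scalar issue, which I would record as the reason $H_0\lhdeq H$ rather than $H_0\leqslant H$ with equality.

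For parts (b) and (c) — the sporadic triples $(n,q,k)\in\{(4,2,2),(5,2,2),(5,2,3)\}$ — I would argue directly. For $(n,q)=(5,2)$ and $k\in\{2,3\}$: here $P_2$ and $P_3$ are the relevant parabolics of $\GL_5(2)=\PSL_5(2)$, and one checks from the order $|\GL_5(2)|$ and the number of $k$-subspaces that a transitive $H$ with $\SL_5(2)\not\leqslant H$ must have order equal to a multiple of $\binom{5}{k}_2$ that is compatible with a core-free factorisation; the factorisation list of \cite{Factns} for $\PSL_5(2)$ leaves only $H=\GammaL_1(2^5)=\C_{31}\rtimes\C_5$ (a Singer cycle normaliser), and one verifies that this group is indeed transitive on both $2$- and $3$-subspaces by a direct orbit-counting / cyclic-group argument (the Singer cycle acts with regular orbits of length $31$ on each, and $5$ permutes these orbits transitively). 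For $(n,q)=(4,2)$ and $k=2$: $P_2$ in $\GL_4(2)\cong\Alt_8$ has index $35$, and the relevant factorisation $\Alt_8=(\text{index-}15\text{ subgroup})\cdot(\text{index-}35\text{ subgroup})$ has $\Alt_7$ as the complementary factor; since $\Alt_7\leqslant\GL_4(2)$ acts on the $35$ totally-unrestricted $2$-subspaces with the point stabiliser of index $35$ in $\Alt_7$ being $\PSL_2(7)$, transitivity follows, and I would confirm that no other subgroup works by consulting the maximal-subgroup structure of $\Alt_8$ (equivalently \cite{BHRD}*{Tables} for $\GL_4(2)$). The main obstacle is book-keeping in part (a): matching the scalar subgroup and the "superset" nature of Hering's list (not every listed $H_0$ with those $(n,q)$ actually is transitive — cf.\ Remark~\ref{rem:Reg}) against the precise constraints in Table~\ref{T:SLa}, and making sure the dual case $k=n-1$ contributes nothing new; the sporadic cases (b), (c) are routine once the factorisation list is in hand.
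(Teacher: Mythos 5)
Your overall strategy matches the paper's: reduce to Hering's theorem for $k=1$, use the inverse--transpose duality for $k=n-1$, use the factorisation classification to restrict $k$, and handle the sporadic triples directly. However, there is one genuine logical gap in how you obtain the list of sporadic triples. You claim that consulting the factorisation classification of \cite{Factns} together with \cite{Reg}*{Chapter~4} yields the exceptions $(n,q,k)\in\{(4,2,2),(5,2,2),(5,2,3)\}$. In fact \cite{Reg}*{Lemma~4.1} (and the classical-group tables of \cite{Factns}) do \emph{not} contain the case $(4,2,2)$: because $\PSL_4(2)\cong\Alt_8$, its factorisations are catalogued in \cite{Factns}*{Theorem~D} for alternating groups rather than in Tables~1 and~3, and the case was overlooked in \cite{Reg}. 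So the derivation as you describe it would silently lose the $\Alt_7$ example in part~(b); you have written down the correct list of exceptions, but the cited sources would not have produced it. Any correct writeup must explicitly check the alternating-group factorisations of $\Alt_8$ (or otherwise treat $(n,q)=(4,2)$ separately) before asserting that $k\in\{1,n-1\}$ elsewhere.

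Two smaller points. First, in your treatment of $(n,q,k)=(4,2,2)$ you assert that the index-$35$ point stabiliser in $\Alt_7$ is $\PSL_2(7)$; in fact $\PSL_2(7)$ has index $15$ in $\Alt_7$, and the stabiliser of a $2$-subspace is the subgroup $(\Alt_4\times \C_3){:}2$ of order $72$. The transitivity of $\Alt_7$ on the $35$ two-subspaces is still correct (the paper verifies it, and its uniqueness as an example, by computation), but your identification of the stabiliser is wrong. Second, in part~(a) the case $(n,q)=(4,3)$ requires a further refinement beyond quoting Hering's list: one must determine exactly which extensions of $2^{1+4}_{-}$ occur, namely that the group induced on top always has $\C_5$ or $\Alt_5$ as a normal subgroup (the paper settles this computationally). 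You flag this ``superset'' issue as book-keeping, which is fair, but it does require an actual argument or computation rather than a citation.
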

\begin{proof}
If $n=2$ then clearly $k=1$. If $n\geq 3$ then $\PSL_n(q)$ is simple and according to~\cite{Reg}*{Lemma~4.1} we have that $k=1$, $n-1$, or $(n,q)=(5,2)$ and $k=2$ or $3$. However, the proof there has missed $G=\PSL_4(2)\cong\Alt_8$ acting on $\cP_2$, since alternating groups such as $\Alt_8$ are treated in \cite{Factns}*{Theorem D} rather than in \cite{Factns}*{Tables~1 and~3}. In this case a \textsc{Magma} calculation reveals a unique example $H=\Alt_7$, as in part (b).  When $k=1$ and $n\geq 2$, the possible transitive subgroups $H$ were determined by Hering \cites{H1,H2} and listed in \cite{Reg}*{Lemma~3.1}. For $(n,q)=(4,3)$ a \textsc{Magma} calculation shows that either $\SL_2(5)\lhdeq H$, or $H$ equals $2^{1+4}_{-}\ldotp X$ where $X\in\{\C_5,D_{10},F_{20},\Alt_5,\Sym_5\}$, so $\C_5$ or $\Alt_5$ is a normal subgroup of~$X$.
The same possibilities for $H$ occur when $k=n-1$
since  the inverse-transpose automorphism  interchanges 1-subspaces and $(n-1)$-subspaces. For the same reason we only need to consider $k=2$ when $(n,q)=(5,2)$. Since there are exactly $155$ 2-dimensional subspaces and $\GL(5,2)=\SL(5,2)\cong\PSL(5,2)$, it follows from \cite{Factns} that $H=\C_{31}\rtimes \C_5=\GammaL_1(2^5)$.
\end{proof}

\begin{remark}\label{R:Sp2}
Observe that $\Sp_2(q)=\SL_2(q)$ and $\Gamma\Sp_2(q)=\GammaL_2(q)$. Moreover, all 1-dimensional subspaces of $V=(\F_q)^2$ are totally isotropic. Thus Theorem~\ref{T:SLbigN} in the case $n=2$ also provides all the subgroups of $\Gamma\Sp_2(q)$ that are transitive on the set of all totally isotropic $1$-subspaces.
\end{remark}

We  use Theorem~\ref{T:SLbigN} to deal with some small dimensional orthogonal groups in Subsection~\ref{S:Osmall} and the two-dimensional unitary  groups in Subsection~\ref{S:PGU2}.

\subsection{Some small dimensional orthogonal groups}\label{S:Osmall}

Isomorphisms between some small dimensional orthogonal groups and other classical groups make it more convenient to treat 
such groups in tandem with the other classical groups to which they are isomorphic.  Since the $2$-dimensional orthogonal groups are soluble, 
we only consider dimensions at least $3$.  Apart from the generic isomorphisms $\POmega_{2m+1}(q)\cong\PSp_{2m}(q)$ for all $m$ when $q$ is even, the exceptional isomorphisms involve groups of dimensions up to~$6$ and are the following: 
\begin{align*}
  &\POmega_3(q)\cong\PSL_2(q),&&\POmega_4^{+}(q)\cong\PSL_2(q)^2, &&\POmega_4^{-}(q)\cong\PSL_2(q^2),\\
  &\POmega_5(q)\cong\PSp_4(q)\; (q\;\;{\rm odd}),&&\POmega_6^{+}(q)\cong \PSL_4(q),&&\POmega_6^{-}(q)\cong \PSU_4(q).
\end{align*}
We omit the non-simple case $\POmega_4^{+}(q)\cong\PSL_2(q)^2$, as discussed in Subsection~\ref{sub:spaces}. 
 
These isomorphisms of simple groups  influence the subgroup structure of the corresponding automorphism groups, in particular giving useful information about subspace stabilisers. For example,  the exceptional isomorphism $\POmega_6^{+}(q)\cong\PSL_4(q)$ relates a subspace stabilizer 
in the $\cN_1$-action of $\POmega_6^{+}(q)$ on $V=(\F_q)^6$  to a symplectic group $\PSp_4(q)$ acting on $W=(\F_q)^4$ -- not a subspace stabilizer.  The action of $\PSL_4(q)$ on the set of cosets of $\PSp_4(q)$ is not one we are considering for linear groups, and so must be treated specially for the orthogonal groups $\POmega_6^{+}(q)$. On the other hand, exceptional isomorphisms can relate
subspace stabilisers, sometimes for different types of spaces, for example, the stabilisers of subspaces in  $\cP_1(V)$ and $\cP_2(W)$ correspond under the isomorphism $\POmega_6^{+}(q)\cong\PSL_4(q)$.

In Table~\ref{tab:ExceptionalIsos}, we list the subgroups corresponding (under the displayed exceptional isomorphisms) to stabilisers in the orthogonal groups of the relevant totally singular, nondegenerate, and nonsingular $1$-subspaces. 
Justifications for these correspondences will be given in the places where we deal with the individual families. Essentially, the correspondences are determined using the structure of the subspace stabilisers given in~\cite{BHRD}*{Table~2.3, p.\,61}. Parabolic subgroups have a normal subgroup (the unipotent radical) whose order is a power of $q$, so parabolic subgroups correspond to parabolic subgroups and the exponent of $q$ often determines the correspondences $P_i\leftrightarrow P_j$ in Table~\ref{tab:ExceptionalIsos}. For example,  $P_1(\POmega_6^+(q))=P_2(\PSL_4(q))$ has a unipotent radical of  order~$q^4$, and not $q^5$. The remaining cases use~\cite{BHRD}*{Table~2.3} and the maximal subgroups in dimensions $2$ and $4$~\cite{BHRD}*{pp.\,377, 381, 392}.

\begin{table}[ht!]
\centering
\caption{Exceptional isomorphisms and subgroup correspondences. For $\POmega_5(q)\cong\PSp_4(q)$ take $q$ to be odd. The case $q$ even is handled by Theorem~\ref{T:Ooddqeven}.}
  \begin{tabular}{ccccccccc}
  \toprule
   $\POmega_3(q)$&&$P_1$&$N_1^{+}$&$N_1^{-}$&&&  \\
   $\PSL_2(q)$&&$P_1$&$D_{\delta(q-1)}$&$D_{\delta(q+1)}$&$\delta\in\{1,2\}$&\\
   \midrule
   $\POmega_4^{-}(q)$&&$P_1$&$N_1$&$N_2^\pm$&&&  \\
   $\PSL_2(q^2)$&&$P_1$&$\SL_2(q)\ldotp d$&$D_{2(q^2-1)/d}$&$d\kern-2pt=\kern-2pt(q\kern-2pt-\kern-2pt1,\kern-1pt2)$&\\ 
   \midrule
   $\POmega_5(q)$&&$P_1$&$P_2$&$N_1^{+}$&$N_1^{-}$&$N_2^{+}$&$N_2^{-}$  \\
   $\PSp_4(q)$&&$P_2$&$P_1$&$\GO_4^{+}(q)$&$\GO_4^{-}(q)$&$\GL_2(q)\ldotp2$&$\GU_2(q)\ldotp 2$\\
  \midrule
   $\POmega_6^+(q)$&&$P_1$&$P_2$&$P_3$&$N_1$&$N_2^{+}$&$N_2^{-}$& \\
   $\PSL_4(q)$&&$P_2$&$P_1$&$P_3$&$\Sp_4(q)$&$\GL_2(q)\wr\C_2$&$\GL_2(q^2)\ldotp 2$&\\
   \midrule
   $\POmega_6^-(q)$&&$P_1$&$P_2$&$N_1$&$N_2^{+}$&$N_2^{-}$&  \\
   $\PSU_4(q)$&&$P_2$&$P_1$&$\Sp_4(q)$&$\GL_2(q^2)\ldotp 2$&$\GL_2(q)\wr\C_2$&\\
 \bottomrule
\end{tabular}
\label{tab:ExceptionalIsos}
\end{table}

In the remainder of this section we use Theorem~\ref{T:SLbigN} to deal with those orthogonal groups isomorphic to linear groups,
namely $\POmega_3(q), \POmega_4^-(q),$ and $\POmega_6^+(q)$. The groups $\POmega_5(q)$  are dealt with in Section~\ref{S:Sp} for $q$ odd, and in Section~\ref{S:OoddEvenq} for $q$ even, while  the groups $\POmega_6^-(q)$ are handled in Subsection~\ref{SS:O6-}.

Note that for $G=\GO_3(2)$, the set $\cN_2^-$ has size one and so we exclude this case below.

\begin{theorem}\label{T:Omega3}
  Suppose that $H\le\GammaO_3(q)$ acts transitively on a set $\cU$ of subspaces of the natural module $V=(\F_q)^3$ given in Table~\textup{\ref{T-defn}}, with $\Omega_3(q)\not\le H$ and if $q=2$ then $\cU\neq \cN_2^-$. Then one of the following holds, where $\oH$ is the image of $H$ in $\PGammaL_2(q)$ and $q=p^f$ with $p$  prime:
  \begin{enumerate}[{\rm (a)}]
  \item $\cU=\cP_1$ and $H\le N_{\GammaO_3(q)}(\GO_2^-(q))=N_2^-$, or
    $\C_2^2\lhdeq H$ and $q\in\{5,7,11,23\}$, or $\Alt_5\lhdeq H$ and $q\in\{9,11,19,29,59\}$; or
  \item $\cU=\cN_1^{-}$ with $q$ odd,  and $H\le P_1$; or
  \item $\cU=\cN_2^-$ with $q>2$ even,  and either $H\le P_1$, or $q=4$ and $H\leqslant N_G(\Omega_2^+(4))=N_2^+$, or $q=16$ and $\GO_3(4).4\lhdeq H$ where the `$4$\kern-1.5pt' induces a field automorphism of order~$2$ on $\GO_3(4)$; or
  \item $\cU=\cN_2^+$ with $q=4$ and $\GO_2^-(4).2\lhdeq H\leqslant N_2^-$, where the `\kern1pt$2$\kern-1.5pt' acts as a field automorphism.
\end{enumerate}
\end{theorem}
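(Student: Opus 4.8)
The plan is to exploit the exceptional isomorphism $\POmega_3(q)\cong\PSL_2(q)$, which reduces the problem of classifying transitive subgroups on each family $\cU$ from Table~\ref{T-defn} to an already-solved problem about $\GammaL_2(q)$ acting on $1$-subspaces. Throughout I write $\Gbar=\PGammaL_2(q)$ and let $\Hbar$ be the image of $H\le\GammaO_3(q)$; note $\GammaO_3(q)/\{\pm I\}\cong\PGammaL_2(q)$ (indeed $\GO_3(q)\cong\PGL_2(q)\times\{\pm I\}$ in odd characteristic, and $\GO_3(q)\cong\SL_2(q)$ when $q$ is even), and $\Omega_3(q)$ corresponds to $\PSL_2(q)$. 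The first step is to read off from Table~\ref{tab:ExceptionalIsos} which subgroup of $\PSL_2(q)$ the stabiliser of an element of $\cU$ corresponds to: $P_1\leftrightarrow P_1$ (a Borel, of index $q+1$), $N_1^+\leftrightarrow D_{\delta(q-1)}$, and $N_1^-\leftrightarrow D_{\delta(q+1)}$ where $\delta=\gcd(2,q-1)$; in even characteristic the remaining families are handled by the identifications of Remark~\ref{R:Sp2} / Theorem~\ref{T:SLbigN} via $\GO_3(q)\cong\Sp_2(q)=\SL_2(q)$, noting that $\cN_2^-$ (resp. $\cN_2^+$) in $V$ corresponds under taking perps to $\cN_1^-$ (resp. $\cN_1^+$) and these in turn to the two classes of dihedral subgroups.

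Case $\cU=\cP_1$: here $H$ transitive on $\cP_1$ means $\Gbar=\Hbar\cdot P_1$, i.e. $\Hbar$ is transitive on the $q+1$ points of the projective line, equivalently (lifting) a subgroup of $\GammaL_2(q)$ transitive on $1$-subspaces of $(\F_q)^2$. Theorem~\ref{T:SLbigN}(a) with $n=2$ gives that either $\Hbar\le\GammaL_1(q^2)$-image — which is exactly $N_{\Gbar}(D_{\delta(q+1)})$, the normaliser of a nonsplit torus, corresponding on the orthogonal side to $N_{\GammaO_3(q)}(\GO_2^-(q))=N_2^-$ — or one of the listed $H_0\lhdeq H$ with $n=2$, namely $Q_8=2^{1+2}_-$ with $q\in\{5,7,11,23\}$ (whose image in $\PSL_2(q)$ is $\C_2^2$) and $\SL_2(5)$ with $q\in\{9,11,19,29,59\}$ (image $\Alt_5$). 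This yields exactly the three alternatives in part~(a), after checking that the projective images are $\C_2^2\lhdeq\Hbar$ and $\Alt_5\lhdeq\Hbar$ respectively, and noting that the $\GammaL_1$ case must be phrased as $H\le N_2^-$ since $\GO_2^-(q)$ is the full preimage.

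Case $q$ odd, $\cU=\cN_1^-$ (and the dual $\cN_1^+$): here $\Hbar$ must be transitive on the coset space $\Gbar/D_{\delta(q+1)}$, a set of size $|\PGL_2(q)|/(2(q+1)) = q(q-1)/2$ (times possibly a field-automorphism factor). One approach is direct: a subgroup of $\PGammaL_2(q)$ transitive on this set must have order divisible by $q(q-1)/2$; combined with the factorisations of $\PGammaL_2(q)$ (which has no maximal factorisation with one factor a dihedral group of order $2(q+1)$ other than the parabolic, since $D_{q+1}\cdot X=\PSL_2(q)$ forces $X$ to contain a Sylow $p$-subgroup and a subgroup of order $(q-1)/2$, and the subgroup-structure of $\PSL_2(q)$ (Dickson) then forces $X$ to lie in a Borel), one concludes $H\le P_1$. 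The statement only claims $\cN_1^-$ gives $H\le P_1$ — so I also need to remark that $\cN_1^+$ gives no proper transitive subgroup at all (the index $q(q+1)/2$ is too large, as $P_1$ itself is not transitive on $\cN_1^+$), which is presumably why $\cN_1^+$ is absent from the statement; I would verify this by the same order/Dickson argument, or cite that $\Omega_3(q)$ is $2$-transitive on $\cP_1$ but only rank-$\ge3$ on each $\cN_1^\eps$ with $P_1$ intransitive on $\cN_1^+$. The hard part of this case is being careful about the factor $\delta$ and about semilinear (field) automorphisms when $q=p^f$ with $f>1$.

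Cases $q>2$ even, $\cU=\cN_2^\pm$: since $q$ is even, $\GO_3(q)\cong\Sp_2(q)=\SL_2(q)$ is simple-by-nothing and $\cN_2^\eps$ has stabiliser $\GO_2^\eps(q)=D_{2(q-\eps1)}$. A subspace $U\in\cN_2^\eps$ has $U^\perp=V^\perp$-complement... rather, $G_U=G_{U^\perp}$ and $U^\perp$ is the nonsingular $1$-space, so $\cN_2^\eps\leftrightarrow\cN_1^\eps(V)$ which is itself $\leftrightarrow$ the dihedral-subgroup coset space in $\PSL_2(q)\cong\Omega_3(q)$. For $\eps=-$: transitivity on $\cN_2^-$ means $H$ maps onto a subgroup of $\PSL_2(q)\rtimes\langle\phi\rangle$ transitive on $\PGL_2(q)/D_{2(q+1)}$; Dickson's list of subgroups of $\PSL_2(q)$ ($q$ even) shows the transitive ones are contained in a Borel, or — and this is where the sporadic entries come from — when $q\in\{4,16\}$ the extra factorisations. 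Precisely, $\PSL_2(4)\cong\Alt_5$ and $D_6=\Sym_3$ with $\Alt_5=\Alt_4\cdot\Sym_3$ gives the $q=4$, $H\le N_2^+$ possibility (here the "extra" transitive subgroup on $\cN_2^-$ of size $q(q-1)/2=6$ is $\Alt_4\cong\Omega_2^+(4){:}\text{something}$, i.e. $N_G(\Omega_2^+(4))$); and the $q=16$ entry $\GO_3(4).4\lhdeq H$ corresponds to the subfield factorisation $\PSL_2(16).4\ge(\PSL_2(4).2)\cdot(\text{torus normaliser})$, which I would extract from \cite{Factns}*{Table 3} (the $\PSL_2(q)$ line). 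For $\eps=+$: $\cN_2^+\leftrightarrow\cN_1^+(V)\leftrightarrow\PGL_2(q)/D_{2(q-1)}$, coset space of size $q(q+1)/2$; the only transitive proper subgroup occurs for $q=4$ (again via $\Alt_5=\Alt_4\cdot D_{10}$... actually $\Alt_5=D_{10}\cdot\Sym_3$ gives $\PSL_2(4)/D_6$ size $10$, hmm — here I need the index-$5$ subgroup $\Alt_4$, so $\cN_2^+$ has size $5$ and $H$ projecting onto $\Alt_4$ does not work; rather the transitive subgroup is $D_{10}\rtimes 2$ inside $\PGL_2(4)=\Sym_5$, giving $\GO_2^-(4).2\le N_2^-$ as stated), giving part~(d).

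The main obstacle I anticipate is bookkeeping rather than conceptual: keeping straight (i) the factor-of-two discrepancies between $\GO$, $\SO$, $\Omega$ and their projective images in odd versus even characteristic, i.e. that $\GO_2^\eps(q)$ rather than $\Omega_2^\eps(q)$ or $\SO_2^\eps(q)$ is the correct normaliser appearing in parts~(a),(c),(d); (ii) the precise structure and normaliser of the sporadic subgroups $2^{1+2}_-$ and $\SL_2(5)$ and their images, which must be matched against the small-$q$ \textsc{Magma}/Dickson data feeding Theorem~\ref{T:SLbigN}; and (iii) correctly identifying the $q=4$ and $q=16$ examples with the relevant subfield factorisations $\PSL_2(16).4 = (\PSL_2(4).2)\cdot N$ from \cite{Factns}. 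Everything else is a routine application of Dickson's classification of subgroups of $\PSL_2(q)$ together with Theorem~\ref{T:SLbigN} and the dictionary in Table~\ref{tab:ExceptionalIsos}; in particular one should double-check by comparing the orders $|G|=|G_U|\cdot|\cU|$ that each claimed $H$ is genuinely transitive (the paper's convention, recalled after Table~\ref{T:summary}, allows listing a superset, so strict transitivity need only be verified for the sporadic entries, which I would do by a short \textsc{Magma} check or by exhibiting the factorisation explicitly).
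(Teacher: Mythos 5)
Your overall strategy coincides with the paper's: pass to $\PGammaL_2(q)$ via $\SO_3(q)\cong\PGL_2(q)$, use Theorem~\ref{T:SLbigN} for $\cP_1$, and use the factorisations of groups with socle $\PSL_2(q)$ (whether via Dickson's subgroup list or via Lemma~\ref{lem:maxmin} and \cite{Factns}) for the families whose stabilisers are dihedral, with explicit computations at $q\in\{3,4,5,9,16\}$. However, there is a genuine gap: you never treat the family $\cU=\cN_1$ for $q$ even (row~6 of Table~\ref{T-defn}, the $q^2-1$ nonsingular $1$-subspaces other than $V^\perp$). This family is within the scope of the theorem, and since it does not appear in the conclusion you must prove that no transitive proper subgroup exists for it. The paper's argument: each $U\in\cN_1$ determines the unique singular $1$-subspace of $U+V^\perp$, giving a $G$-invariant partition of $\cN_1$ into $|\cP_1|$ blocks of size $q-1$; hence $H$ transitive on $\cN_1$ forces $H$ transitive on $\cP_1$, so by part~(a) (and since $q$ is even the sporadic entries do not arise) $\oH\le\C_{q+1}\rtimes\C_{2f}$, whence $q^2-1$ must divide $2f(q+1)$, i.e.\ $q-1\mid 2f$ --- impossible for $q=2^f\ge4$, while $q=2$ is disposed of separately. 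Nothing in your proposal produces this case or this contradiction.

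A secondary point: your identification ``$\cN_2^\eps\leftrightarrow\cN_1^\eps(V)$ under taking perps'' is false for $q$ even --- every nondegenerate $2$-subspace $U$ satisfies $U^\perp=V^\perp$, since $V^\perp\le U^\perp$ and both are $1$-dimensional, so all elements of $\cN_2^+\cup\cN_2^-$ have the \emph{same} perp. The correct reduction is through the symplectic quotient $V/V^\perp$: the stabiliser of $U\in\cN_2^\eps$ is $\GO_2^\eps(q)\cong D_{2(q-\eps1)}$ viewed inside $\Sp_2(q)\cong\GO_3(q)$, which does land you at the right coset spaces, so your subsequent computations for $q=4$ and $q=16$ survive. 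Finally, you should flag explicitly that $q=2$ and $q=3$ (where $\POmega_3(q)$ is not simple, so Lemma~\ref{lem:derived} and the maximal-factorisation machinery do not apply) need separate ad hoc treatment, as the paper gives them.
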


\begin{proof}
  By Table~\ref{T-defn}, one of the following holds: $\cU=\cP_1$, or $q$ is odd and $\cU=\cN_1^+$ or $\cN_1^-$, or $q$ is even and $\cU=\cN_1, \cN_2^+$ or $\cN_2^-$. Moreover, $q\neq 2$ when $\cU=\cN_2^-$. Thus $G=\GammaO_3(q)$ fixes $\cU$ setwise. By~\cite{Taylor}*{11.6 Theorem} we have $\SO_3(q)\cong \PGL_2(q)$ and so $\GammaO_3(q)\cong (Z\times\PGL_2(q))\rtimes \C_f$ where $q=p^f$, $Z$ is the group of scalar matrices and $\PGammaO_3(q)\cong\PGammaL_2(q)$.
  
For each $X\leqslant \GammaO_3(q)$ let $\overline{X}$ denote the image of $X$ in $\PGammaL_2(q)$. Let $H\leqslant \GammaO_3(q)$ act transitively on $\cU$ such that $\Omega_3(q)\not\leqslant H$. We note that $q\geq3$, for if $q=2$ then  $\GammaO_3(q)=\GO_3(2)\cong \Sym_3$ and $\Omega_3(2)\cong \Alt_3$, and since $|\cP_1|=|\cN_1|=|\cN_2^+|=3$ divides $|H|$ it follows that $\Omega_3(2)\lhdeq H$, a contradiction. 

When $q=3$ we have  $|Z|=2$, $\GammaO_3(3)=\CO_3(3)\cong \C_2\times \Sym_4$, and $\Alt_4\cong\Omega_3(3)\not\le H$. Here $|\cP_1|=4$, $|\cN_1^+|=6$ and $|\cN_1^-|=3$. Thus if $\cU=\cP_1$ then $\oH\leqslant D_8\cong \C_4\rtimes \C_2$, as in part (a). If $\cU=\cN_1^-$ then any subgroup of $G$ of order divisible by 3 is transitive on $\cU$ and since $H\not\ge\Omega_3(3)$, it follows that $H\leq \C_2\times \Sym_3=P_1$, as  in part (b). Finally, if $\cU=\cN_1^+$ then a \textsc{Magma} calculation shows that $\Alt_4\cong \Omega_3(3)\lhdeq H$, a contradiction.

From now on we assume that $q\geq4$. Thus $\POmega_3(q)\cong\PSL_2(q)$ is a nonabelian simple group and so by Lemma~\ref{lem:derived}, we have $\PSL_2(q)\not\leqslant \overline{H}$.

  \smallskip
  \textsc{Case $\cU=\cP_1$.} Comparing orders and group structure we see that $\overline{P_1}$ is a maximal parabolic subgroup of $\PGammaL_2(q)$. In particular, $\oH$ acts transitively on the set of 1-dimensional subspaces of a 2-dimensional vector space over $\F_q$. Thus by Theorem~\ref{T:SLbigN} either $\C_2^2$ or $\Alt_5\lhdeq \oH$ for particular values of $q$, or $\oH\leqslant \C_{q+1}\rtimes \C_{2f}$.  Since $\SO_3(q)\cong\PSL_2(q)$ the first two cases imply that $\C_2^2$ or $\Alt_5\lhdeq H$, while comparing \cite{BHRD}*{Tables~8.1 and~8.7} we deduce that  the last case implies  $H\leqslant N_G(\Omega_2^-(q))=N_2^-$. Thus $H$ satisfies part (a) of the theorem.

  \smallskip
  \textsc{Case} $\cU=\cN_1^{-}$ with $q$ odd, or  $\cU=\cN_2^{-}$ with $q>2$ even. In these cases $\GO^-_2(q)\cong D_{2(q+1)}$, and we have $G_U=(Z\times D_{2(q+1)})\rtimes \C_f$, whence $\overline{G_U}=N_{\overline{G}}(D_{\delta(q+1)})$ where $\delta=\gcd(2,q)$. The latter subgroup is maximal in $\oG$ by \cite{BHRD}*{Tables~8.1 and~8.2},  so the conditions of Lemma~\ref{lem:maxmin} hold for the factorisation  
$\oG=\oH N_{\overline{G}}(D_{\delta(q+1)})$. Hence by Lemma~\ref{lem:maxmin} we get a maximal core-free factorisation of an almost simple group $G^*$ with socle $\PSL_2(q)$ and one factor being $N_{G^*}(D_{\delta(q+1)})$. Thus by~\cite{Factns}*{Theorem A, Tables~1 and 3}, either $q=4, 5, 9$ or $16$, or  $\oH\leqslant \overline{P_1}$. The latter case implies that $H\leqslant P_1$ as in part (b) or (c), and so it remains to deal with the small values of $q$.  When $q=4$ we have $|\cN_2^-|=6$ and a \textsc{Magma} calculation implies that either $H\leqslant P_1$ or $\overline{H}\cong \C_6$ or $D_{12}$. Since $\GO_3(4)\cong\Alt_5$ has a unique class of subgroups isomorphic to $\C_3$ and $\Omega_2^+(4)\cong \C_3$, it follows that $H\leqslant N_G(\Omega_2^+(4))=N_2^+$ as in part (c).
When $q=5$ we have $|\cN_1^-|=10$ and so any subgroup of $G$ with order divisible by 10 is contained in $P_1$ as in part (b). When $q=9$ we have $|\cN_1^-|=36$, and all subgroups with order divisible by $9$ are contained in $P_1$ as in part (b). When $q=16$ we have $|\cN_2^-|=120$, and either $H\leq P_1$ or $\overline{H}=(\Alt_5\times 2)\cdot 2$ as in part (c), see~\cite{Factns}*{5.1.1, p.\,91}.  Moreover, by Lemma~\ref{lem:derived}, $\Alt_5\cong \GO_3(4)\lhd H$. Since $N_G(\GO_3(4))=(\GO_3(4)\times Z).4$ where the 4 induces a field automorphism of order 2 on $\GO_3(4)$, it follows that $\GO_3(4).4\lhdeq H$ as in part (c).

  \smallskip
  \textsc{Case} $\cU=\cN_1^{+}$ with $q$ odd, or  $\cU=\cN_2^{+}$ with $q$ even.  In these cases $\GO_2^+(q)\cong D_{2(q-1)}$, and we have  $\overline{G_U}=N_{\overline{G}}(D_{\delta(q-1)})$ where $\delta=\gcd(2,q)$. By~\cite{BHRD}*{Tables~8.1 and~8.2},  $\overline{G_U}$ is maximal in $\overline{G}$ unless $q=4$ or $5$.  When $q=4$ we have $|\cN_2^+|=10$ and a \textsc{Magma} calculation shows that $\overline{H}=F_{20}$. Now $\C_5\cong\Omega_2^-(4)\leqslant \GO_3(4)\cong \Alt_5$. Since $G$ has a unique conjugacy class of subgroups of order 5 and $|Z|=3$, it follows that $\Omega_2^-(4)\lhdeq H$. Now $N_G(\Omega_2^-(4))= (\GO_2^-(4)\times  Z).2=N_2^-$ where the 2 acts as a field automorphism on $\GO_2^-(4)$. Since $|Z|=3$ and $\overline{H}=F_{20}$ it follows that $\GO_2^-(4).2\lhdeq H$ as in part (d).  When $q=5$ we have  $\PGL_2(5)\cong S_5$ and $|\cN_1^+|=15$. A \textsc{Magma} calculation then shows that $\Alt_5\cong\Omega_3(5)\lhdeq H$, which is not the case. Hence $q\ne 5$ and  the conditions of Lemma~\ref{lem:maxmin} hold for the factorisation  
$\oG=\oH N_{\overline{G}}(D_{\delta(q-1)})$. By Lemma~\ref{lem:maxmin} we get a maximal core-free factorisation of an almost simple group $G^*$ with socle $\PSL_2(q)$ and one factor being $N_{G^*}(D_{\delta(q-1)})$. However~\cite{Factns}*{Theorem~A, Tables~1 and 3}  implies that there are no such factorisations unless possibly when $q=9$. When $q=9$ we have  $|\cU|=|\cN_1^+|=45$. However, all subgroups of $\PGammaL_2(9)$ with order divisible by $45$ contain $\PSL_2(9)$.  
  
\smallskip
  \textsc{Case} $\cU=\cN_1$ with $q$ even. Here $\cU$ consists of the $q^2-1$ nonsingular $1$-subspaces of $V$,  apart from $V^\perp$. For $U\in\cU$, the $2$-subspace $U+V^\perp$ contains a unique totally singular $1$-subspace, say $W$, and $G_{U+V^\perp}=G_W$ contains $G_U$.  It follows that 
  $G$ acts imprimitively on $\cU$ inducing an action on a system of imprimitivity that is permutationally isomorphic to its action on $\cP_1$. In particular, 
  the subgroup $H$ is transitive on $\cP_1$ since it is transitive on $\cU$. Then by part (a), and since $q$ is even, we have $\oH\le \C_{q+1}\rtimes \C_{2f}$.
  However, since the scalar subgroup $Z$ acts trivially on $\cU$ the size $|\cU|=q^2-1$ must divide $|\oH|$ and hence must divide $2f(q+1)$. Thus $q-1$ divides $2f$,  a contradiction.   
  \end{proof}

\begin{theorem}\label{T:KleinPSL2q2}
  Suppose that $H\leqslant \GammaO_4^{-}(q)$ acts transitively on a set $\cU$ of subspaces of the natural module $V=(\F_q)^4$ given by Table~\textup{\ref{T-defn}},  and where $\Omega_4^{-}(q)\not\le H$.
   Then one of the following holds, where $\oH$ is the image of $H$ in $\PGammaL_2(q^2)$:
  \begin{enumerate}[{\rm (a)}]
  \item $\cU=\cP_1$ and $H\leqslant N_{\GammaO^-_4(q)}(\Omega_2^-(q^2))$,  or $q=3$ and  $\Alt_5\lhdeq H$; or
  \item $\cU=\cN_1^\pm$, $q=3$ and $\Alt_5\lhdeq H$ (one class of subgroups for each of $\cN_1^+$ and $\cN_1^-$); or $\cU=\cN_1$ and either $q=2$ and  $H=\GO_2^-(4).2$ where the $`2$\kern-1.5pt' acts as a field automorphism, or   $q=4$ and $\GO_2^-(4^2).4\lhdeq H$ where the `$4$\kern-1.5pt' acts as a field automorphism; or
  \item $\cU=\cN_2^\pm$, $q=2$ and  $H=\GO_2^-(4).2$ where the `$2$\kern-1.5pt' acts as a field automorphism.
\end{enumerate}
\end{theorem}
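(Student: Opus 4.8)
The plan is to mirror the proof of Theorem~\ref{T:Omega3}, now exploiting the exceptional isomorphism $\POmega_4^-(q)\cong\PSL_2(q^2)$, which extends to $\PGammaO_4^-(q)\cong\PGammaL_2(q^2)$. By Table~\ref{T-defn}, for $n=4$ and type $\Or^-$ the families $\cU$ to consider are $\cP_1$; the nondegenerate families $\cN_1^\pm$ (when $q$ is odd) and $\cN_2^\pm$ (noting that in $V$ a nondegenerate $2$-subspace and its perpendicular have opposite $\pm$-types, so $N_2^+=N_2^-$); and the nonsingular $1$-subspaces $\cN_1$ (when $q$ is even). In every case $G=\GammaO_4^-(q)$ fixes $\cU$ setwise, so $H$ is transitive on $\cU$ precisely when $\oG=\oH\,\oG_U$ in $\oG=\PGammaO_4^-(q)$, where $\oH$ and $\oG_U$ are the images of $H$ and $G_U$. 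First I would settle the small values of $q$ (where the maximality statements in \cite{BHRD} and the factorisation tables in \cite{Factns} carry exceptional small-field entries, and where $\POmega_4^-(q)$ is a small alternating group) by a direct computation in \textsc{Magma}; for the rest, $\POmega_4^-(q)\cong\PSL_2(q^2)$ is a nonabelian simple group, so Lemma~\ref{lem:derived} gives $\PSL_2(q^2)\not\le\oH$.

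For $\cU=\cP_1$, Table~\ref{tab:ExceptionalIsos} identifies $P_1(\POmega_4^-(q))$ with a Borel subgroup of $\PSL_2(q^2)$, so the preimage $\widehat H$ of $\oH$ in $\GammaL_2(q^2)$ is transitive on the $1$-subspaces of $(\F_{q^2})^2$ with $\SL_2(q^2)\not\le\widehat H$. Theorem~\ref{T:SLbigN}(a), applied with $n=2$ over $\F_{q^2}$, then forces $\widehat H\le\GammaL_1(q^4)$: no exceptional row of Table~\ref{T:SLa} applies to $n=2$, and the only square appearing in the excluded field sets $\{5,7,11,23\}$ and $\{9,11,19,29,59\}$ is $9$, corresponding to $q=3$, which is among the small cases. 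Since the image of $\GammaL_1(q^4)$ in $\PGammaL_2(q^2)$ is the normaliser of the nonsplit torus $C_{q^2+1}=\Omega_2^-(q^2)$, translating back through Table~\ref{tab:ExceptionalIsos}, \cite{BHRD}*{Tables~8.1,~8.2} (dimension $2$) and the dimension~$4$ tables gives $H\le N_{\GammaO_4^-(q)}(\Omega_2^-(q^2))$, which is part~(a); the only other possibility from Theorem~\ref{T:SLbigN}, namely $\Alt_5\lhdeq\oH$, arises solely for $q=3$.

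For $\cU=\cN_1^\pm$ (resp.\ $\cN_1$, resp.\ $\cN_2^\pm$), Table~\ref{tab:ExceptionalIsos} identifies $\oG_U$ with the normaliser in $\PGammaL_2(q^2)$ of a subfield subgroup $\PGL_2(q)$ (resp.\ $\PSL_2(q)$, resp.\ the split-torus normaliser $D_{2(q^2-1)/d}$ with $d=(2,q-1)$). One checks via \cite{BHRD}*{Tables~8.1 and~8.2} that $\oG_U$ is maximal in $\oG$ for large $q$, so Lemma~\ref{lem:maxmin} (whose exceptional socles $\PSL_{2m}$, $\POmega_{2m}^+$, $\POmega_8^+(3)$ do not arise) applies to $\oG=\oH\,\oG_U$ and yields a maximal core-free factorisation of an almost simple group with socle $\PSL_2(q^2)$, one factor of which lies in a subfield subgroup, respectively in a split-torus normaliser. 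By \cite{Factns}*{Theorem~A, Tables~1 and~3} the maximal factorisations of such groups (for large $q$) are $P_1\cdot N(\text{nonsplit torus})$ and their variants, so no such factorisation exists; hence there is no transitive $H$ here for large $q$. The examples that do occur (for $\cN_1^\pm$ when $q=3$, for $\cN_1$ when $q\in\{2,4\}$, and for $\cN_2^\pm$ when $q=2$) come from the exceptional small-field factorisations and the \textsc{Magma} computations, and are exactly those recorded in parts~(b) and~(c).

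The main obstacle is the bookkeeping in moving between the matrix group $\GammaO_4^-(q)$ and the projective group $\PGammaL_2(q^2)$: one must pin down which subgroup of $\GammaO_4^-(q)$ corresponds to a given transitive subgroup of $\PGammaL_2(q^2)$, in particular whether a transitive group produced abstractly in $\PGammaL_2(q^2)$ genuinely lifts into $\GammaO_4^-(q)$ or only into its projective quotient, and how the field automorphisms of $\F_{q^2}$ over $\F_p$ interact with the scalar subgroup of $\GO_4^-(q)$. This is the same kind of subtlety flagged in Remark~\ref{rem:Reg}, and it is what fixes the precise form of the answer --- the `$.4$' and `$.2$' field automorphisms appearing in parts~(b) and~(c), and the distinction between $N_{\GammaO_4^-(q)}(\Omega_2^-(q^2))$ and a subfield normaliser. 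The small cases $q\in\{2,3,4\}$ are disposed of by \textsc{Magma}.
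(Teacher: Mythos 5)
Your proposal is correct and follows essentially the same route as the paper: pass to $\PGammaO_4^-(q)\cong\PGammaL_2(q^2)$, handle $\cP_1$ via Theorem~\ref{T:SLbigN} for $\GammaL_2(q^2)$ (where only $q^2=9$ survives among the exceptional field sets), identify the stabilisers for $\cN_1^{\pm}$, $\cN_1$ and $\cN_2^{\pm}$ with subfield and split-torus normalisers, and apply Lemma~\ref{lem:maxmin} together with the tables of \cite{Factns}, resolving the residual small fields $q\in\{2,3,4\}$ (including the exceptional $\PSL_2(16)$ factorisation giving $\GO_2^-(4^2).4$) by \textsc{Magma}. The lifting issues you flag at the end are exactly the points the paper settles to pin down the precise forms $\GO_2^-(4).2$ and $\GO_2^-(4^2).4$ and the two fused $\Alt_5$-classes for $q=3$.
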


\begin{proof}
By Table~\ref{T-defn}, $\cU$ is one of $\cP_1$, $\cN_1^\eps$ (with $q$ odd and $\eps\in\{+,-\}$),  $\cN_1$ (with $q$ even), or $\cN_2^\eps$ (with $\eps\in\{+,-\}$).  Fix $U\in\cU$ and set $G=\textup{Stab}_{\GammaO_4^{-}(q)}(\cU)$. Then $G=\GammaO_4^-(q)$ unless $q$ is odd and $\cU=\cN_1^\eps$, in which case $G$ is an index two subgroup of $\GammaO_4^-(q)$.

By~\cite{KL}*{Proposition~2.9.1} we have $\Omega_4^-(q)\cong \POmega_4^-(q)\cong \PSL_2(q^2)$ and we deduce that $\PGammaO_4^-(q)\cong \PGammaL_2(q^2)$. For  $X\leqslant \GammaO^-_4(q)$ we denote the image of $X$ in $\PGammaL_2(q^2)$ by $\overline{X}$.
  
  \smallskip
(a) Suppose first that $\cU=\cP_1$. Comparing orders and group structure, we find that, for $P_1=G_U$, the image $\overline{P_1}$ is a maximal parabolic subgroup of $\PGammaL_2(q^2)$. In particular $\oH$ acts transitively on the set of 1-dimensional subspaces of a 2-dimensional vector space over $\F_{q^2}$. Thus  $\oH$ is given by Theorem~\ref{T:SLbigN}, and so, since  $ \PSL_2(q^2)\not\le \oH$,  either $\oH\leqslant \C_{q^2+1}\rtimes \C_{4f}$ or $q=3$ and $\Alt_5\lhdeq \oH$. Since $\Omega_4^-(q)\cong\PSL_2(q^2)$ the latter implies that $\Alt_5\lhdeq H$,  while comparing \cite{BHRD}*{Tables~8.1 and~8.17} we deduce from the former that $H\leqslant N_G(\Omega_2^-(q^2))$, so part (a) holds.

\smallskip
(b) Next suppose that $\cU=\cN_1$ with $q$ even, or $\cN_1^\eps$ with $q$ odd. Here $G=\GammaO_4^-(q)$ when $q$ is even, 
and is an index two subgroup when $q$ is odd. Since $\Omega_3(q)\cong\PSL_2(q)$, we see from \cite{BHRD}*{Table~8.17} that $\overline{G_U}=N_{\oG}(\PSL_2(q))$, which is maximal in $\oG$ unless $q=2$. For $q=2$ we have $\GammaO_4^-(2)=\SO_4^-(2)\cong \PGL_2(4)\cong \Sym_5$ and $|\cN_1|=10$; and a \textsc{Magma} calculation shows, since $\Omega_4^-(2)\not\le H$, that $H\cong F_{20}$. Since $\SO_4^-(2)$ has a unique conjugacy class of subgroups of order 5, it follows that $H=\GO_2^-(4).2$ where the `$2$' acts as a field automorphism, as in (b). When $q>2$, the conditions of  Lemma~\ref{lem:maxmin} hold for the factorisation $\oG=\overline{G_U}N_{\oG}(\PSL_2(q))$, and so  there is maximal core-free factorisation of an almost simple group $G^*$ with socle $\PSL_2(q^2)$ with one factor being $N_{G^*}(\PSL_2(q))$.  Thus by \cite{Factns}, either $q=3$, or $q=4$ and $\oH=N_{\oG}(D_{34})=\C_{17}\rtimes \C_{8}$. When $q=4$ the group $Z$ of scalar matrices has order 3 and so $\C_{17}\cong\Omega_2^-(4^2)\lhdeq H\leqslant (\GO_2^-(4^2)\times Z).4$ where the $4$ acts a field automorphism of $\GO_2^-(4^2)$. Since  $\oH=\C_{17}\rtimes \C_{8}$ it follows that $\GO_2^-(4^2).4\lhdeq H$ as in part (b). For $q=3$, a \textsc{Magma} calculation shows that $\Alt_5\lhdeq\oH$, and since $\Omega_4^-(3)\cong\PSL_2(9)$ it follows that $\Alt_5\lhdeq H$. Moreover, $G$ has two conjugacy classes of $\Alt_5$-subgroups fused in $\CO_4^-(q)$, and each class of subgroups acts transitively on only one of the two sets $\cN_1^+$ or $\cN_1^-$.  Thus part (b) holds

\smallskip
(c) Finally consider $\cU=\cN_2^\eps$. If $U\in\cN_2^+$ then $U^\perp\in \cN_2^-$. Thus $H$ is transitive on $\cN_2^+$ if and only if it is transitive on $\cN_2^-$, and so we assume that $\cU=\cN_2^+$. By~\cite{BHRD}*{Table~8.17} we have that $\overline{G_U}=N_{\oG}(D_{q^2-1})$ and is maximal in $\oG$ unless $q=3$.  Thus by Lemma~\ref{lem:maxmin} and  \cite{Factns} we have $q=2$ or $3$.  When $q=2$ we have $|\cN_2^+|=10$ and by a \textsc{Magma} calculation, since $\Alt_5\cong\Omega_4^-(5)\not\leqslant H$,  we see that $H\cong F_{20}$. We deduce as above that $H=\GO_2^-(4).2$ as in part (c). When $q=3$ we have $|\cN_2^+|=45$ and a \textsc{Magma} calculation reveals that there are no possibilities for $H$.
\end{proof}
  
 Finally in this section we deal with $\POmega_6^+(q)\cong \PSL_4(q)$. We note that part (c) of the following result was proved in \cite{Reg}*{Lemma 4.5} while part (a) for $\cU=\cP_1$ was proved in \cite{KLrank3}*{Lemma 5.15}.  Recall that $\cP_{m/2}^+$, $\cP_{m/2}^-$ denote the two orbits of $\Omega^+_{2m}(q)$ on the family $\cP_m$ of totally singular $m$-subspaces.  Also, given a group $X$, we use $X^R$ to denote a reducible copy of $X$ in $\GammaO_6^+(q)$.
  
\begin{theorem}\label{T:KleinSL4}
  Suppose that $H\le\GammaO_6^{+}(q)$  acts transitively on a set $\cU$ of subspaces, given by Table~\textup{\ref{T-defn}}, of the natural module $V=(\F_q)^6$ and $\Omega_6^{+}(q)\not\le H$.  Then one of the following holds:
  \begin{enumerate}[{\rm (a)}]
% (a)
\item $\cU=\P_1$ or $\cP_2$, $q=2$ and $\Alt_7\lhdeq H$; or

  \item $\cU=\cP_3^\eps$ and either $H\leqslant N_{A}(\Omega_2^-(q^2))\leqslant N_4^-$, where $A$ denotes the setwise stabiliser in $\GammaO_6^+(q)$ of $\cP_3^+$, or $H_0\lhdeq H$ where $(H_0,q)$ is given in Table~$\ref{Tab:abKleinSL4}$; or
  
  \begin{table}[!ht]
\caption{Theorem~\ref{T:KleinSL4}(b) Subgroups $H_0\lhdeq H\le\GammaO_6^{+}(q)$ where $H$ is transitive on $\cP_3^+$ or $\cP_3^-$.}\label{Tab:abKleinSL4}
  \begin{threeparttable}
\begin{tabular}{rlccccccc}
\toprule
$H_0$ &&$\Omega_4^-(q)^R$  &$\Omega_5(q)^R$ &$\Alt_7$&$2^4\ldotp\C_5$&$2^4\ldotp\Alt_5$&$\Alt_5$\\
\textup{$q$}&&${\rm all}$&${\rm all}$&$2$&$3$& $3$& $3$\\
\bottomrule
\end{tabular}
\begin{tablenotes}
\item $X^R$ indicates that the group $X$ acts reducibly on $V$.
\end{tablenotes}
\end{threeparttable}
\end{table}

\item $\cU=\cN_1^\eps$ (with $q$ odd and $\eps\in\{+,-\}$), or $\cN_1$ (with $q$ even), and either $\SL_3(q)\lhdeq H$, or $H$ stabilises a totally singular $3$-subspace and modulo its unipotent radical induces a subgroup of $\GammaL_3(q)$ that is transitive on $1$-subspaces; or
\item $\cU=\cN_2^{-}$ and either $[q^3]\rtimes\SL_3(q)\lhdeq H$, or  $H_0$ is given by Table~$\ref{T:O6+N2-}$.

\begin{table}[!ht]
\caption{Theorem~\ref{T:KleinSL4}(d) Subgroups $H_0\lhdeq H\le\GammaO_6^{+}(q)$ where $H$ is transitive on $\cN_2^-$.}\label{T:O6+N2-}
\begin{tabular}{rlcccccc}
\toprule
$H_0$ && $\SL_3(2)$& $2^3\rtimes \C_7$ & $3^3\rtimes\GammaL_1(3^3)$ & $\SL_3(4)$ &$[4^3]\rtimes\GammaL_1(4^3)$\\
\textup{$q$}& &$2$&$2$&$3$&$4$&$4$\\
\bottomrule
\end{tabular}
\end{table}

\end{enumerate}
\end{theorem}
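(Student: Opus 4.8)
The plan is to exploit the exceptional isomorphism $\POmega_6^+(q)\cong\PSL_4(q)$, so that the orthogonal group $\GammaO_6^+(q)$ is handled via its image in $\PGammaL_4(q)$, and then apply Theorem~\ref{T:SLbigN} together with the subgroup correspondences recorded in Table~\ref{tab:ExceptionalIsos}. Throughout, let $\oH$ denote the image of $H$ in $\PGammaL_4(q)$; since $\Omega_6^+(q)\not\le H$ we have $\PSL_4(q)\not\le\oH$ by Lemma~\ref{lem:derived}. I would treat the four families $\cU$ in turn according to the rows of Table~\ref{T-defn} relevant here, namely $\cP_1$, $\cP_2$, $\cP_3^\eps$, and the nondegenerate families $\cN_1^\eps$/$\cN_1$ and $\cN_2^-$ (the case $\cN_2^+$ corresponds under the isomorphism to $\GL_2(q)\wr\C_2$ acting on $\PSL_4(q)$ and, since $N_2^+(\POmega_6^+(q))$ stabilises a decomposition and hence behaves like the $\GL_2(q)\wr\C_2$ imprimitive case, it does not give a genuinely new subspace action -- I would note this is subsumed or handled elsewhere if it appears; by Table~\ref{tab:ExceptionalIsos} the family $\cN_2^+$ corresponds to $\GL_2(q)\wr\C_2\le\PSL_4(q)$, whose transitive overgroups fall under the subspace/decomposition cases already treated). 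Note also $\cN_2^+$ and $\cN_2^-$ are swapped by $N_4^+\leftrightarrow$ something in Table~\ref{tab:ExceptionalIsos}, but I follow the table precisely: $N_1(\POmega_6^+(q))\leftrightarrow\Sp_4(q)\le\PSL_4(q)$, $N_2^-(\POmega_6^+(q))\leftrightarrow\GL_2(q^2)\ldotp2\le\PSL_4(q)$, $P_1\leftrightarrow P_2$, $P_2\leftrightarrow P_1$, $P_3\leftrightarrow P_3$.

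For $\cU=\cP_1$ or $\cP_2$: under the isomorphism these are the actions of $\PSL_4(q)$ on $2$-subspaces and $1$-subspaces (respectively) of $(\F_q)^4$. So $\oH$ is transitive on $k'$-subspaces of a $4$-dimensional space with $k'\in\{1,2,3\}$ and $\PSL_4(q)\not\le\oH$. By Theorem~\ref{T:SLbigN}, with $n=4$: for $k'=1$ or $3$ we land in Table~\ref{T:SLa} or $\GammaL_1(q^4)$ (these will need to be reconciled against $\Omega_6^+(q)\not\le H$ -- e.g.\ an $\SL_a(q^b)$ with $ab=4$ pulls back to a reducible subgroup $\SL_a(q^b)^R$, which would be transitive only if it is actually large, so I expect only $q=2$, $\Alt_7$ survives, cf.\ Theorem~\ref{T:SLbigN}(b)); for $k'=2$, Theorem~\ref{T:SLbigN}(b) forces $(n,q)=(4,2)$ and $H=\Alt_7$. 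Tracing the $\Alt_7\le\PSL_4(2)\cong\Alt_8$ back gives part (a). The main point to check carefully is that no $\GammaL_1(q^4)$-type or Table~\ref{T:SLa} subgroup other than $q=2,\Alt_7$ actually pulls back to a subgroup of $\GammaO_6^+(q)$ containing no conjugate of $\Omega_6^+(q)$ yet transitive on $\cP_1$ resp.\ $\cP_2$ -- this needs the precise dictionary $\cP_1(\POmega_6^+(q))\leftrightarrow\cP_2(\PSL_4(q))$ and an order/structure comparison, probably completed with a small \textsc{Magma} check at $q=2,3$.

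For $\cU=\cP_3^\eps$: these totally singular $3$-subspaces split into two $\Omega_6^+(q)$-orbits, corresponding under triality-type arguments to the actions of $\PSL_4(q)$ on $P_3$ (the two $\Omega^+$-orbits $\cP_3^\pm$ match $P_1(\PSL_4(q))$ and $P_3(\PSL_4(q))$, i.e.\ $1$- and $3$-subspaces of $(\F_q)^4$) -- so again $\oH$ is transitive on $1$-subspaces of $(\F_q)^4$ and Theorem~\ref{T:SLbigN}(a) applies with $n=4$. The entries of Table~\ref{Tab:abKleinSL4} then read off directly: the $\GammaL_1(q^4)$ case splits according to the subfield/extension structure into the reducible $\Omega_4^-(q)^R$ (coming from $\GammaL_1(q^4)\le\GammaL_2(q^2)$, with $\Omega_4^-(q)\cong\PSL_2(q^2)$) and $\Omega_5(q)^R$ lines, plus the bounded-$q$ exceptions $\Alt_7$ ($q=2$), $2^4\ldotp\C_5$, $2^4\ldotp\Alt_5$, $\Alt_5$ ($q=3$) from Table~\ref{T:SLa}, and the generic $N_4^-$-inclusion $H\le N_A(\Omega_2^-(q^2))\le N_4^-$ from $\oH\le\C_{q^2+1}\rtimes\C_{4f}$. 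Lemma~\ref{lem:maxmin} and \cite{Factns} are used to confine the sporadic $q$. For $\cU=\cN_1^\eps$ or $\cN_1$: by Table~\ref{tab:ExceptionalIsos}, $N_1(\POmega_6^+(q))\leftrightarrow\Sp_4(q)\le\PSL_4(q)$, so $H$ transitive on $\cN_1$ means $\oH\,\Sp_4(q)=\PSL_4(q)$ (or the corresponding almost simple statement), a factorisation to be read off from \cite{Factns}: the maximal factorisations of $\PGammaL_4(q)$ with one factor $N(\PSp_4(q))$ have the other factor a $P_1$ or $P_3$ parabolic, i.e.\ $\SL_3(q)$ type (modulo unipotent radical acting on $(\F_q)^3$ transitively on $1$-subspaces) -- yielding part (c). For $\cU=\cN_2^-$: $N_2^-(\POmega_6^+(q))\leftrightarrow\GL_2(q^2)\ldotp2\le\PSL_4(q)$, and the factorisations of $\PSL_4(q)$ with a $N(\GL_2(q^2).2)$ factor give the generic parabolic family $[q^3]\rtimes\SL_3(q)$ and, at small $q$, the Table~\ref{T:O6+N2-} sporadics, identified via \cite{Factns} and \textsc{Magma} at $q\in\{2,3,4\}$.

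The hard part will be the bookkeeping around the exceptional isomorphism: making the subgroup correspondences of Table~\ref{tab:ExceptionalIsos} precise at the level of the full semisimilarity groups (not just the simple groups), keeping track of the scalar group $Z$ and the outer automorphisms (graph versus field), and -- crucially -- correctly matching the two $\Omega_6^+(q)$-orbits $\cP_3^\pm$ with $P_1(\PSL_4(q))$ and $P_3(\PSL_4(q))$, so that a group transitive on one orbit is distinguished from one transitive on the other. A secondary obstacle is verifying, for each generic family in Tables~\ref{Tab:abKleinSL4} and~\ref{T:O6+N2-}, that the pulled-back subgroup genuinely does (or the statement only claims it \emph{may}) act transitively, and pinning down the precise extension type (split versus nonsplit, the exact ``$\ldotp d$'' or ``$.2$'' or ``$.4$'') using \cite{BHRD}*{Tables~8.1, 8.17} and order comparisons. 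The small-field cases $q\in\{2,3,4\}$ are dispatched by \textsc{Magma} computations, which I would invoke rather than perform by hand.
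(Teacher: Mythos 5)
Your overall strategy---pass to $\POmega_6^+(q)\cong\PSL_4(q)$, translate each family $\cU$ into a known action of $\PSL_4(q)$, and then invoke Theorem~\ref{T:SLbigN}, Lemma~\ref{lem:maxmin}, the factorisations in \cite{Factns} and \textsc{Magma} for small $q$---is exactly the paper's. However, there are three concrete problems. First, your dictionary for $\cP_2$ is wrong: $|\cP_2|=(q^2+q+1)(q^3+q^2+q+1)$, which is not the number $(q^4-1)/(q-1)$ of $1$-subspaces of $W=(\F_q)^4$; the elements of $\cP_2$ correspond to incident point--hyperplane \emph{flags} of $W$ (the table entry ``$P_1$'' must be read as the flag stabiliser $P_{1,3}$, which is what is maximal in $\Aut(\PSL_4(q))$). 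Transitivity on flags does imply transitivity on $1$-subspaces, so Theorem~\ref{T:SLbigN}(a) still gives a valid necessary condition, but it produces a genuine superset (e.g.\ $\Sp_4(q)$, $\SL_2(q^2)$, $\GammaL_1(q^4)$ for every $q$), and your proposal contains no argument eliminating these for general $q$; the clean route is that the flag stabiliser is maximal in $\Aut(\PSL_4(q))$, so Lemma~\ref{lem:maxmin} and \cite{Factns} kill everything except $q=2$, which \textsc{Magma} resolves to $\Alt_7$.

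Second, the theorem implicitly asserts that for $\cU=\cN_2^+$ and for $\cU=\cN_3^\eps$ (with $q$ odd, a row of Table~\ref{T-defn} you never mention) there are \emph{no} transitive proper subgroups, and this must be proved. Your remark that $\cN_2^+$ ``is subsumed or handled elsewhere'' is not an argument: the correct statement is that $\overline{G_U}$ is the maximal $\mathcal{C}_2$-subgroup of $\PGammaL_4(q)$ stabilising a decomposition of $W$ into two $2$-subspaces, and \cite{Factns} shows no core-free factorisation with such a factor exists (with $q=2$ checked separately). For $\cN_3^\eps$ one similarly identifies $\overline{G_U}$ as an index-two subgroup of the maximal subgroup of type $\GO_4^+(q)$ and again finds no factorisation for $q$ odd. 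Without these two cases the proof is incomplete. A smaller omission: several entries of Tables~\ref{Tab:abKleinSL4} and~\ref{T:O6+N2-} require positive verification of transitivity and identification of the precise preimages in $\Omega_6^+(q)\cong\SL_4(q)/\langle\pm I\rangle$ (e.g.\ $2^{1+4}_-\ldotp\Alt_5$ becoming $2^4\ldotp\Alt_5$, and $\Sp_4(q)$ appearing as the reducible $\Omega_5(q)^R$), which you flag as ``bookkeeping'' but which is where the content of part~(b) actually lies.
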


\begin{proof}
By Table~\ref{T-defn}, $\cU$ is one of $\cP_1$, $\cP_2$, $\cP_3^\eps$, or $\cN_2^\eps$ (with $\eps\in\{+,-\}$), or  $\cN_1^\eps$ or $\cN_3^\eps$ (with $q$ odd and $\eps\in\{+,-\}$), or $\cN_1$ (with $q$ even). 
Fix $U\in\cU$ and set $G=\textup{Stab}_{\GammaO_6^{+}(q)}(\cU)$. Then $G=\GammaO_6^+(q)$ unless $\cU=\cP_3^\eps$, or $q$ is odd and $\cU=\cN_1^\eps$ or $\cN_3^\eps$, in which case $G$ is an index two subgroup of $\GammaO_6^+(q)$.

  By~\cite{Taylor}*{12.18 Theorem}, $\PGammaO^+_6(q)\cong\Aut(\PSL_4(q))$.  For any $X\leqslant \GammaO_6^+(q)$ we denote the image of $X$ in $\Aut(\PGammaL_4(q))$ by $\overline{X}$. Moreover, if $\cU=\cP_3^\eps$ then it follows from \cite{Taylor}*{12.16 Theorem} that $\oG\cong\PGammaL_4(q)$. We denote the 4-dimensional vector space over $\F_q$ acted on by $\SL_4(q)$ by $W$.

\smallskip
(a1)~$\cU=\cP_1$. By~\cite{Taylor}*{p.\,187}, $\cP_1$ is in one-to-one correspondence with the set of 2-dimensional subspaces of $W$. Thus $H$ acts transitively on $\cP_1$ if and only if $\oH$ acts transitively on the set of 2-dimensional subspaces of $W$. Thus by Theorem~\ref{T:SLbigN}, either $\PSL_4(q)\lhdeq \oH$ or $q=2$ and $\Alt_7\lhdeq H$. In the first case, Lemma~\ref{lem:derived} implies that $\Omega_6^+(q)\lhdeq H$ as $\Omega_6^+(q)$ is quasisimple, and this is a contradiction. Thus part (a) holds for this case. 

\smallskip
(a2)~$\cU=\cP_2$. Comparing composition factors of subspace stabilisers, we see that there is a one-to-one correspondence between the elements of $\cP_2$ and the flags $X<Y$ where $X$ is a 1-dimensional subspace of $W$ and $Y$ is a hyperplane of $W$. By~\cite{BHRD}*{Table~8.8}, the stabiliser of such a flag is maximal in $\oG=\Aut(\PSL_4(q))$. Then by Lemma~\ref{lem:maxmin} and  \cite{Factns} (except potentially when $q=2$ as here $\PSL_4(2)\cong\Alt_8$), $\PSL_4(q)\lhdeq \oH$ and we again have that $\Omega_6^+(q)\lhdeq H$, a contradiction. If $q=2$ then $|\cP_2|=105$ and a \textsc{Magma} calculation reveals that  $\Alt_7\lhdeq H$  is the unique example, as in part (a).

\smallskip
(b)~$\cU=\P_3^\eps$. In this case $\oG=\PGammaL_4(q)$ and $G=A$. Moreover,  if $\tau\in\GammaO^+_6(q)\backslash A$ then $H$ is transitive on $\cP_3^+$ if and only if $H^\tau$ is transitive on $\cP_3^-$, so we may assume that $\eps=+$.  By~\cite{Taylor}*{12.16 Theorem}, $\cP_3^+$ is in one-to-one correspondence with the set of 1-dimensional subspaces of $W$ (and $\cP_3^-$ is in one-to-one correspondence with the set of 3-dimensional subspaces of $W$). Thus $\oH$ is transitive on the set of 1-dimensional subspaces of $W$ and so is provided by Theorem~\ref{T:SLbigN}. We note that $\Sp_4(q)$ appears in $\GammaO_6^+(q)$ as a copy of $\Omega_5(q)$ fixing a hyperplane (see \cite{BHRD}*{Tables~8.31 and~8.32}).  Also, the $\SL_2(q^2)$-subgroup of $\SL_4(q)$ that is transitive on 1-subspaces is a $\mathcal{C}_3$-subgroup of $\SL_4(q)$ and then comparing \cite{BHRD}*{Tables~8.8 and~8.31} we see that it occurs as $\Omega_4^-(q)$ as a subgroup of $\Omega_6^+(q)$ fixing a nondegenerate 4-subspace. Also note that $\Omega_6^+(q)\cong\SL_4(q)/\langle \pm I_4\rangle$ and so the examples $2_-^{1+4}\ldotp\C_5$, $2_-^{1+4}\ldotp\Alt_5$ and $\SL_2(5)$ and appearing in Table~\ref{T:SLa} for $q=3$ become $2^4\ldotp\C_5$, $2^4\ldotp\Alt_5$ and  $\Alt_5$ in this case. Furthermore, $\Alt_6$ occurs as $\Omega_5(2)^R\leqslant \Omega_6^+(2)$. Finally, for $\GammaL_1(q^4)\leqslant \GammaL_4(q)$, note that $\GammaL_1(q^4)\leqslant \GammaL_2(q^2)\leqslant\GammaL_4(q)$. Moreover, comparing \cite{BHRD}*{Tables~8.1 and~8.31} we see that the middle subgroup in this chain yields an $N_2^-$ subgroup of $A$ as $\Omega_4^-(q)\cong\PSL_2(q^2)$. Then as $D_{2(q^2+1)}\cong\GO_2^-(q^2)$ and $D_{2(q+1)}\cong\GO_2^-(q)$ we deduce that $H\leqslant N_A(\Omega_2^-(q^2))\leqslant N_4^-$.
Thus case (b) holds.

  \smallskip
  (c)~$\cU=\cN_1^\eps$ (with $q$ odd and $\eps\in\{+,-\}$), or $\cN_1$ (with $q$ even). This case was dealt with in \cite{Reg}*{Lemma~4.5} and $H$ is as given in part (c).

  \smallskip
  (d1)~$\cU=\cN_2^{-}$. Here $G=\GammaO_6^+(q)$ and since $\Omega^-_4(q)\cong\PSL_2(q^2)$ we deduce from \cite{BHRD}*{Table~8.8} that $\overline{G_U}=N_{\oG}(\PSL_2(q^2))$, which is maximal in $\oG$. Let $\oB$ be a subgroup of $\Aut(\PSL_4(q))$ that contains $\oH$ and is maximal subject to being core-free. Then by Lemma~\ref{lem:maxmin} and  \cite{Factns} we have that $\overline{B}$ is the stabiliser of a 1-dimensional subspace of $W$, a 3-dimensional subspace of $W$, or  a decomposition of $W$ into a 1-dimensional space and a complementary 3-subspace. Moreover, in the third case $q=2$ or $4$. 
  
  Suppose that we are in one of the first two cases. As discussed in case (b), 1-dimensional subspaces and 3-dimensional subspaces of $W$ correspond to totally singular 3-subspaces of $V$. Thus $H$ is contained in the stabiliser $B$ of a totally singular 3-subspace $E$ of $V$ and we have a factorisation $B=HB_U$. Note that $U\cap E=0$ and following \cite{Factns}*{(3.6.2b), p.\,68} we see that $Y:=E\cap U^\perp$ has dimension 1 so that $V=U^\perp + E$. Moreover $Y$ is $B_U$-invariant, so $H$ acts transitively on the 1-dimensional subspaces of~$E$. Thus by Theorem~\ref{T:SLbigN}, modulo its unipotent radical $H$ induces either $\SL_3(q)$ or a subgroup of $\GammaL_1(q^3)$ on $E$. Now 
  $$|\cN_2^-|=\frac{|\GO^+_6(q)|}{|\GO_2^-(q)||\GO_4^-(q)|}=\frac{1}{2}q^4(q^3-1)(q+1)$$
  divides $|H|$, and $H\leqslant [q^3]\rtimes\GammaL_3(q)$. Thus $|H^E|$ is divisible by $q/(2,q)$. Suppose that $H^E\leqslant \GammaL_1(q^3)$. Thus $q/(2,q)$ divides $3f$ and so $q\leq 4$. Similarly, if $\SL_3(q)\leqslant H^E\leqslant \GammaL_3(q)$ and $q>4$ then $q^4/(2,q)$ does not divide $|H^E|$ and so $H\cap R\neq 1$, where $R$ is the unipotent radical of $B$. Since $\SL_3(q)$ acts irreducibly on $R$ (view it as a subgroup of $\SL_4(q)$) it follows that either $q\leqslant 4$, or $R\rtimes \SL_3(q)\leqslant H$ as in part (d). A \textsc{Magma} calculation then reveals the list of groups given by Table~\ref{T:O6+N2-} when $q\leq 4$.

  \smallskip
  
  (d2)~$\cU=\cN_2^{+}(V)$. Since $\POmega_4^+(q)\cong\PSL_2(q)^2$, by comparing \cite{BHRD}*{Table~8.8  and Table~8.31} we deduce that $\overline{G_U}$ is the stabiliser in $\oG$ of a decomposition of $W$ into two complementary $2$-subspaces. The usual argument combined with \cite{Factns} shows that (except possibly for $q=2$ as $\PSL_4(2)\cong \Alt_8$), no possibility for $H$ occurs.  A \textsc{Magma} calculation reveals that in the exceptional case $q=2$ there are no examples either.

\smallskip
(e)~$\cU=\cN_3^{\eps}(V)$ (with $q$ odd and $\eps\in\{+,-\}$). By comparing \cite{BHRD}*{Tables~8.8 and~8.31}, $\overline{G_U}$ is an index two subgroup of $N_{\oG}(\PSL_2(q)^2)$, which is maximal in $\oG$ and which, as a subgroup of $\PGammaL_4(q)$, is of type $\GO_4^+(q)$. Let $\oB$ be a subgroup of $\Aut(\PSL_4(q))$ that contains $\oH$ and is maximal subject to being core-free. Then by Lemma~\ref{lem:maxmin} we have a maximal core-free factorisation of an almost simple group with socle $\PSL_4(q)$ where one of the factors has socle $\PSL_2(q)^2$. However, by  \cite{Factns} there is no such factorisation for $q$ odd.  \end{proof}

\subsection{Unitary groups in small dimension}\label{S:PGU2}

Since $\PSL_2(q)\cong\PSU_2(q)$ we now deal with unitary groups in dimension $n=2$. Let $K=\F_{q^2}$ where $q=p^r$ for some prime~$p$, let $k=\F_{q}\leqslant K$,  and  let $V$ be a 2-dimensional vector space over $K$. Let $(\cdot,\cdot)$ be a nondegenerate hermitian form on $V$ and let $\{e,f\}$ be a basis of $V$ such that $(e,e)=(f,f)=0$ and $(e,f)=1$.  Choose $0\ne\zeta\in K$ such that $\zeta^q+\zeta=0$. We take a new basis $\{e, f'\}$ for $V$ where $f'=\zeta^q f$, and note that the Gram matrix for $(\cdot,\cdot)$ with respect to this new basis is $\zeta J$ where $J=\left(\begin{smallmatrix}0&1\\ -1&0\end{smallmatrix}\right)$. Hence $(\alpha e+\beta f',\gamma e+\delta f')=(\alpha\delta^q-\beta\gamma^q)\zeta$.

Consider the group $\GammaU_2(q)$ of all semisimilarities of $(\cdot,\cdot)$.  Let $A\in \GL_2(q^2)$.  Then $A$ lies in $\CU_2(q)$ precisely when $AJA^{\sigma T}=\delta_A J$ for some $\delta_A\in K^\times$, where $\sigma$ is the automorphism of $\GL_2(q^2)$ that raises each entry of a matrix to its $q^{\mathrm{th}}$-power. A simple calculation shows that
$AJA^{\sigma T}=\det(A)J$ for all $A\in\GL_2(k)$.  Thus $\GL_2(k)\leqslant \CU_2(q)$ and $\SL_2(k)\leqslant \GU_2(q)$.  Comparing orders shows that $\SL_2(k)=\SU_2(q)$.  Note that for $A=\lambda I_2$ with $\lambda\in K^\times$, we have $\delta_A=\lambda^{q+1}\in k^\times$. Thus for each $A\in\GL_2(k)$, there exists $\lambda\in K^\times$ such that $\lambda A\in\GU_2(q)$. Letting $Z$ be the subgroup of all $K$-scalars we have $\GL_2(k)Z\leqslant \CU_2(q)$ and comparing orders we get equality. In particular,  $\PGL_2(q)\cong \PCU_2(q)= \PGU_2(q)$.

Let $\phi$ be the transformation of $V$ given by $(\alpha e+\beta f')^\phi=\alpha^p e+\beta^p f'$. Then $\phi$ is a semisimilarity with $(u^\phi,v^\phi)=\zeta^{1-p}(u,v)^p$ for all $u,v\in V$.  Thus $\GammaU_2(q)=\langle \CU_2(q),\phi\rangle$ is a group of order $2r|\CU_2(q)|$. Note that $\phi$ normalises the subgroup $\GL_2(k)$ and induces a field automorphism  of order $r$. In particular, letting $z=\phi^r$ we see that $z$ centralises $\GL_2(k)$. Thus we have.  
\begin{equation}\label{eq:pi}
\begin{array}{ll}
&\mbox{$G=\langle \GL_2(k),Z,\phi\rangle=\GammaU_2(q)$, $\overline{G}=\PGammaU_2(q)$, and  
$\overline{G}/\langle z\rangle\cong \PGammaL_2(q)$, and}\\
&\mbox{let $\pi\colon\PGammaU_2(q)\to \PGammaL_2(q)$ be the natural epimorphism with kernel $\langle z\rangle$. }
\end{array}
\end{equation}

\begin{lemma}\label{lem:quotient}
Let $U$ be a $1$-subspace of $V=K^2$, let $G, \overline{G}$, and
$\pi$ be as in $\eqref{eq:pi}$.
\begin{enumerate}[{\rm (a)}]
\item If $U$ is totally isotropic then $\pi(\overline{G}_U)$ is the stabiliser in $\PGammaL_2(q)$ of a $1$-subspace of~$k^2$.
\item If $U$ is nondegenerate then $\pi(\overline{G}_{\{U,U^\perp\}})=N_{\PGammaL_2(q)}(D_{2(q+1)})\cong \C_{q+1}\rtimes \C_{2r}$.
\end{enumerate}
\end{lemma}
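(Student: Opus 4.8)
The plan is to analyse the two cases separately, in both cases by tracking what the stabiliser looks like inside $\GammaU_2(q)$ and then pushing it through the chain of quotients $\GammaU_2(q)\to\PGammaU_2(q)\to\PGammaL_2(q)$ described in~\eqref{eq:pi}. Throughout I will use the explicit model set up above: $V=K^2$ with $K=\F_{q^2}$, basis $\{e,f'\}$, Gram matrix $\zeta J$, and the identifications $\GU_2(q)\cap\GL_2(k)Z=\GU_2(q)$, $\SU_2(q)=\SL_2(k)$, $\PGU_2(q)=\PGL_2(q)$, together with the field automorphism $\phi$ and the central element $z=\phi^r$ which centralises $\GL_2(k)$.

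\textbf{Part (a): $U$ totally isotropic.} The totally isotropic $1$-subspaces of $V$ are exactly the $1$-spaces spanned by singular vectors; in the chosen basis these are $\la e\ra$ and $\la f'\ra$ together with $\la e+\beta f'\ra$ for suitable $\beta$, and in fact the set of totally isotropic $1$-subspaces of $V$ is $\SU_2(q)=\SL_2(k)$-equivariantly identified with the set of $1$-subspaces of $k^2$ (there are $q+1$ of each). The plan is: first argue, via orders and the structure of the subspace stabiliser (a Borel/parabolic $[q]\rtimes\C_{q^2-1}$ extended by semilinear maps), that $\overline G_U$ maps onto a parabolic subgroup of $\overline G=\PGammaU_2(q)=\PGL_2(q)\cdot(\text{semilinear part})$; then note that since $z$ stabilises $U$ (it centralises $\GL_2(k)$ and acts as a field automorphism fixing the coordinate axes), the image $\pi(\overline G_U)$ in $\PGammaL_2(q)$ contains the Borel subgroup of $\PGL_2(q)$ stabilising the corresponding $1$-space of $k^2$, and then match orders/indices to conclude equality. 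The cleanest route is to compute $|\overline G:\overline G_U|=|\cU|=q+1$ and $|\PGammaL_2(q):\pi(\overline G_U)|$ and check they agree, using that $\pi$ has kernel $\la z\ra$ of order $r=f$ and $|\PGammaU_2(q)|=2r\cdot|\PGU_2(q)|=2r\cdot|\PGL_2(q)|$ while $|\PGammaL_2(q)|=2r\cdot|\PGL_2(q)|$ as well (so $\pi$ is an isomorphism here — wait, $|\PGammaU_2(q):\PGU_2(q)|=2f$ gives $\overline G$ order $2f|\PGL_2(q)|$, and $\la z\ra$ has order $f$, so $\pi$ surjects with kernel of order $f$ and $|\PGammaL_2(q)|=f|\PGL_2(q)|$... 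I will need to be careful here: $\PGammaL_2(q)=\PGL_2(q)\rtimes\C_f$, and the point is just that $z$ lies in $\overline G_U$, so $\pi(\overline G_U)\supseteq$ the parabolic and we match the index $q+1$ on both sides).

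\textbf{Part (b): $U$ nondegenerate.} Now $G_U$ stabilises the orthogonal decomposition $U\oplus U^\perp$, and the setwise stabiliser $G_{\{U,U^\perp\}}$ is, inside $\GU_2(q)$, a dihedral-type group $\GU_1(q)\wr\C_2$ of order $2(q+1)^2$ modulo scalars giving $D_{2(q+1)}$ in $\PGU_2(q)=\PGL_2(q)$, extended by the semilinear element $\phi$. The plan: identify $\overline G_{\{U,U^\perp\}}=N_{\overline G}(D_{2(q+1)})$ inside $\PGammaU_2(q)$ and compute its order; then quotient by $\la z\ra$. The key point is that the maximal torus $\GU_1(q)\times\GU_1(q)$ of $\GU_2(q)$ projects, under the identification $\PGU_2(q)\cong\PGL_2(q)$, to a dihedral group $D_{2(q+1)}$ — this is the nonsplit torus normaliser of $\PGL_2(q)$ — and $z=\phi^r$ acts on it; since $z$ centralises $\GL_2(k)$ but the torus $\GU_1(q)^2$ is \emph{not} contained in $\GL_2(k)$, I need to check how $z$ acts on $\PGU_2(q)$-side, and the outcome must be that $\pi(\overline G_{\{U,U^\perp\}})=N_{\PGammaL_2(q)}(D_{2(q+1)})=D_{2(q+1)}\rtimes\C_{2r}\cong\C_{q+1}\rtimes\C_{2r}$. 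I will verify this by matching orders: $|\overline G_{\{U,U^\perp\}}|=2r\cdot 2(q+1)$ and $\la z\ra$ has order $r$, so the image has order $4(q+1)$, which is exactly $|N_{\PGammaL_2(q)}(D_{2(q+1)})|=2(q+1)\cdot 2$ (the extra $\C_2$ coming from the field automorphism $\phi$ of $\PSL_2(q)$ stabilising a nonsplit torus normaliser — using \cite{BHRD}*{Table 8.1})... I should double-check whether the full $\C_{2r}$ or only a $\C_2$ survives, consulting \cite{BHRD}*{Table 8.1} for the precise shape $\C_{q+1}\rtimes\C_{2r}$ of a $\mathcal C_3$-type maximal subgroup of $\PGammaL_2(q)$.

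\textbf{Main obstacle.} The routine part is the explicit linear algebra; the genuinely delicate step is controlling the \emph{semilinear} contribution — precisely, pinning down the image of the field-automorphism part of $\GammaU_2(q)$ under the composite $\GammaU_2(q)\to\PGammaL_2(q)$ and showing that, modulo $\la z\ra$, what survives is exactly a $\C_{2r}$ acting as stated (rather than a proper subgroup, or a different extension). This hinges on understanding how $\phi$ (semilinear of order $2r$ on the unitary side, acting with the extra factor $\zeta^{1-p}$ on the form) relates to the field automorphism of order $r=f$ on $\PGammaL_2(q)$, and I expect to resolve it by combining the explicit formula $(u^\phi,v^\phi)=\zeta^{1-p}(u,v)^p$ with an order comparison and a citation to \cite{BHRD}*{Table 8.1} to identify the target subgroup unambiguously as $\C_{q+1}\rtimes\C_{2r}$.
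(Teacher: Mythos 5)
Your overall strategy is the same as the paper's (pin down the stabiliser explicitly inside $\GammaU_2(q)$ using the basis $\{e,f'\}$, then push it through $\GammaU_2(q)\to\PGammaU_2(q)\xrightarrow{\;\pi\;}\PGammaL_2(q)$), but there is a concrete error that breaks the verification you rely on in part (b): the kernel $\la z\ra$ of $\pi$ has order $2$, not $r=f$. Indeed $z=\phi^r$ raises coordinates to the $q$-th power, i.e.\ it is the involutory field automorphism of $K=\F_{q^2}$ over $k$; this is why it centralises $\GL_2(k)$, and it is consistent with $|\PGammaU_2(q)|=2f\,|\PGL_2(q)|$ versus $|\PGammaL_2(q)|=f\,|\PGL_2(q)|$. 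In part (a) this slip is harmless, since the direct computation $\overline{G}_U=\la \GL_2(k)_U,Z,\phi\ra$ with $\GL_2(k)_U=\GL_2(k)_{\la e\ra_k}$ already yields $\pi(\overline{G}_U)=\PGammaL_2(q)_W$ with no counting (this is exactly what the paper does). But in part (b) your argument \emph{is} an order count, and both sides of it are wrong: with the correct kernel order, $|\pi(\overline{G}_{\{U,U^\perp\}})|=4r(q+1)/2=2r(q+1)$, not $4(q+1)$; and $|N_{\PGammaL_2(q)}(D_{2(q+1)})|=|\C_{q+1}\rtimes\C_{2r}|=2r(q+1)$, not $2(q+1)\cdot 2$ --- the normaliser contains the full group of field automorphisms, not just a $\C_2$. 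As written, the count does not establish the claimed equality.

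Two further points you flag but do not resolve, and which the paper settles explicitly. First, $z$ interchanges each nondegenerate $1$-subspace with its perp (since $\la \lambda e+f'\ra^\perp=\la\lambda^q e+f'\ra$), so $z$ lies in $G_{\{U,U^\perp\}}$ but not in $G_U$; this is both why the lemma is stated for the pair stabiliser and why the kernel genuinely sits inside the group whose image you are computing. Second, to see that the \emph{entire} $\C_{2r}$ of semilinear maps survives in the image (rather than a proper subgroup), the paper exhibits an explicit semisimilarity $\varphi\colon \lambda_1v_1+\lambda_2v_2\mapsto\lambda_1^pv_1+\lambda_2^pv_2$ of order $2r$ fixing both $U$ and $U^\perp$, giving $\GammaU_2(q)_{\{U,U^\perp\}}=\GU_2(q)_{\{U,U^\perp\}}Z\rtimes\la\varphi\ra$ directly. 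A corrected order count (using orbit--stabiliser on the $q(q-1)/2$ pairs to get $|\overline{G}_{\{U,U^\perp\}}|=4r(q+1)$, then dividing by $|\la z\ra|=2$) would also close the argument, but you must fix the kernel order and the order of the target normaliser first.
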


\begin{proof}
(a)~ Since $\overline{G}$ is transitive on the set of totally isotropic $1$-subspaces of $V$ we may assume that $U=\langle e\rangle_K$. Since $Z$ and $\phi$ fix $U$ it follows that $\overline{G}_U=\langle \GL_2(k)_U,Z,\phi\rangle$. Noting that $\GL_2(k)_U=\GL_2(k)_W$, where $W=\langle e\rangle_k$, we deduce that $\pi(\overline{G}_U)=\PGammaL_2(q)_W$.

  (b)~Note that the totally isotropic $1$-subspaces of $V$ are  $\langle e\rangle$ and $\langle \lambda e+f'\rangle$ for $\lambda\in k$. Thus the nondegenerate $1$-subspaces are those of the form $\langle \lambda e+f'\rangle$ with $\lambda \in K\backslash k$. Note that $\langle \lambda e+f'\rangle^\perp=\langle \lambda^q e+f'\rangle$ and so the element $z$ interchanges each nondegenerate $1$-subspace with its orthogonal complement. Thus $z$ lies in the kernel of the action of $G$ on pairs of orthogonal nondegenerate 1-subspaces. Note that there are $q^2-q$ nondegenerate 1-subspaces in $V$ and so there are $q(q-1)/2$ orthogonal pairs. Moreover,  $\overline{G}$ acts transitively on the set of nondegenerate 1-subspaces. Let $U=\langle v_1\rangle$ and $U^\perp=\langle v_2\rangle$, be nondegenerate $1$-subspaces over $K$. Then $\{v_1\,v_2\}$ is a basis for $V$ and we see that $\GU_2(q)_{\{U,U^\perp\}}\cong \C_{q+1} \,\Wr\, \C_2$. Moreover, the map 
  \[
  \varphi:\lambda_1 v_1+\lambda_2v_2\mapsto \lambda_1^pv_1+\lambda_2^pv_2
  \] 
  is a semisimilarity of order $2r$ that fixes $U$ and $U^\perp$. Thus 
  \[
  \GammaU_2(q)_{\{U,U^\perp\}}= \GU_2(q)_{\{U,U^\perp\}}Z \rtimes\langle \varphi\rangle
  \]
  and so $\pi(\overline{G}_{\{U,U^\perp\}})\cong \C_{q+1}\rtimes \C_{2r}=N_{\PGammaL_2(q)}(D_{2(q+1)})$.
\end{proof}

As discussed in Subsection~\ref{sub:spaces}, the $\overline{G}$-orbits on subspaces that we consider are  $\cP_1$ and $\cN_1$. 
We now determine the subgroups of $G$ that are transitive on $\cP_1$ or $\cN_1$. 

\begin{theorem}\label{T2}
  Let $G, \overline{G}$, and $\pi$ be as in $\eqref{eq:pi}$, and suppose that $H\le G$ acts transitively on the set $\cU = \cP_1$ or $\cN_1$ of subspaces  of the natural module $V=(\F_{q^2})^2$, such that
  $\SU_2(q)\not\leqslant H$ and $q\geqslant 2$. 
Then either
  \begin{enumerate}[{\rm (a)}]
  \item $\cU=\cP_1$ and one of $\pi(\overline{H})\leqslant N_{\PGammaL_2(q)}(D_{2(q+1)})$, or $\C_2^2\lhd \pi(\overline{H})$ with $q\in\{5,7,11,23\}$, or $A_5\lhd \pi(\overline{H})$ with $q\in\{9,11,19,29,59\}$;
 or
  \item $\cU=\cN_{1}$ and $H \leqslant P_1$, or $q=16$ and $\pi(\overline{H})=N_{\PGammaL_2(16)}(\PSL_2(4))$, or $q=2$.
  \end{enumerate}
\end{theorem}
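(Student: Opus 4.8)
The plan is to transfer both problems, via the epimorphism $\pi$ of \eqref{eq:pi}, to factorisations of $\PGammaL_2(q)$ and then quote the linear results. Since $\SU_2(q)$ (hence $G$) is already transitive on $\cP_1$ and on $\cN_1$, transitivity of $H$ gives $G=G_UH$ when $\cU=\cP_1$, and $G=G_{\{U,U^\perp\}}H$ when $\cU=\cN_1$; in the latter case I work with the $G$-action on orthogonal pairs, which is legitimate because $z$ interchanges $U$ with $U^\perp$ and $H$ inherits transitivity on pairs from transitivity on $\cN_1$. Passing to $\overline G$ and applying $\pi$, Lemma~\ref{lem:quotient} yields
\[
\PGammaL_2(q)=\pi(\overline{G}_U)\,\pi(\overline{H})\qquad\text{or}\qquad\PGammaL_2(q)=\pi(\overline{G}_{\{U,U^\perp\}})\,\pi(\overline{H}),
\]
where $\pi(\overline{G}_U)$ is the stabiliser in $\PGammaL_2(q)$ of a $1$-subspace of $(\F_q)^2$, and $\pi(\overline{G}_{\{U,U^\perp\}})=N_{\PGammaL_2(q)}(D_{2(q+1)})$.

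Next I would show $\PSL_2(q)\not\le\pi(\overline{H})$ for $q\ge4$. For such $q$, $G^{(\infty)}=\SU_2(q)=\SL_2(q)$ is quasisimple and $\PGammaL_2(q)^{(\infty)}=\PSL_2(q)$ is simple; since $z$ centralises $\SU_2(q)$ but is nontrivial in $\overline G$ it lies outside the simple group $\PSU_2(q)$, so $\langle z\rangle\cap\PSU_2(q)=1$ and the composite $G\to\overline G\xrightarrow{\pi}\PGammaL_2(q)$ carries $\SU_2(q)$ isomorphically onto the socle $\PSL_2(q)$. Lemma~\ref{lem:derived} then shows that $\PSL_2(q)\le\pi(\overline{H})$ would force $\SU_2(q)\le H$, contrary to hypothesis. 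The values $q\in\{2,3\}$, where $\SU_2(q)$ is not quasisimple, I would dispose of separately: for $q=2$ both asserted conclusions are vacuous (as $N_{\PGammaL_2(2)}(D_6)=\PGammaL_2(2)$ and $|\cN_1|=2$), and for $q=3$ a short \textsc{Magma} computation settles it.

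Now take $q\ge4$. If $\cU=\cP_1$, the full preimage $\widetilde H\le\GammaL_2(q)$ of $\pi(\overline{H})$ is transitive on the $1$-subspaces of $(\F_q)^2$ with $\SL_2(q)\not\le\widetilde H$, so Theorem~\ref{T:SLbigN}(a) applies with $n=2$ and $k=1$; among the rows of Table~\ref{T:SLa} only $H_0\in\{Q_8,\SL_2(5)\}$ are compatible with $n=2$ (the remaining rows need $n\in\{4,6\}$ or force $H_0=\SL_2(q)$), so $\widetilde H\le\GammaL_1(q^2)$, or $Q_8\lhd\widetilde H$ with $q\in\{5,7,11,23\}$, or $\SL_2(5)\lhd\widetilde H$ with $q\in\{9,11,19,29,59\}$. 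Quotienting out the scalars $(\F_q)^\times I$, and using that the image of $\GammaL_1(q^2)$ in $\PGammaL_2(q)$ is $N_{\PGammaL_2(q)}(D_{2(q+1)})$, that of $Q_8$ is $\C_2^2$, and that of $\SL_2(5)$ is $\Alt_5$, gives exactly the alternatives in (a). If $\cU=\cN_1$, then $N:=N_{\PGammaL_2(q)}(D_{2(q+1)})$ is maximal and core-free in $\PGammaL_2(q)$ (the few small $q$ for which it is not I would treat directly), so a maximal core-free $B\ge\pi(\overline{H})$ gives a maximal factorisation $\PGammaL_2(q)=NB$; by Lemma~\ref{lem:maxmin} and the classification of maximal factorisations of almost simple groups with socle $\PSL_2(q)$ in~\cite{Factns}, either $B$ is a point stabiliser of the projective line, or $q=16$ and $B=N_{\PGammaL_2(16)}(\PSL_2(4))$. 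In the first case $\pi(\overline{H})$ lies in a point stabiliser; since $z$ fixes every totally isotropic $1$-subspace of $V$, the $\pi$-preimage of such a point stabiliser is $\overline{P_1}$, whence $H\le P_1$. In the second case an order count, using that $\overline H$ is transitive on the $|\cN_1|=240$ subspaces while $|N_{\PGammaL_2(16)}(\PSL_2(4))|=240$ (with a \textsc{Magma} check of the residual possibility $\langle z\rangle\le\overline H$), forces $\pi(\overline{H})=N_{\PGammaL_2(16)}(\PSL_2(4))$. Together with $q=2$, recorded as the last case, this gives (b).

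The step I expect to be hardest is the $\cN_1$ analysis: extracting from~\cite{Factns} precisely which factorisations of $\PSL_2(q)$-type groups use the normaliser of a nonsplit torus, handling the exceptional $q$ for which $N$ fails to be maximal, and — at $q=16$ — pinning $\pi(\overline{H})$ down exactly rather than merely up to a subgroup, which requires careful bookkeeping of the kernel $\langle z\rangle$ of $\pi$ and of the $\PGL_2$-versus-$\PSL_2$ discrepancy.
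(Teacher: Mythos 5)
Your proposal follows essentially the same route as the paper: Lemma~\ref{lem:quotient} transfers both cases to factorisations of $\PGammaL_2(q)$, case $\cP_1$ is settled by Theorem~\ref{T:SLbigN}, and case $\cN_1$ by Lemma~\ref{lem:maxmin} together with \cite{Factns}, with small $q$ handled directly; you supply slightly more detail than the paper on the $\GammaL_2$-to-$\PGammaL_2$ bookkeeping and on $q\in\{2,3,16\}$. One cosmetic slip: for $q$ odd the composite $G\to\PGammaL_2(q)$ does not carry $\SU_2(q)$ \emph{isomorphically} onto $\PSL_2(q)$ (the scalar $-I$ lies in the kernel), but only onto it, which is all that Lemma~\ref{lem:derived} requires.
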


\begin{proof}
If $\PSU_2(q)=\pi(\overline{H})$, then Lemma~\ref{lem:derived} would imply that $\SU_2(q)\leqslant H$, contrary to our assumption. Hence $\PSU_2(q)\not\leqslant\pi(\overline{H})$.

\smallskip
  (a)~Suppose that $\cU=\cP_1$ and let $U\in \cU$. Then $G=G_UH$ and so by Lemma~\ref{lem:quotient} we have $\PGammaL_2(q)=\PGammaL_2(q)_W\pi(\overline{H})$, where $W$ is a 1-subspace of $k^2$.  Thus the possibilities for $\pi(\overline{H})$ are given by Theorem~\ref{T:SLbigN}, and so either $\pi(\overline{H})\leqslant N_{\PGammaL_2(q)}(D_{2(q+1)})$, or $\C_2^2\lhd \pi(\overline{H})$ with $q\in\{5,7,11,23\}$, or $A_5\lhd \pi(\overline{H})$ with $q\in\{9,11,19,29,59\}$, as in part (a).

\smallskip
  (b)~Suppose that $\cU=\cN_1$ and let $U\in \cU$. Since $H$ acts transitively on $\cN_1$ it also acts transitively on the set of pairs of orthogonal nondegenerate 1-subspaces. Hence by Lemma~\ref{lem:quotient}, $\PGammaL_2(q)=\pi(\overline{H})N_{\PGammaL_2(q)}(D_{q+1})$. If $q\geq 4$ then Lemma~\ref{lem:maxmin} and  \cite{Factns} imply that $\pi(\overline{H})\leqslant P_1$, or $q=16$ and $\pi(\overline{H})=N_{\PGammaL_2(16)}(\PSL_2(4))$, as in part (b). If $q=3$ then a \textsc{Magma} calculation implies that either $\SU_2(3)\leqslant H$ or $H\leqslant P_1$. Finally, if $q=2$ then $|\cN_1|=2$ and so any subgroup $H$ of even order that is not contained in $N_1$ is transitive. 
  \end{proof}

\section{Unitary groups}\label{S:PGUn}

Let $V=(\F_{q^2})^n$  be equipped with a nondegenerate hermitian form.  As discussed in Subsection~\ref{sub:spaces}, the families $\cU$ of subspaces of $V$ that we consider are the sets $\cP_k$ of totally isotropic $k$-subspaces, and $\cN_k$  of nondegenerate $k$-subspaces, for $1\leq k\leq n/2$. For each of these sets $\cU$, it follows from  Witt's Lemma that $\GU_n(q)$ is transitive on  $\cU$, and it can be shown that $\SU_n(q)$ is also transitive on $\cU$, and moreover $\cU$ is invariant under $\GammaU_n(q)$.
In Theorem~\ref{T1} we classify the subgroups of $\GammaU_n(q)$ that are transitive on~$\cU$. Part (a) of this result is essentially a summary of \cite{Reg}*{Lemma~4.3}. Part (b) for $n=4$ and 5, and part (c) for $n=4$ were done in \cite{KLrank3}*{Corollary~5.12} (see Remark~\ref{rem:KL}). We use Theorem~\ref{T1} to deal with the six-dimensional orthogonal groups of minus type.

\begin{theorem}\label{T1}
  Suppose that $H\le\GammaU_n(q)$   such that  $\SU_n(q)\not\le H$. Let  $V=(\F_{q^2})^n$ and suppose that $H$ 
  acts transitively on the set $\cU=\cP_k$ or $\cN_k$ of $k$-subspaces of $V$, where $1\le k\le n/2$. If $n=2$ then $H$ is given by Theorem~$\ref{T2}$, while if $n\geqslant 3$  then one~of
  \begin{enumerate}[{\rm (a)}]
  \item $\cU=\cN_1$, and either $(n,q)=(3,2)$ and $H=3^{1+2}_+\rtimes \C_8$,
    or $n=2m$, $H\leq P_m$ and  modulo the unipotent radical of $P_m$, $H$ induces a subgroup of $\GammaL_m(q^2)$ which
    is transitive on $1$-subspaces, or
    $n=2m$ and $H_0\lhdeq H$ where $H_0$ is given in Table~\textup{\ref{T6}}; or
\begin{table}[!ht]
\caption{Theorem~\ref{T1}(a) subgroups $H_0\lhdeq H\le\GammaU_{2m}(q)$ where $H$ is transitive
on $\cN_1$.}\label{T6}
\begin{tabular}{rlccccccc}
\toprule
$H_0$ &&$\Sp_{2m}(q)$& $\Sp_{2a}(q^2)$& $\Sp_{2a}(q^4)$& $\SL_{m}(q^2)$& $\SL_{a}(q^4)$&\\
\textup{$m$}&&$\ge2$&$2a$&$4a$&$\ge2$ &$2a>2$&&\\
\textup{$q$}&&${\rm all}$&$2,4$&$2$&$2,4$&$2$&&\\
\midrule
$H_0$ && $\G_2(q)'$& $\G_2(q^2)$& $\G_2(q^4)$&$\nonsplit{3_1}{\PSU_4(3)}$& $\nonsplit{3}{M_{22}}$& $3\ldotp \textup{Suz}$\\
\textup{$m$}&&$3$&$6$&$12$&$3$&$3$&$6$\\
\textup{$q$}&&\textup{$q$ even}&$2,4$&$2$&$2$&$2$&$2$\\
\bottomrule
\end{tabular}
\end{table}
    \item $\cU=\cP_1$, and $H_0\lhdeq H$ where $(H_0,n, q)$ are given in
      Table~\textup{\ref{T7}}; or
\begin{table}[!ht]
  \caption{Theorem~\ref{T1}(b) subgroups $H_0\lhdeq H\le\GammaU_n(q)$ where $H$
    is transitive on $\cP_1$.}\label{T7}
\begin{threeparttable}
\begin{tabular}{rlcccccccc}
\toprule
$H_0$ && $3_{+}^{1+2}$& $3_{-}^{1+2}$& $\PSL_2(7)$& $\nonsplit{3}{\Alt_7}$& $\C_{19} {}{}^\dagger$& $\nonsplit{4}{\PSL_3(4)}^{*}$&$\nonsplit{3}{\textrm{J}_3}$\\
\textup{$(n,q)$}&&$(3,2)$&$(3,2)$&$(3,3)$&$(3,5)$&$(3,8)$&$(4,3)$&$(9,2)$\\
\bottomrule
\end{tabular}
  \begin{tablenotes}\footnotesize
  \item [$\dagger$] More information about $H$ is given in the proof.
  \end{tablenotes}
  \end{threeparttable}
\end{table}

\item $\cU=\cP_m$, $n=2m$ and either $\SU_{2m-1}(q)\lhdeq H$,
     or $m=2$ and $H_0\lhdeq H$ where $(H_0,q)$ are listed in Table~\textup{\ref{T8}}.
\begin{table}[!ht]
\caption{Theorem~\ref{T1}(c) subgroups $H_0\lhdeq H\le\GammaU_{4}(q)$ where $H$
    is transitive on $\cP_2$.}\label{T8}
\begin{threeparttable}
\begin{tabular}{rlccccccc}
\toprule
$H_0$ && $ 3_{+}^{1+2}$& $ 3_{-}^{1+2}$&$(\C_3)^3$& $\PSL_2(7){}{}^*$&$\nonsplit{4}{\PSL_3(4)}$&$\nonsplit{3}{\Alt_7}{}{}^*$&$\C_{19}{}{}^{*,\dagger}$\\
\textup{$q$}&&$2$&$2$&$2$&$3$&$3$&$5$&$8$\\
\bottomrule
\end{tabular}
  \begin{tablenotes}\footnotesize
  \item [$*$]  Contained in $N_1$.
  \item [$\dagger$] More information about $H$ is given in the proof.
  \end{tablenotes}
    \end{threeparttable}

\end{table}
  \end{enumerate}
\end{theorem}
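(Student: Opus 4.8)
The plan is to turn transitivity into a group factorisation and feed it into the classification of maximal factorisations of almost simple groups. The case $n=2$ is immediate from Theorem~\ref{T2}, so assume $n\ge3$; the solvable case $(n,q)=(3,2)$ will be dispatched directly with \textsc{Magma}, so we may also assume $\SU_n(q)$ is quasisimple. Fix $U\in\cU$ and set $G=\GammaU_n(q)$. By Witt's Lemma $\SU_n(q)$ is transitive on $\cU$, so $H$ is transitive on $\cU$ if and only if $G=G_UH$; projecting to $\oG=\PGammaU_n(q)$ this reads $\oG=\oG_U\oH$, and since $\SU_n(q)\not\le H$, Lemma~\ref{lem:derived} applied to $G\to\oG$ gives $\PSU_n(q)\not\le\oH$. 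Apart from a few small $(n,q)$ (such as those involving $\PSU_4(2)$, treated by hand), $\oG_U$ is a maximal $\mathcal{C}_1$-subgroup of $\oG$; choosing $\oB$ maximal among core-free subgroups of $\oG$ with $\oH\le\oB$ and invoking Lemma~\ref{lem:maxmin} yields a maximal core-free factorisation of an almost simple group with socle $\PSU_n(q)$ having $\oG_U$ as a factor. Reading off \cite{Factns} (via \cite{Reg}*{Lemma~4.1}), the only $\mathcal{C}_1$-factors that can occur force $\cU$ to be $\cN_1$, $\cP_1$, or, when $n=2m$, $\cP_m$, and they pin down the finitely many candidates for $\oB$ in each case.

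For $\cU=\cN_1$ (so $n=2m$ is even, apart from the tiny case $(3,2)$) the bulk of the work is recorded in \cite{Reg}*{Lemma~4.3}, which I would re-derive and refine. The maximal factors $\oB$ are the images of: a parabolic $P_m$; the normaliser of $\Sp_{2m}(q)$; the normaliser of a field-extension subgroup $\GU_m(q^2)$; the normaliser of $\G_2(q)$ when $m=3$ and $q$ is even; and certain sporadic subgroups of $\GammaU_6(2)$ and $\GammaU_{12}(2)$. In the parabolic case $H$ stabilises a maximal totally isotropic subspace $W$, and working modulo the unipotent radical one checks that transitivity on $\cN_1$ is equivalent to transitivity of the induced group $H^W\le\GammaL_m(q^2)$ on $1$-subspaces, which is Theorem~\ref{T:SLbigN}. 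In the symplectic, field-extension and $\G_2$ cases one argues by induction on $n$ (together with the corresponding symplectic and $\G_2$ classifications), and the tower of field extensions $q\to q^2\to q^4$ that arises is cut off by the arithmetic bound of Lemma~\ref{L:NT}, giving exactly Table~\ref{T6}; the sporadic cases, where the covers $\nonsplit{3_1}{\PSU_4(3)}$, $\nonsplit{3}{M_{22}}$ and $\nonsplit{3}{\mathrm{Suz}}$ appear, are settled by computation.

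For $\cU=\cP_1$ the action of $\PSU_3(q)$ on $\cP_1$ is the Hermitian unital, a $2$-transitive action, so for $n=3$ the transitive subgroups come from Theorem~\ref{T:SLbigN} together with small computations (Table~\ref{T7}); for $n=4,5$ one recovers and corrects \cite{KLrank3}*{Corollary~5.12}, while for $n\ge6$ the factorisation data of \cite{Factns} leaves only $\nonsplit{3}{\mathrm{J}_3}<\GammaU_9(2)$. For $\cU=\cP_m$ with $n=2m$ the only maximal factor is of type $N_1$, forcing $\SU_{2m-1}(q)\lhdeq H$ when $m\ge3$; for $m=2$, governed by the Klein correspondence $\SU_4(q)\cong\Omega_6^-(q)$, one obtains the sporadic list of Table~\ref{T8}. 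Throughout, the remaining small parameters $(n,q)$ (such as $(3,2),(3,3),(3,5),(3,8),(4,2),(4,3),(4,8),(9,2)$), where $\oG_U$ fails to be maximal or the factorisation only narrows $H$ to a finite list, are finished off with \textsc{Magma} and index-divisibility constraints.

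The part I expect to be most delicate is not any one argument but the bookkeeping around isomorphism types and the earlier literature. First, pinning down the exact central and quasisimple covers — $\nonsplit{3_1}{\PSU_4(3)}$, $\nonsplit{3}{M_{22}}$, $\nonsplit{4}{\PSL_3(4)}$, $\nonsplit{3}{\mathrm{J}_3}$, $\nonsplit{3}{\Alt_7}$ — occurring in Tables~\ref{T6}--\ref{T8}, which needs Schur-multiplier information matched against the relevant dimension-$n$ tables of \cite{BHRD} (e.g. Tables~8.6, 8.10, 8.27) and forces the corrections to \cite{Reg}*{Lemma~4.3} and \cite{KLrank3}*{Corollary~5.12} recorded in Remarks~\ref{rem:Reg} and~\ref{rem:KL}. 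Secondly, confirming that the examples missing from \cite{KLrank3} — in particular $\C_{19}<\GammaU_4(8)$ and $3_{-}^{1+2}<\GammaU_4(2)$ acting on $\cP_2$, and $\C_{19}<\GammaU_3(8)$ acting on $\cP_1$ — really are transitive. Thirdly, in the recursive $\cN_1$ cases one must verify that the transitivity condition really descends to the claimed subspace action of the smaller classical group and that the field-extension tower terminates, which is exactly the role of Lemma~\ref{L:NT}.
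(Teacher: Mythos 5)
Your overall strategy --- convert transitivity into the factorisation $G=G_UH$, project to $\PGammaU_n(q)$, apply Lemma~\ref{lem:derived} and Lemma~\ref{lem:maxmin} to reach a maximal core-free factorisation classified in \cite{Factns}, handle $\cN_1$ via a corrected \cite{Reg}*{Lemma~4.3}, and finish the exceptional parameters with \textsc{Magma} --- is exactly the paper's route, and your case division ($\cN_1$, $\cP_1$, $\cP_m$ plus the small $(n,q)$) matches. However, two steps as written would fail. First, your reduction ``the only $\mathcal{C}_1$-factors that can occur force $\cU$ to be $\cN_1$, $\cP_1$ or $\cP_m$'' silently assumes $\oG_U$ is maximal, and this is false precisely when $\cU=\cN_{n/2}$ with $n$ even: there $G_U$ has index two in the stabiliser of the decomposition $V=U\oplus U^\perp$, so Lemma~\ref{lem:maxmin} cannot be applied to the factorisation $G=G_UH$ directly. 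You must first pass to $G=HX$ with $X=G_{\{U,U^\perp\}}$ (maximal except for $(n,q)=(4,2)$, which needs a separate computation) and quote \cite{Factns}*{3.3.3 and 3.3.4} to see there are no examples; only then may you assert $k<n/2$ for nondegenerate subspaces, which your conclusion (only $\cN_1$ survives) requires.

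Second, your claim that for $n=3$ and $\cU=\cP_1$ ``the transitive subgroups come from Theorem~\ref{T:SLbigN}'' is a wrong appeal: that theorem classifies subgroups of $\GammaL_n(q)$ transitive on subspaces of the natural \emph{linear} module, whereas the $\PSU_3(q)$-action on the $q^3+1$ points of the Hermitian unital is not such an action. The correct input is the list in \cite{Factns} of core-free factorisations of almost simple groups with socle $\PSU_3(q)$ having a $P_1$ factor, which is what confines $q$ to $\{2,3,5,8\}$ and produces $3_\pm^{1+2}$, $\PSL_2(7)$, $\nonsplit{3}{\Alt_7}$ and $\C_{19}$. Relatedly, take care not to obtain the $m=2$ entries of Table~\ref{T8} from $\Omega_6^-(q)$ via the Klein correspondence: in this paper Theorem~\ref{T:KleinSU4} is \emph{deduced from} Theorem~\ref{T1}, so that direction is circular. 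The paper instead treats $(m,q)=(2,2),(2,3)$ computationally and, for general $(m,q)$, reduces the factorisation $B=HB_U$ inside $B=N_1$ to a factorisation of $\GammaU_{2m-1}(q)$ with a $P_{m-1}$ factor (since $B_U$ fixes the $(m-1)$-subspace $U\cap\langle v\rangle^\perp$), which by \cite{Factns} forces $(2m-1,q)\in\{(3,3),(3,5),(3,8)\}$ and hence $m=2$; this is also the argument that actually ``forces'' $\SU_{2m-1}(q)\lhdeq H$ when $m\geq 3$, a step your sketch asserts without justification.
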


\begin{proof}
Let $G=\GammaU_n(q)$ and $H\leqslant G$ be transitive on $\cU$ such that $\SU_n(q)\not\leqslant H$. Let $U\in \cU$. Then we have a factorisation $G=HG_U$.  If $U$ is a nondegenerate subspace of dimension $n/2$ then we also have a factorisation $G=HX$, where $X$ is the stabiliser of the decomposition $V=U\oplus U^\perp$, which has $G_U$ as an index two subgroup. By  \cites{BHRD,KL} $X$ is maximal in $G$ unless $(n,q)=(4,2)$.  When $(n,q)=(4,2)$, computations in \textsc{Magma} show that $\SU_4(2)\lhdeq H$, while for $(n,q)\neq(4,2)$,  Lemma~\ref{lem:maxmin} and \cite{Factns}*{3.3.3 and 3.3.4} imply that there are so such factorisations. Thus 
 for actions on nondegenerate $k$-subspaces we have $k<n/2$.   Suppose that $(n,q)\neq (3,2)$. Then by \cite{King1981} and \cite{King1981a}, $\SU_n(q)$ acts primitively on $\cU$, and so $G_U$ is maximal and core-free in $G$. 

We denote the image  in $\PGammaU_n(q)$ of a subgroup $X\leqslant G$ by $\overline{X}$. Then we have a factorisation $\overline{G}=\oH\,\oG_U$ of the almost simple group $\overline{G}$. Since $\SU_n(q)$ is quasisimple, Lemma~\ref{lem:derived} implies that $\PSU_n(q)\not\leqslant \oH$. Let $\oB$ be maximal among core-free subgroups of $\overline{G}$ containing $\oH$ and let $G^*=\oB\,\PSU_n(q)$. Then by Lemma~\ref{lem:maxmin}, $\overline{B}$ is maximal in $G^*$ and we have a maximal core-free factorisation $G^*=\oB(G^*)_U$.  Moreover, $\oB=N_{\overline{G}}(\oB\cap L)$ and so letting $B$ be the full preimage of $\oB$ in $G$, the Correspondence Theorem implies that $B=N_G(B\cap \SU_n(q))$.  Since $(n,q)\neq (3,2)$ the possibilities for $\cU$ and $\oB$ are given by~\cite{Factns}. We list the possibilities for $\cU$ in Table~\ref{T:UnitaryCases} along with the excluded case $(n,q)=(3,2)$.

\begin{table}[!ht]
\caption{Unitary group cases for Theorem~\ref{T1}.}\label{T:UnitaryCases}
\begin{tabular}{rlcccccccc}
\toprule
\textup{Cases} && $1$& $2$& $3$& $4$& $5$& $6$&$7$& $8$\\
$\cU$&&$\cP_1\textup{ or }\cN_1$&$\cP_1$&$\cP_1$&$\cP_1$&$\cP_1$&$\cP_1$&$\cN_1$&$\cP_m$\\
$(n,q)$&&$(3,2)$&$(3,3)$&$(3,5)$&$(3,8)$&$(4,3)$&$(9,2)$&$(2m,q)$&$(2m,q)$\\
\bottomrule
\end{tabular}
\end{table}

By~\cite{BG}*{pp. 150--151} for the unitary case
we have $|\cP_m|=\prod_{i=1}^m(q^{2i-1}+1)$ and
\begin{equation}\label{E8}
  |\cN_1|=\frac{q^{n-1}(q^n-(-1)^n)}{q+1},\quad\textup{and}\quad
  |\cP_1|=\frac{(q^{n-1}+(-1)^{n-2})(q^n+(-1)^{n-1})}{q^2-1}.
\end{equation}
The formulas~\eqref{E8} work for $n$ odd and even, and
$|\cN_1|+|\cP_1|=(q^{2n}-1)/(q^2-1)$ holds.

\smallskip
{\sc Case} 1:~$(n,q)=(3,2)$.\quad 
Here $\GammaU_3(2)=3_+^{1+2}\rtimes\GL_2(3)$ is solvable and $\cU=\cP_1$ or $\cN_1$. In this case $|\cP_1|=9$ and $|\cN_1|=12$.
Calculations with
{\sc Magma} show that when $\cU=\cP_1$ we have that  $H_0\lhdeq H$ where $H_0\in\{3_{+}^{1+2},3_{-}^{1+2}\}$, as in Table~\ref{T7}, while when $\cU=\cN_1$ we have $H=3^{1+2}_+\rtimes \C_8$, as in part (a).

\smallskip
{\sc Case} 2:~$(n,q)=(3,3)$.\quad 
Here $\cU=\cP_1$ and it follows from~\cite{Factns}*{p.\,13} that $\overline{H}\leqslant N_{\overline{G}}(\PSL_2(7))$. By~\cite{BHRD}*{Table~8.6}, the preimage of $\PSL_2(7)$ in $\SU_3(3)$ is $\PSL_2(7)$ and so $H\leqslant N_G(\PSL_2(7))$. Calculations with {\sc Magma} show that $\PSL_2(7)\lhdeq H$ as in Table~\ref{T7}.

\smallskip
{\sc Case} 3:~$(n,q)=(3,5)$.\quad 
It follows from~\cite{Factns}*{p.\,13} that $\cU=\cP_1$ and
$\overline{H}\leqslant N_{\overline{G}}(\Alt_7)$. The preimage in $\GU_3(5)$ of $\Alt_7$ is
by~\cite{BHRD}*{Table~8.6} the nonsplit and absolutely irreducible
group $\nonsplit{3}{\Alt_7}$.
Calculations with {\sc Magma} show that $\nonsplit{3}{\Alt_7}\lhdeq H$.
Therefore $\nonsplit{3}{\Alt_7}\lhdeq H\le\nonsplit{3}{\Sym_7}$ agreeing with Table~\ref{T7}.

\smallskip
{\sc Case} 4:~$(n,q)=(3,8)$.\quad 
It follows from~\cite{Factns}*{p.\,13} that $\cU=\cP_1$ and
$\overline{H}\leqslant N_{\overline{G}}(19.3)$, and moreover 
 $ L\ldotp 3^2\le\oG$, where $L=\PSU_3(8)$.
Note that $|\cP_1|=q^3+1=3^3\cdot19$ divides $|\oH|$.  Now
$L\ldotp 3^2=P_1(3\times 19\ldotp9)$ is an exact factorisation
by~\cite{Factns}*{p.\,98}. Since $|\Aut(L):L|=3^2\cdot2$, it follows that
$3\times 19\ldotp9 \lhdeq \overline{H}$.
The preimage of $3\times 19\ldotp9$ in $G$
is isomorphic to
$\C_{513}\rtimes \C_9=\langle a,b\mid a^9=b^{513}=1,b^a=b^4\rangle$ by {\sc Magma},
and so we have the entry in Table~\ref{T7}.

\smallskip
{\sc Case} 5:~$(n,q)=(4,3)$ and $\cU=\cP_1$.\quad 
Here $\overline{H}\leqslant N_{\overline{G}}(\PSL_3(4))$ by~\cite{Factns}*{p.\,13} and a calculation in \textsc{Magma} shows that $\PSL_3(4)\lhdeq \oH$. Thus Lemma~\ref{lem:derived} implies that $\nonsplit{4}{\PSL_3(4)}\lhdeq H$ as in Table~\ref{T7}. 

\smallskip
{\sc Case} 6:~$(n,q)=(9,2)$.\quad 
It follows from~\cite{Factns}*{p.\,13} that $\cU=\cP_1$ and
$\overline{H}\leqslant N_{\overline{G}}(\textrm{J}_3)$. The preimage of $\textrm{J}_3$ in $\GU_9(2)$
 is the nonsplit absolutely irreducible subgroup
$\nonsplit{3}{\textrm{J}_3}$ by~\cite{BHRD}*{Table~8.57}. There are no
smaller cases to consider since $\textrm{J}_3$ has no factorisations
by~\cite{Factns}*{p.\,16}. Hence 
$\nonsplit{3}{\textrm{J}_3}\lhdeq H$ agreeing with Table~\ref{T7}.

\smallskip
{\sc Case} 7:~$\cU=\cN_1$.\quad 
Here  the possibilities for $H$
are given by~\cite{Reg}*{Lemma~4.3}.  However, the groups $M_{22}$ and $\PSU_4(3)$ listed there are only the images in $\PGammaU_n(q)$ and should be $\nonsplit{3}{M_{22}}$ and $\nonsplit{3_1}{\PSU_4(3)}$ as in \cite{BHRD}*{Table 8.27} and listed in Table~\ref{T6}.
 
\smallskip
{\sc Case} 8:~$n=2m$ and \ $\cU=\cP_m$.\quad 
By Lemma~\ref{lem:maxmin} and \cite{Factns}*{p.\,13}, there are three subcases: (a)~$(m,q)=(2,2)$, 
(b)~$(m,q)=(2,3)$, or (c)~$(m,q)\not\in\{(2,2),(2,3)\}$ and $B=N_1$
by~\cite{Factns}*{p.\,11}, where $N_1$ is the stabiliser of a nondegenerate
$1$-subspace $\langle v\rangle$.

\smallskip
{\sc Case} 8(a):~$(m,q)=(2,2)$.\quad 
Here we use {\sc Magma} to construct
the subgroups $H$ of $\GammaU_4(2)$ that act transitively on $\cP_2$ with
$\SU_4(2)\not\leqslant H$. Such subgroups $H$ either contain $\SU_3(2)$ as a normal subgroup, or have a normal subgroup $H_0$ such that
$H_0\in\{3_{+}^{1+2}, 3_{-}^{1+2}, \C_3^3\}$ as in
Table~\ref{T8}. Thus case (c) holds in this case.

\smallskip
{\sc Case} 8(b):~$(m,q)=(2,3)$.\quad 
Here we use {\sc Magma} to determine all
$H\leq\GammaU_4(3)$ that act transitively on $\cP_2$ with
$\SU_4(3)\not\leqslant H$. There are 14 conjugacy classes of such $H$,
and each normalizes and contains a subgroup conjugate to $H_0$ where
$H_0\in\{\PSL_2(7), \nonsplit{4_b}{\PSL_3(4)},\SU_3(3)\}$. The first two
possibilities are in Table~\ref{T8}, while the last is listed in the main statement of Theorem~\ref{T1}(c). Note also that the group $\PSL_2(7)$ given here is reducible and contained in $N_1$.

\smallskip
{\sc Case} 8(c):~$(m,q)\not\in\{(2,2),(2,3)\}$.\quad 
Here $G_U=P_m$ and $H\leqslant B=N_1=G_{\langle v\rangle}$ where $\langle v\rangle$
is a nondegenerate $1$-subspace. Then $G=G_UH$ and $B=B_UH$ where
$B_U=B\cap G_U=G_{\langle v\rangle,U}$. 
Since $B$ fixes the nondegenerate $(2m-1)$-subspace
$\langle v\rangle^{\perp}$, it also preserves the decomposition
$V=\langle v\rangle\oplus\langle v\rangle^{\perp}$. However, 
\[
  \SU(\langle v\rangle)\times\SU(\langle v\rangle^{\perp})\lhdeq B=
   G\cap (\GammaU(\langle v\rangle)\times\GammaU(\langle v\rangle^{\perp}))
\]
by~\cite{KL}*{Lemma~4.1.1(ii)}. Moreover, the projection
$\pi\colon B\rightarrow \GammaU(\langle v\rangle^{\perp})$ satisfies 
$$
\SU(\langle v\rangle^\perp) \lhdeq \pi(B)=\GammaU(\langle v\rangle^{\perp}).
$$ 
Note that $m>1$. If $\SU_{2m-1}(q)\leqslant \pi(H)$ then since $\SU_{2m-1}(q)$ is quasisimple (because $(m,q)\ne(2,2)$), it follows from Lemma~\ref{lem:derived} that $\SU_{2m-1}(q)\lhdeq H$ as in the statement of  Theorem~\ref{T1}(c).   Suppose then that $\SU_{2m-1}(q)\not\leqslant \pi(H)$. 
As $B=B_UH$,
we have a factorisation $\GammaU_{2m-1}(q)=\pi(B)=\pi(B_U)\pi(H)$.  The totally isotropic subspace $W:=U\cap\langle v\rangle^\perp$ is fixed by
$B_U=G_{\langle v\rangle,U}$ and so $\pi(B_U)\leqslant P_{m-1}(\pi(B))$.
Thus factoring out by scalars we obtain a core-free factorisation of an almost simple group with socle $\PSU_{2m-1}(q)$ and with one factor being $P_{m-1}$. 
Hence Lemma~\ref{lem:maxmin} and \cite{Factns}*{p.\,10, p.\,13} imply that $(2m-1,q)$ equals $(3,3), (3,5)$ or $(3,8)$. Thus, since $(m,q)\ne(2,3)$, we have
 $(m,q)=  (2,5)$ or $(2,8)$.  For $(m,q)=(2,5)$, a {\sc Magma} computation shows that there are also transitive subgroups containing $H_0=\nonsplit{3}{\Alt_7}$ as a normal subgroup (and contained in $N_1$), as listed in Table~\ref{T8}. When $(m,q)=(2,8)$ a similar computation
with {\sc Magma} shows that there are two possibilities for $\oH$, namely $\oH=\C_{513}\rtimes \C_9$ or $(\C_{513}\rtimes \C_9)\rtimes \C_2$, where in both cases $\oH\cap \PSU_4(8)=\C_{513}\rtimes \C_3$.
Thus $\C_{19}\lhd H$. This covers
all the cases in Table~\ref{T8}.
\end{proof}

\subsection{Six dimensional orthogonal groups of minus type}\label{SS:O6-} 
As we observed in Subsection~\ref{S:Osmall}, $\POmega_6^-(q)\cong \PSU_4(q)$, and it makes sense to treat these orthogonal groups here, now that we have completed the analysis for unitary groups. Recall Remark \ref{rem:Reg}(d) which points out a small discrepancy between one of the groups in Theorem \ref{T:KleinSU4}(c)  and the statement of \cite{Reg}*{Lemma 4.4(iii)}.

 \begin{theorem}\label{T:KleinSU4}
  Suppose that $H\le\GammaO_6^{-}(q)$ acts transitively on a set $\cU$ of subspaces of the natural module $V=(\F_q)^6$ given by~Table~\textup{\ref{T-defn}}, and where $\Omega_6^{-}(q)\not\le H$.  Then one of the following holds, where $\oH$ is the image of $H$ in $\PGammaU_4(q)$:
  \begin{enumerate}[{\rm (a)}]
  \item $\cU=\cP_1$ and $\SU_3(q)\lhdeq H$, or $H_0\lhdeq H$ where $(H_0,m,q)$ are listed in Table~\textup{\ref{T:GOminus6q}}; or
  
  \begin{table}[!ht]
\caption{Theorem~\ref{T:KleinSU4}(a) subgroups $H_0\lhdeq H\le\GammaO^-_{6}(q)$ where $H$
    is transitive on $\cP_1$.}\label{T:GOminus6q}
\begin{threeparttable}
\begin{tabular}{rlccccccc}
\toprule
$H_0$ && $3_{+}^{1+2}$&$3_{-}^{1+2}$&$(\C_3)^3$& $\PSL_2(7)$&$\nonsplit{2}{\PSL_3(4)}$&$\nonsplit{3}{\Alt_7}$&$\C_{19}{}{}^\dagger$\\
\textup{$q$}&&$2$&$2$&$2$&$3$&$3$&$5$&$8$\\
\bottomrule
\end{tabular}
  \begin{tablenotes}\footnotesize
  \item [$\dagger$] More information about $H$ is given in the proof.
  \end{tablenotes}
    \end{threeparttable}
\end{table}

  \item $\cU=\cP_2$ and $q=3$ and $\nonsplit{2}{\PSL_3(4)}\lhdeq H$; or
  \item $\cU=\cN_1^\eps$ (with $q$ odd and $\varepsilon\in\{+,-\}$) or $\cN_1$ (with $q$ even), and either $\SU_3(q)\lhdeq H$, or $q=3$ and $\nonsplit{2}{\PSL_3(4)}\lhdeq H$, or $q=2$ and $3_+^{1+2}\lhdeq H$; or
  \item $\cU=\cN_2^{+}$,  and either $q=4$ and $\SU_3(4)\lhdeq H$, or $q=2$ and $3^{1+2}_+\lhd H$.
\end{enumerate}
\end{theorem}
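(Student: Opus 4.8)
The plan is to exploit the exceptional isomorphism $\POmega_6^-(q)\cong\PSU_4(q)$ (see \cite{Taylor} and \cite{KL}*{Proposition~2.9.1}) together with the subgroup correspondences recorded in Table~\ref{tab:ExceptionalIsos}, so that every subspace action of $\GammaO_6^-(q)$ becomes an action of an almost simple group with socle $\PSU_4(q)$ that has already been analysed. First I would set $G=\mathrm{Stab}_{\GammaO_6^-(q)}(\cU)$ and write $\oH$ for the image of $H$ in $\PGammaU_4(q)$; since $\SU_4(q)/\Z(\SU_4(q))\cong\POmega_6^-(q)$ is quasisimple, Lemma~\ref{lem:derived} converts the hypothesis $\Omega_6^-(q)\not\le H$ into $\PSU_4(q)\not\le\oH$. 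By Table~\ref{T-defn} the families to be treated for $n=6$ of type $\Or^-$ are $\cP_1$, $\cP_2$, $\cN_2^+$ and $\cN_2^-$ (all $q$), together with $\cN_1^\eps$ and $\cN_3^\eps$ for $q$ odd and $\cN_1$ for $q$ even; here $G=\GammaO_6^-(q)$ except when $q$ is odd and $\cU\in\{\cN_1^\eps,\cN_3^\eps\}$, when $G$ has index $2$ in $\GammaO_6^-(q)$.

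For the parabolic families, Table~\ref{tab:ExceptionalIsos} identifies $P_1(\POmega_6^-(q))$ with $P_2(\PSU_4(q))$ and $P_2(\POmega_6^-(q))$ with $P_1(\PSU_4(q))$. Hence $H$ is transitive on $\cP_1$ (resp.\ $\cP_2$) exactly when $\oH$ is transitive on the totally isotropic $2$-subspaces (resp.\ $1$-subspaces) of the natural unitary module $W=(\F_{q^2})^4$, that is, on $\cP_2=\cP_m$ with $m=2$ (resp.\ $\cP_1$) of $\GammaU_4(q)$. Theorem~\ref{T1}(c) and Theorem~\ref{T1}(b), read off from Tables~\ref{T8} and~\ref{T7}, then give $\SU_3(q)\lhdeq\oH$ or one of the small groups listed there (and, in the case of $\cP_2$, force $q=3$). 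Pulling back along the central isogeny $\SU_4(q)\to\Omega_6^-(q)$, and using \cite{BHRD}*{Table~8.8} together with the small-dimensional tables to recover the isomorphism type of each preimage, turns $\nonsplit{4}{\PSL_3(4)}$ in $\SU_4(3)$ into $\nonsplit{2}{\PSL_3(4)}$ in $\Omega_6^-(3)$ and leaves the other entries unchanged; this gives parts (a) and (b), the small-field cases being inherited from the \textsc{Magma} computations in the proof of Theorem~\ref{T1}.

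For the $\cN_1$-families, Table~\ref{tab:ExceptionalIsos} identifies $N_1(\POmega_6^-(q))$ with the maximal symplectic ($\mathcal{C}_8$-type) subgroup $\Sp_4(q)$ of $\PSU_4(q)$, so transitivity of $H$ on $\cN_1^\eps$ or $\cN_1$ is equivalent to a factorisation $\oG=\oH\,B$ with $B$ of type $\PSp_4(q)$. Lemma~\ref{lem:maxmin} and \cite{Factns} (the factorisations of an almost simple group with socle $\PSU_4(q)$ possessing a $\PSp_4(q)$-type factor) then show that either $\SU_3(q)\lhdeq H$, or $q\in\{2,3\}$, and in the latter two cases a \textsc{Magma} computation yields $3^{1+2}_+\lhdeq H$ ($q=2$) and $\nonsplit{2}{\PSL_3(4)}\lhdeq H$ ($q=3$). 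When $q$ is odd one must finally decide which conjugacy class of such subgroups is transitive on $\cN_1^+$ and which on $\cN_1^-$, recalling that the two orbits are interchanged only by an element of $\CO_6^-(q)\setminus\GO_6^-(q)$. This proves part (c).

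The remaining families are dealt with in the same manner. By Table~\ref{tab:ExceptionalIsos}, $N_2^+$ corresponds to a subgroup of $\PSU_4(q)$ of type $\GL_2(q^2)\ldotp2$ and $N_2^-$ to a subgroup of type $\GL_2(q)\wr\C_2$, while, using the action of $\SU_4(q)$ on $\wedge^2 W$, the stabiliser $N_3^\eps$ ($q$ odd) corresponds to a subgroup of type $\SO_4^\pm(q)$ contained in $\Sp_4(q)$. An inspection of \cite{Factns} shows there is no factorisation of an almost simple group with socle $\PSU_4(q)$ having a factor of type $\GL_2(q)\wr\C_2$ or of type $\SO_4^\pm(q)$, so $\cN_2^-$ and $\cN_3^\eps$ give no examples; for $\cN_2^+$ the only possibilities occur at $q=2$ and $q=4$, where \textsc{Magma} yields $3^{1+2}_+\lhd H$ and $\SU_3(4)\lhdeq H$ respectively, which is part (d). I expect the main difficulty to lie not in the reductions themselves but in the careful bookkeeping of central extensions and conjugacy classes when passing between $\SU_4(q)$, $\PSU_4(q)$, $\Omega_6^-(q)$ and $\GammaO_6^-(q)$ — for instance, establishing that the relevant copy of $\PSL_3(4)$ in $\Omega_6^-(3)$ is the nonsplit extension $\nonsplit{2}{\PSL_3(4)}$ rather than $\PSL_3(4)$ itself (precisely the point where \cite{Reg}*{Lemma~4.4(iii)} is imprecise; cf.\ Remark~\ref{rem:Reg}(d)), and assigning the $\SU_3(q)$- and $\nonsplit{2}{\PSL_3(4)}$-subgroups correctly to $\cN_1^+$ and $\cN_1^-$; the verifications for $q\in\{2,3,4\}$ are routine \textsc{Magma} computations.
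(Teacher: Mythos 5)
Your proposal follows essentially the same route as the paper: translate each family $\cU$ through the isomorphism $\POmega_6^-(q)\cong\PSU_4(q)$ using the correspondences of Table~\ref{tab:ExceptionalIsos}, invoke Theorem~\ref{T1} for the parabolic cases and Lemma~\ref{lem:maxmin} with \cite{Factns} for the nondegenerate ones, and clean up the small fields by \textsc{Magma}. The only compressions relative to the paper's argument are that the $\cN_1$ case in fact requires a second factorisation step inside $\PSU_3(q)$ (which also throws up $q=5,8$, eliminated by order and computational arguments), and that $\cN_2^-$ at $q=2$ and $\cN_3^\eps$ at $q=3$ must be checked separately because the relevant stabiliser fails to be maximal there.
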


\begin{proof}
  Note that $\cU=\cP_1,\cP_2$, $\cN_2^\eps$ (with $\eps=\pm$), or $\cN_1$ (with $q$  even), or $\cN_1^\eps$  or $\cN_3^\eps$  (with $\eps=\pm$ and $q$  odd) (see Table~\textup{\ref{T-defn}}).  Fix $U\in\cU$ and set $G=\textup{Stab}_{\GammaO_6^{-}(q)}(\cU)$.    Then $G=\GammaO_6^-(q)$ unless $q$ is odd and $\cU=\cN_1^\eps$ or $\cN_3^\eps$, in which case $G$ is an index two subgroup of $\GammaO_6^-(q)$.

 By~\cite{Taylor}*{Corollary 12.35}, $\Omega_6^-(q)\cong\SU_4(q)/\langle -I_4\rangle$. Then by comparing orders and the fact that $\Aut(\PSU_4(q))=\PGammaU_4(q)$ we deduce that 
  $\PGammaO_6^-(q)\cong\PGammaU_4(q)$.  For any $X\leqslant \GammaO_6^-(q)$ we denote the image of $X$ in $\PGammaU_4(q)$ by $\overline{X}$. Denote by $W$ the natural module $(\F_{q^2})^4$ for $\SU_4(q)$.

\smallskip
  {\sc Case}~$\cU=\cP_1$.  Comparing composition factors we see that if $X$ is the stabiliser in $\GammaO_6^-(q)$ of a totally singular 1-subspace of $V$ then $\overline{X}$ is the stabiliser of a totally isotropic 2-subspace of $W$. Thus $\oH$ acts transitively on the set of totally isotropic 2-subspaces of $W$.  Thus using Theorem~\ref{T1}(c) and the fact that $\Omega_6^-(q)\cong\SU_4(q)/\langle \pm I_4\rangle$, implies that either $\SU_3(q)\lhdeq H$ or $H_0\lhdeq H$ where $H_0$ is given in Table~\ref{T:GOminus6q}, so part (a) holds. Note from the proof of Theorem~\ref{T1} that, in the case where
  $H_0\cong \C_{19}$, the group $\overline{H}$ satisfies $\overline{H}\cap \POmega_6^-(q)=\C_{513}\rtimes \C_3$. 
  \smallskip

  {\sc Case}~$\cU=\cP_2$. Again, by comparing composition factors we see that if $X$ is the stabiliser of an element of $\cP_2$ then $\overline{X}$ is the stabiliser of a totally isotropic 1-subspace of $W$. Thus $\oH$ is transitive on the set of totally isotropic 1-subspaces of $W$ and so, by Theorem~\ref{T1}(b) with $n=4$, we have that $q=3$ and $\PSL_3(4)\lhdeq \oH$. Then \cite{BHRD}*{Table~8.34} implies that $\nonsplit{2}{\PSL_3(4)}\lhdeq H$, and part (b) holds.
 
 \smallskip
  {\sc Case}~$\cU=\cN_1^\eps$ (with $q$ odd) or $\cN_1$ (with $q$ even). Note that when $q$ is odd, a similarity in $\CO_6^-(q)$ that is not an isometry, interchanges $\cN_1^+$ and $\cN_1^-$ and so we may assume that $\cU=\cN_1^+$ in this case. Since $G_U$ has $\POmega_5(q)\cong \PSp_4(q)$ as a composition factor, we see from \cite{BHRD}*{Table~8.10} that $\overline{G_U}$ is a $\mathcal{C}_5$-subgroup normalising $\PSp_4(q)$, and is maximal in $\oG$. Let $\oB$ be a subgroup of $\PGammaU_4(q))$ that contains $\oH$ and is maximal subject to being core-free. Then by Lemma~\ref{lem:maxmin} and  \cite{Factns} we see that either $\oB$ is the stabiliser a nondegenerate 1-subspace $W_1$ of $W$, or $q=3$ and $\oB=N_{\oG}(\PSL_3(4))$.  Suppose first that $\oB$ is the stabiliser in $\oG$ of $W_1$ and let $W_3=W_1^\perp.$ Then considering $\oB^{W_3}$ either $q=2$ or we obtain a factorisation of an almost simple group with socle $\PSU_3(q)$. Thus by \cite{Factns}, either $\PSU_3(q)\lhdeq \oH$, or $q=2,3,5$ or 8. In the first case, Lemma~\ref{lem:derived} implies that $\SU_3(q)\lhdeq H$. We deal with these small values of $q$, and also the case where $q=3$ and $\oB=N_{\oG}(\PSL_3(4))$, with \textsc{Magma}. When $q=2$, there are five possibilities for $H$ and all contain $3_+^{1+2}$ as a normal subgroup.  When $q=3$ we see that either $\SU_3(3)$ or $\nonsplit{2}{\PSL_3(4)}$ is normal in $H$. For $q=5$, we see that $\SU_3(q)\lhdeq H$. Finally, for $q=8$ note that $|\cN_1|=32832$  and \cite{Factns}*{Table~3} implies that either $\SU_3(8)\lhdeq H$ or $\oH\leqslant N_{\oG}(\C_{19})$. However, $|N_{\oG}(\C_{19})|=9234$, which eliminates the latter possibility. Thus part (c) holds.

\smallskip
  {\sc Case}~$\cU=\cN_2^{+}$. Here $G_U$ has $\POmega_4^-(q)\cong\PSL_2(q^2)$ as a unique insoluble composition factor but $\overline{G_U}$ is not a parabolic subgroup of $\oG$. Thus comparing \cite{BHRD}*{Tables~8.10 and~8.33} we see that $\overline{G_U}$ is a $\mathcal{C}_2$-subgroup of $\oG\cong\PGammaU_4(q)$ preserving a decomposition of $W$ into a pair of complementary totally isotropic 2-subspaces. Moreover, $\overline{G_U}$ is maximal for all $q>2$.  If $q>2$ we let $\oB$ be a subgroup of $\oG$ that contains $\oH$ and is maximal subject to being core-free. Then Lemma~\ref{lem:maxmin} and \cite{Factns} imply that $q=4$ and $\oB$ is the stabiliser in $\oG$ of a nonsingular 1-subspace of $W$. A \textsc{Magma} calculation and application of Lemma~\ref{lem:derived} then reveals that $\SU_3(4)\lhdeq H$. For $q=2$, a \textsc{Magma} calculation shows that $3_+^{1+2}\lhd H$.
   This verifies part~(d)

\smallskip
  {\sc Case}~$\cU=\cN_2^{-}$. Here $G_U$ has $\POmega_4^+(q)\cong\PSL_2(q)^2$ as a section. Thus by comparing \cite{BHRD}*{Tables~8.10 and~8.33} we see that $\overline{G_U}$ is a $\mathcal{C}_2$-subgroup of $\oG\cong\PGammaU_4(q)$ preserving a decomposition of $W$ into a pair of orthogonal nondegenerate 2-subspaces, and is maximal for $q\geq 3$. Thus by Lemma~\ref{lem:maxmin}, when $q\geq 3$  we get a core-free maximal core-free factorisation of $\oG$. However, by \cite{Factns} no such factorisation exists. Thus $q=2$. A \textsc{Magma} calculation then shows that there are no examples with $\Omega_6^-(q)\not\leqslant H$.

\smallskip
  {\sc Case}~$\cU=\cN_3^{\eps}$. Here $q$ is odd and the two isometry classes are interchanged by a similarity.  Hence $G$ is an index two subgroup of $\GammaO_6^-(q)$ and 
 $G_U=G_{U^\perp}$ preserves an orthogonal decomposition $V=U\oplus U^\perp$.   Thus we may assume that $\cU=\cN_3^+$ in this case. Also $G_U$ has $\PSL_2(q)^2$ as a section. Comparing \cite{BHRD}*{Tables~8.10 and~8.33} we see that $\overline{G_U}$ is a $\mathcal{C}_5$-subgroup of $\oG\cong\PGammaU_4(q)$ of type $\SO_4^+(q)$, and is maximal in $G$ for $q\geq 5$. By~\cite{Factns}, there are no such maximal core-free factorisations and so $q=3$. In this case, a \textsc{Magma} calculation implies that $\Omega_6^-(3)\lhdeq H$.
\end{proof}

\section{Symplectic groups}\label{S:Sp}

Let $V=(\mathbb{F}_q)^n$ be equipped with a nondegenerate alternating form and let $\cU$ be the set of all totally isotropic subspaces of $V$ of dimension $k\leq n/2$, or the set of all nondegenerate subspaces of dimension $k\leq n/2$. Note that $n$ is always even, and also that  every 1-subspace of $V$ is totally isotropic, so the case $n=2$ was considered in Remark~\ref{R:Sp2}. We assume henceforth that $n\ge4$. By Witt's Lemma, $\Sp_n(q)$ is transitive on $\cU$. In Theorem~\ref{T3} we classify the subgroups $H$ of $\GammaSp_n(q)$ that are transitive on~$\cU$. We use this result  in Subsection~\ref{SS:O5} to deal with the five-dimensional orthogonal groups in odd characteristic.
We note that $\PSp_n(q)$ is simple unless $(n,q)=(4,2)$, in which case $\PSp_4(2)\cong \Sym_6$. 

Part (c) of Theorem~\ref{T3} was considered in \cite{Reg}*{Lemma~4.2} while parts (a) and (b) for $m=2$ were considered in \cite{KLrank3}*{Corollary~5.12}. We begin with the following lemma, which enables us to eliminate some possibilities given by \cite{Reg}*{Lemma~4.2(i)}.

\begin{lemma}\label{L:temp}
Let  $G=\Sp_{4m}(2)$ with natural module $V=\F_2^{4m}$, where  $m$ is even, and either let $L=\Sp_m(16).4$ preserving an extension field structure on $V$, or let $L= \G_2(16).4$ if $m=6$. Then $L$ is not transitive on the set $\cN_2$ of nondegenerate $2$-subspaces of $V$.  In particular the possibilities \emph{`$\Sp_{m/2}(q^4)$ ($m/2$ even, $q=2$)'} and \emph{`$\G_{2}(q^4)$ ($m=12$, $q=2$)'} in \cite{Reg}*{Lemma 4.2(ii)} lead to no examples.
\end{lemma}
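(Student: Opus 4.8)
The plan is to produce an $L$-semi-invariant function on $\cN_2$ whose set of values is too large for $L$ to be transitive. An order count will not suffice on its own: one checks directly that $|\cN_2|$ divides $|L|$ in both cases — writing $m=2e$, for $L=\Sp_m(16).4$ one gets $|L|/|\cN_2|=3\cdot 2^{4(e-1)^2}\prod_{i=1}^{e-1}(2^{8i}-1)$, and the quotient is likewise an integer for $L=\G_2(16).4$ — so the obstruction has to be genuinely geometric.

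First I would fix coordinates. Using the extension-field structure, identify $V=\F_2^{4m}$ with $\F_{16}^m$; let $\beta$ be the $\F_{16}$-bilinear alternating form on $\F_{16}^m$ that is preserved by $\Sp_m(16)$, chosen so that its Gram matrix with respect to the standard basis $e_1,\dots,e_m$ has entries in $\F_2$, with $\beta(e_1,e_2)=1$. Then the $\F_2$-symplectic form on $V$ is $B=\tau\circ\beta$, where $\tau\colon\F_{16}\to\F_2$ is the trace map, a surjective $\F_2$-linear form. Write $\sigma$ for the Frobenius $x\mapsto x^2$; the field-automorphism group $\langle\sigma\rangle$ has order $4$, and in both cases $L\le N:=\Sp_m(16).\langle\sigma\rangle$ — when $L=\G_2(16).4$ this is because $\G_2(16)\le\Sp_6(16)$ and the field automorphisms adjoined in forming $\G_2(16).4$ act on $V$ as field automorphisms of $\Sp_6(16)$, so $L\le\Sp_6(16).\langle\sigma\rangle=N$.

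The key construction assigns to each $U\in\cN_2(V)$, with $\F_2$-basis $\{u,v\}$, the scalar $\lambda(U):=\beta(u,v)\in\F_{16}$. A short characteristic-$2$ computation — using that $\beta$ is alternating, hence symmetric as an $\F_2$-bilinear form — shows that $\beta$ takes the value $\beta(u,v)$ on every ordered pair of distinct nonzero vectors of $U$, so $\lambda(U)$ does not depend on the chosen basis. Since $\{u,v\}$ is a basis of the nondegenerate plane $U$ we have $\tau(\lambda(U))=B(u,v)\ne0$; hence $\lambda$ maps $\cN_2(V)$ into $T:=\{t\in\F_{16}^\times:\tau(t)=1\}$, a set of size $8$. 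Moreover $\lambda$ is onto $T$: for $t\in T$ the plane $\langle e_1,te_2\rangle_{\F_2}$ lies in $\cN_2(V)$, since $B(e_1,te_2)=\tau(t)=1$, and has $\lambda$-value $t$. Finally $\lambda$ is $N$-semi-invariant: $\lambda(gU)=\lambda(U)$ for $g\in\Sp_m(16)$ because such $g$ preserves $\beta$, and $\lambda(\sigma U)=\lambda(U)^2$ because the Gram matrix of $\beta$ has entries in $\F_2$, so squaring all coordinates squares the value of $\beta$. Consequently $\lambda$ sends each $L$-orbit on $\cN_2(V)$ into a single orbit of $\langle\sigma\rangle$ acting on $T$ by $t\mapsto t^2$; since $|T|=8>4=|\langle\sigma\rangle|$ there are at least two such $\langle\sigma\rangle$-orbits, and surjectivity of $\lambda$ then forces $L$ to have at least two orbits on $\cN_2(V)$. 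Thus $L$ is not transitive, and in particular the two possibilities named in \cite{Reg}*{Lemma~4.2(ii)} yield no examples.

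I expect the only delicate points to be the two properties of $\lambda$: that $\beta$ is constant on the ordered pairs of distinct nonzero vectors of a two-dimensional $\F_2$-subspace (a few lines of characteristic-$2$ bilinear-form algebra), and that the Frobenius acts on the values of $\lambda$ by squaring — which is exactly why $\beta$ must be chosen with a Gram matrix defined over $\F_2$. Everything else (the divisibility computation, the identity $|T|=8$, and the count of $\langle\sigma\rangle$-orbits on $T$) is routine.
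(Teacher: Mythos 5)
Your argument is correct, and it is genuinely different from the one in the paper. The paper works through the factorisation machinery: it sandwiches $L$ inside the intermediate extension-field subgroup $M=\Sp_{2m}(4).2$, quotes the maximal factorisations $G=MG_X$ and $M=LM_Y$ together with the explicit stabiliser computations $M_X=\Sp_{2m-2}(4)\times\Sp_2(2)$ and $L_X=\Sp_{m-2}(16)\times\Sp_2(2)$ from \cite{Factns}*{pp.\,47--48}, and compares indices to get $|G:G_X|=2|L:L_X|$, i.e.\ exactly two equal orbits; the case $m=2$ has to be done separately in \textsc{Magma} because the inductive set-up needs $m\ge4$. You instead build the explicit semi-invariant $\lambda(U)=\beta(u,v)\in\F_{16}$, and your verifications are all sound: well-definedness follows from $\beta$ being alternating in characteristic $2$, the image is exactly the $8$-element trace-one set $T$ (the planes $\langle e_1,te_2\rangle_{\F_2}$ witness surjectivity), $\Sp_m(16)$ fixes $\lambda$, and the Frobenius squares it because the Gram matrix is over $\F_2$ (and $\tau(x^2)=\tau(x)$ guarantees $\sigma\in\Sp_{4m}(2)$). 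Since every element of $N_G(\Sp_m(16))$ acts on $\lambda$-values through a power of the Frobenius, each $L$-orbit lands in a single squaring-orbit on $T$, and $|T|=8>4$ kills transitivity; the reduction of the $\G_2(16).4$ case to containment in $\Sp_6(16).4$ is the same short step the paper uses. What your approach buys is uniformity (it covers $m=2$ with no computer check), independence from \cite{Factns}, and a transparent geometric reason for the failure of transitivity; in fact $T$ splits into exactly two squaring-orbits of size $4$ (the order-$5$ elements and one of the two order-$15$ orbits), so combined with your divisibility computation you recover the paper's sharper conclusion that $L$ has precisely two equal-length orbits. The only point worth making explicit is that the chosen symplectic basis can be taken compatible with the $\F_2$-structure defining the field automorphism in the $\G_2$ case, but this is standard.
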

 
 \begin{proof}
 Since in the case $m=6$ the subgroup $\G_2(16).4$ is contained in an extension field subgroup 
 $\Sp_{6}(16).4$, it is sufficient to prove the lemma in the case  $L=\Sp_m(16).4$. 
 If $m=2$ then a {\sc Magma} computation shows that no subgroup $\Sp_2(16).4$ of $\Sp_8(2)$ acts transitively on $\cN_2$. Assume from now on that $m\geq 4$. Let $M=\Sp_{2m}(4).2$ be the maximal extension field subgroup of $G$ containing $L=\Sp_m(16).4$.  Using the notation from \cite{Factns}*{p.\,47}, let $E_1,\dots, E_m, F_1,\dots, F_m$ be a standard basis for $V=\F_4^{2m}$ (regarded as the natural module for $M$) relative to the alternating form preserved by $M$,  let $Y=\la E_1,F_1\ra_{\F_4}$, a nondegenerate $2$-subspace of $\F_4^{2m}$, and let $\lambda\in\F_4\setminus\F_2$ with trace 1 relative to $\F_2$ so that $X=\la E_1,\lambda F_1\ra_{\F_2}\in\cN_2$, a nondegenerate $2$-subspace of $V$ contained in $Y$. Note that   $G_X=\Sp_{4m-2}(2)\times \Sp_2(2)$ and   $M_Y=(\Sp_{2m-2}(4)\times \Sp_2(4)).2$, and also that $M_X$ leaves $Y$ invariant so $M_X\leq M_Y$.

It is proved in \cite{Factns}*{pp.\,47--48}  that $G=MG_X$ and $M=LM_Y$ are maximal core-free factorisations,  and that (importantly)  
 \[
 M_X=(M')_X = \Sp_{2m-2}(4)\times\Sp_2(2)\quad\mbox{and}\quad L_Y=(L')_Y = \Sp_{m-2}(16)\times\Sp_2(4),
 \]
 where $M', L'$ denote the derived subgroups.  Now $L_X\leq M_X$ (since $L\leq M$) which is contained in $M_Y$ (as noted above). Thus $L_X\leq M_Y\cap L=L_Y$, and hence $L_X\leq M_X\cap L_Y=\Sp_{m-2}(16)\times\Sp_2(2)$. On the other hand, 
 the argument on  \cite{Factns}*{p.\,47} shows that $L_X$ contains $\Sp_{m-2}(16)\times\Sp_2(2)$, so equality holds.
Thus $M_X$ is a subgroup of $M_Y$ of index $|M_Y:M_X|=2 |\Sp_2(4):\Sp_2(2)|$, while 
$L_X$ is a subgroup of $L_Y$ of index $|L_Y:L_X|= |\Sp_2(4):\Sp_2(2)|=|M_Y:M_X|/2$.  Since $G=MG_X$ and $M=LM_Y$ we have 
\[
|G:G_X|=|M:M_X|=|M:M_Y|.|M_Y:M_X| = |L:L_Y|.\left( 2 |L_Y:L_X|\right) = 2|L:L_X|,
\]
and hence $L$ has two equal length orbits on $\cN_2$.  
 \end{proof}

\begin{theorem}\label{T3}
Suppose  that $H\leqslant\GammaSp_{2m}(q)$, where $m\geq 2$, such that $\Sp_{2m}(q)'\not\leqslant H$. Let $V=(\mathbb{F}_q)^{2m}$
and suppose that $H$ acts transitively on  the set $\cU=\cP_k$ or $\cN_k$ of $k$-subspaces of $V$, where $1\le k\le m$. Then one of
 \begin{enumerate}[{\rm (a)}]
  \item $\cU=\cP_1$ and $H_0\lhdeq H$ where $(H_0,m,q)$ is given in
    Table~\textup{\ref{T:Spa}}; or
\begin{table}[!ht]
\caption{Theorem~\ref{T3}(a) Subgroups $H_0\lhdeq H\le\GammaSp_{2m}(q)$ where $H$ is transitive
on $\cP_1$.}\label{T:Spa}
\begin{tabular}{rlcccccc}
\toprule
$H_0$ &&$\Sp_{2a}(q^b)$ &$\textup{G}_2(q^b)'$&$2_{-}^{1+4}\ldotp\C_5$&$2_{-}^{1+4}\ldotp\Alt_5$ &$\SL_2(5)$&$\SL_2(13)$\\
\textup{$m$}&&$ab>a$&$3b$&$2$&$2$&$2$&$3$\\
\textup{$q$}&&${\rm all}$&${\rm all}$&$3$&$3$&$3$&$3$\\
\bottomrule
\end{tabular}
\end{table}

\item $\cU=\cP_m$ and $\Omega_{2m}^{-}(q)\lhdeq H$ where $m\geq2$ and
  $q$ is even, or $(m,q)=(2,3)$ and either $2_-^{1+4}.\Alt_5\lhdeq H$, or $H=2_-^{1+4}.F_{20}$; or
\item $\cU=\cN_2$ and $H_0\lhdeq H$ where $H_0$
  is given in~Table~\textup{\ref{T:Spc}}.
\begin{table}[!ht]
\caption{Theorem~\ref{T3}(c) Subgroups $H_0\lhdeq H\le\GammaSp_{2m}(q)$ where $H$ is transitive
on $\cN_2$.}\label{T:Spc}
\begin{tabular}{rlccccc}
\toprule
$H_0$ &&$\Sp_{2a}(q^2)$ &$\G_2(q)'$&$\G_2(q^2)$&$\SL_2(q^2)$\\
\textup{$m$}&&$2a$&$3$&$6$&$2$\\
\textup{$q$}&&$2,4$&\textup{even}&$2,4$&\textup{even}\\
\bottomrule
\end{tabular}
\end{table}

%\begin{table}[!ht]
%\caption{(e) $\mathcal{C}_3$ and $\mathcal{C}_8$ subgroups $H_0\lhdeq H\le\GammaSp_{2m}(q)$ transitive
%  on $\cN_1^{-}(\F_q^{\;2m+1})$.}%\label{T:Spe}
%  %Subgroups $H_0\lhdeq H\le\GammaSp_{2m}(q)$ where $H$ is transitive
%  %on $\cN_1^{-}(\F_q^{\;2m+1})$.}\label{T:Spe}
%\begin{tabular}{rlccccccc}
%\toprule
%$H_0$ &&$\Sp_{2a}(q^b)$ &$\G_2(q^b)'$ &$\Sp_{2c}(q^d)^2$&$(\G_2(q^d)')^2$&$\Sp_{2c}(q^{d/2})$ %&$\G_2(q^{d/2})'$&$X_a(q^{b_4})$\\
%\textup{$m$}&&$ab$&$3b$&$2cd$&$6d$&$cd$&$3d$&$ab_4$\\
%\textup{$q$}&&$\ge2$&$\ge2$&$\ge2$&$\ge2$&$\ge2$&$\ge2$&$2,4$\\
%\midrule
%$H_0$ &&$\G_2(q^{b_4})'$ &$\Omega_{2a}^{+}(q^{2b_2})$&$\SL_2(q^{2b_4})$&$\Omega_7(q^{b_4})$&$\Omega_8^{-}(%q^{b_4})$ &$\Omega_9(q^{b_4})$&\\
%\textup{$m$}&&$6b_4$&$2ab_2$&$4b_4$&$4b_4$&$4b_4$&$8b_4$&$b_4=1,2,4$\\
%\textup{$q$}&&$2,4$&$2,4$&$2,4$&$2,4$&$4$&$2,4$&\\
%\bottomrule
%\end{tabular}
%\end{table}
%%%%%%%%%%%%%%%%%%%%%%%%%%%%%%%%%%%%%%%%%%%%%%%%%%%%%%%%%%%%%%%%%%%%%%%%
\end{enumerate}
\end{theorem}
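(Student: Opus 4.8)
The plan is to use the factorisation $G=HG_U$ that holds for every $U\in\cU$, since $G_0:=\Sp_{2m}(q)$ is transitive on $\cU$ by Witt's Lemma and hence $G=\GammaSp_{2m}(q)=HG_U$. First I would record two elementary constraints: a nondegenerate symplectic subspace has even dimension, so $\cN_k=\emptyset$ for odd~$k$, and every $1$-subspace is totally isotropic, so $\cN_1=\emptyset$; then \cite{Reg}*{Lemma~4.1} (which lists the subspace types of a classical group admitting a proper transitive subgroup) leaves only the possibilities $\cU=\cP_1$, $\cU=\cP_m$ and $\cU=\cN_2$. Throughout one keeps in mind that $\PSp_{2m}(q)$ is simple except for $\PSp_4(2)\cong\Sym_6$, whose relevant actions are small enough to settle directly with \textsc{Magma}.

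For $\cU=\cP_1$ the group $H\le\GammaL_{2m}(q)$ is transitive on all $1$-subspaces, and since $\SL_{2m}(q)\not\le\GammaSp_{2m}(q)$ when $m\ge2$ we certainly have $\SL_{2m}(q)\not\le H$; hence Theorem~\ref{T:SLbigN}(a) applies with $n=2m$, $k=1$, and the task is to decide which of its conclusions are compatible with $H\le\GammaSp_{2m}(q)$. The possibility $H\le\GammaL_1(q^{2m})$ always occurs, via $\GammaL_1(q^{2m})\le\GammaL_2(q^m)=\GammaSp_2(q^m)\le\GammaSp_{2m}(q)$ (the last being a field-reduction $\mathcal{C}_3$-subgroup); among the entries of Table~\ref{T:SLa}, the field-extension groups $\Sp_a(q^b)$ embed by the same device, the groups $\SL_a(q^b)$ with $a$ even and $a\ge4$ are discarded as they preserve no alternating form, and $\G_2(q^b)'$ enters through its $6$-dimensional module in characteristic~$2$; the remaining sporadic entries (over $\F_2$ and $\F_3$, together with $\PSp_4(2)$) are handled by \textsc{Magma}. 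This gives Table~\ref{T:Spa}.

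For $\cU=\cP_m$ the stabiliser $G_U$ is the maximal parabolic $P_m$, so passing to $\overline{G}=\PGammaSp_{2m}(q)$ and invoking Lemma~\ref{lem:maxmin} together with \cite{Factns} produces a maximal core-free factorisation of an almost simple group with socle $\PSp_{2m}(q)$ and one factor of type $P_m$. Apart from small cases with $(m,q)\in\{(2,2),(2,3)\}$, \cite{Factns} forces the other factor to be (the normaliser of) $\GO_{2m}^-(q)$ with $q$ even, and one checks directly that $\Omega_{2m}^-(q)\le\Sp_{2m}(q)$ is indeed transitive on $\cP_m$: the map $W\mapsto\ker(Q|_W)$ sends $\cP_m$ onto the totally singular $(m-1)$-subspaces, on which $\Omega_{2m}^-(q)$ is transitive, and the fibres have size $q+1$ and are permuted transitively by the relevant $\Omega_2^-(q)$. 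Lemma~\ref{lem:derived} then upgrades the conclusion to $\Omega_{2m}^-(q)\lhdeq H$, and \textsc{Magma} disposes of $\Sp_4(2)$ and $\Sp_4(3)$, yielding part~(b). (By contrast $\GO_{2m}^+(q)$ is not transitive on $\cP_m$, having orbits separated by $\dim\ker(Q|_W)$ and, on the totally singular subspaces, by $\Omega_{2m}^+(q)$-orbit type.)

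Finally, $\cU=\cN_2$ is essentially \cite{Reg}*{Lemma~4.2}, so here the plan is to quote that result and then correct it: Lemma~\ref{L:temp} excises the spurious families $\Sp_{m/2}(q^4)$ ($m/2$ even, $q=2$) and $\G_2(q^4)$ ($m=12$, $q=2$), and, as flagged in Remark~\ref{rem:Reg}(b), one must replace the projective groups listed in \cite{Reg} by the correct matrix groups; what survives are the field-extension subgroups $\GammaSp_{2a}(q^2)\le\GammaSp_{4a}(q)$, transitive on $\cN_2$ only for $q\in\{2,4\}$, together with the $\G_2$-subgroups, that is, Table~\ref{T:Spc}, with $(m,q)=(2,2)$ again checked in \textsc{Magma}. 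I expect the main obstacle to lie in this case: reconciling the superset of \cite{Reg}*{Lemma~4.2} with genuine transitivity requires the careful orbit-length bookkeeping of Lemma~\ref{L:temp} (following how $L_X\le L_Y$ sits inside $M_X\le M_Y$), and, more generally, the pervasive difficulty across all three cases is to decide for each exceptional embedding whether the transitive subgroup is the classical group itself or only its image in the projective group, and precisely which cover occurs.
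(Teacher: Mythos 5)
Your overall architecture (the factorisation $G=HG_U$, reduction to $\cP_1,\cP_m,\cN_2$, Hering for $\cP_1$, \cite{Factns} for $\cP_m$, and \cite{Reg}*{Lemma~4.2} corrected by Lemma~\ref{L:temp} for $\cN_2$) matches the paper, but there are two genuine gaps.

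The serious one is in case (a): you assert that the possibility $H\le\GammaL_1(q^{2m})$ ``always occurs,'' via the chain $\GammaL_1(q^{2m})\le\GammaL_2(q^m)=\GammaSp_2(q^m)\le\GammaSp_{2m}(q)$. The last inclusion is false: field reduction embeds $\Sp_2(q^m)=\SL_2(q^m)$ into $\Sp_{2m}(q)$, but the similarity group of $\CSp_{2m}(q)$ is only $\F_q^\times$, so the full $\GL_2(q^m)=\CSp_2(q^m)$ (and hence the full $\GammaL_1(q^{2m})$) does not land inside $\GammaSp_{2m}(q)$. In fact $\GL_1(q^{2m})\cap\Sp_{2m}(q)$ is cyclic of order only $q^m+1$, so $|\GammaL_1(q^{2m})\cap\GammaSp_{2m}(q)|$ divides $2mf(q-1)(q^m+1)$, which is smaller than $|\cP_1|=(q^{2m}-1)/(q-1)$ except in tiny cases; the paper needs the number-theoretic Lemma~\ref{L:NT} plus a \textsc{Magma} check at $(m,q)=(2,3),(2,7)$ to eliminate this branch of Theorem~\ref{T:SLbigN}(a) entirely. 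As stated, your version of part (a) would add a spurious family to Table~\ref{T:Spa}, so the theorem you would prove is not the theorem claimed.

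The second gap is the dismissal of $\cU=\cN_k$ for $k>2$. You attribute the reduction to $\{\cP_1,\cP_m,\cN_2\}$ to \cite{Reg}*{Lemma~4.1}, but that lemma (and the maximal-factorisation machinery behind it) only applies when $G_U$ is maximal, and for $\cU=\cN_m$ with $m$ even and $m\ge4$ the stabiliser $N_m$ is \emph{not} maximal: it has index $2$ in the imprimitive subgroup stabilising $\{U,U^\perp\}$. The paper must treat this case separately: Lemma~\ref{lem:maxmin} applied to $(G^*)_{\{U,U^\perp\}}$ forces $\oB=N_{G^*}(\SO^-_{2m}(q))$ with $q$ even, and then \cite{Factns}*{(3.2.4b)} shows $U$ and $U^\perp$ have opposite types for the relevant quadratic form, so $\oB_{\{U,U^\perp\}}=\oB_U$ and $\oB$ cannot be transitive on $\cN_m$ --- a contradiction. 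Without some such argument your case analysis is incomplete. (More minor: in case (b) the claim that Lemma~\ref{lem:derived} ``upgrades'' the conclusion to $\Omega^-_{2m}(q)\lhdeq H$ elides the real work, namely showing that a subgroup of $\GammaO^-_{2m}(q)$ not containing $\Omega^-_{2m}(q)$ cannot be transitive on $\cP_m$; this requires the induced $P_{m-1}$-factorisation of $\POmega^-_{2m}(q)$ for $m\ge3$ and, for $m=2$, the graph automorphism of $\Sp_4(q)$ reducing to part (a).)
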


\begin{proof}
Let $G=\GammaSp_{2m}(q)$, so $H\leqslant G$ is transitive on $\cU$ with $\Sp_{2m}(q)\not\leqslant H$.  Let $U\in \cU$. Then we have a factorisation $G=HG_U$.
By~\cite{King1981} and \cite{King1981a}, either $\Sp_{2m}(q)$ acts primitively on $\cU$ and so $G_U$ is maximal in $G$, or 
$\cU=\cN_m$. 

Given any $X\leqslant G$ we denote the image of $X$ in $\PGammaSp_{2m}(q)$ by $\overline{X}$. Then we have a factorisation $\overline{G}=\overline{H}\,\overline{G}_U$ of the almost simple group $\overline{G}=\PGammaSp_{2m}(q)$. When $(m,q)\neq (2,2)$, the group $\Sp_{2m}(q)$ is quasisimple and so Lemma~\ref{lem:derived} implies that $\PSp_{2m}(q)\not\leqslant \overline{H}$. When $(m,q)=(2,2)$ there are no non-identity scalar matrices and so we again have $\PSp_{2m}(q)\not\leqslant \oH$. Let $L=\PSp_{2m}(q)$, let $\overline{B}$ be maximal among the core-free subgroups of $\PGammaSp_{2m}(q)$ containing $\overline{H}$, and let $G^*=\overline{B}\,L$. Suppose first that $\cU\neq \cN_m$. Then by Lemma~\ref{lem:maxmin}, $\overline{B}$ is maximal in $G^*$ and we have $G^*=\overline{B}(G^*)_U$, which is a maximal core-free factorisation. Thus  $\cU$ and $\overline{B}$ are given by~\cite{Factns}. Moreover, $\overline{B}=N_{\PSp_{2m}(q)}(\overline{B}\cap L)$ and so letting $B$ be the full preimage of $\oB$ in $\GammaSp_n(q)$, the Correspondence Theorem implies that  $B=N_G(B\cap \Sp_{2m}(q))$. 
By  \cite{Reg}*{Lemma~4.1(iii)} we have that $\cU=\cP_1$, $\cP_m$ or $\cN_2$. 
On the other hand, if $\cU=\cN_m$ with $m\geq 3$ then $m$ is even, $(G^*)_U$ has index~$2$ in the stabiliser $(G^*)_{\{U,U^\perp\}}$ of the decomposition $V=U\oplus U^\perp$. Then Lemma~\ref{lem:maxmin} implies that  $G^*=\overline{B} (G^*)_{\{U,U^\perp\}}$ is a maximal core-free factorisation. Moreover since $\overline{B}$ is transitive on $\cN_m$, $\overline{B}_U$ must have index $2$ in $\overline{B}_{\{U,U^\perp\}}$. It follows from  \cite{Factns}*{pp.\,10, 12, 13} that $\overline{B}=N_{G^*}(\SO_{2m}^-(q))$ with $q$ even. However, by  \cite{Factns}*{(3.2.4b)}, one of $U, U^\perp$ has $+$ type while the other has $-$ type relative to the quadratic form for $\overline{B}$, and it follows that $\overline{B}_{\{U,U^\perp\}}=\overline{B}_U$, which is a contradiction. Thus the cases to be considered are $\cU=\cP_1$, $\cU=\cP_m$, and $\cU=\cN_2$.

\smallskip
{\sc Case}~$\cU=\cN_2$.  This case was dealt with 
in~\cite{Reg}*{Lemma~4.2}, even when $m=2$. The possibilities $\Sp_{m/2}(16)\lhdeq H$ with $q=2$, so that $H\leqslant \Sp_{m/2}(16).4$, and also $G_2(16)\lhdeq H$ with $(m,q)=(12,2)$, are listed in~\cite{Reg}*{Lemma~4.2(i)}. However these groups are eliminated by Lemma~\ref{L:temp}. The remaining groups are listed in Table~\ref{T:Spc}.

\smallskip
{\sc Case}~$\cU=\cP_1$. %\subsection{$G_U=P_1$.} 
Every 1-subspace of the symplectic space $V$ is isotropic, so
here $\cU$ comprises all 1-subspaces and $|\cU|=(q^n-1)/(q-1)$. 
The subgroups of $\GammaL_n(q)$ acting transitively on 1-subspaces are
listed in Theorem~\ref{T:SLbigN}(a). The subgroups $\SL_{2a}(q^b)$ ($m=ab$)
preserve an extension field structure, and so are not contained in $\Sp_{2m}(q)$ (unless $a=1$ in which case they are the groups $\Sp_{2}(q^m)$ as in Table~\ref{T:Spa}).
%because $n=2m\ge4$. 
The subgroups $\Sp_{2a}(q^b)$ ($m=ab$), and $\G_2(q^b)'$ ($m=3b$)
do lie in $\Sp_{2m}(q)$ and are listed in Table~\ref{T:Spa}. We argue that  $\GammaL_1(q^n)\cap\GammaSp_n(q)$ 
is usually too small to act transitively on $\P_1$. It follows
from~\cite{Ber}*{Table~1} that $\GL_1(q^{2m})\cap\Sp_{2m}(q)$ is cyclic
of order $q^m+1$ and hence $\GL_1(q^{2m})\cap\CSp_{2m}(q)$ has order dividing $(q-1)(q^m+1)$.
We conclude that
$|\GammaL_1(q^{2m})\cap\GammaSp_{2m}(q)|$ divides $2mf(q-1)(q^m+1)$. As 
$|\P_1|=(q^{2m}-1)/(q-1)$ divides $|H|$, it also divides $2mf(q-1)(q^m+1)$
and hence $(q^m-1)\mid 2mf(q-1)^2$. By Lemma~\ref{L:NT}, the only possibilities are
$m=2$ and $q\in\{3,7\}$. A calculation with {\sc Magma} shows that $\GammaSp_4(7)$ has no solvable
transitive subgroups and $\GammaSp_4(3)$ has
three, but none are metacyclic. 
This eliminates the case
$H\le\GammaL_1(q^{2m})$ in~Theorem~\ref{T:SLbigN}(a). 

It remains to consider the small
dimensional cases with $m\ge2$ in the second row of Table~\ref{T:SLa}.
Neither of the possibilities $\Alt_7$ and $\Alt_6$ for $(m,q)=(2,2)$ arise since 
$H$ does not contain $\Sp_4(2)'\cong\Alt_6$. Next suppose $(m,q)=(2,3)$. 
Then the first two possibilities listed  satisfy
$2_{-}^{1+4}\lhdeq H\le 2_{-}^{1+4}\ldotp\Sym_5$ and both arise, and are listed in Table~\ref{T:Spa}, 
as $2_{-}^{1+4}\ldotp\Sym_5\leqslant \CSp_4(3)$ (see \cite{BHRD}*{Table~8.12, p.\,383}).
Similarly $\SL_2(5)\le\Sp_4(3)$, so we list
$H_0=\SL_2(5)$ in Table~\ref{T:Spa}. 
Finally for $(m,q)=(3,3)$,~\cite{BHRD}*{Table~8.29,\, p.\,392}
or a \textsc{Magma} computation shows that $\nonsplit{2}{\PSL_2(13)}=\SL_2(13)$
is a subgroup of $\Sp_6(3)$.
This justifies all the entries of Table~\ref{T:Spa}.

\smallskip
{\sc Case}~$\cU=\cP_m$. 
Suppose now that $H$ is transitive on the set $\cU=\mathcal{P}_m$, so $|\cU|=\prod_{i=1}^m(q^i+1)$ 
by~\cite{BG}*{pp.\;150--151}. Then by \cite{Factns} there are two possibilities for the subgroup $B$:
\begin{enumerate}
  \item[(b1)] $B=N_G(\SO_{2m}^-(q))$ with $q$ even by \cite{Factns}*{p.\,10}; or
  \item[(b2)] $\oB\cap L=2^4.\Alt_5$ with $(m,q)=(2,3)$ by \cite{Factns}*{p.\,13}.
\end{enumerate}

(b1)~ Suppose that $B=N_G(\SO_{2m}^-(q))=\GammaO^-_{2m}(q)$ with $q$ even. 
By \cite{Factns}*{p.\,49},  $\SO_{2m}^-(q)$
is transitive on $\cP_m$, and since $|\cP_m|$ is odd and
$|\SO_{2m}^-(q):\Omega_{2m}^-(q)|=2$, it follows that
$\Omega_{2m}^-(q)$ is also transitive on $\cP_m$.
Therefore any $H$ satisfying $\Omega_{2m}^-(q)\lhdeq H$ is necessarily
transitive on $\cP_m$ as in (b). Henceforth suppose that $\Omega_{2m}^-(q)\not\le H$.  Then we have a factorisation $B=HB_U$.
Since $\Omega_{2m}^-(q)$ is irreducible on $V$, we get a core-free factorisation of the almost simple group $\PGammaO_{2m}^-(q)$.  

 By~\cite{Factns}*{3.2.4(a), p.\,49}, $B_U\leqslant B_Y$, where $Y$ is an $(m-1)$-dimensional subspace of $U$ which is totally singular with respect to the quadratic form preserved by $\SO^-_{2m}(q)$.  Thus we have a core-free factorisation of $\PGammaO_{2m}^-(q)$, where one of the factors is the stabiliser of a totally singular $(m-1)$-subspace.
No such factorisation exists for $m\ge4$ by \cite{Factns}*{p.\,11--13}, and if $m=3$ then none exists with $q$ even by Theorem~\ref{T:KleinSU4}. Thus $m=2$.  Since $q$ is even $\PSp_4(q)$ has an outer automorphism $\tau$ that interchanges $P_1$ and $P_2$. Hence $(\oH)^\tau$ acts transitively on $\cP_1$. Thus by part (a) we have $\PSp_2(q^2)\lhdeq (\oH)^\tau$ and so by  \cite{Asch}*{Section 14} we have that $\Omega_4^-(q)\lhdeq H$.

(b2)~Suppose $(m,q)=(2,3)$ and  $\oB\cap L=2^4.\Alt_5$, with $G$ acting on $\cP_2$.
By~\cite{BHRD}*{Table~8.12} we have $B\cap \Sp_4(3)=2_-^{1+4}.\Alt_5$.  A computation with {\sc Magma} shows that a transitive subgroup of $\CSp_4(3)$ either  contains $2_-^{1+4}.\Alt_5$ or $\Sp_4(3)$ as a normal subgroup, or is equal to $2_-^{1+4}.F_{20}$, as given in part~(b) of the statement.
\end{proof}

\subsection{Five dimensional orthogonal groups with $q$ odd}\label{SS:O5} 

As we observed in Subsection~\ref{S:Osmall}, $\POmega_5(q)\cong \PSp_4(q)$. We treat these orthogonal groups here if $q$ is odd, using our analysis of the symplectic groups.
If $q$ is even,  the groups $\POmega_5(q)$ are treated in Section~\ref{S:OoddEvenq} where we deal uniformly with the family $\POmega_n(q)$ for  even $q$ and odd $n\geq 5$.

\begin{theorem}\label{T:KleinSp4}
  Suppose that $H\leqslant \GammaO_5(q)$ acts transitively on a set $\cU$ of subspaces of the natural module $V=(\F_q)^5$ given by Table~$\ref{T-defn}$, where $q$ is odd and $\Omega_5(q)\not\le H$. Then
  \begin{enumerate}[{\rm (a)}]
  \item $\cU=\cP_1$ and $q=3$ where either $2^4.\Alt_5\lhdeq H$, or $H=2^4.F_{20}$; or
  \item $\cU=\cP_2$ and either $\Omega_4^-(q)\lhdeq H$, or $q=3$ and either
    $2^4\ldotp\C_5$, $2^4\ldotp\Alt_5$ or $\Alt_5\lhdeq H$; or
  \item $\cU=\cN_1^-$ and $H\leqslant P_2$ and $H$ induces a subgroup of $\GammaL_2(q)$ that is transitive on $1$-subspaces.

\end{enumerate}
\end{theorem}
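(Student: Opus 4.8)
The plan is to transfer every subspace action of $\GammaO_5(q)$ to a coset action of $\GammaSp_4(q)$ through the exceptional isomorphism $\POmega_5(q)\cong\PSp_4(q)$ ($q$ odd) of Table~\ref{tab:ExceptionalIsos}, and then to invoke Theorem~\ref{T3} when the associated $\Sp_4$-action is a subspace action and the factorisation machinery of Lemma~\ref{lem:maxmin} together with \cite{Factns} otherwise. First I would record that $G=\GammaO_5(q)$ stabilises each family in Table~\ref{T-defn}: for odd $q$, a similarity or field automorphism of a $5$-dimensional orthogonal space changes the discriminant of an even-dimensional subspace only by a square, so it preserves the isometry type of every nondegenerate subspace and of its perp. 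By \cite{Taylor} we have $\PGammaO_5(q)\cong\PGammaSp_4(q)$; writing $\oH$ for the image of $H$ and applying Lemma~\ref{lem:derived} with the quasisimple group $\Omega_5(q)\cong\PSp_4(q)$, we get $\PSp_4(q)\not\le\oH$. The families to treat are $\cP_1,\cP_2,\cN_1^{\pm},\cN_2^{\pm}$, and Table~\ref{tab:ExceptionalIsos} (read off from composition factors and \cite{BHRD}*{Table~2.3}) identifies their stabilisers in $\PSp_4(q)$: for the natural symplectic module $W=(\F_q)^4$ one has $\cP_1\leftrightarrow\cP_2(W)$ (Lagrangians) and $\cP_2\leftrightarrow\cP_1(W)$ (all $1$-subspaces), while $\cN_1^{+},\cN_1^{-},\cN_2^{+},\cN_2^{-}$ correspond to coset spaces of subgroups of type $\Sp_2(q)\wr\C_2$, $\Sp_2(q^2).2$, $\GL_2(q).2$, $\GU_2(q).2$ respectively.

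For $\cU=\cP_1$ the group $\oH$ is transitive on the Lagrangians of $W$, so Theorem~\ref{T3}(b) with $m=2$ and $q$ odd forces $q=3$ and, after passing back through the central quotient of order $2$ (which sends $2_-^{1+4}$ to $2^4$), either $2^4.\Alt_5\lhdeq H$ or $H=2^4.F_{20}$; this is part~(a). For $\cU=\cP_2$, $\oH$ is transitive on the $1$-subspaces of $W$, and Theorem~\ref{T3}(a) with $m=2$ gives, again modulo the centre, either the (reducibly embedded) subgroup $\Omega_4^-(q)$ normal in $H$ --- coming from $\Sp_2(q^2)=\SL_2(q^2)$, which is transitive on the nonzero vectors of $W$ --- or, when $q=3$, one of $2^4.\C_5$, $2^4.\Alt_5$, $\Alt_5$ normal in $H$; this is part~(b).

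For $\cU\in\{\cN_1^{+},\cN_2^{+},\cN_2^{-}\}$, the corresponding subgroups $\Sp_2(q)\wr\C_2$, $\GL_2(q).2$, $\GU_2(q).2$ are maximal in $\PGammaSp_4(q)$ apart from finitely many small $q$ (handled by \textsc{Magma}), so by Lemma~\ref{lem:maxmin} transitivity of $H$ would yield a maximal core-free factorisation of an almost simple group with socle $\PSp_4(q)$ having one factor of that type; by \cite{Factns} none exists for odd $q$ (in each case the index of the subgroup fails to divide the order of any complementary maximal subgroup), so these three families admit no proper transitive $H$. The case $\cU=\cN_1^{-}$ is the crux: here $\oH$ is transitive on the coset space of the $\mathcal{C}_3$ field-extension subgroup $\Sp_2(q^2).2$, and this is the one orthogonal-type family that does occur in a factorisation, namely $\Sp_4(q)=(\Sp_2(q^2).2)\cdot P_1(W)$ because $\SL_2(q^2)$ is transitive on the $1$-subspaces of $W$; by \cite{Factns} this is, up to conjugacy, the only factorisation with a field-extension factor, the relevant point being that $\SL_2(q^2)$ is not transitive on the Lagrangians of $W$, which excludes a $P_2(W)$-complement. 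Thus Lemma~\ref{lem:maxmin} and \cite{Factns} give $\oH\le P_1(W)$, which under the generalised-quadrangle duality is the parabolic $P_2$ of $\Omega_5(q)$, so $H\le P_2=G_Y$ for a totally singular $2$-space $Y\subset V$. Writing $B=G_Y$ with unipotent radical $R$ and $\pi\colon B\to B/R\cong\GammaL_2(q)$, the factorisation $G=HG_U$ with $H\le B$ and the modular law give $B=B_UH$ for $U\in\cN_1^{-}$; choosing $U=\langle w\rangle$ with $Y\not\subseteq w^{\perp}$ (possible since $Y^{\perp}\ne V$) makes $Y\cap w^{\perp}$ a $1$-dimensional $B_U$-invariant subspace of $Y$, whence $\pi(B_U)$ lies in a point stabiliser and $\GammaL_2(q)=\pi(B)=\pi(B_U)\pi(H)$ forces $\pi(H)$ to be transitive on the $1$-subspaces of $Y$, which is part~(c).

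The main obstacle I anticipate is the $\cN_1^{-}$ case: one must pin down the unique relevant factorisation of $\PSp_4(q)$, verify that it forces the parabolic $P_2$ of $\Omega_5(q)$ rather than $P_1$, and then carry out the induced-action argument on the $2$-dimensional Levi quotient. Secondary but pervasive is the bookkeeping across the exceptional isomorphism --- matching each orthogonal family to the correct $\mathcal{C}_2$- or $\mathcal{C}_3$-subgroup of $\Sp_4(q)$, and tracking the central quotient of order at most $2$ so that the groups $2_-^{1+4}.X$ appearing in Theorem~\ref{T3} are correctly replaced by $2^4.X$ in $\Omega_5(q)$ --- together with disposing of the handful of small $q$ (essentially $q=3$, and a couple of cases at $q=5$) by direct computation.
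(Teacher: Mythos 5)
Your proposal is correct and follows essentially the same route as the paper: transfer each family through the isomorphism $\Omega_5(q)\cong\PSp_4(q)$, apply Theorem~\ref{T3}(a),(b) to $\cP_2$ and $\cP_1$ respectively (tracking the central quotient so that $2_-^{1+4}.X$ becomes $2^4.X$ and $\SL_2(5)$ becomes $\Alt_5$), and dispose of the nondegenerate families via Lemma~\ref{lem:maxmin} and \cite{Factns}, with $\cN_1^-$ reduced to $H\le P_2$ and the induced transitive action on $1$-subspaces of the Levi quotient. The only cosmetic differences are your extra justification for why the $\mathcal{C}_3$-factorisation forces $P_1(W)$ rather than $P_2(W)$ (the paper simply cites \cite{Factns}) and your slightly more cautious treatment of small $q$ for $\cN_1^{+}$ and $\cN_2^{\pm}$, where the paper notes maximality fails only at $q=3$ and checks that case by computer.
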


\begin{proof}
By Table~\ref{T-defn}, $\cU$ is one of $\cP_1$, $\cP_2$, $\cN_1^\pm$ or $\cN_2^\pm$. 
%and also, if $q$ is even, $\cN_1$ (the set of all nonsingular $1$-subspaces except $V^\perp$) or $\cN_4^\pm$ (corresponding to quadratic forms on the symplectic space $V/V^\perp$).   ... , and we note that $\Sp_4(q)\cong\PSp_4(q)$ if $q$ is even .... , while if $q$ is even then $\PGammaO_5(q)\cong \GammaO_5(q)\cong \GammaSp_4(q)\cong \PGammaSp_4(q)$ (see the explanation at the beginning of  Section~\ref{S:OoddEvenq}).   We denote by $W$ the 4-dimensional vector space over $\F_{q}$ acted on by $\Sp_4(q)$, and note that when $q$ is even then $W=V/V^\perp$.
Fix $U\in\cU$ and set
$G=\textup{Stab}_{\GammaO_5(q)}(\cU)$. By~\cite{Taylor}*{12.31~Theorem}, $\Omega_5(q)\cong\PSp_4(q)$. Also $\Aut(\PSp_4(q))=\PGammaSp_4(q)$, and comparing orders we deduce that  $\PGammaO_5(q)\cong\PGammaSp_4(q)$. For any $X\leqslant \GammaO_5(q)$ we denote the image of $X$ in $\PGammaSp_4(q)$ by $\overline{X}$. We denote by $W$ the 4-dimensional vector space over $\F_{q}$ acted on by $\Sp_4(q)$.

\smallskip
{\sc Case}~$\cU=\cP_1$. By~\cite{Taylor}*{p.\,196}, there is a one-to-one correspondence between $\cP_1$ and the set of totally isotropic 2-dimensional subspaces of $W$. Thus by Theorem~\ref{T3}(b),  and the facts that $q$ is odd and $\Omega_5(3)\cong\PSp_4(3)$, we have  $q=3$ and either $2^4.\Alt_5\lhdeq H$ or $2^4.F_{20}= H$, as in part (a). 

\smallskip
{\sc Case}~$\cU=\cP_2$. By~\cite{Taylor}*{p.\,196}, there is a one-to-one correspondence between $\cP_2$ and the set of totally isotropic 1-dimensional subspaces of $W$. Then by Theorem~\ref{T3}(a) and again using the fact that $\Omega_5(q)\cong\PSp_4(q)$, we see that either $\PSp_2(q^2)\lhdeq H$, or  $q=3$ and
one of $2^4\ldotp\C_5\lhdeq H$, $2^4\ldotp\Alt_5\lhdeq H$ or $\Alt_5\lhdeq H$. Now $\PSp_2(q^2)\cong\Omega_4^-(q)$ and comparing \cite{BHRD}*{Table~8.22} we see that in the first case $\Omega_4^-(q)\lhdeq H$. Thus part (b) holds.

\smallskip
  {\sc Case}~$\cU=\cN_1^+$.  Noting that $\POmega_4^+(q)\cong\PSL_2(q)^2$ and comparing composition factors we see from Table~\cite{BHRD}*{Table~8.12} that $\overline{G_U}$ is a $\mathcal{C}_2$-subgroup
  of $\PGammaSp_4(q)$, and is maximal for all $q$. Thus by Lemma~\ref{lem:maxmin} we get a  maximal core-free factorisation of a group with socle $\PSp_4(q)$, and by~\cite{Factns} there are no such factorisations.
  
  \smallskip
   {\sc Case}~$\cU=\cN_1^-$. Since $\POmega_4^-(q)\cong\PSL_2(q^2)$ and comparing composition factors we see from Table~\cite{BHRD}*{Table~8.12} that $\overline{G_U}$ is a $\mathcal{C}_3$-subgroup of $\oG$ and is maximal for all $q$. It follows from Lemma~\ref{lem:maxmin} and \cite{Factns} that $\oH$ stabilises a totally isotropic 1-subspace of $W$ and so $H$ stabilises a totally singular 2-subspace $X$ of $V$, that is $H\leqslant P_2$.  Let $B=G_X$ so that $H\leqslant B$. Then arguing as in \cite{Factns}*{p.\.57} we see that $B_U$ fixes a 1-dimensional subspace of $X$. Since we have a factorisation $B=B_UH$ it follows that $H$ acts transitively on the set of 1-dimensional subspaces of $X$, and part (c) holds.
   
   \smallskip
   {\sc Case}~$\cU=\cN_2^\eps$. Again, comparing composition factors and consulting \cite{BHRD}*{Table~8.12} we see that $\overline{G_U}$ is a $\mathcal{C}_2$-subgroup stabilising a decomposition into two totally isotropic 2-subspaces or a $\mathcal{C}_3$-subgroup of type $\GU_2(q)$. Moreover, they are only maximal for $q\geq 5$. By Lemma~\ref{lem:maxmin} and \cite{Factns} there are no such maximal factorisations and so $q=3$. However, in this case a \textsc{Magma} calculation finds no examples.
   \end{proof}

\section{Orthogonal groups in even dimension of minus type}\label{S:OEven}

Let $V$ be a $2m$-dimensional vector space over $\mathbb{F}_q$ equipped with a nondegenerate quadratic form of minus type.  By~\cite{BG}*{Lemma~2.5.10}, $\Omega_{2m}^-(q)$ acts transitively on each set $\cU$ given in Table~\ref{T-defn}, and we identify transitive subgroups in these actions in Theorem~\ref{T9}. If $kq$ is odd then there are two isometry types of nondegenerate $k$-subspaces but only one similarity class. Thus the two orbits $\cN_k^\pm$ of $\Omega_{2m}^-(q)$ are fused in $\CO_{2m}^-(q)$, and we find the subgroups transitive on $\cN_k=\cN_k^+\cup\cN_k^-$ in Proposition~\ref{P1}.

First we need the following lemma.

\begin{lemma}\label{lem:SUorthog}
Let $m\geq 4$. If $m$ is odd let $A=\SU_m(2)\leqslant \Omega^-_{2m}(2)$ and if $m$ is even let $A=\SU_m(2)\leqslant \Omega^+_{2m}(2)$. Then $A$ is transitive on $\cN_2^+$.
\end{lemma}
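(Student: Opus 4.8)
The plan is to count. We have $A=\SU_m(2)$ acting on $V=\F_2^{2m}$, viewed as the natural module for $\GU_m(2)$ over $\F_4$ with a nondegenerate hermitian form $h$, restricted to $\F_2$: the form $Q(v)=h(v,v)$ is a quadratic form on the $\F_2$-space $V$ of dimension $2m$, and its polar form is $b(u,v)=h(u,v)+h(v,u)=\mathrm{Tr}_{\F_4/\F_2}(h(u,v))$, which is nondegenerate. A standard fact (see e.g.\ \cite{KL}*{Proposition~2.5.6}, or \cite{BHRD}*{Table~2.3}) is that this $\F_2$-form has type $(-)^m$, i.e.\ minus type when $m$ is odd and plus type when $m$ is even; this is exactly the embedding $\SU_m(2)\le\Omega^\eps_{2m}(2)$ asserted in the statement. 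So $A$ really does act on the correct orthogonal space.

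First I would identify the $A$-stabiliser of a point of $\cN_2^+$. A nondegenerate $\F_2$-$2$-subspace $X$ of plus type is spanned by two singular vectors $e,f$ with $b(e,f)=1$; in hermitian terms this means $h(e,e)=h(f,f)=0$ and $\mathrm{Tr}_{\F_4/\F_2}(h(e,f))=1$. The $\F_4$-span $\langle e,f\rangle_{\F_4}$ is then a nondegenerate hermitian $2$-space (since $h(e,f)\neq 0$), and $X$ is one of the plus-type $\F_2$-forms sitting inside it. So $A_X\le A_{\langle e,f\rangle_{\F_4}}$, and $A_{\langle e,f\rangle_{\F_4}}$ has the shape $(\GU_2(2)\times\GU_{m-2}(2))\cap\SU_m(2)$ acting on the orthogonal decomposition $\langle e,f\rangle_{\F_4}\perp\langle e,f\rangle_{\F_4}^{\perp_h}$. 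Inside $\GU_2(2)$ I would pin down the stabiliser of the $\F_2$-subspace $X$: since $|\GU_2(2)|=18$ and $\GU_2(2)$ is transitive on the $\F_2$-plus-type $2$-subspaces of its natural module (there are $3$ of them — the singular $\F_2$-$2$-spaces carrying the polar form), the stabiliser has order $6$. Combining, $|A_X|=6\cdot|\SU_{m-2}(2)|\cdot|\GU_{m-2}(2):\SU_{m-2}(2)|$-type bookkeeping gives $|A_X|$ in closed form.

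Then I would simply compare $|A|/|A_X|$ with $|\cN_2^+|$. The size $|\cN_2^+|$ for $\GO^\eps_{2m}(2)$ is the standard formula from \cite{BG}*{pp.\,150--151}: the number of plus-type nondegenerate $2$-subspaces of an $\eps$-type orthogonal space of dimension $2m$ over $\F_2$, which is a ratio of orders $|\GO^\eps_{2m}(2)|/(|\GO_2^+(2)|\,|\GO^{\eps'}_{2m-2}(2)|)$ with $\eps'$ determined by $\eps\eps'=+$. On the other side, $|A|=|\SU_m(2)|=2^{m(m-1)/2}\prod_{i=2}^m(2^i-(-1)^i)$, and $|A_X|$ is as computed above. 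The verification is then a routine (if slightly tedious) algebraic identity in $q=2$; I would present it by cancelling the obvious common factors $|\SU_{m-2}(2)|$-part against $|\GO^{\eps'}_{2m-2}(2)|$-part using the exceptional isomorphism / index relations relating $\SU_{m-2}(2)$-orbit counts to the smaller orthogonal group, reducing to a one-line check.

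The main obstacle I anticipate is not the counting itself but getting the $\F_2$-type bookkeeping exactly right: one must be careful that the $\F_4/\F_2$ restriction of the hermitian space really has the stated type for both parities of $m$, and that the orthogonal complement $\langle e,f\rangle_{\F_4}^{\perp_h}$, of $\F_4$-dimension $m-2$, restricts to an $\F_2$-orthogonal space of the type $\eps'$ dictated by parity of $m-2$ — so that $|\cN_2^+|$ and $|A:A_X|$ are being compared over the same ground data. Once the types are matched, transitivity follows because $A$ has a single orbit on $\cN_2^+$ exactly when these two numbers coincide, and the whole point of restricting to $m\ge 4$ is to have $\SU_{m-2}(2)$ large enough for the index computation (and the $\GU_2(2)$-transitivity on its three $\F_2$-planes) to be unobstructed by small-case degeneracies; the cases $m\le 3$ would be handled separately elsewhere.
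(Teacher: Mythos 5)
Your overall strategy coincides with the paper's: realise $V=\F_2^{2m}$ as $\F_4^m$ with hermitian form $h$ and $Q(v)=h(v,v)$, observe that every $X\in\cN_2^+$ spans a nondegenerate hermitian $2$-space $KX$ over $\F_4$, reduce the computation of $A_X$ to the action of the induced group $\GU_2(2)$ on $KX$, and finish by comparing $|A:A_X|$ with $|\cN_2^+|$. However, the one quantitative input you supply for this reduction is wrong, and the error is fatal to the count. A $4$-dimensional $\F_2$-space carrying a plus-type quadratic form (which is what $KX$ becomes on restriction of scalars) contains $18$ nondegenerate plus-type $2$-subspaces, not $3$; your figure of $3$ appears to come from counting the totally singular $2$-subspaces in one $\Omega_4^+(2)$-orbit, but the members of $\cN_2^+$ are nondegenerate (each contains exactly two singular and one nonsingular vector), not totally singular. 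Since none of these $18$ subspaces is a $1$-dimensional $\F_4$-subspace (such a subspace restricts to a totally singular or a minus-type $2$-space), they form a single orbit on which $\GU_2(2)$, of order $18$, acts regularly: the stabiliser is trivial, not of order $6$. The paper proves this triviality directly: writing $X=\langle\lambda a,b\rangle_{\F_2}$ with $h(a,a)=h(b,b)=0$, $h(a,b)=1$ and $\lambda+\lambda^2=1$, the two singular vectors of $X$ are $\lambda a$ and $b$, and since $h(\lambda a,b)=\lambda\neq\lambda^2=h(b,\lambda a)$ no isometry can interchange them, so an element of $\GU_2(2)$ fixing $X$ fixes a $K$-basis pointwise and is the identity.

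With your stabiliser of order $6$ you would obtain $|A:A_X|=|\cN_2^+|/6$, so the comparison you propose as the final step would fail rather than establish transitivity. With the corrected value $A_X=\SU_{m-2}(2)$ (the kernel of the action of $A_{KX}=(\SU_{m-2}(2)\times\SU_2(2)).3$ on $KX$), one gets $|A:A_X|=2^{2m-3}\bigl(2^m-(-1)^m\bigr)\bigl(2^{m-1}-(-1)^{m-1}\bigr)=|\cN_2^+|$ for both parities of $m$, and the argument closes exactly as you intend. So the plan is salvageable, but only after replacing the order-$6$ claim by a proof that the $\GU_2(2)$-stabiliser is trivial.
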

\begin{proof}
  Following \cite{Factns}*{p.60--61},  let $F=\F_2$, $K=\F_{4}$ and view $V=F^{2m}=K^m$ as an $A$-module with $A=\SU_m(2)$. Let  $[\ ,\,]$ be the $A$-invariant $K$-Hermitian form 
 on $V$ and define $Q:V\rightarrow F$ by $Q(v)=[v,v]$ for all $v\in V$. Then $Q$ is a quadratic form of $+$ type when $m$ is even and of $-$ type when $m$ is odd (see for example \cite{BG}*{Construction~2.5.14}).  Since $A$ preserves $Q$ this gives us an embedding $A\leqslant \Omega_{2m}^+(2)$ when $m$ is even and an embedding $A\leqslant \Omega_{2m}^-(2)$ when $m$ is odd. Let $\cN_2^+$ be the set of all $2$-dimensional subspaces $W$ of $V$ over $F$  such that the restriction of $Q$ to $W$ is nondegenerate of $+$ type.

Choose $a,b\in V$ such that $[a,a]=0=[b,b]$ and $[a,b]=1$, and let $\lambda\in K$ such that $\lambda+\lambda^2=1$. Then if $(\ ,\ )$ is the symmetric bilinear form associated with $Q$ we have that $(\lambda a,b)=Q(\lambda a+b)+Q(\lambda a)+Q(b)=\lambda+\lambda^2=1$. Hence $U:=\langle \lambda a,b\rangle_F\in \cN_2^+$.  Now $A_U$ fixes $\langle U\rangle_K=\langle a,b\rangle_K$, which is a nondegenerate $K$-subspace of dimension 2.  By \cite{KL}*{Lemma~4.1.1}, $A_{\langle U\rangle_K}= (\SU_{m-2}(2)\times \SU_2(2)).(q+1)$ where the $(q+1)$ acts diagonally on each factor so that $A_{\langle U\rangle_K}$ induces $\GU_2(2)$ on $\langle U\rangle_K$ and $\GU_{m-2}(2)$ on $(\langle U\rangle_K)^\perp$. Since $[\lambda a,b]=\lambda$ while $[b,\lambda a]=\lambda^2\neq \lambda$, there is no $g\in\GU_2(2)$ interchanging $\lambda a$ and $b$. As these are the only singular vectors in $U$, it follows that $\GU_2(2)_U=1$ and hence $A_U=\SU_{m-2}(2)$.  Thus $|A:A_U|=2^{2m-3}(2^m+1)(2^{m-1}-1)=|\cN_2^+|$ and so $\SU_m(2)$ is transitive on $\cN_2^+$.
\end{proof}

\begin{theorem}\label{T9}
  Suppose that $H\le\GammaO_{2m}^{-}(q)$, where  $m\geq 2$, such that
  $\Omega_{2m}^-(q)\not\leqslant H$. Let $V=(\F_{q})^{2m}$ and suppose
  that $H$ acts transitively on a  set $\cU$ of subspaces
  of $V$  given by Table~\textup{\ref{T-defn}}.  Then either $m=2$ and $H$
  is given by Theorem~$\ref{T:KleinPSL2q2}$, or $m=3$ and $H$ is given
  by Theorem~$\ref{T:KleinSU4}$, or $m\geq 4$ and  one of 
\begin{enumerate}[{\rm (a)}]
  \item $\cU=\cP_1$ and $H_0\lhdeq H$ where $(H_0,m,q)$ are listed in Table $\ref{T:OEvenP1}$; or
  
\begin{table}[!ht]
\caption{Theorem~\ref{T9}(a) subgroups $H_0\lhdeq H\le\GammaO_{2m}^{-}(q)$ with $m\geq 4$ and $H$ transitive on $\cP_1$.}\label{T:OEvenP1}
\begin{tabular}{rlccccccccc}
\toprule
$H_0$&& $\SU_m(q)$&$\nonsplit{3}{\textup{J}_3}$&$\Alt_{12}$ & $  \textup{M}_{12}$&&\\
$m$&& $\textup{odd}$&$9$&$5$&$5$&\\
$q$&& ${\rm all}$&$2$&$2$&$2$&\\
\bottomrule
\end{tabular}
\end{table} 

\item $\cU=\cN_1^\eps$ (with $q$ odd and $\eps\in\{+,-\}$) or $\cN_1$ (with $q$ even) and either $m$ is odd and  $\SU_m(q)\lhdeq H$, or $q$ is even and $H_0\lhdeq H$ where $(H_0,m,q)$ are listed in Table 
$\ref{T:OEvenN1}$; or

\begin{table}[!ht]
\caption{Theorem~\ref{T9}(a) subgroups $H_0\lhdeq H\le\GammaO_{2m}^{-}(q)$ with $m\geq 4$ and  $H$ transitive on $\cN_1$ with $q$ even.}\label{T:OEvenN1}
\begin{tabular}{rlccccccccc}
\toprule
$H_0$	&&$\SU_{m/2}(q^2)$&$\SU_{m/4}(q^4)$&$\Omega^{-}_m(q^2)$&$\Omega^{-}_{m/2}(q^4)$\\
%$\cU$ 	&&$\cN_1^\pm$ 		&$\cN_1$&$\cN_1$&$\cN_1$&  $\cN_1$& $\cN_1$\\
$m$		&&$4a+2\geq6$&$8a+4\geq12$&$2a$&$4a$\\
$q$    	&&$2,4$&$2$&$2,4$&$2$\\
\bottomrule
\end{tabular}
\end{table}

\item $\cU=\cN_2^+$, $m$ is odd, $q=2$ or $4$, and $\SU_m(q)\lhdeq H$. Moreover,  when $q=4$ the groups $H$ and  $\GammaU_m(4)$ project onto the same 
  subgroup of $\PGammaO_{2m}^-(4)$.

\end{enumerate}
\end{theorem}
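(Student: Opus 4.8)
The plan is as follows. The cases $m=2$ and $m=3$ are immediate from the exceptional isomorphisms $\POmega_4^-(q)\cong\PSL_2(q^2)$ and $\POmega_6^-(q)\cong\PSU_4(q)$ together with Theorems~\ref{T:KleinPSL2q2} and~\ref{T:KleinSU4}, so I assume $m\ge4$. Fix $U\in\cU$ and put $G=\textup{Stab}_{\GammaO_{2m}^-(q)}(\cU)$, which equals $\GammaO_{2m}^-(q)$ except when $\cU=\cN_k^{\eps'}$ with $k$ and $q$ odd, in which case $|\GammaO_{2m}^-(q):G|=2$; in every case $G$ is almost simple with socle $L=\POmega_{2m}^-(q)$, since $2m\ge8$. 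Transitivity of $H$ on $\cU$ gives the factorisation $G=HG_U$, and by \cite{King1981} and \cite{King1981a} the action of $\Omega_{2m}^-(q)$ on $\cU$ is primitive, any exception in small dimension being dealt with directly as in the proof of Theorem~\ref{T3}; thus $G_U$ is maximal and core-free in $G$. Passing to $\oG=\PGammaO_{2m}^-(q)$, and using that $\Omega_{2m}^-(q)$ is quasisimple, Lemma~\ref{lem:derived} gives $L\not\le\oH$; choosing $\oB$ maximal among core-free subgroups of $\oG$ containing $\oH$ and setting $G^*=\oB L$, Lemma~\ref{lem:maxmin} (whose exceptional triples involve $\POmega_8^+(3)$, $\PSL_{2m}$ or $\POmega^+$, and so do not occur here) yields a maximal core-free factorisation $G^*=\oB(G^*)_U$ with $\oB=N_{\oG}(\oB\cap L)$.

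I would then invoke the classification of maximal factorisations of almost simple groups with socle $\POmega_{2m}^-(q)$ in~\cite{Factns}, together with \cite{Reg}*{Lemma~4.1} (which, by Remark~\ref{rem:Reg}, requires no correction in minus type). The outcome is that for $m\ge4$ the only families in Table~\ref{T-defn} admitting such a factorisation with one factor a subspace stabiliser are $\cU=\cP_1$, $\cU=\cN_1^\eps$ (for $q$ odd) or $\cN_1$ (for $q$ even), and $\cU=\cN_2^+$; for every other family there---the $\cP_k$ with $2\le k<m$, the $\cN_k^{\eps'}$ with even $k$ other than $\cN_2^+$, and the $\cN_k^{\eps'}$ with odd $k\ge3$ and $q$ odd---no such factorisation exists, forcing $\Omega_{2m}^-(q)\le H$, a contradiction. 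For the three surviving families, reading off~\cite{Factns} gives the following possibilities for $\oB\cap L$: an image of $\SU_m(q)$, which can occur only when $m$ is odd (as $\SU_m(q)\le\Omega_{2m}^-(q)$ precisely then); a subfield or extension-field subgroup of shape $\SU_{m/2}(q^2)$, $\SU_{m/4}(q^4)$, $\Omega_m^-(q^2)$ or $\Omega_{m/2}^-(q^4)$, occurring for $q\in\{2,4\}$ in the nonsingular $1$-space cases; and the sporadic possibilities $\nonsplit{3}{\textup{J}_3}$ ($m=9$), $\Alt_{12}$ or $\textup{M}_{12}$ ($m=5$) when $q=2$ and $\cU=\cP_1$.

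For $\cU=\cN_1$ (either parity of $q$) the list of transitive $H$ obtained coincides with that of \cite{Reg}*{Lemma~4.4}, which I would restate in the present notation, using~\cite{BHRD} to confirm that the groups are the matrix groups claimed; the correction in Remark~\ref{rem:Reg}(d) concerns only $m=3$. For $\cU=\cP_1$, having located $\oB\cap L$, I would lift to $G$ by Lemma~\ref{lem:derived} and~\cite{BHRD} to pin down $H_0$ (for instance the nonsplit cover $\nonsplit{3}{\textup{J}_3}$ rather than $\textup{J}_3$), and then verify transitivity of each $H_0$ on $\cP_1$ directly from the factorisation; the cases $q=2$, $m\in\{5,9\}$ I would confirm with \textsc{Magma}. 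For $\cU=\cN_2^+$, the factorisation forces $m$ odd and $q\in\{2,4\}$ with $\oB\cap L$ an image of $\SU_m(q)$: transitivity of $\SU_m(q)$ on $\cN_2^+$ is Lemma~\ref{lem:SUorthog} when $q=2$ and follows by the same counting argument when $q=4$ (choosing $\lambda\in\F_{q^2}$ with $\lambda+\lambda^q$ of nonzero trace to $\F_q$ and computing the index of the stabiliser $(\SU_m(q))_U$ via \cite{KL}*{Lemma~4.1.1}); the ``moreover'' clause for $q=4$ then follows by comparing the scalar subgroup of $\GU_m(4)$ with that of $\GO_{2m}^-(4)$, which forces $H$ and $\GammaU_m(4)$ to have the same image in $\PGammaO_{2m}^-(4)$.

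The hard part will be the second step: extracting from the intricate tables of~\cite{Factns} exactly which subspace-stabiliser factorisations of $\POmega_{2m}^-(q)$ arise, uniformly across all the families of Table~\ref{T-defn}, and then passing cleanly from the maximal factor $\oB$ to the genuinely transitive subgroups $H$. The latter needs careful bookkeeping of isoclinism types and central covers via~\cite{BHRD}, and---instead of merely quoting the existence of a factorisation---a verification of transitivity for the infinite families $\SU_m(q)$, $\SU_{m/2}(q^2)$, $\SU_{m/4}(q^4)$, $\Omega_m^-(q^2)$ and $\Omega_{m/2}^-(q^4)$ by Witt-type and counting arguments in the style of Lemma~\ref{lem:SUorthog}.
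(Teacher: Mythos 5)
Your overall strategy (reduce to a factorisation $G=HG_U$, pass to the projective group, apply Lemma~\ref{lem:maxmin} and the maximal factorisations of \cite{Factns}, then descend from the maximal factor $\oB$ to the actual transitive subgroups) is the same as the paper's, and your treatment of $\cP_1$ and $\cN_1$ is essentially sound. The genuine gap is in the case $\cU=\cN_2^+$. First, your parenthetical that the only failures of primitivity are ``in small dimension'' is wrong: by King's results, $\GO_{2m}^-(q)$ fails to be primitive on $\cN_2^+$ when $q=2$ for \emph{every} $m$ (and one must separately check that $q=3$ is still primitive). So for $q=2$ the subgroup $G_U$ is not maximal and the entire Lemma~\ref{lem:maxmin}/\cite{Factns} machinery you propose cannot be applied to that case; your proposal has no argument covering it. The paper's workaround is that a hyperbolic $2$-space over $\F_2$ contains a unique nonsingular vector, so transitivity on $\cN_2^+$ forces transitivity on $\cN_1$; one then runs a primitive-prime-divisor argument on $|\cN_2^+|=2^{2m-3}(2^m+1)(2^{m-1}-1)$ against the groups in part (b), reducing to $\SU_m(2)$ (with transitivity supplied by Lemma~\ref{lem:SUorthog}) plus small cases checked by machine. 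Something of this kind is indispensable and is absent from your outline.

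Second, your claim that for $q=4$ ``transitivity of $\SU_m(q)$ on $\cN_2^+$ \dots follows by the same counting argument'' contradicts the theorem you are proving: the ``moreover'' clause of part (c) asserts that $H$ must project onto the \emph{same} subgroup of $\PGammaO_{2m}^-(4)$ as the full group $\GammaU_m(4)$, which means precisely that $\SU_m(4)$ (even extended by scalars) is \emph{not} transitive on $\cN_2^+$ — the field automorphisms are needed. The counting in Lemma~\ref{lem:SUorthog} does not carry over to $q=4$; the index $|\SU_m(4):\SU_m(4)_U|$ falls short of $|\cN_2^+|$. Correspondingly, your derivation of the ``moreover'' clause by ``comparing scalar subgroups'' is not the right mechanism: in the paper it comes from the Remark column of \cite{Factns}*{p.\,11}, which records that the factorisation $G=N_2^+\,B$ holds only when $B$ is the full normaliser $N_G(\SU_m(4))$, forcing $\oH=N_{\oG}(\SU_m(4))$. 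You should replace the counting claim with this normaliser argument (or an explicit order comparison showing $\SU_m(4)$ has two orbits).
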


\begin{proof}
Let $G$ be the setwise stabiliser in $\GammaO^-_{2m}(q)$ of $\cU$ and let $H\leqslant G$ act transitively on $\cU$ such that $\Omega^-_{2m}(q)\not\leqslant H$. If $m=2$ then the result follows from Theorem~\ref{T:KleinPSL2q2}, while if $m=3$ it follows from Theorem~\ref{T:KleinSU4}. Thus from now on we assume that $m\geq 4$. Note that $G=\GammaO_{2m}^-(q)$ unless $\cU$ is the set of nondegenerate $k$-subspaces of a given isometry type and $kq$ is odd. In the latter case $G$ is an index two subgroup of $\GammaO_{2m}^-(q)$ that contains $\GO_{2m}^-(q)\leqslant G$.  Let $U\in \cU$. Then we have a factorisation $G=HG_U$. By
 \cites{King1981, King1981a, King1982}
(or see  \cite{Factns}*{Theorems~7.0.1 and~8.1.1} and \cite{BHRD}*{Theorem~2.1.1}), $\GO_{2m}^-(q)$ acts primitively on $\cU$ unless $\cU=\cN_2^+$ and $q=2$ or $3$. Further, if $\cU=\cN_2^+$ and $q=3$, then it follows from \cite{BHRD}*{Table~8.52} (for $m=4$), \cite{BHRD}*{Proposition~2.3.2} (for $m=5, 6$), and \cite{KL}*{Main Theorem~(c) and Table~3.5F} (for $m\geq 7$), that    $G=\GammaO_{2m}^-(3)$ is primitive on $\cU$.
We will treat the exceptional case where $\cU=\cN_2^+$ and $q=2$ separately at the end of the proof, and so for now we assume that $G_U$ is maximal in $G$. Then by \cite{Reg}*{Lemma~4.1(v)}, $\cU$ is  $\cP_1$, or $\cN_1$ ($q$ even), or $\cN_1^\pm$ ($q$ odd),
or $\cN_2^+$.  
The possibilities when $\cU=\cN_1$ ($q$ even) or $\cN_1^\pm$ ($q$ odd) are determined in~\cite{Reg}*{Lemma~4.4}. We list these in part (b) (especially Table~\ref{T:OEvenN1}) for completeness. 

So assume now that  $\cU$ is  $\cP_1$, or  $\cU$ is  $\cN_2^+$ ($q>2$). Denote the image of any subgroup $X$ of $\GammaO_{2m}^-(q)$ in $\PGammaO_{2m}^-(q)$ by $\overline{X}$ and let $L=\POmega_{2m}^-(q)$. Then we have a factorisation $\oG=\oH\,\oG_U$. Since $\Omega_{2m}^-(q)$ is quasisimple, $L\not\leqslant \oH$. Let $\oB$ be maximal among core-free subgroups of $\oG$ containing $\oH$ and let $G^*=\oB L$. Then by Lemma~\ref{lem:maxmin}, $\oB$ is maximal in $G^*$ and we have a maximal core-free factorisation $G^*=\oB(G^*)_U$. In particular, for each $\cU$, the possibilities for $\oB$ 
are given by \cite{Factns} and are collated in Table~\ref{T:OEvenCases}. Moreover, $\oB=N_{\oG}(\oB\cap L)$ and so letting $B$ be the full preimage of $\oB$ in $\GammaO^-_n(q)$, the Correspondence Theorem implies that 
 $B=N_G(B\cap \Omega_{2m}^-(q))$. We consider separately the three cases in  Table~\ref{T:OEvenCases}.

\begin{table}[!ht]
\caption{Cases for $L=\POmega_{2m}^{-}(q)$ where $m\ge4$.}\label{T:OEvenCases}
\begin{tabular}{rlcccc}
\toprule
$\cU$ && $\cP_1$&$\cP_1$&  $\cN_2^{+}$\\
$(m,q)$&&$(\textup{odd},\,{\rm all})$&$(5,2)$&$(\textup{odd},4)$\\
$B$&&$N_G(\SU_m(q))$&$N_G(\Alt_{12})$&$N_G(\SU_m(4))$\\
\bottomrule
\end{tabular}
\end{table}

   {\sc Case}~$\cU=\cP_1$ \emph{with  $m$ odd and $B=N_G(\SU_m(q))$}.\quad 
Set
$A=\SU_m(q)$, $F=\F_q$, $K=\F_{q^2}$, view $V=F^{2m}=K^m$ as
an $A$-module, and let $[\ ,\,]$ be the $A$-invariant $K$-Hermitian form  
such that $Q(v)=[v,v]$ for all $v\in V$, where $Q$ is the quadratic form corresponding to $G$. Let $U\in\cU$ and let 
$U=\langle u\rangle_F$, the 1-dimensional $F$-subspace containing $u$. Then $Q(u)=0$ since $U$ is singular. Let $\langle u\rangle_K$ be the 
1-dimensional $K$-subspace containing $u$. Since $[u,u]=Q(u)=0$, the subspace $\langle u\rangle_K$ 
is isotropic  with respect to $[\,,\,]$. By~\cite{KL}*{Lemma~2.10.5}, $A$ is 
transitive on the 1-dimensional isotropic $K$-subspaces of $V$. Since $m\ge 4$, the stabiliser in $A$ of $\langle u\rangle_K$ induces on $\langle u\rangle_K$  multiplications by each nonzero element of $K$, and hence acts transitively on the set of $F$-subspaces of $\langle u\rangle_K$. Thus $A$ acts transitively on $\cP_1$
and we list the entry $\SU_m(q)\lhdeq H$ in Table~\ref{T:OEvenP1}.

Suppose now that $\SU_m(q)\not\leq H$. Then we have a factorisation $B=HB_U$. Note that $B_U$ is contained in the stabiliser in $B$ of a totally isotropic subspace of dimension 1 over $K$ and so we also have a factorisation $B=HP_1(B)$. Since $\SU_m(q)\not\leq H$ we obtain a core-free factorisation of an almost simple group with socle $\PSU_m(q)$. Since $m\geq 4$ is odd, it follows from Lemma~\ref{lem:maxmin} and \cite{Factns} that $(m,q)=(9,2)$ and $\oH\leqslant N_{\oB}(\textup{J}_3)$. It follows from \cite{BHRD}*{Table~8.57} that $H\leqslant N_B(\nonsplit{3}{\textup{J}_3})$. By~\cite{Factns}, $3.\textup{J}_3$ is transitive on the 1-dimensional isotropic $\F_4$-subspaces of $V$, and since it contains all $\F_4$-scalars, it is transitive on the set $\cP_1$  of singular 1-dimensional $\F_2$-subspaces of $V$. This leads to the example $\nonsplit{3}{\textup{J}_3}\lhdeq H$ in Table~\ref{T:OEvenP1}. Since $\textup{J}_3$ has no factorisations by \cite{Factns}, there are no further examples.

\smallskip

 {\sc Case}~$\cU=\cP_1$ \emph{with $m=5$, $q=2$ and $B=N_G(\Alt_{12})= \Sym_{12}$}.\quad  We view $V=\F_2^{\,10}$ as a section $S/T$ of a 12-dimensional
space $\F_2^{\,12}$ following~\cite{Factns}*{(5.2.16), p.\,115}, where $$S=\{x\in\F_2^{\,12}\mid \sum_{i=1}^{12}x_i=0\}$$ and $T=\langle (1,\dots,1)\rangle$. 
Clearly, $\Sym_{12}$ preserves the quadratic form 
$$Q(x)=\left\{\begin{array}{cr}
                       1& \mbox{if}\ |\{i \mid x_i=1\}| \equiv 2 \pmod 4 \\
                       0& \mbox{if}\ |\{i \mid x_i=0\}| \equiv 0 \pmod 4
                       \end{array}\right. $$
on $S$ with associated symmetric bilinear form $(x,y)=\sum_{i=1}^{12}x_iy_i$. Moreover, $T=S^\perp$.
 Thus the action of $\Sym_{12}$ on $S/T$ gives an embedding of $\Sym_{12}$ in  $\GO_{10}^{-}(2)$ (see \cite{Factns}*{p.\,34}).  Now  $u=(1,1,1,1,0,\dots,0)$ (8 zeroes) lies in $S$ and $Q(u)=0$.  We may take
$U=\langle u+T\rangle\in\cP_1$. Now \cite{Factns} implies that $\Alt_{12}$ is transitive on $\cP_1$, and the possibility $\Alt_{12}\lhdeq H$ is 
listed in Table~\ref{T:OEvenP1}. It remains to consider the case where $\Alt_{12}\not\leqslant H$. In this case we have a factorisation $B=HB_U$. Now $B_U=B\cap (\Sym_4\times\Sym_8)$ and so $H$ is a 4-homogeneous subgroup of $\Sym_{12}$. Thus \cite{Kantor} implies that $H$ is 4-transitive and so $H=\textup{M}_{12}$. Indeed $B=M_{12}B_U$, so $G=M_{12}P_1$, and the example $M_{12}$ is listed in Table~\ref{T:OEvenP1}, concluding  this case.

\smallskip
 {\sc Case}~$\cU=\cN_2^{+}$ \emph{with $m$ odd, $q=4$, and $B=N_G(\SU_m(q))$}. \quad Here $G=\GammaO^-_m(4)$ and so $B=\GammaU_m(4)$.  Suppose first that $\SU_m(4)\not\leqslant H$ and let $U\in\cU$. Then we have a core-free factorisation $B=HB_U$. However, since $m$ is odd and $q=4$ there are no such factorisations by Lemma~\ref{lem:maxmin} and \cite{Factns}, a contradiction. Thus $\SU_m(4)\lhdeq H$ as stated in Theorem~\ref{T9}(c). Moreover,   by  the Remark column of \cite{Factns}*{p.\,11} we have $\overline{H}=N_{\overline{G}}(\SU_m(4))$ and so $H$ and  $\GammaU_m(4)$ project onto the same subgroup of $\PGammaO_{2m}^-(4)$, proving the assertions of part (c) in this case.

\smallskip
 {\sc Case}~$\cU=\cN_2^{+}$ \emph{with $q=2$}. When $q=2$, if $U\in\cN_2^+$ then $U$ contains a unique nonsingular 1-subspace. Thus if $H\leqslant \GO_{2m}^-(2)$ acts transitively on $\cN_2^+$ then it is also transitive on $\cN_1$. Hence $H$ is given in part (b) of the theorem. By~\cite{BG}*{Table~4.1.2} and the fact that $|\cN_2^+|=|\cN_{2m-2}^-|$ we have that $|\cN_2^+|=2^{2m-3}(2^m+1)(2^{m-1}-1)$. Hence, unless $m=7$, it follows that $|H|$ is divisible by a primitive prime divisor of $2^{m-1}-1$. Looking at the five possibilities for $H_0$  and using the fact that $m\geq 4$, it follows that $m$ is odd and $\SU_m(2)\lhdeq H$. By Lemma~\ref{lem:SUorthog} this is indeed an example as listed in part (c) of the Theorem.
\end{proof}

As mentioned in Subsection~\ref{sub:spaces}, when $kq$ is odd we also determine the subgroups which are transitive on $\cN_k=\cN_k^+\cup\cN_k^-$. 

\begin{proposition}\label{P1}
Suppose that $H\leqslant \GammaO_{2m}^-(q)$, where $m\geq 2$ and $q$ is odd, such that  $\Omega_{2m}^-(q)\not\leqslant H$. Let $V=(\F_q)^{2m}$ and suppose that $H$ acts transitively on the set of all nondegenerate $k$-subspaces of $V$ for some odd $k$. Then $k=1$, $m$ is odd, and either $\SU_m(q)\lhd H$, or $(m,q)=(3,3)$ and $\nonsplit{2}{\PSL_3(4)}\lhd H$. 
\end{proposition}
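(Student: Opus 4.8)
The plan is to reduce the claim about transitivity on $\cN_k = \cN_k^+ \cup \cN_k^-$ to the already-classified transitivity on a single isometry class, using the index-two subgroup $\GO_{2m}^-(q) \le \GammaO_{2m}^-(q)$ and the observation that a similarity in $\CO_{2m}^-(q)\setminus\GO_{2m}^-(q)$ interchanges $\cN_k^+$ and $\cN_k^-$. First I would dispose of small $m$: for $m=2,3$ the result follows by inspecting the outcomes of Theorem~\ref{T:KleinPSL2q2} and Theorem~\ref{T:KleinSU4} respectively, checking in each listed case whether the group actually meets both orbits $\cN_k^\pm$ (for instance the two $\Alt_5$-classes in $\GammaO_4^-(3)$ each hit only one of $\cN_1^+$, $\cN_1^-$, so they are excluded, while the $\nonsplit{2}{\PSL_3(4)}$ example inside $\GammaO_6^-(3)$ on $\cN_3$ does work). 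For $m\ge4$ I would use Theorem~\ref{T9}: any $H$ transitive on $\cN_k$ is in particular transitive on $\cN_k^+$, so $k=1$, $m$ is odd, and $\SU_m(q)\lhdeq H$ (the $q=2$ rows of Table~\ref{T:OEvenN1} are irrelevant since $q$ is odd here, and the $\cP_1$ and $\cN_2^+$ cases do not apply).

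The key remaining point for $m\ge4$ is to upgrade "transitive on $\cN_1^+$ with $\SU_m(q)\lhdeq H$" to "transitive on all of $\cN_1 = \cN_1^+ \cup \cN_1^-$". Here I would argue as follows: since $q$ is odd and $m$ odd, the two classes $\cN_1^\pm$ are swapped by a diagonal element of $\CO_{2m}^-(q)$ but fused by no element of $\GO_{2m}^-(q)$; so $H$ is transitive on $\cN_1$ exactly when $H \not\le \GO_{2m}^-(q)$ and $H \cap \GO_{2m}^-(q)$ (equivalently $\SU_m(q)$, or rather its image) is transitive on $\cN_1^+$. Now $\SU_m(q)$, being a normal subgroup of $H$ contained in $\GO_{2m}^-(q)\cdot Z$, lies in the kernel of the sign map $\GammaO_{2m}^-(q)\to \C_2$ detecting $\cN_1^+$ versus $\cN_1^-$ swaps, so by itself $\SU_m(q)$ cannot fuse the two classes. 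Hence transitivity on $\cN_1$ forces $H$ to contain an element inducing the class-swap. Using the structure recorded in the proof of Theorem~\ref{T9} (where $\overline{H} = N_{\overline{G}}(\SU_m(q))$ and $B = \GammaU_m(q)$-type in the relevant case), I would check that $N_{\GammaO_{2m}^-(q)}(\SU_m(q))$ does contain such an element — essentially because the field automorphism / diagonal outer part of $\GammaU_m(q)$, when transported into $\GammaO_{2m}^-(q)$, lands outside $\GO_{2m}^-(q)$ — so that indeed $\SU_m(q)\lhd H$ with $H$ transitive on $\cN_1$ is realisable, giving the stated conclusion.

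For the small cases I would run the same logic by hand. For $(m,q)=(3,3)$, $k=3$: here $\POmega_6^-(3)\cong\PSU_4(3)$ and the $\nonsplit{2}{\PSL_3(4)}$ subgroup of $\GammaO_6^-(3)$ appears in Theorem~\ref{T:KleinSU4}(a),(b),(c); I would verify (via the \textsc{Magma} data already invoked there, or directly) that a suitable overgroup $H$ with $\nonsplit{2}{\PSL_3(4)}\lhd H$ is not contained in $\GO_6^-(3)$, hence does fuse $\cN_3^+$ and $\cN_3^-$, and that $\nonsplit{2}{\PSL_3(4)}$ itself is transitive on $\cN_3^+$; this yields the second alternative. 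Any other candidate from Theorems~\ref{T:KleinPSL2q2} and~\ref{T:KleinSU4} with odd $k$ — the $\Alt_5$ rows, or the $\cN_2^+$/$\cN_2^-$ entries (which have even $k$, hence excluded) — either has $k$ even or fails the class-fusion test, so no further examples arise.

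\textbf{Main obstacle.} The delicate step is the class-fusion bookkeeping: deciding, for each surviving normal subgroup $\SU_m(q)\lhd H$, whether $H$ necessarily contains (or can contain) an element of $\GammaO_{2m}^-(q)$ inducing the swap of $\cN_k^+$ and $\cN_k^-$ while still not containing $\Omega_{2m}^-(q)$. This requires tracking the precise image of $H$ under the "spinor-norm/similarity-sign" map and identifying which outer elements of $\GammaU_m(q)$ correspond to $\GO$-swaps versus $\CO\setminus\GO$-swaps under the exceptional identifications and the embedding $\GU_m(q) \hookrightarrow \GO_{2m}^\pm(q)$; the parity dependence on $m$ and the fact that $q$ is odd are exactly what make $k=1$ and $m$ odd the only survivors.
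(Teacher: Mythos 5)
Your overall strategy is the same as the paper's: pass to the index-two subgroup $H^+=H\cap G^+$ (where $G^+$ preserves each of $\cN_k^+$ and $\cN_k^-$), observe that $H^+$ must be transitive on both classes, quote the single-class results (Theorems~\ref{T:KleinPSL2q2}, \ref{T:KleinSU4}, \ref{T9}), and then decide which surviving subgroups are normalised by an element of $\CO_{2m}^-(q)\setminus\GO_{2m}^-(q)$ fusing the two classes. Your fusion argument for $\SU_m(q)\lhd H$ is in the right spirit, though the paper settles it more cleanly by citing that $\Omega_{2m}^-(q)$ has a \emph{single} conjugacy class of $\SU_m(q)$-subgroups (\cite{KL}*{Proposition~4.3.18}), so the normaliser automatically contains a similarity outside $\GO_{2m}^-(q)$ and the same subgroup is transitive on both $\cN_1^\eps$; no spinor-norm bookkeeping is needed. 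Your elimination of the two $\Alt_5$-classes in $\GammaO_4^-(3)$ matches the paper exactly.

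However, there is a concrete error in your treatment of $(m,q)=(3,3)$: you assert that ``the $\nonsplit{2}{\PSL_3(4)}$ example inside $\GammaO_6^-(3)$ on $\cN_3$ does work'' and that this yields the second alternative of the proposition. This is wrong on both counts. Theorem~\ref{T:KleinSU4} has \emph{no} case $\cU=\cN_3^\eps$ in its statement precisely because the \textsc{Magma} computation in its proof shows that any subgroup of $\GammaO_6^-(3)$ transitive on $\cN_3^\eps$ contains $\Omega_6^-(3)$; so there are no proper examples for $k=3$ at all, which is exactly why the proposition can conclude $k=1$ unconditionally. The genuine $\nonsplit{2}{\PSL_3(4)}$ example occurs in Theorem~\ref{T:KleinSU4}(c), i.e.\ for $\cU=\cN_1^\eps$ with $q=3$, and the second alternative of the proposition is a $k=1$ statement: one then checks (as the paper does via \cite{BHRD}*{Table~8.34}) that $\nonsplit{2}{\PSL_3(4)}$ is normalised by an element of $\CO_6^-(3)\setminus\GO_6^-(3)$, so its normaliser fuses $\cN_1^+$ and $\cN_1^-$. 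As written, your argument would both admit a spurious $k=3$ example (contradicting the conclusion ``$k=1$'' you are trying to prove) and misattribute the source of the $\nonsplit{2}{\PSL_3(4)}$ case; the fix is simply to read off the $\cN_1^\eps$, $q=3$ entry of Theorem~\ref{T:KleinSU4}(c) rather than a nonexistent $\cN_3$ entry.
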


\begin{proof}
Let $\cU=\cN_k=\cN_k^+\cup\cN_k^-$, the set of all nondegenerate $k$-subspaces of $V$. 
Note that $\GammaO^-_{2m}(q)$ has an index two subgroup with orbits $\cN_k^+$ and $\cN_k^-$ on $\cU$. Thus $H$ has an index two subgroup $H^+$ which is transitive on both $\cN_k^+$ and $\cN_k^-$. Hence by Theorem~\ref{T9}, $k=1$,  and either $m$ is odd with $H$ is as in the statement of the proposition, or $(m,q)=(2,3)$ and $\Alt_5\lhd H^+$. However, as noted in the proof of Theorem~\ref{T:KleinPSL2q2}, each of the latter groups (containing an $\Alt_5$-subgroup) acts transitively on only one of the $\cN_1^\varepsilon$ and so does not provide an example. The other possibilities do indeed provide examples: by \cite{KL}*{Proposition~4.3.18}, $\Omega_{2m}^-(q)$ contains only one conjugacy class of $\SU_m(q)$-subgroups and so each such subgroup acts transitively on each orbit $\cN_1^\eps$. Moreover, since there is only one conjugacy class, the normaliser in $\GammaO_{2m}^-(q)$ of $\SU_m(q)$ contains an element of $\CO_{2m}^-(q)\backslash \GO^-_{2m}(q)$ and so $N_{\GammaO_{2m}^-(q)}(H^+)$ is transitive on $\cU$. Similarly, when $(m,q)=(3,3)$ we see from \cite{BHRD}*{Table~8.34} that $H^+=\nonsplit{2}{\PSL_3(4)}$ is normalised by an element of $\CO^-_{6}(3)\backslash \GO^-_{6}(3)$ and so $N_{\CO^-_6(3)}(\nonsplit{2}{\PSL_3(4)})$ acts transitively on $\cU$.
\end{proof}

\section{Orthogonal groups in odd dimension}\label{S:OOdd}

Let $V=(\mathbb{F}_q)^n$,  with $n$ odd,  equipped with a nondegenerate quadratic form $Q$.  
The form $Q$ and the associated polar form $(\cdot,\cdot)$ may be chosen as follows.  Relative to a basis
$e_1,\dots,e_m,f_1,\dots,f_m,d$ for $V$ we 
define
\[
  Q\left(zd+\sum_{i=1}^m (x_ie_i+y_if_i)\right)=z^2+\sum_{i=1}^m x_iy_i\quad\textup{and}\quad (u,v)=Q(u+v)-Q(u)-Q(v).
\]
Note that $Q(d)=1$ and $(d,d)=2$. If $q$ is even then $V^\perp=\langle d\rangle$, a  nonsingular $1$-subspace, while if 
$q$ is odd then the Gram matrix for $(\cdot,\cdot)$ is invertible (as $(d,d)=2\ne0$) and the radicals of both $Q$ and $(\cdot,\cdot)$ are trivial. Thus the cases where $q$ is odd and even are quite different and are treated in separate Subsections~\ref{S:OOddOddq} and~\ref{S:OoddEvenq}.

\subsection{Orthogonal groups in odd dimension \texorpdfstring{$n=2m+1\geq 3$}{} and \texorpdfstring{$q$}{} odd}\label{S:OOddOddq}

Let  $\cU$  be the set of all totally singular subspaces of $V$ of dimension $k\leq m$, or the set of all nondegenerate subspaces of dimension $k\leq m$ of a fixed isometry type. Then by \cite{BG}*{Lemma~2.5.10}, $\Omega_n(q)$ is transitive on $\cU$. Note that if $k$ is odd then there are two isometry types of nondegenerate subspaces of dimension $k$ but only one similarity type. However, their orthogonal complements have even dimension and are not similar (for some isometry type they are of plus type and for the other isometry type they are of minus type.) Hence $\cU$ is also a $\GammaO_n(q)$-orbit.

 The following formulas for $|\cU|$ are useful references.
\begin{equation}\label{E:OSizeN1Pm}
  |\cP_m|=\prod_{i=1}^m(q^i+1),\qquad\textup{and}\qquad 
  |\cN_1^\eps|=\frac{q^m(q^m+\eps1)}{2}.   
\end{equation}

\begin{theorem}\label{T4}
  Suppose that  $H\le\GammaO_{2m+1}(q)$, where $m\geq1$ and $q$ is odd, such that  $\Omega_{2m+1}(q)\not\le H$. Suppose that $H$ acts transitively on a set $\cU$ of subspaces of $V=(\F_q)^{2m+1}$ given by Table~$\ref{T-defn}$. Then either $m=1$ and $H$ is given by Theorem~$\ref{T:Omega3}$, or $m=2$ and $H$ is given by Theorem~$\ref{T:KleinSp4}$, or $m\geq3$ and one of the following holds:
  \begin{enumerate}[{\rm (a)}]
    \item $\cU=\cP_1$, $m=3$, and $\G_2(q)\lhdeq H$; or
  \item $\cU=\cP_m$ and $H_0\lhdeq H$ where $(H_0,m,q)$ is given in
    Table~\textup{\ref{T:Oa}}; or
\begin{table}[!ht]
\caption{Theorem~\ref{T4}(b) subgroups $H_0\lhdeq H\le\GammaO_{2m+1}(q)$ with $H$
  transitive on $\cP_m$ with $m\geq 3$.}\label{T:Oa}
\begin{tabular}{rlcccccc}
\toprule
$H_0$ &&$\Omega^{-}_{2m}(q)$ &$\Alt_8$&$\Alt_9$ &$2^6\ldotp\Alt_7$&$\nonsplit{2}{\PSL_3(4)}$&$\Omega_7(2)$\\
\textup{$(m,q)$}&&$(m,{\rm odd})$& $(3,3)$&$(3,3)$&$(3,3)$&$(3,3)$&$(3,3)$\\
\bottomrule
\end{tabular}
\end{table}
  \item $\cU=\cN_1^{-}$ and either $H_0\lhdeq H$ where $H_0$ is one of the groups listed in Table~\textup{\ref{T:Oc}}, or $H\le P_m$ where $H$ induces a subgroup
  of $\GammaL_m(q)$ which is transitive on $(m-1)$-subspaces.
  
\begin{table}[!ht]
\caption{Theorem~\ref{T4}(c) subgroups $H_0\lhdeq H\le\GammaO_{2m+1}(q)$ with $H$
  transitive on $\cN_1^-$.}\label{T:Oc}
\begin{tabular}{rlccccccccc}
\toprule
$H_0$ &&$\textup{F}_4(q)$&$\PSp_6(q)$& $\G_2(q)$ &$\SL_3(q)$ \\
$(m,q)$&& $(12,3^f)$&$(6,3^f)$&$(3,\textup{odd})$&$(3,3^f)$\\
\bottomrule
\end{tabular}
\end{table}

  \item $\cU=\cN_1^+$,  $m=3$, and $H_0\lhdeq H$ where $H_0$ is given in
    Table~\textup{\ref{T:Oc2}}; or

\begin{table}[!ht]
\caption{Theorem~\ref{T4}(d) subgroups $H_0\lhdeq H\le\GammaO_{7}(q)$ with $H$
  transitive on $\cN_1^+$.}\label{T:Oc2}
\begin{tabular}{rlccccccc}
\toprule
$H_0$ &	&$\G_2(q)$ 		&$\SU_3(q)$& ${}^2\G_2(q)$&$\Alt_9$	&$\Omega_7(2)$\\
$q$&		&$\textup{odd}$&$3^f$		& $3^{2e+1}$	& $3$			&$3$\\
\bottomrule
\end{tabular}
\end{table}

  \item $\cU=\cN_2^{\eps}$, $m=3$, $q=3^f$, and $\G_2(q)\lhdeq H$. 
\end{enumerate}
\end{theorem}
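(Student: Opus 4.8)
The plan is to follow the pattern of the proofs of Theorems~\ref{T3} and~\ref{T9}. If $m=1$ the group $\GammaO_3(q)$ is covered by Theorem~\ref{T:Omega3}, and if $m=2$ the exceptional isomorphism $\POmega_5(q)\cong\PSp_4(q)$ reduces us to Theorem~\ref{T:KleinSp4}; so assume $m\ge3$, where $\POmega_{2m+1}(q)$ is simple of type $B_m$ and is isomorphic to no other classical group. Set $G=\GammaO_{2m+1}(q)$; one checks that $G$ fixes each family $\cU$ of Table~\ref{T-defn} setwise, since for odd $n$ and odd $q$ the nondegenerate families arising are distinguished either by the isometry type of an (even-dimensional) perpendicular or by the type of a restricted quadratic form in even dimension, and both invariants are preserved by similarities and field automorphisms --- here using that, since $2m+1$ is odd, every similarity of $(V,Q)$ has square multiplier. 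Fix $U\in\cU$; transitivity of $H$ on $\cU$ is the factorisation $G=HG_U$. By the theorems of King~\cites{King1981,King1981a,King1982}, $\Omega_{2m+1}(q)$ is primitive on $\cU$, so $G_U$ is maximal in $G$. Passing to $\oG=\PGammaO_{2m+1}(q)$, with $L=\POmega_{2m+1}(q)$, Lemma~\ref{lem:derived} gives $L\not\le\oH$; choosing $\oB$ maximal among core-free subgroups of $\oG$ with $\oH\le\oB$ and setting $G^*=\oB L$, Lemma~\ref{lem:maxmin} yields a maximal core-free factorisation $G^*=\oB(G^*)_U$ with $\oB=N_{\oG}(\oB\cap L)$. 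By the corrected version of \cite{Reg}*{Lemma~4.1(iv)} (Remark~\ref{rem:Reg}(a) adds the case $M=P_1$ for $m=3$), the only families that can arise are $\cP_1$ (with $m=3$), $\cP_m$, $\cN_1^-$, $\cN_1^+$ (with $m=3$) and $\cN_2^\eps$ (with $m=3$); for each, \cite{Factns} supplies the list of candidates for $\oB$.

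I would treat $m\ge4$ first, as there only $\cP_m$ and $\cN_1^-$ occur. For $\cP_m$ one has $\oB=N_{\oG}(\Omega_{2m}^-(q))$; arguing as in \cite{Factns}*{\S3.2}, the point stabiliser $B_U$ fixes a maximal totally singular subspace of the minus-type $2m$-subspace preserved by $B$, so $B=HB_U$ forces $H$ to be transitive on those, and Theorem~\ref{T9} then gives $\Omega_{2m}^-(q)\lhdeq H$ (having first checked that $\Omega_{2m}^-(q)$ really is transitive on $\cP_m$). For $\cN_1^-$, $\oB$ is either a parabolic $P_m$ --- whence $B=P_m$ is the stabiliser of a maximal totally singular subspace $E$, $B_U$ fixes the hyperplane $E\cap U^\perp$ of $E$, and $B=HB_U$ forces $H$ to induce, modulo the unipotent radical, a subgroup of $\GammaL_m(q)$ transitive on $(m-1)$-subspaces of $E$, as in part~(c) --- or one of the exceptional subgroups $\PSp_6(q)$ (with $m=6$, $q=3^f$) or $\textup{F}_4(q)$ (with $m=12$, $q=3^f$), where, after verifying transitivity on $\cN_1^-$ using the $13$- and $25$-dimensional modules arising from the characteristic~$3$ special isogeny, and ruling out proper transitive subgroups via a further factorisation of the relevant almost simple group, one obtains $H_0\lhdeq H$ as in Table~\ref{T:Oc}.

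For $m=3$ all five families can occur and the exceptional factorisations of $\Omega_7(q)$ are relevant: for $\cP_1$, $\oB=\G_2(q)$ (with $\cP_1$ the point set of the split Cayley hexagon); for $\cN_1^-$, $\oB\in\{P_3,\G_2(q),\SL_3(q)\ (q=3^f)\}$; for $\cN_1^+$, $\oB\in\{\G_2(q),\SU_3(q)\ (q=3^f),{}^2\G_2(q)\ (q=3^{2e+1})\}$ together with the $q=3$ possibilities; and for $\cN_2^\eps$, $\oB=\G_2(q)$ with $q=3^f$. In each case I would verify transitivity of the listed group on the stated family --- for $\SL_3(q)$, ${}^2\G_2(q)$ and the $\G_2$-actions this uses the geometry of $\G_2$ in characteristic $3$, and the orders $|\cN_1^\pm|$, $|\cN_2^\pm|$ decide which family a given subgroup can meet (note that $|\cN_1^+|$ and $|\cN_1^-|$ have opposite parity) --- and then, as in the proof of Theorem~\ref{T1} Case~8(c), eliminate proper transitive subgroups by descending to a factorisation of the corresponding almost simple group. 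Finally the case $q=3$ is handled separately for every family, since $\Omega_7(3)$ admits the sporadic factorisations corresponding to $\Alt_8$, $\Alt_9$, $2^6\ldotp\Alt_7$, $\nonsplit{2}{\PSL_3(4)}$ and $\Omega_7(2)$; here I would use \textsc{Magma} to enumerate the transitive subgroups directly, producing the entries of Tables~\ref{T:Oa} and~\ref{T:Oc2}.

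I expect the main obstacle to be the characteristic~$3$ exceptional embeddings $\SL_3(q),{}^2\G_2(q)\le\G_2(q)$, $\PSp_6(q)\le\Omega_{13}(q)$ and $\textup{F}_4(q)\le\Omega_{25}(q)$: one must both pin down which of $\cN_1^+$, $\cN_1^-$ each is transitive on (the numerology being delicate) and show that these almost simple groups have no proper transitive subgroup on the subspace families in question, which requires a careful analysis of their own maximal factorisations.
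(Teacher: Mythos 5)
Your proposal follows the same route as the paper: reduce to $m\ge3$, use King's primitivity results plus Lemmas~\ref{lem:derived} and~\ref{lem:maxmin} to pass to a maximal core-free factorisation of $\PGammaO_{2m+1}(q)$, read off the candidates for $\oB$ from \cite{Factns}, and then settle each family by descending to factorisations of $\POmega_{2m}^-(q)$, $\GammaL_m(q)$, $\G_2(q)$, $\PSp_6(q)$ and $\textup{F}_4(q)$, with \textsc{Magma} for the sporadic $\Omega_7(3)$ factorisations. That is exactly the paper's strategy, and your case division and endgame for each family match the paper's.

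There is one concrete gap: you assert that King's theorems give primitivity of $\Omega_{2m+1}(q)$ on \emph{every} family $\cU$ from Table~\ref{T-defn}, and hence that $G_U$ is always maximal. This fails precisely when $q=3$ and $\cU=\cN_2^+$: there $N_2^+$ is \emph{not} maximal in $\GO_{2m+1}(3)$, so Lemma~\ref{lem:maxmin} cannot be applied to the factorisation $G=HG_U$ as you propose, and your treatment of case~(e) is unjustified for $q=3$. The paper handles this exceptional case separately and elementarily: a $U\in\cN_2^+$ over $\F_3$ contains exactly one $1$-subspace from each of $\cN_1^+$ and $\cN_1^-$, so a subgroup transitive on $\cN_2^+$ is transitive on both $\cN_1^+$ and $\cN_1^-$, and intersecting the conclusions of parts~(c) and~(d) forces $m=3$ and $\G_2(3)\lhdeq H$. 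You should insert this reduction (or some substitute for the missing maximality) before claiming case~(e) for $q=3$. The remaining items you flag as needing verification (transitivity of $\Omega_{2m}^-(q)$ on $\cP_m$, of $\PSp_6(q)$ and $\textup{F}_4(q)$ on $\cN_1^-$, and the identification of the relevant point stabilisers inside $\G_2(q)$, $\PSp_6(q)$) are genuine obligations, but they are discharged in \cite{Factns} exactly as you anticipate, so they are not gaps in the plan, only work still to be written out.
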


\begin{proof}
 Let $G=\GammaO_{2m+1}(q)$. By Table~\ref{T-defn}, $\cU$ is $\cP_k$ or $\cN_k^\eps$ for some $k\leq m$ and $\eps=\pm$, and by our discussion above, $G$ leaves $\cU$ invariant. Let $H\leqslant G$ act transitively on $\cU$  such that $\Omega_{2m+1}(q)\not\leqslant H$. If $m=1$ then the result follows from Theorem~\ref{T:Omega3}, while if $m=2$ then it follows from Theorem~\ref{T:KleinSp4}.  Thus from now on we assume that $m\geq 3$. 
 Let $U\in \cU$. Then we have a factorisation $G=HG_U$.  Since $m\geq 3$ and $q$ is odd, \cite{King1981} and \cite{King1981a} imply that $\GO_n(q)$ acts primitively on $\cU$ unless $q=3$ and $\cU=\cN_2^+$. We will treat the latter case separately at the end. Thus we assume for now that $G$ acts primitively on $\cU$ and so $G_U$ is maximal in $G$.

We denote the image of each $X\leqslant \GammaO_{2m+1}(q)$ in $\PGammaO_{2m+1}(q)$ by $\overline{X}$, and let $L=\POmega_{2m+1}(q)$. Then we have a factorisation $\PGammaO_{2m+1}(q)=\overline{H}\,\overline{G}_U$. Since $\Omega_{2m+1}(q)$ is quasisimple, Lemma~\ref{lem:derived} implies that $L\not\leqslant \oH$. Let $\overline{B}$ be maximal among core-free subgroups of $\PGammaO_{2m+1}(q)$ containing $\overline{H}$ and let $G^*=\overline{B}L$. Then by Lemma~\ref{lem:maxmin}, $\overline{B}$ is maximal in $G^*$ and we have a core-free maximal factorisation $G^*=\overline{B}(G^*)_U$. In particular $\cU$ and $\overline{B}$ are given by \cite{Factns}. Moreover, $\overline{B}=N_{\PGammaO_{2m+1}(q)}(\overline{B}\cap L)$ and so letting $B$ be the full preimage of $\oB$ in $\GammaO_n(q)$, the Correspondence Theorem implies that $B=N_G(B\cap \Omega_{2m+1}(q))$.

Besides a large number of factorisations with $m=q=3$ listed
on~\cite{Factns}*{p.\,13}, we must consider the cases
in Table~\ref{T:OOddCases} listed on~\cite{Factns}*{p.\,11, p.\,12}. We note the case $\cU=\cP_1$ when $m=3$ was missed in \cite{Reg}*{Lemma 4.1}.
\begin{table}[!ht]
\caption{Infinite families for $L=\POmega_{2m+1}(q)$ where  $q$ is odd. Further, $q=3^f$ in cases (c3) and (c4), and $b_2\leqslant 2$ in case (c3).}\label{T:OOddCases}
\begin{tabular}{rlccccccccc}
\toprule
\textup{Case} && (a)& (b)& (c1)&(c2)&(c3)& (c4)& (d)&(e)\\
$\cU$ && $\cP_1$&$\cP_m$& $\cN_1^{-}$& $\cN_1^-$& $\cN_1^-$&$\cN_1^-$&$\cN_1^+$&  $\cN_2^\eps$\\
$m$&&$3$&$\ge3$&$\ge3$&$3$&$6$&$12$&$3$&$3$\\
$\oB\cap L$&&$\G_2(q)$&$N_1^{-}$&$P_m$&$\G_2(q)$&$\PSp_6(q)\ldotp b_2$&$\textup{F}_4(q)$& $\G_2(q)$& $\G_2(q)$\\
\bottomrule
\end{tabular}
\end{table}

\smallskip

{\sc Case}~$\cU=\cP_1$. By~\cite{Factns}, the only possibility is $m=3$ and $B=N_G(\G_2(q))$. Moreover, $\G_2(q)$ is indeed transitive on $\cP_1$, as listed in case (a) of the Theorem. Suppose now that $\G_2(q)\not\leqslant H$. Then we have a factorisation $B=HB_U$, where $U\in\cP_1$. As observed in \cite{Factns}*{p100}, $B_U$ is a parabolic subgroup of $B$, and there are no such factorisations in the list for almost simple groups with socle $\G_2(q)$ given in \cite{Factns}*{Table~5}. 

\smallskip
{\sc Case}~$\cU=\cP_m$.  By~\cite{Factns}, either $B=N_1^-$ or $(m,q)=(3,3)$. We may take
$U=\langle e_1,\dots, e_m\rangle\in\cP_m$. Suppose first that $H\leqslant N_1^-=N_G(\Omega_{2m}^-(q))$. We follow~\cite{Factns}*{3.4.1, p.\,57} and set $v=e_1+\lambda f_1$ where
$\lambda\in\F_q^\times$ is a nonsquare and $B=G_{\langle v\rangle}$.    Suppose first that $\Omega_{2m}^-(q)\not\leqslant H$. Then $N_1^-=B=HB_U$. This factorisation yields the factorisation $B^{v^\perp}=H^{v^\perp}(B_U)^{v^\perp}$ which then yields a factorisation of an almost simple group $X$ with socle $\POmega_{2m}^-(q)$. Moreover, as $B_U$ fixes the totally singular $(m-1)$-subspace $U\cap v^\perp=\langle e_2,\ldots,e_m\rangle$, we obtain a core-free factorisation of $X$ with one factor being the stabiliser of a totally singular $(m-1)$-subspace. However, such a factorisation does not exist if $m\ge4$
by~\cite{Factns}*{pp.\,11--13} and Theorem~\ref{T:KleinSU4} implies that when $m=3$ we must have $q=3$.  We note that for $(m,q)=(3,3)$, other possibilities for $B$ also occur and so at this point it is convenient to
use {\sc Magma} to list the subgroups of $\GammaO_7(3)=\GO_7(3)$ that act transitively
on $\cP_3$. The new possibilities are recorded in Table~\ref{T:Oa}. 

We conclude this case by showing that $A=\Omega_{2m}^-(q)=\Omega_{2m+1}(q)_v$ acts transitively on $\cP_m$ and so any $H$ with $\Omega_{2m}^-(q)\lhdeq H$ acts transitively, as recorded in Table~\ref{T:Oa}. Now by \cite{BG}*{Lemma 2.5.10}, $A$ acts transitively on the set of all totally singular $(m-1)$-subspaces of $v^\perp$ and so it remains to show that $A_{U\cap v^\perp}$ acts transitively on the set of elements of $\cP_m$ that contain $U\cap v^\perp$. Now $(U\cap v^\perp)^\perp=\langle e_1,e_2,\ldots,e_m,f_1,d\rangle$ and note that $\langle e_1,f_1,d\rangle \perp \langle e_2,\ldots, e_m\rangle$. Thus an element $a+b$ with $a\in \langle e_1,f_1,d\rangle$ and $b\in \langle e_2,\ldots, e_m\rangle$ is singular if and only if $a$ is singular. Hence the elements of $\cP_m$ containing $U\cap v^\perp$ correspond to the totally singular 1-subspaces of $\langle e_1,f_1,d\rangle$. Since $v=e_1+\lambda f_1\in \cN_1^-$ it follows that $W:=\langle e_1-\lambda f_1,d\rangle\leqslant v^\perp$ is a nondegenerate 2-subspace of minus type. Moreover, \cite{KL}*{Lemmas~4.1.1 and~4.1.12} imply that $A_{U\cap v^\perp,W }$ induces at least $\SO_2^-(q)$ on $W$ and hence for each $\mu\neq 0$, is transitive on all vectors $w\in W$ with $Q(w)=\mu$. Since $W$ is of minus type, any singular 1-subspace of $v\perp W$ is spanned by a vector of the form $v+a(e_1-\lambda f_1)+bd$ for some $a,b\in\F_q$.  Such a vector is singular if and only if $b^2=\lambda(a^2-1)$. However, in this case $Q(a(e_1-\lambda f_1)+bd)=-\lambda a^2+b^2=-\lambda$, a constant. Hence $A_{U\cap v^\perp,W }$ is transitive on all such vectors and thus all singular 1-subspaces in $\langle v,W\rangle$. Hence $A$ is transitive on $\cP_m$.

\smallskip
{\sc Case}~$\cU=\cN_1^{-}$. Here there are four possibilities for $B$ from Table~\ref{T:OOddCases}, namely (c1)~$P_m$,  (c2)~$N_G(\G_2(q))$ with $m=3$, (c3)~$N_G(\PSp_6(q))$ with $m=6$, and
(c4)~$N_G(\textup{F}_4(q))$ with $m=12$. Moreover, cases (c3) and (c4) only occur when $q=3^f$.
Let $U\in\cN_1^{-}$ where $\dim(U)=1$, $\dim(U^\perp)=2m$
and the quadratic form restricted to $U^\perp$ has type~$\Or^{-}$. 

(c1)~Suppose first that $H\leqslant P_m$. As seen above, we have that $N_1^-\cap P_m$ fixes an $(m-1)$-dimensional subspace of the totally singular $m$-subspace $W$ fixed by $P_m$.
Restricting to $W$ gives an epimorphism
$\pi\colon P_m\twoheadrightarrow \GammaL_m(q)$. This gives a factorisation $\pi(P_m)=\pi(H)\pi(P_m\cap N_1^-)$. Since $P_m\cap N_1^-$ fixes a hyperplane of $W$ it follows that $\pi(H)$ is a subgroup of $\GammaL_m(q)$ that is transitive on hyperplanes as in case (c).

(c2)~Suppose now that $H\leqslant N_G(\G_2(q))=B$ where $m=3$. Then $B=HB_U$. By~\cite{Factns}*{p.\,100}, $\G_2(q)_U=\SU_3(q).2$, and so by the classification of factorisations of almost simple groups with socle $\G_2(q)$, see \cite{Factns}*{Table~5}, we deduce that either $\G_2(q)\lhdeq H$ or $q=3^f$ and $\SL_3(q)\lhdeq H$ as listed in Table~\ref{T:Oc}.

(c3)~Suppose next that $H\leqslant N_G(\PSp_6(q))=B$ where $m=6$ and $q=3^f$. Note that \cite{Factns}*{Lemma~A on p.\,85} implies that $\PSp_6(q)$ acts transitively on $\cN_1^-$ and so any group $H$ with $\PSp_6(q)\lhdeq H$ is an example as in Table~\ref{T:Oc}. Suppose then that $\PSp_6(q)\not\leqslant H$. Then we have a factorisation $B=HB_U$ and hence a factorisation of an almost simple group with socle $\PSp_6(q)$. By~\cite{Factns}*{p.\,86} we have that $\PSp_6(q)_U=(\Sp_2(q)\times\Sp_2(q^2))/\{\pm I\}$. Moreover, $\PSp_6(q)_U$ is contained in an $N_2$ subgroup of $\PSp_6(q)$ and hence $B=HN_2$. However, by \cite{Factns} when $q=3^f$ an almost simple group with socle $\PSp_6(3^f)$ does not have a core-free factorisation with one factor being an $N_2$ subgroup, contradicting $\PSp_6(q)\not\leqslant H$.

(c4)~Finally, suppose that $H\leqslant N_G(\textup{F}_4(q))$ where $m=12$ and $q=3^f$. If $\textup{F}_4(q)\not\leqslant H$ then we obtain a core-free factorisation of an almost simple group with socle $\textup{F}_4(q)$. However, by \cite{Factns}*{Table~5} there are no such factorisations when $q$ is odd. Thus we must have $\textup{F}_4(q)\lhdeq H$ as in Table~\ref{T:Oc}.

\smallskip
{\sc Case}~$\cU=\cN_1^+$.  By~\cite{Factns}, either $m=3$ and $H\leqslant N_G(\G_2(q))$ or $(m,q)=(3,3)$. Moreover, $\G_2(q)$ acts transitively on $\cN_1^+$ giving us one of the cases in Table~\ref{T:Oc2}. Let $U\in\cN_1^+$. Suppose that $H\leqslant N_G(\G_2(q))=B$ such that $\G_2(q)\not\leqslant H$. Then we have a factorisation $B=HB_U$. By~\cite{Factns}*{p.\,100} we have $\G_2(q)_U=\SL_3(q).2$ and so by the classification of factorisations of almost simple groups with socle $\G_2(q)$ \cite{Factns}*{Table~5} we deduce that $q=3^f$ and either $\SU_3(q)\lhdeq H$ or $f$ is odd and ${}^2\G_2(q)\lhdeq H$. These cases are listed in Table~\ref{T:Oc2}. It remains to consider the exceptional factorisations when $(m,q)=(3,3)$. In this case $\GammaO(7,3)=\GO(7,3)$ and a \textsc{Magma} calculation gives the remaining cases in Table~\ref{T:Oc2}. Note that ${}^2\G_2(3)\cong\PGammaL_2(8)$.

\smallskip
{\sc Case}~$\cU=\cN_2^\eps$ with $(\eps,q)\neq (+,3)$. By~\cite{Factns}, we again have that $m=3$ and $B=N_G(\G_2(q))$. Moreover, by~\cite{Factns}*{Lemma~A p.\,100}, $\G_2(q)$ is transitive on $\cN_2^\eps$ for all $q$ (including the case $(\eps,q)=(+,3)$ where $N_2^+$ is not maximal in $G$). Suppose that $\G_2(q)\not\leqslant H$. Then we have a factorisation $B=HB_U$, where $U\in\cN_2^\eps.$ By \cite{Factns}*{p.\,101}, $|\G_2(q)_U|=2q(q^2-1)(q-\eps 1)$. The possible factors for a core-free factorisation of an almost simple group with socle $\G_2(q)$ are given in \cite{Factns}*{Table~5}, but in no case is the order of the intersection of a factor with $\G_2(q)$ equal to $|\G_2(q)_U|$. Thus $\G_2(q)\lhdeq H$, as listed in the case (e) of the Theorem.

It remains to deal with the case where $q=3$ and $\cU=\cN_2^+$. Let $U\in\cU$. Then $U$ contains four 1-dimensional subspaces with two being singular and two being nonsingular. Moreover, the nonsingular subspaces are not isometric and so one lies in $\cN_1^+$ and one lies in $\cN_1^-$. Thus if $H\leqslant \GammaO_{2m+1}(q)$ acts transitively on $\cN_2^+$ then it also acts transitively on both $\cN_1^+$ and on $\cN_1^-$. By parts (c) and (d) it follows that $m=3$ and $\G_2(3)\lhd H$, as listed in case (e) of the Theorem. As mentioned in the previous paragraph, $\G_2(3)$ is indeed transitive.
\end{proof}

\subsection{Orthogonal groups with odd dimension \texorpdfstring{$n=2m+1\ge3$}{} and
\texorpdfstring{$q$}{} even}\label{S:OoddEvenq}

As discussed above, the quadratic form~$Q$ on  $V=(\F_q)^{2m+1}$ has trivial radical, while the radical  $V^\perp$ of its associated symmetric bilinear form $(\cdot,\cdot)$  is the nonsingular 1-subspace $\langle d\rangle$. Moreover, the  form $(\cdot,\cdot)$ induces a nondegenerate alternating form on $V/V^\perp$ given by $(u+V^\perp,v+V^\perp)=(u,v)$ for all $u,v\in V$.
Since $\GammaO_{2m+1}(q)$ acts faithfully on $V/V^\perp$ it follows that $\GammaO_{2m+1}(q)\cong\GammaSp_{2m}(q)$. 
The families $\cU$ of subspaces we treat in this case (as a careful reading of Table~\ref{T-defn} reveals) are $\cP_k$ for $k\leq m$, $\cN_k^\eps$ for even $k\leq m$ and  $\eps\in\{+,-\}$, $\cN_1$ (all nonsingular $1$-subspaces apart from  $\langle d\rangle$),  and $\cN_{2m}^\eps$ for $\eps\in\{+,-\}$.

\begin{theorem}\label{T:Ooddqeven}
  Suppose that  $H\le\GammaO_{2m+1}(q)$, where $m\geq1$ and $q$ is even, such that $\Omega_{2m+1}(q)'\not\le H$. Suppose that $H$ acts transitively on a set $\cU$ of subspaces of $V=(\F_q)^{2m+1}$ given by Table~$\ref{T-defn}$. Then either $m=1$ and $H$ is given by Theorem~$\ref{T:Omega3}$, or $m\geq2$ and one of the following holds:

\begin{enumerate}[{\rm (a)}]
  \item $\cU=\cP_1$,  $H_0\lhdeq H$ where $(H_0,m,q)$ is given in
    Table~\textup{\ref{tab:Ooddqeven}};~or

\begin{table}[ht!]
\caption{Theorem~\ref{T:Ooddqeven}(a),(c) subgroups $H_0\lhdeq H\le\GammaO_{2m+1}(q)$ where $H$ is transitive
on $\cP_1$  and on $\cN_1$ and $q$ is even.}\label{tab:Ooddqeven}
\begin{tabular}{rlccc}
\toprule
$H_0$ &&$\Sp_{2a}(q^b)$ &$\textup{G}_2(q^b)'$\\
\textup{$m$}&&$ab>a$&$3b$\\
\bottomrule
\end{tabular}
\end{table}

  \item $\cU=\cP_m$,  and $\Omega_{2m}^{-}(q)\lhdeq H$ where $m\ge2$; or
\item $\cU=\cN_1$, and $H_0\lhdeq H$ where $(H_0,m,q)$ is given in
    Table~\textup{\ref{tab:Ooddqeven}}; or 
  \item $\cU=\cN_2^\eps$ and $H_0\lhdeq H$ where $(H_0,m,q)$ is given in Theorem~$\ref{T3}${\rm(c)}; or
\item  $\cU=\cN_{2m}^-$, and either $H$ is described in ~\cite{Reg}*{Lemma~4.6}, or $q=2$ or $4$ and $\Omega_{2m}^+(q)\lhdeq H$, or $(m,q)=(2,2)$ and  $H$ is a transitive subgroup of $\Sym_6$; or
\item $\cU=\cN_{2m}^{+}$ and $H_0\lhdeq H$ where $H_0$
  is given in Table~\textup{\ref{T:Spd}} and $b_4\in\{1,2,4\}$.
\begin{table}[!ht]
\caption{Theorem~\ref{T:Ooddqeven}(f) subgroups $H_0\lhdeq H\le\GammaO_{2m+1}(q)$ where $H$ is transitive
on $\cN_{2m}^{+}(\F_q^{\;2m+1})$ and $q$ is even.}\label{T:Spd}
\begin{threeparttable}
\begin{tabular}{rlcccccccc}
\toprule
$H_0$ &&$\Sp_{2a}(q^b)$ &$\SU_{a}(q^{b_4})$&$\Omega_{2a}^{-}(q^{b_4})$  &$\G_2(q^b)'$&$\Sz(q^b)$\\
\textup{$m$}&&$ab$ &$ab_4$\;\textup{($a$ odd)}&$ab_4$& $3b$&$2b$\;\textup{($b$ odd)} \\
\textup{$q$}&&\textup{even} &$2,4\tnote{*}$&$2,4 \tnote{*}$&$q$&$2^\textup{odd}$ \textup{with} $q^b\geq 8$\\
\midrule
$H_0$ &  &$\PSL_2(17)$&$3_{+}^{1+2}$&\\
\textup{$m$}&&$4$&$3$&\\
\textup{$q$}&&$2$&$2$&\\
\bottomrule
\end{tabular}
\begin{tablenotes}\footnotesize
\item[*] If $q=4$ then $b_4=1$; if $q=2$ then $b_4\in\{1,2\}$.
\end{tablenotes}
\end{threeparttable}
\end{table}
\end{enumerate}
\end{theorem}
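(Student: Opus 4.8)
The plan is to use the exceptional isomorphism $\GammaO_{2m+1}(q)\cong\GammaSp_{2m}(q)$ given by the action on $\oV:=V/V^\perp$, where $V^\perp=\la d\ra$ is the nonsingular $1$-space spanned by a vector with $Q(d)=1$: this homomorphism is injective because the action is faithful and surjective by a comparison of orders, and it carries $\Omega_{2m+1}(q)'$ to $\Sp_{2m}(q)'$. Identifying $H$ with its image in $\GammaSp_{2m}(q)$, the hypothesis becomes exactly the hypothesis $\Sp_{2m}(q)'\not\le H$ of Theorem~\ref{T3}. If $m=1$ the result is Theorem~\ref{T:Omega3}, so we assume $m\ge2$.

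The key step is to translate each family $\cU$ of subspaces of $V$ into a family of subspaces, or a coset space, of $\oV$. Since $d$ is nonsingular and $V^\perp\subseteq U^\perp$ for every subspace $U$, one has $U\cap V^\perp=0$ whenever $U$ is totally singular or nondegenerate, so $U\mapsto(U+V^\perp)/V^\perp$ is defined on all these families. For $\cU=\cP_k$ the preimage $\hat U$ of a totally isotropic $k$-space of $\oV$ is a nondegenerate quadratic $(k+1)$-space on which $Q$ restricts to an additive-semilinear map; the kernel $\ker(Q|_{\hat U})$ is the unique totally singular $k$-subspace of $\hat U$ and the unique preimage of that $k$-space, so the translation map is an $H$-equivariant bijection $\cP_k(V)\to\cP_k(\oV)$. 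For $\cU=\cN_k^{\eps}$ with $k$ even the same map sends $\cN_k^{\eps}(V)$ $H$-equivariantly \emph{onto} $\cN_k(\oV)$, since each nondegenerate quadratic $(k+1)$-space contains nondegenerate $k$-subspaces of both types. For $\cU=\cN_1$ the nonsingular $1$-subspaces lying in a fixed $2$-space $\hat u\supseteq V^\perp$ form a block of size $q-1$, and $G$ permutes the resulting block system just as it permutes $\cP_1(\oV)$. Finally, a nondegenerate $2m$-subspace $U$ must be a complement to $V^\perp$ (else $d\in U\cap U^\perp$) and conversely every complement is nondegenerate; the polar form then makes $U\cong\oV$ a symplectic space and $Q|_U$ a quadratic form of type $\eps$ refining it, so $\cN_{2m}^{\eps}(V)$ is $\GammaSp_{2m}(q)$-isomorphic to the coset space of $N_{\GammaSp_{2m}(q)}(\GO_{2m}^{\eps}(q))$, equivalently to $\Sp_{2m}(q)/\GO_{2m}^{\eps}(q)$.

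With these reductions the cases $\cP_1$, $\cP_m$, $\cN_k^{\eps}$ ($k$ even), $\cN_1$ and $\cN_{2m}^{-}$ are immediate. If $\cU=\cP_k$ then $H$ is transitive on $\cP_k(\oV)$, so $k\in\{1,m\}$ by \cite{Reg}*{Lemma~4.1(iii)}; Theorem~\ref{T3}(a) then gives part~(a) (Table~\ref{tab:Ooddqeven} being exactly the rows of Table~\ref{T:Spa} with $q$ even) and Theorem~\ref{T3}(b) gives part~(b), the $(m,q)=(2,3)$ possibilities being absent as $q$ is even. If $\cU=\cN_k^{\eps}$ with $k$ even then $H$ is transitive on $\cN_k(\oV)$, forcing $k=2$ by \cite{Reg}*{Lemma~4.1(iii)}, and Theorem~\ref{T3}(c) gives part~(d). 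If $\cU=\cN_1$ then $H$ is transitive on $\cP_1(\oV)$ and Theorem~\ref{T3}(a) gives part~(c). If $\cU=\cN_{2m}^{-}$ then $H$ is transitive on $\Sp_{2m}(q)/\GO_{2m}^{-}(q)$, which is precisely the action studied in \cite{Reg}*{Lemma~4.6}; that lemma, together with the correction in Remark~\ref{rem:Reg}(f) (the missing family with $\Omega_{2m}^{+}(q)\lhdeq H$ for $q\in\{2,4\}$, whose transitivity is also part of that correction; cf. Remark~\ref{rem:pablo}) and the small case $(m,q)=(2,2)$ with $\PSp_4(2)\cong\Sym_6$, yields part~(e).

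The remaining case $\cU=\cN_{2m}^{+}$, giving part~(f), is the substantial one, and I expect it to be the main obstacle. Here $\GO_{2m}^{+}(q)$ is a maximal $\mathcal{C}_8$-subgroup of $\Sp_{2m}(q)$, so passing via Lemma~\ref{lem:maxmin} to a maximal core-free overgroup of $H$ in $\PGammaSp_{2m}(q)$ yields a maximal core-free factorisation of an almost simple group with socle $\PSp_{2m}(q)$, one factor of type $\Or_{2m}^{+}$. Reading off \cite{Factns}, the other factor is $\GO_{2m}^{-}(q)$, one of the extension-field groups $\Sp_{2a}(q^b)$, $\G_2(q^b)'$ ($m=3b$), or $\Sz(q^b)$ ($m=2b$, $b$ odd, $q^b\ge8$), or, for $q\in\{2,4\}$ only, $\SU_a(q^{b_4})$ ($a$ odd) or $\Omega_{2a}^{-}(q^{b_4})$, or one of the sporadic factorisations producing $\PSL_2(17)\le\Sp_8(2)$ and $3_{+}^{1+2}\le\Sp_6(2)$. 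One then has to determine, for each family, which subgroups $H$ with the listed normal subgroup are genuinely transitive -- that is, resolve the ``superset'' issue of \cite{Reg}*{Remark on p.\,16}: transitivity of the generic extension-field groups, of $\Sz(q^b)$ and of $\G_2(q^b)'$ comes from \cite{Factns}, transitivity in the unitary (hence orthogonal) case from Lemma~\ref{lem:SUorthog}, and the two sporadic cases from a \textsc{Magma} computation; this assembles into Table~\ref{T:Spd}. Thus the hard part is precisely extracting from \cite{Factns} the correct list of $\Or_{2m}^{+}$-type factorisations of the groups with socle $\PSp_{2m}(q)$ and pinning down which of the resulting subgroups actually act transitively, together with the exact structure of the small and sporadic examples.
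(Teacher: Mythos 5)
Your overall strategy is the same as the paper's: pass to the faithful action of $\GammaO_{2m+1}(q)\cong\GammaSp_{2m}(q)$ on $\oV=V/V^\perp$, translate each family $\cU$ into a symplectic one, and quote Theorem~\ref{T3} together with \cite{Reg}*{Lemma~4.6} and the maximal factorisations of $\PSp_{2m}(q)$. Your translations of $\cP_k$, $\cN_1$, $\cN_k^{\eps}$ ($k$ even) and $\cN_{2m}^{\eps}$ are all correct, and parts (a), (b) and (e) are fully established by your argument. One point to flag for parts (c) and (d): your surjections onto $\cP_1(\oV)$ and $\cN_2(\oV)$ give only the necessary direction (which is all the literal statement requires); the paper additionally verifies that each listed $H_0$ genuinely is transitive, by showing that the block stabiliser $(H_0)_{\langle w\rangle}$ induces the full cyclic group of order $q-1$ on the singular line $\langle w\rangle$ (for $\cN_1$), and that $(H_0)_W$ induces at least $\Sp_2(q)\cong\GO_3(q)$ on the nondegenerate $3$-space $W=U+V^\perp$ (for $\cN_2^{\eps}$, using Cooperstein's description of the relevant $\G_2$ stabilisers). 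That verification is also where the distinction between $\Sp_2(q^2)$ occurring as an extension-field subgroup rather than as $\Omega_4^-(q)$ gets resolved.

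Part (f) is the one place where your proposal is a sketch rather than a proof, and the paper's execution is not just a read-off from \cite{Factns}. The maximal factorisations give only five possibilities for the maximal core-free overgroup $B$, with $B\cap\Sp_{2m}(q)$ one of $\Sp_{2a}(q^b)\ldotp b$, $\GO_{2m}^-(q)$ ($q=2,4$), $\Sz(q)$, $\G_2(q)$, $\PSL_2(17)$; the entries $\SU_a(q^{b_4})$, $\Omega_{2a}^-(q^{b_4})$ and $3_+^{1+2}$ of Table~\ref{T:Spd} do not appear at this level but arise one layer down, inside the case $B=N_G(\Omega_{2m}^-(q))$: there $B_U$ lies in the stabiliser of a nonsingular $1$-subspace of the $\GO_{2m}^-$-geometry, so $H$ is transitive on the $\cN_1$ of that geometry and one invokes Theorem~\ref{T9}(b) (and Theorem~\ref{T:KleinSU4} when $m=3$). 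Likewise the case $B=N_G(\Sp_{2a}(q^b))$ with $\Sp_{2a}(q^b)\not\le H$ is closed by an induction on $m$, using $(\Sp_{2a}(q^b))_U=\GO_{2a}^+(q^b)$ and $q^b\ne 2$, not by a further table lookup. So your plan is correct, but the recursive structure of case (f) --- the reduction to Theorems~\ref{T9} and \ref{T:KleinSU4} and the induction --- is the substantive missing content rather than merely a longer list extraction.
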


\begin{proof}
Let $G=\GammaO_{2m+1}(q)$. The possibilities for $\cU$ were given before the statement, and in each case $G$ leaves $\cU$ invariant.  Let $H\leqslant G$ act  transitively on  $\cU$  such that $\Omega_{2m+1}(q)\not\leqslant H$. If $m=1$ then the possibilities for $H$ are given by Theorem~\ref{T:Omega3}. We assume from now on that $m\geq2$.  Let $\overline{V}=V/V^\perp$ and for each $U\in \cU$ let $\overline{U}=(U+V^\perp)/V^\perp$.    Note that $\Omega_{2m+1}(q)=\SO_{2m+1}(q)=\GO_{2m+1}(q)\cong\Sp_{2m}(q)$, unless $(m,q)=(2,2)$, in which case $|\SO_5(q):\Omega_5(2)|=2$.  Hence, if $\Omega_{2m+1}(q)'\not\leqslant H$  then $H$ does not induce $\Sp_{2m}(q)'$ on $\overline{V}$.

\smallskip
{\sc Case}~$\cU=\cP_k$ for some $k\leq m$.  Since $U\in\cU$ is totally singular,  $U\cap V^\perp=\{0\}$ and so $\overline{U}$ is a totally isotropic $k$-subspace of $\overline{V}$.  
Moreover, since $\Sp_{2m}(q)$ acts transitively on the set of totally isotropic  $k$-subspaces of $\overline{V}$ it follows that all such subspaces of $\overline{V}$ arise in this manner. Moreover, by~\cite{Taylor}*{p.\,143}, $U$ is the unique totally singular $k$-subspace of $U+V^\perp$ and so there is a one-to-one correspondence between $\cU$ and the set of totally isotropic $k$-subspaces of $\overline{V}$.  
Thus $H$ acts transitively on $\cU$ if and only if it acts transitively on the set of totally isotropic $k$-subspaces of $\overline{V}$. Thus by Theorem~\ref{T3} we have $k=1$ or $m$ and $H$ is listed there. This gives parts (a) and (b) and note that $\Sp_{2a}(q^b)\cong \Omega_{2a+1}(q^b)$.

\smallskip
{\sc Case}~$\cU=\cN_1$. Here $U\in \cU$ is a nonsingular 1-subspace other than $V^\perp=\langle d\rangle$. Then $U+V^\perp$ contains a unique singular 1-subspace $\langle w\rangle$ and the nonsingular 1-subspaces of $U+V^\perp$ are $\langle d+\lambda w\rangle$ for $\lambda\in\F_q$. Moreover, any singular 1-subspace $\langle w\rangle$ in $V$ gives rise to $q-1$ nonsingular 1-subspaces in $V^\perp+\langle w\rangle$ 
distinct from $V^\perp$. Hence $H\leqslant G$ is transitive on $\cU$ if and only if $H$ is transitive on $\cP_1$ and for each $\langle w\rangle \in\cP_1$, the subgroup $H_{\langle w\rangle}$ acts transitively on the set of $1$-subspaces $\langle d+\lambda w\rangle$ for $\lambda\in\F_q\setminus\{0\}$, equivalently,  $H_{\langle w\rangle}$ acts transitive on the nonzero vectors in $\langle w\rangle$.   Hence by part (a),  $H_0\lhdeq H$ where $H_0$ is given in Table~\ref{tab:Ooddqeven}. Moreover, for both possibilities, $(H_0)_{\langle w\rangle}$ induces a cyclic group of order $q-1$ on $\langle w\rangle$ (see \cite{KL}*{Lemma~4.1.2} when $H_0=\Sp_{2a}(q^b)$ and \cite{rW}*{Section~4.3.5} for $H_0=G_2(q^b)'$). Thus both possibilities for $H_0$ are also transitive on $\cN_1$, proving part~(c). 

\smallskip
{\sc Case}~$\cU=\cN_k^\eps$ for some even $k\leq m$ and $\eps\in\{+,-\}$. Here $U\in\cU$ is a nondegenerate $k$-subspace. Hence $V^\perp\cap U=\{0\}$ and $\overline{U}$ is a nondegenerate $k$-subspace of $\overline{V}$. Since $\Sp_{2m}(q)$ is transitive on the set of nondegenerate $k$-subspaces of $\overline{V}$, all such subspaces of $\overline{V}$ arise in this way. Thus by Theorem~\ref{T3}, $k=2$ and $H$ is one of the groups in  Table~\ref{T:Spc}.  In particular, $\cU=\cN_2^+$ or $\cN_2^-$. Let $W=U+V^\perp$, a nondegenerate $3$-subspace of $V$.  Now $H$ is transitive on $\cN_2^\eps$ if and only if $H_{W}$ is transitive on the set of $2$-subspaces of $W$ contained in  $\cN_2^\eps$.   
%Since the induced group  $G_W^W$ contains $\GO_3(q)$ it follows that $W$ contains $q(q+\eps.1)/2$ subspaces from $\cN_2^\eps$, and we require that  $H_W$ should be transitive on this set of subspaces. 
For $H$ as in the first and last columns of Table~\ref{T:Spc}, it follows from \cite{Factns}*{3.2.1(a), p.\,47} that  $\Hbar_{\overline{U}}^{\overline{U}}\rhd \Sp_2(q)$ and so $H_W^W\lhdeq\GO_3(q)$. By Witt's Lemma,  $\GO_3(q)$ is transitive on the set of nondegenerate 2-subspaces of $W$ of each isometry type, as required.
 We note that in the last column, the $\Sp_2(q^2)$ is occurring as an extension field group and not as $\Omega_4^-(q)$ as it is not possible for the stabiliser of a nondegenerate hyperplane to be transitive on $\cN_2^\eps$.

For the remaining columns of Table~\ref{T:Spc}, $\soc(H)=\G_2(q^{m/3})'$. By~\cite{Cooperstein}*{p.\,27 and Lemma~5.4}, $\G_2(q^{m/3})$ acts transitively on the set of nondegenerate 2-subspaces of a 6-dimensional space over $\F_{q^{m/3}}$ equipped with a nondegenerate alternating form such that the stabiliser $K$ of such a subspace $X$ is the maximal subgroup $\SL_2(q^{m/3})\times\SL_2(q^{m/3})$. Moreover, by 
\cite{rW}*{Section~4.3.6} we see that $K$ induces $\SL_2(q^{m/3})\cong\GO_3(q^{m/3})$ on the nondegenerate 3-subspace corresponding to $X$ of the 7-dimensional vector space over $\F_{q^{m/3}}$ equipped with a nondegenerate quadratic form.   Thus, if  $X=\langle e_1,f_1\rangle$, then the stabiliser in $K$ of the $\F_{q}$-span $Y$ of $\{e_1,f_1\}$ induces $\SL_2(q)\cong\Sp_2(q)$ on $Y$. Hence  $\GO_3(q)\lhdeq H_W^W$ acts transitively on the set of subspaces in $\cN_2^\eps$ contained in $W$ and so each group in Table~\ref{T:Spc} that is transitive on the $\cN_2$ subspaces of $\Vbar$ is transitive on $\cN_2^\eps$, proving part (d).

\smallskip
{\sc Case}~$\cU=\cN_{2m}^\eps$ for some $\eps\in\{+,-\}$. Here $U$ is a nondegenerate $2m$-subspace and $(U+V^\perp)/V^\perp=\overline
{V}$.  By~\cite{Pollatsek1971}*{Theorem~1.5}, $\Sp_{2m}(q)_U=\GO_{2m}^\eps(q)$ is maximal in $\Sp_{2m}(q)$  and so $G_U$ is maximal in $G$. We assume the same setup as in the proof of Theorem~\ref{T3},
and in particular that $H\leqslant B$ where $\overline{B}$ is a core-free subgroup of $\PGammaSp_{2m}(q)$, and that $G^*=\overline{B}(G^*)_U$ is a core-free maximal factorisation, where $G^*$ is an almost simple group with socle $L=\PSp_{2m}(q)$. Moreover, $B=N_G(B\cap \Omega_{2m+1}(q))$.

Suppose first that $\cU=\cN_{2m}^{-}$. Here, when $(m,q)\neq (2,2)$,  the transitive
subgroups of $G$ not containing $\Sp_{2m}(q)$ were computed
in~\cite{Reg}*{Lemmas~4.6} with reference to~\cite{Reg}*{Lemma~4.5}.  We note that however, the reference to \cite{Reg}*{Lemma~4.5} forgot to include the possibility that $\Omega_{2m}^+(q)\lhdeq H$ when $q=2$ or 4.
When $(m,q)=(2,2)$ we have $|\cN_4^-|=6$ and $\GO_5(2)\cong\Sym_6$. This gives part (e).

Finally, suppose that $\cU=\cN_{2m}^{+}$. 
Suppose first that $(m,q)=(2,2)$. Then a \textsc{Magma} computation shows that either $\Alt_6=\Omega_5(2)$ or $\Alt_5$ are normal subgroups of $H$, or $H=F_{20}$. Moreover, both classes of $\Alt_5$-subgroups in $G$ are transitive. The two $\Alt_5$ subgroups correspond to $\Omega_4^-(2)$ and $\Omega_3(4)$, each of which appears in Table~\ref{T:Spd}, while there is a unique conjugacy classes  of $F_{20}$ subgroups and so these must be normalising $\Omega_2^-(2^2)\cong \C_5$ as in the third column of Table~\ref{T:Spd}. 
Thus we may assume that $(m,q)\ne(2,2)$, and then  
it follows from~\cite{Factns}*{p.\,10, p.\,13} that we have the five cases
in Table~\ref{T:SpDsubcases}.  For reasons which will become clear later we deal with (f1) last.
\begin{table}[!ht]
\caption{Subcases when $L=\PSp_{2m}(q)$ and $\cU=\cN_{2m}^{+}$.}\label{T:SpDsubcases}
\begin{tabular}{rlccccccc}
\toprule
\textup{Cases} && (f1)& (f2)& (f3)& (f4)& (f5)\\
$B\cap \Sp_{2m}(q)$ && $\Sp_{2a}(q^b)\ldotp b$& $\GO^{-}_{2m}(q)$& $\Sz(q)$& $\G_2(q)$& $\PSL_2(17)$\\
$m$&&$ab>a$&$\ge2$&$2$&$3$&$4$\\
$q$&&${\rm even}$&$2,4$&$2^\textup{even}\ge8$&${\rm even}$&$2$\\
\bottomrule
\end{tabular}
\end{table}

(f2)~Here $q=2$ or $4$ and $B=N_G(\Omega_{2m}^-(q))$.  We record $\Omega_{2m}^-(q)\lhdeq H$ in Table~\ref{T:Spd}.  Suppose now that $\Omega_{2m}^-(q)\not\leqslant H$. Then we have a factorisation $B=HB_U$. By~\cite{Factns}*{3.2.4(e), pp.\;51--52} $(\GO_{2m}^-(q))_U$ is contained 
in the stabiliser in $\GO^-_{2m}(q)$ of a nonsingular 1-subspace of the natural $(2m)$-dimensional module for $\GO^-_{2m}(q)$. Thus $H$ is given by Theorem~\ref{T9}. If $m\geq 4$ then either $\SU_m(q)\lhdeq H$ or $H$ is given in Table~\ref{T:OEvenN1},  and hence appears in Table~\ref{T:Spd}.  For $m=3$, Theorem~\ref{T:KleinSU4} implies that either $\SU_3(q)\lhdeq H$ or $q=2$ and  $3_+^{1+2}\lhd H$. Finally, if $m=2$ then note that we have already considered the case $q=2$, while for $q=4$ a \textsc{Magma} calculation shows that no examples exist.

(f3)~In this case $m=2$ with $q\geq 8$ and $B=N_G(\Sz(q))$. By~\cite{Factns}*{Table~2}, $\Sz(q)$ is transitive on $\cU$ and so $\Sz(q)\lhdeq H$ is listed in Table~\ref{T:Spd}. Suppose now that $\Sz(q)\not\leqslant H$. Then we have a factorisation $B=HB_U$. However, \cite{Factns}*{Table~5} implies that no such factorisation exists.

(f4)~In this case $m=3$ and $B=N_G(\G_2(q))$. By~\cite{Factns}*{Table~2}, $\G_2(q)$ is transitive on $\cU$ and so $\G_2(q)'\lhdeq H$ is listed in Table~\ref{T:Spd}. Moreover, a \textsc{Magma} computation shows that when $q=2$, the group $\G_2(2)'\cong\PSU_3(3)$ is also transitive. Suppose now that $\G_2(q)'\not\leqslant H$. Then we have a factorisation $B=HB_U$. By~\cite{Factns}*{(5.2.3b)\;p.\,111} we have that $(\G_2(q))_U=\SL_3(q).2$. However, by \cite{Factns}*{Table~5} there are no such factorisations of an almost simple group with socle $\G_2(q)$ for $q$ even.

(f5)~In this case $G=\GO_9(2)$ and $B=\PSL_2(17)$. We record $\PSL_2(17)$ in Table~\ref{T:Spd} and now suppose that $H<B$.
Now $|\cN_8^{+}|=136$ and as there is a single conjugacy class of subgroups of $B=\PSL_2(17)$ of
index $136$ namely dihedral groups $D_{18}$, we have
$B_U=D_{18}$. Hence we
have a factorisation $B=\PSL_2(17)=H B_U$. Note that $D_{18}$ is the subgroup of $\PSL_2(17)$ arising from $\GammaL_1(17^2)$. However, by \cite{Factns} no such   factorisation exists (though one does exist of $\PGL_2(17)$).

(f1)~Here $B= N_G(\Sp_{2a}(q^b))$. It is shown in \cite{Factns}*{3.2.1(d)} that $(\Sp_{2a}(q^b).b)\cap \GO^+_{2m}(q) = \GO_{2a}(q^b).b$ 
(and the argument holds for all $b$). It follows that the subgroup $\Sp_{2a}(q^b)$ is also transitive on $\cN_{2m}^+$, and hence  if 
 $\Sp_{2a}(q^b)\lhdeq H$ then $H$ is transitive on $\cU$ and we have listed this in Table~\ref{T:Spd}. Suppose now that $\Sp_{2a}(q^b)\not\leqslant H$. Then we have a factorisation $B=HB_U$. As just noted, $(\Sp_{2a}(q^b))_U=\GO^+_{2a}(q^b)$ and so the result follows by induction on $m$ and noting that $q^b\neq 2$. 
\end{proof}

\section{Orthogonal groups  in even dimension of plus type}\label{S:OPlus}

Let $V=(\F_q)^{2m}$, with $m\geq4$, equipped with a nondegenerate quadratic form $Q$ of plus type and associated bilinear form $(\dot,\dot)$. We let $\{e_1,\ldots,e_m,f_1,\ldots,f_m\}$ be a basis for $V$ such that $Q(e_i)=Q(f_i)=0$ and $B(e_i,f_j)=\delta_{ij}$ for all $i,j$.

In Subsection~\ref{SS:OPlus}, we find all subgroups of $\GammaO_{2m}^+(q)$ that are transitive on a set $\cU$ given by Table~\ref{T-defn}, namely $\cP_k$ for $k< m$, $\cP_m^\eps$ for  $\eps\in\{+,-\}$, $\cN_k^\eps$ for even $k\leq m$ and  $\eps\in\{+,-\}$, and if   $q$ is odd also  $\cN_k^\eps$ for odd $k\leq m$ and  $\eps\in\{+,-\}$, and if $q$ is even also $\cN_1$. The union $\cP_m=\cP_m^+\cup\cP_m^-$ is a $\GammaO_{2m}^+(q)$-orbit, and also  if $k$ and $q$ are both odd then   $\cN_k=\cN_k^+\cup\cN_k^-$ is a $\GammaO_{2m}^+(q)$-orbit. Subgroups acting transitively on these latter sets are determined in Subsection~\ref{SS:OPlus2}, namely in Proposition~\ref{P:Both} for $\cP_m$ and Proposition~\ref{P2} for $\cN_k$.    The eight-dimensional case needs special attention and we do some preparatory work in Subsection~\ref{SS:O8}.

\begin{remark}\label{R:errata}
 The subgroups of $\GammaO_{2m}^+(q)$ that are transitive on $\cN_1$ when $q$ is even, or on $\cN_1^\eps$, where $\eps\in\{+,-\}$, when $q$ is odd are determined in~\cite{Reg}*{Lemma~4.5}. However,  some of the groups in~\cite{Reg}*{Lemma~4.5} are given as subgroups of $\PGammaO_{2m}^+(q)$ instead of $\GammaO_{2m}^+(q)$, and in addition $\Alt_6$ is incorrectly listed when $(m,q)=(4,3)$ (see the proof of Lemma~\ref{lem:m4q3}~(e)). We therefore give a revised version of this classification as part (e)  of Theorem~\ref{T10} (see, in particular, Table~\ref{T:OPlusN1}). See also Remark \ref{rem:Reg}(e).
\end{remark}

The formulas for $|\cN_1^\eps|, |\cP_1|,|\cP_m^\eps|$, and $|\cN_2^{\eps}|$ (see~\cite{Factns}*{pp.\,62,\,63,\,67,\,70}) are:
\begin{align}
  |\cN_1|&=q^{m-1}(q^m-1) \quad \textrm{ for $q$ even}, &  |\cN_1^\eps| =\frac{q^{m-1}(q^m-1)}{2} \quad \textrm{ for $q$ odd,}\\
  |\cP_1|&=\frac{q^{2m}-1}{q-1}-q^{m-1}(q^m-1),\label{E:OPlus1}\\ 
  |\cP_m^\eps|&=\prod_{i=1}^{m-1}(q^i+1)=|\cP_{m-1}|, &|\cN_2^{\eps}|=\frac{q^{2m-2}(q^m-1)(q^{m-1}+\eps 1)}{2(q-\eps 1)}.\label{E:OPlus2}
\end{align}
The formula for $|\cP_1|$ factors to give $|\cP_1|=\frac{(q^m-1)(q^{m-1}+1)}{q-1}$. When $m=4$, we see that $|\cP_1|=(q+1)(q^2+1)(q^3+1)=|\cP_4^{\cmt{$\eps$}}|$.

\subsection{Eight-dimensional orthogonal groups of  plus type}\label{SS:O8}

The groups $\POmega_8^+(q)$ have more outer automorphisms than their counterparts in higher dimensions and this affects both their subgroup structure and actions on subspace families. We first make some technical comments, in particular concerning our notation.

\begin{remark}\label{R:tri}
When $L=\POmega_8^{+}(q)$ we have  $|\Aut(L):\PGammaO_8^+(q)|=3$. Let $\tau$ be an outer automorphism of $L$ of order~$3$ that is not in $\PGammaO_8^+(q)$. It is well-known that $\tau$ induces a  3-cycle $(\mathcal{P}_1, \mathcal{P}_4^+, \mathcal{P}_4^-)$.  We note that $\tau$ does not induce an automorphism of the matrix group $\Omega_8^+(q)$ when $q$ is odd. Indeed the stabiliser in $\Omega_8^+(q)$  of a totally singular 1-subspace is not isomorphic to the stabiliser of a totally singular 4-subspace.

There are two types of subgroups $\Omega_7(q)$ of $L=\POmega_8^{+}(q)$ which
we denote by $\Omega_7(q)^R$ and $\Omega_7(q)^I$, where $\Omega_7(q)^R$ acts reducibly on $V$ fixing a nonsingular 1-subspace,
and $\Omega_7(q)^I$ acts irreducibly on~$V$. If $q$ is odd, there are \emph{four} $L$-conjugacy classes of $\Omega_7(q)^I$ and \emph{two} of $\Omega_7(q)^R$; we note that $\Omega_7(q)^R$ has preimage $2\times \Omega_7(q)$ in $\Omega_8^+(q)$ while $\Omega_7(q)^I$ has preimage $\nonsplit{2}{\Omega_7(q)}$ \cite{BHRD}*{Table~8.50}. When $q$ is even, there are \emph{two} $L$-conjugacy classes of subgroups $\Omega_7(q)^I$ and \emph{one} of $\Omega_7(q)^R$; and in this case each preimage is isomorphic to $\Omega_7(q)\cong\Sp_6(q)$.  Furthermore,  $\tau$ induces 3-cycles    $(\Omega_7(q)^I,\Omega_7(q)^I,\Omega_7(q)^R)$ on these classes, for all values of $q$. We use  ${}^\wedge\Omega_7(q)^I$ to denote the minimal preimage of $\Omega_7(q)^I$ in $\Omega_8^+(q)$. Hence ${}^\wedge\Omega_7(q)^I=\nonsplit{2}{\Omega_7(q)}$ when $q$ is odd,  and is equal to $\Omega_7(q)^I$ when $q$ is even.

Let $M_2$ denote an imprimitive subgroup of $L$ preserving a decomposition $V=E\oplus F$ into totally singular 4-subspaces
$E$ and $F$. There are two $L$-conjugacy classes of subgroups $M_2$ and one $L$-conjugacy class of subgroups $N_2^{+}$, and $\tau$ induces a 3-cycle $(M_2,M_2,N_2^{+})$ on these classes. 
Finally, we note that $\Omega_8^+(q)$ has two conjugacy classes of subgroups isomorphic to $\SU_4(q)$. Letting $K_1$ and $K_2$ be the images in $L$ of two such non-conjugate subgroups, the outer automorphism $\tau$ induces a 3-cycle $(N_L(K_1),N_L(K_2),N_2^-)$.
These observations are important in the proof of Theorem~\ref{T10} when considering actions on $\cP_1$, $\cN_2^+$ and $\cN_2^-$.
\end{remark}

Arguments for the group $\POmega_8^+(3)$ are so delicate that we have separated them out in the following lemma.

 \begin{lemma}\label{lem:m4q3}
 Let $H\leqslant \GammaO^+_8(3)$ with $\Omega_8^+(3)\not\leqslant H$.
 \begin{enumerate}[{\rm (1)}]
 \item If $H$  acts transitively on a set $\cU$ of subspaces given in Table~$\ref{T-defn}$ then $H$ is listed in Theorem~$\ref{T10}$.
 \item If $H$ acts transitively on $\cP_4$ then $H$ is given in Proposition~$\ref{P:Both}$.
 \item If $H$ acts transitively on $\cN_1$ then $H$ is given in Proposition~$\ref{P2}$.
 \end{enumerate}
 \end{lemma}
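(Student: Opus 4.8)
The plan is to run through each admissible family $\cU$ of subspaces of $V=(\F_3)^8$ separately. Write $G=\mathrm{Stab}_{\GammaO_8^+(3)}(\cU)$ (so $G=\GammaO_8^+(3)$, or an index-$2$ subgroup when $\cU$ is a single $\Omega_8^+(3)$-orbit, e.g.\ $\cU=\cP_4^\eps$), and $L=\POmega_8^+(3)=\soc(\PGammaO_8^+(3))$. By Table~\ref{T-defn} the families relevant to part~(1) are $\cP_k$ for $1\le k\le 3$, the two classes $\cP_4^\eps$, and $\cN_k^\eps$ for $1\le k\le 4$ and $\eps\in\{+,-\}$; part~(2) concerns $\cP_4=\cP_4^+\cup\cP_4^-$ and part~(3) concerns $\cN_1=\cN_1^+\cup\cN_1^-$. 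In every case a transitive $H$ gives a factorisation $G=HG_U$ with $U\in\cU$, hence $\PGammaO_8^+(3)=\oH\,\oG_U$. The reason this lemma is singled out is that $L=\POmega_8^+(3)$ is exactly the socle excluded in Lemma~\ref{lem:maxmin}, so we cannot first pass to a maximal core-free factorisation; instead we work directly from the classification of maximal factorisations of the almost simple groups with socle $\POmega_8^+(3)$ in \cite{Factns}*{Table~4}, which does incorporate triality.

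First I would use triality and \cite{Reg}*{Lemma~4.1(v)} to reduce the list of pairs $(\cU,\oB)$ to be examined, where $\oB\ge\oH$ is a maximal core-free subgroup of $\Aut(L)$. As recalled in Remark~\ref{R:tri}, the order-$3$ outer automorphism $\tau$ induces the $3$-cycles $(\cP_1,\cP_4^+,\cP_4^-)$, $(\Omega_7(3)^I,\Omega_7(3)^I,\Omega_7(3)^R)$, $(M_2,M_2,N_2^{+})$ and $(N_L(K_1),N_L(K_2),N_2^-)$ on the relevant $L$-classes; consequently the transitive subgroups of $\Aut(L)$ on $\cP_4^\eps$ are the $\tau$-images of those on $\cP_1$, and similarly for the $N_2^\eps$-families. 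Checking the triality orbit of the entries of \cite{Factns}*{Table~4} leaves only $\cU\in\{\cP_1,\cP_4^+,\cP_4^-,\cN_1^+,\cN_1^-,\cN_2^+,\cN_2^-\}$ (together with the unions $\cP_4$, $\cN_1$) as families that can admit a proper transitive subgroup, with $\oB\cap L$ among $\Omega_7(3)^{I}$, $\Omega_7(3)^{R}$, $\Omega_8^+(2)$, $\Sp_6(2)$, $\SU_4(2)$, $\Alt_9$, $N_2^+$, $N_2^-$ and their triality images; for every other family in Table~\ref{T-defn} ($\cP_2$, $\cP_3$, $\cN_3^\eps$, $\cN_4^\eps$) there is no such factorisation and hence no proper transitive $H$.

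For each surviving pair $(\cU,\oB)$ I would then pin down $H$ exactly. Writing $B$ for the full preimage of $\oB$ in $G$ and using $H\le B$, the factorisation $G=HG_U$ together with the modular law gives $B=HB_U$ with $B_U=B\cap G_U$; thus $H$ is a transitive subgroup of the (comparatively small) permutation group $B$ on the $[B:B_U]$ cosets of $B_U$, where $[B:B_U]$ divides $|\cU|$. A \textsc{Magma} enumeration of the conjugacy classes of subgroups of $B$ of the appropriate index, a transitivity check on $\cU$, and identification of a minimal normal subgroup $H_0\lhdeq H$ then produce the list. Lemma~\ref{lem:derived} is used, as elsewhere in the paper, both to lift information about $\oH$ to $H$ and to identify the precise preimages in $\Omega_8^+(3)$ — in particular to see that ${}^\wedge\Omega_7(3)^{I}=\nonsplit{2}{\Omega_7(3)}$, $\nonsplit{2}{\Omega_8^+(2)}$, $\nonsplit{2}{\Sp_6(2)}$, $\nonsplit{2}{\SU_4(2)}$ and $\nonsplit{2}{\Alt_9}$ (rather than their projective images) occur, in accordance with Remark~\ref{rem:Reg}(e). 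Comparing the resulting lists with the statements of Theorem~\ref{T10}, Proposition~\ref{P:Both} and Proposition~\ref{P2} then settles parts~(1), (2) and~(3) respectively.

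The main obstacle I anticipate is computational tractability for the families of large index, namely $\cN_2^{+}$ (of size $3^6\cdot 80\cdot 28/4=408240$) and $\cN_2^{-}$ (of size $189540$), where a candidate maximal overgroup $B$ can still be sizeable: here a naive subgroup-lattice computation is infeasible, and one must instead use order-divisibility (a primitive prime divisor of $3^4-1$, namely $5$, and of $3^6-1$, namely $7$, must divide $|H|$) to discard most candidate $\oB$ outright and reduce to a handful of small overgroups before invoking \textsc{Magma}. A second delicate point is the triality bookkeeping of Remark~\ref{R:tri}: since the subspace-stabiliser structure is not triality-invariant (the stabiliser of a totally singular $1$-space is not isomorphic to that of a totally singular $4$-space, and the two $\SU_4$-classes and two $\Omega_7(3)^I$-classes are distinguished only inside $L$), one must carefully keep track of passage between $\Aut(L)$, $\PGammaO_8^+(3)$ and the matrix group $\GammaO_8^+(3)$ when reading off which preimage of $\Omega_7(3)$ or $\SU_4(2)$ actually appears.
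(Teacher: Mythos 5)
Your overall framework (work directly from the maximal factorisations of almost simple groups with socle $\POmega_8^+(3)$ in \cite{Factns}*{Table~4}, since this socle is excluded from Lemma~\ref{lem:maxmin}, then finish each surviving case with \textsc{Magma} and Lemma~\ref{lem:derived}) matches the spirit of the paper's proof, which likewise proceeds family-by-family with heavy \textsc{Magma} use. However, your screening step contains a genuine error: you assert that for $\cP_3$ and $\cN_3^\eps$ ``there is no such factorisation and hence no proper transitive $H$.'' This is false, and if carried through it would produce a list contradicting Theorem~\ref{T10} itself: $\nonsplit{2}{\Omega_8^+(2)}$ is transitive on $\cP_3$ (Theorem~\ref{T10}(c)) and $\nonsplit{2}{\Omega_7(3)}={}^\wedge\Omega_7(3)^I$ is transitive on $\cN_3^\eps$ (Theorem~\ref{T10}(g), valid for all odd $q$ including $q=3$). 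The corresponding factorisations do appear in \cite{Factns}*{Table~4} for $\POmega_8^+(3)$ — the $\cN_3$ one ``in disguise,'' since under triality the $N_3$-stabiliser corresponds to the tensor-product subgroup $(\PSp_2(q)\otimes\PSp_4(q)).2$, as the paper points out in the proof of Theorem~\ref{T10}. Your triality bookkeeping only tracked the orbits $(\cP_1,\cP_4^+,\cP_4^-)$ and the $N_2^\eps$ classes, which is not an exhaustive account of the entries of Table~4 for this socle.

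A second, smaller gap concerns $\cN_4^\eps$: your conclusion (no examples) agrees with the paper, but the justification ``no factorisation in Table~4'' is incomplete because $G_U$ has index $2$ in $G_{\{U,U^\perp\}}$ when $U$ and $U^\perp$ are isometric, so $G_U$ need not be maximal and a transitive $H$ need not sit inside a maximal core-free factor in the usual way. The paper closes this by a counting argument: $|\cU|>3^{15}$ forces $|H\cap L|>|L|^{1/3}$, so $H\cap L$ is one of the ``large'' subgroups classified by Alavi--Burness, divisibility by $7\cdot 13$ pins it down to $\Omega_7(3)$, and only then does the nonexistence of the relevant maximal factorisation apply. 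You would need some such argument (or a feasible computation) here, since a naive subgroup enumeration at index $>3^{15}$ is out of reach.
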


\begin{proof}
(1) Suppose first that $\cU$ is a set of subspaces given in Table~$\ref{T-defn}$. Let  $G$ be the stabiliser in $\GammaO^+_8(3)=\CO^+_8(3)$ of $\cU$ and let $H\leqslant G$ act transitively on $\cU$ such that $\Omega_8^+(3)\not\leqslant H$. Then $G=\CO^+_8(3)$ unless $\cU=\cP_4^+,\cP_4^-,\cN_1^\eps$ or $\cN_3^\eps$. In the first two of the exceptional cases $G$ is an index 2 subgroup of $\CO_8^+(3)$ containing $\SO_8^+(3)$ while in the last two cases we have $G=\GO_8^+(3)$.   For $X\leqslant \CO^+_8(3)$ denote the image of $X$ in $\PCO^+_8(3)$ by $\overline{X}$. We now deal with each possibility for $\cU$ from Table~$\ref{T-defn}$, in most cases using \textsc{Magma}. 

(a) First consider $\cU=\cN_4^\eps$, where $\eps\in\{+,-\}$, and $G=\CO^+_8(3)$. Let $L=\Omega_8^+(3) $ and note that $L$ is transitive on $\cU$. Now  by \cite{BG}*{Table~4.1.2}, $|\cU|=\frac12 3^8(3^2+1)^2(3^4+3^2+1)$ if $\eps=+$ and  $|\cU|=\frac12 3^8(3^2-1)(3^6-1)$ if $\eps=-$. In either case $|\cU|>3^{15}$, and since $H$ is transitive on $\cU$ we have  
$|H|>3^{15}$.  Also (see \cite{KL}*{Table~2.1C}) $|G:L|=8$ and  $|\POmega_8^+(3)|=\frac14 3^{12}(3^6-1)(3^4-1)^2(3^2-1)
<\frac14 .3^{28}<3^{29}$, and it follows that $|L|<3^{30}$ and  $|H\cap L| \geq \frac{1}{8} |H|> 3^{13}>  |L|^{1/3}$. Thus $H\cap L$ is one of the groups from \cite{AB}*{Table~7 or Proposition~4.23}.  Moreover for both values of $\eps$ the order $|H\cap L|$ is divisible by $3^4+3^2+1 = 7\times 13$, and  the only one of these subgroups with this property is  $H\cap L\cong \Omega_7(3)$.  In particular $H\cap L$ is maximal in $L$ and since, for $U\in\cU$,  $L$ is transitive on $\cU$ and its overgroup $L_{\{U,U^\perp\}}$ is maximal in $L$, we have a maximal core-free factorisation $L= (H\cap L)( L_{\{U,U^\perp\} })$.  However there are no such factorisations by \cite{Factns}*{Table~4}.

(b)  Next $\cU=\cN_3^\eps$ and $G=\GO^+_8(3)$. Note that $\cN_3^+$ and $\cN_3^-$ are fused in $\CO_8^+(3)$. By~\textsc{Magma} and Lemma~\ref{lem:derived}, we deduce that $\nonsplit{2}{\Omega_7(3)}\lhdeq H$, which appears in Theorem~\ref{T10}~(g).

(c) Next consider $\cU=\cN_2^+$ and $G=\CO^+_8(3)$.  A \textsc{Magma} calculation and Lemma~\ref{lem:derived} shows that $\nonsplit{2}{\Omega_7(3)}\lhdeq H$, which appears in Table~\ref{T:OPlusd}. 

(d) The next  case  is $\cU=\cN_2^-$ and $G=\CO^+_8(3)$.  By~\textsc{Magma} and Lemma~\ref{lem:derived}, we deduce that either $\nonsplit{2}{\Omega_7(3)}\lhdeq H$ (which appears in Table~\ref{T:OPlusd}), or $3^6\rtimes \PSL_4(3)\lhdeq \oH$ and $H$ is contained in $P_4$. We note that since $\SL_4(3)$ is quasisimple, Lemma~\ref{lem:derived} applied to the subgroup $\PSL_4(3)$ of $\oH$ implies that $3^6\rtimes \SL_4(3)\lhdeq H$, as in Theorem~\ref{T10}~(f).

(e) Next consider $\cU=\cN_1^\eps$ and $G=\GO^+_8(3)$.  Note that $\cN_1^+$ and $\cN_1^-$ are fused in $\CO_8^+(3)$.  The possibilities for $H$ are given by  \cite{Reg}*{Lemma~4.5} but there only the projective versions are listed.  Combining  \cite{Reg}*{Lemma~4.5} and a \textsc{Magma} calculation we obtain the following list of examples:
\begin{enumerate}[(i)]
\item $H\rhdeq \SL_4(3)$, $\SU_4(3)$ or $\Sp_4(3)$,
\item $H\rhdeq \nonsplit{2}{\Omega_7(3)}$, $\nonsplit{2}{\Omega_8^+(2)}$,  $\nonsplit{2}{\Sp_6(2)}$, $\nonsplit{2}{\SU_4(2)}$, $\nonsplit{2}{\Alt_9}$,
    \item $H\leqslant P_4$ and modulo the unipotent radical $H$ induces a subgroup of $\GL_4(3)$ which is transitive on 1-subspaces.
\end{enumerate}
The subgroups in parts (i) and (ii) appear in Table~\ref{T:OPlusN1}, and those in part (iii) are as in Theorem~\ref{T10}~(e). Note that in (ii), $\nonsplit{2}{\Omega_7(3)}$ is the group 
${}^\wedge\Omega_7(3)^I$, and  $\PSU_4(2)\cong\PSp_4(3)$ but this occurs differently to the group $\Sp_4(3)$ in (i). Also no groups with normal subgroup $\Alt_6$, as listed in \cite{Reg}*{Lemma~4.5(vii)}  for $m=4, q=3$, act transitively on $\cN_1^\eps$.

(f) Here $G=\CO_8^+(3)$ acts on $|\cP_1|=1120$ singular $1$-subspaces. A  {\sc Magma}  calculation shows that $H$ contains one of the following as a normal subgroup $\nonsplit{2}\Omega_7(3)$, $  \nonsplit{2}{\Omega^+_8(2)}$, $\nonsplit{2}{\Alt_8}$, $\nonsplit{2}{\Alt_9}$, $\nonsplit{2}{\Omega_7(2)}$,  $\nonsplit{4}{\PSL_3(4)}$, $\SU_4(3)$, $2^{1+6}_+.\Alt_7$, or $2^{1+6}_+.\Alt_8$. These all appear in Table~\ref{T:OPlusa}.

(g)  When $\cU=\cP_2$, a \textsc{Magma} calculation reveals that there are no possibilities for $H$.

(h) When $\cU=\cP_3$ we have  $G=\CO_8^+(3)$ and a \textsc{Magma} calculation shows that $H$ contains $\nonsplit{2}{\Omega_8^+(2)}$ as a normal subgroup, as in Theorem~\ref{T10}~(c).

(i)  Finally suppose that $\cU=\cP_4^\eps$ and $G$ is the index two subgroup of $\CO^+_8(3)$ that fixes each $\Omega_8^+(3)$-orbit on totally singular 4-subspaces. Note that if $H$ is transitive on $\cP_1$ then $\oH^\tau$ is transitive on $\cP_4^+$ or $\cP_4^-$ where $\tau$ is a suitable triality automorphism. The $\CO_8^+(3)$-conjugacy class consisting of $\nonsplit{2}\Omega_7(3)$ subgroups that are transitive on $\cP_1$ splits into two conjugacy classes under $G$. Applying triality to their images in $\oG$ and pulling back to $G$ we find that both $\Omega_7(3)^R\lhdeq H$ and $\nonsplit{2}\Omega_7(3)\lhdeq H$ are transitive on $\cP_4^\eps$ for $\eps=\pm$  (and both lie in subgroup families in Table~\ref{T:OPlusb}). Similarly, the  $\textup{CO}_8^+(3)$-conjugacy class of $\textup{SU}_4(3)$ subgroups that are transitive on $\cP_1$ splits into two conjugacy classes under $G$. Applying triality to their images in $\oG$ and pulling back to $G$ we find that both $\SU_4(3)\lhdeq H$ and $\Omega_6^-(3)^R\lhdeq H\leqslant N_2^-$,  are transitive on $\cP_4^\eps$ for $\eps=\pm$  (and both lie in subgroup families  in Table~\ref{T:OPlusb}). 
Also, using \cite{BHRD}*{Table~8.50} and \textsc{Magma} we see that all of $2^7:\Alt_8$ and $2^{1+6}_+.\Alt_8$, and $2^6:\Alt_7$ and $2^{1+6}_+.\Alt_7$ are transitive on $\cP_4^\eps$, for $\eps=\pm$. Finally, a {\sc Magma}~\cite{Magma} computation yields the following remaining examples for a normal subgroup of $H$: $\nonsplit{2}{\Omega^+_8(2)}$, $\Omega_7(2)$, $\nonsplit{2}{\Omega_7(2)}$ $\nonsplit{4}{\PSL_3(4)}$, $\nonsplit{2}{\PSL_3(4)}$, $\nonsplit{2}{\Alt_8}$, $\Alt_8$, $\nonsplit{2}{\Alt_9}$ and $\Alt_9$. All of these groups are in Table~\ref{T:OPlusb}.

\smallskip
(2)\  Next suppose that $\cU=\cP_4$. Then a \textsc{Magma} calculation shows that $H$ satisfies one of the following, and these are the groups occurring in Table~\ref{T:OPlusBoth} for Proposition~\ref{P:Both}  when $(m,q)=(4,3)$.
\begin{enumerate}[(i)]
\item $H\rhdeq \Omega_7(3)^R$, $\Omega_6^-(3)^R$;
\item  $H=\nonsplit{2}{\GO_7(2)}$, $\nonsplit{2}{\GO^+(8,2)}$;
\item $H\rhdeq \Alt_8$, $\Alt_9$, $\nonsplit{2}{\PSL_3(4)}$, $\nonsplit{4}{\PSL_3(4)}$
\item $H\rhdeq 2^7:\Alt_8$,  $2^6:\Alt_7$, 
\end{enumerate}

(3)\  Finally, suppose that $\cU=\cN_1$ and that $H$ is transitive on $\cU$. Then $H$ has an index two subgroup $H^+$ that is transitive on $\cN_1^+$ and $\cN_1^-$, and so $H^+$ has already been determined in part (1). A \textsc{Magma} calculation then shows that one of the following holds, and so $H$ is as given by Proposition~\ref{P2}.
\begin{enumerate}[(i)]
\item $H\rhdeq \SL_4(3)$, $\SU_4(3)$ or $\Sp_4(3)$;
\item $H= \nonsplit{2}{\SO_7(3)}$;
    \item $H\leqslant P_4$, $H$ contains an element of $\CO_8^+(3)\backslash\GO_8^+(3)$, and  modulo the unipotent radical of $P_m$,  $H\cap\GO_8^+(8,3)$  induces a subgroup of $\GL_4(3)$ that is transitive on $1$-subspaces. \qedhere
\end{enumerate}
\end{proof}

\subsection{Actions on subspace families from Table~\texorpdfstring{\ref{T-defn}}{}}\label{SS:OPlus}

We now consider all the subspace families $\cU$ from Table~\ref{T-defn}, for dimensions $2m\geq 6$.

\begin{theorem}\label{T10}
  Suppose that  $H\le\GammaO_{2m}^{+}(q)$, where $m\geq 3$, such that $\Omega_{2m}^{+}(q)\not\le H$. Let 
  $V=(\F_q)^{2m}$ and suppose that $H$ acts transitively on a set $\cU$  
  of subspaces of $V$ given by Table~$\textup{\ref{T-defn}}$. 
Then one of the following holds:
  \begin{enumerate}[{\rm (a)}]
\item $\cU=\cP_1$ and $H_0\lhdeq H$ where $(H_0,m,q)$ is given in
    Table~$\ref{T:OPlusa}$; or
\begin{table}[!ht]
\caption{(a) Subgroups $H_0\lhdeq H\le\GammaO^+_{2m}(q)$ where $H$ is transitive
on $\cP_1$.}\label{T:OPlusa}
\begin{tabular}{rlccccccc}
\toprule
$H_0$ &&$\Alt_7$&$\Alt_9$&$\nonsplit{2}{\Alt_8}$&$\nonsplit{2}{\Alt_9}$&$2^{1+6}_+\ldotp\Alt_7$&$2^{1+6}_+\ldotp\Alt_8$&\\
\textup{$m$}&&$3$&$4$&$4$&$4$&$4$&$4$&\\
\textup{$q$}&&$2$&$2$&$3$&$3$&$3$&$3$&\\
\midrule
$H_0$ &&$\nonsplit{4}{\PSL_3(4)}$&$\nonsplit{2}{\Omega_7(2)}$&$\nonsplit{2}{\Omega_8^+(2)}$&${}^\wedge\Omega_7(q)^I$&$\SU_m(q)$ \\
\textup{$m$}&&$4$&$4$&$4$&$4$&\textup{even}\\
\textup{$q$}&&$3$&$3$&$3$&\textup{all}&\textup{all}\\
\bottomrule
\end{tabular}
\end{table}

    \item $\cU=\cP_2$, $m=3$, $q=2$ and $\Alt_7\lhdeq H$; or

    \item $\cU=\cP_3$, $m=4$, $q=3$ and $\nonsplit{2}{\Omega_8^+(2)}\lhdeq H$; or

\item $\cU=\cP_m^\eps$, and either $m=3$ and $H\leqslant N_{A}(\Omega_2^-(q^2))\leqslant N_4^-$ where $A$ is the stabiliser in $\GammaO_6^+(q)$ of $\cP_3^+$, or $H_0\lhdeq H$ where $(H_0,m,q)$ is given in
  Table~$\ref{T:OPlusb}$; or
\begin{table}[!ht]
  \caption{(d) Subgroups $H_0\lhdeq H\le\GammaO^+_{2m}(q)$
    where $H$ is transitive on $\cP_m^+$ or $\cP_m^-$ (excluding the case $m=3$ and  $H\leqslant N_A(\Omega_2^-(q^2))\leqslant N_4^-$.)}\label{T:OPlusb}
\begin{tabular}{rlcccccccc}
\toprule
$H_0$ &&$\Omega_{2m-1}(q)^R$ &$\Omega^{-}_{2m-2}(q)^R$  &${}^\wedge\Omega_7(q)^I$&$\SU_4(q)$&$2^4\colon \C_5$&$2^4\ldotp\Alt_5$&$\Alt_5$&$\Alt_7$\\

\textup{$m$}&& $\ge3$&$\ge3$&$4$&$4$&$3$&$3$&$3$&$3$\\ 
\textup{$q$}&& \textup{all}& \textup{all}&\textup{all}&\textup{all}&$3$&$3$&$3$&$2$\\
\midrule
$H_0$ &&$2^{1+6}_+\kern-2pt\ldotp\kern-2pt\Alt_7$& $2^6:\Alt_7$& $2^{1+6}_+\kern-2pt\ldotp\kern-2pt\Alt_8$&$2^7:\Alt_8$&$\nonsplit{2}{\Omega_8^+(2)}$&$\nonsplit{2}{\Omega_7(2)}$&$\Omega_7(2)$&\\
\textup{$m$}&&$4$&$4$&$4$&$4$&$4$&$4$&$4$&\\
\textup{$q$}&&$3$&$3$&$3$&$3$&$3$&$3$&$3$&\\
\midrule 
$H_0$ &&$\nonsplit{4}{\PSL_3(4)}$&$\nonsplit{2}{\PSL_3(4)}$&$\nonsplit{2}{\Alt_8}$&$\Alt_8$&$\nonsplit{2}{\Alt_9}$&$\Alt_9$&$\Alt_9$\\
\textup{$m$}&&$4$&$4$&$4$&$4$&$4$&$4$&$4$\\
\textup{$q$}&&$3$&$3$&$3$&$3$&$3$&$3$&$2$\\
\bottomrule
\end{tabular}
\end{table}

\item $\cU=\cN_1$ (with $q$ even) or $\cU=\cN_1^\varepsilon$ (with $q$ odd),  and either $H_0\lhdeq H$ where $(H_0,m,q)$ is listed in Table~$\ref{T:OPlusN1}$, or $H\leq P_m$ such that, modulo the unipotent radical of $P_m$, $H$ induces a subgroup of $\GammaL_m(q)$ that is transitive on $1$-subspaces; or
\begin{table}[!ht]
\caption{(e) Subgroups $H_0\lhdeq H\le\GammaO_{2m}^{+}(q)$, where $H$ is transitive on~$\cU=\cN_1$ with $q$ even, or on $\cU=\cN_1^\varepsilon$ with $q$ odd. }\label{T:OPlusN1}
\begin{tabular}{rlcccccccc}
\toprule
$H_0$ &&$\SL_m(q)$ &$\SL_{m/2}(q^2)$ & $\SL_{m/4}(q^4)$ & $\Sp_m(q)$ & $\Sp_{m/2}(q^2)$ &$\Sp_{m/4}(q^4)$\\
\textup{$m$}&& \textup{all}& \textup{even} & \textup{$\geq 8$ even} & \textup{even}& \textup{even} & \textup{$\geq 8$ even} &\\
\textup{$q$}&& \textup{all} & $2,4$ &$2$&\textup{all} & $2,4$ &$2$&\\
\midrule
$H_0$ &&$\SU_m(q)$ &$\SU_{m/2}(q^2)$ & $\SU_{m/4}(q^4)$ & $\Omega_m^+(q^2)$ &$\Omega_{m/2}^+(q^4)$\\
\textup{$m$}&& \textup{even}& \textup{even} &$\geq 8$ \textup{even} & \textup{even} & $m/2\geq 2$ \textup{even}\\
\textup{$q$}&& \textup{all} & $2,4$ &$2$& $2,4$ &$2$\\
\midrule
$H_0$ && $\G_2(q)'$ & $\G_2(q^2)$ &$\G_2(q^4)$ & $\SL_2(q^2)$ &$\SL_2(q^4)$ & $\SL_2(q^8)$\\
\textup{$m$}&& $6$ &$12$ & $24$ &$4$ &$8$ &$16$\\
\textup{$q$}&&\textup{even} & $2,4$& $2$ &\textup{even} & $2,4$ &$2$\\
\midrule
$H_0$ && ${}^\wedge\Omega_7(q)^I$ &$\Omega_7(q^2)$ &$\Omega_7(q^4)$&$\nonsplit{2}{\Omega_8^-(q^{1/2})}$ &$\Omega^-_8(q)$&$\Omega^-_8(q^2)$\\
\textup{$m$}&& $4$ &$8$&$16$&$4$ & $8$ & $16$\\
\textup{$q$}&& \textup{all} &$2,4$ &$2$ &\textup{all} &$2,4$ &$2$ \\
\midrule
$H_0$ && $\Omega_9(q)$ &$\Omega_9(q^2)$ &$\Omega_9(q^4)$ & $\Alt_6$ &$\Alt_7$ &$\Alt_9$\\
\textup{$m$}&& $8$ & $16$ &$32$ &$4$&$4$&$4$\\
\textup{$q$}&&  \textup{all} &$2,4$ &$2$&$2$&$2$&$2$\\
\midrule
$H_0$ &&$\nonsplit{2}{\Omega_8^+(2)}$& $\nonsplit{2}{\Sp_6(2)}$&$\nonsplit{2}{\SU_4(2)}$& $\nonsplit{2}{\Alt_9}$&$\PSU_4(3)$ & $\nonsplit{3}{M_{22}}$\\
\textup{$m$}&& $4$ & $4$ &$4$ &$4$&$6$&$6$\\
\textup{$q$}&& $3$& $3$& $3$& $3$& $2$& $2$&\\
\midrule
$H_0$ && $\nonsplit{3}{\textup{Suz}}$ &$\textup{Co}_1$\\
\textup{$m$}&& $12$ & $12$ \\
\textup{$q$}&& $2$ &$2$\\
\bottomrule
\end{tabular}
\end{table}

\item $\cU=\cN^\pm_2$ and $H_0\lhdeq H$ where $(H_0,m,q)$ is given in
    Table~\textup{\ref{T:OPlusd}}, or  $\cU=\cN_2^-$ and $H\leq P_m$ such that, modulo the unipotent radical of $P_m$ either  $H$ induces at least $\SL_m(q)$ or $m=3$ and $H$ induces a subgroup that is transitive on $1$-subspaces; or
\begin{table}[!ht]
\begin{threeparttable}
\caption{(f) Subgroups $H_0\lhdeq H\le\GammaO_{2m}^{+}(q)$ where $H$    is transitive on~$\cU=\cN_2^\pm$. In the first case
  $H$ equals $[2^{10}]\ldotp31\ldotp5$.}\label{T:OPlusd}
\begin{tabular}{rlccccccc}
\toprule
$\cU$&&$\cN_2^-$&$\cN_2^-$&$\cN_2^+$&$\cN_2^\eps$& $\cN_2^\eps$ &$\cN_2^-$&$\cN_2^-$\\
$H_0$ &&$[2^{10}]\ldotp31\ldotp5$&$\SL_m(q)$ &$\SU_m(q)$&${}^\wedge\Omega_7(q)^I$& $\Alt_9$ & $\Alt_8$\tnote{*} & $\GO_8^-(q^{1/2})$\tnote{**}\\
\textup{$(m,q)$}&&$(5,2)$&$(\ge3,2\;{\rm or }\;4)$&$(\textup{even},2 \textup{ or }4)$&$(4,q)$& $(4,2)$& $(4,2)$ & $(4,4 \textup{ or } 16)$\\
\bottomrule
\end{tabular}
\begin{tablenotes}\footnotesize
\item[*] Even though $\Alt_8\cong\SL_4(2)$, this group is a different subgroup than the one appearing in the second column when $(m,q)=(4,2)$. Indeed this group is self-normalising and contained in an $\Alt_9$ while the $\SL_4(2)$ in the second column is normalised by a group of twice its order.
\item[**] This is the image under triality of a subfield subgroup and existence is only confirmed for $q=4$. Moreover, when $q=16$ we have $|H:H_0|=2$. See Remark \ref{rem:pablo}.
\end{tablenotes}
\end{threeparttable}
\end{table}

    \item $\cU=\cN_3^\eps$, $m=4$, $q$ is odd and ${}^\wedge\Omega_7(q)^I\lhdeq H$; or

    \item $\cU=\cN_4^-$, $m=4$, and either $q=2$ and $H=\Omega_7(2)^I$,  or $q=4$ and $\Omega_7(4)^I.2\lhdeq H$.
   \end{enumerate}
\end{theorem}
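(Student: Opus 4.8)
The plan is to follow the strategy of the proofs of Theorems~\ref{T9}, \ref{T4} and~\ref{T:Ooddqeven}. Let $G$ be the setwise stabiliser of $\cU$ in $\GammaO_{2m}^+(q)$; thus $G=\GammaO_{2m}^+(q)$ unless $\cU$ is one of $\cP_m^{\eps}$ or $\cN_k^{\eps}$ with $k$ odd, in which case $G$ has index two in $\GammaO_{2m}^+(q)$ and contains $\GO_{2m}^+(q)$. Given $H\le G$ transitive on $\cU$ with $\Omega_{2m}^+(q)\not\le H$, fix $U\in\cU$, so $G=HG_U$. The delicate case $m=4$, $q=3$ is settled (computationally) in Lemma~\ref{lem:m4q3}(1), so assume $(m,q)\ne(4,3)$. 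By the work of King~\cites{King1981,King1981a,King1982} (see also \cite{Factns}*{Theorems~7.0.1 and~8.1.1} and \cite{BHRD}*{Theorem~2.1.1}), $G$ acts primitively on $\cU$ in every case under consideration except $\cU=\cN_2^+$ with $q=2$; in that case every $U\in\cN_2^+$ contains a unique nonsingular $1$-subspace, so transitivity on $\cN_2^+$ forces transitivity on $\cN_1$ and $H$ is read off from part~(e). Henceforth $G_U$ is maximal in $G$.

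Next pass to the projective groups: put $L=\POmega_{2m}^+(q)$ and write $\overline{X}$ for the image of $X\le G$ in $\PGammaO_{2m}^+(q)$. Since $\Omega_{2m}^+(q)$ is quasisimple, Lemma~\ref{lem:derived} gives $L\not\le\oH$, and we have a factorisation $\oG=\oH\,\oG_U$. Choose $\oB$ maximal among core-free subgroups of $\oG$ with $\oH\le\oB$, and set $G^*=\oB L$. As $L\ne\POmega_8^+(3)$, Lemma~\ref{lem:maxmin} shows that $G^*=\oB(G^*)_U$ is a maximal core-free factorisation and $\oB=N_{\oG}(\oB\cap L)$, so the full preimage $B$ of $\oB$ in $G$ satisfies $B=N_G(B\cap\Omega_{2m}^+(q))$. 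Feeding $L=\POmega_{2m}^+(q)$ into the classification of maximal factorisations of almost simple classical groups~\cite{Factns}, together with the restriction on $\cU$ coming from \cite{Reg}*{Lemma~4.1}, leaves the families $\cP_1$, $\cP_2$, $\cP_3$, $\cP_m^{\eps}$, $\cN_1$ (or $\cN_1^{\eps}$), $\cN_2^{\pm}$, $\cN_3^{\eps}$, $\cN_4^-$, and for each a short list of possible $\oB$: a parabolic $P_m$, the subfield subgroup $\Omega_{2m}^-(q^{1/2})$ or $\Omega_{2m-1}(q)^R$, $\SU_m(q)$, $\Omega_{2m-2}^-(q)^R$, an extension-field subgroup, an $\mathcal{C}_3$-type $\G_2$- or $\SL_2(q^a)$-subgroup, or --- in dimension~$8$ --- $\Omega_7(q)$, $M_2$, $\SU_4(q)$; plus finitely many exceptional small cases with $q\in\{2,3,4\}$. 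The family $\cN_1$ (or $\cN_1^{\eps}$) is exactly \cite{Reg}*{Lemma~4.5}, corrected as in Remark~\ref{rem:Reg}(e) to distinguish matrix from projective groups, which yields part~(e).

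For each remaining family we argue as in the previous sections. If $\oB$ is parabolic then, restricting to the totally singular $m$-space $W$ stabilised by $P_m$, we obtain an epimorphism $P_m\twoheadrightarrow\GammaL_m(q)$ and a factorisation in $\GammaL_m(q)$; since $P_m\cap N_1$ (respectively $P_m\cap N_2^-$) fixes a hyperplane of $W$, the image of $H$ in $\GammaL_m(q)$ is transitive on $1$-subspaces and is described by Theorem~\ref{T:SLbigN}, producing the parabolic alternatives of parts~(e) and~(f). If $\oB$ is one of the subfield, reducible $\Omega$-, $\SU_m(q)$-, extension-field, $\G_2$- or $\SL_2(q^a)$-type subgroups, one first checks which are genuinely transitive on $\cU$: $\SU_m(q)$ via the Hermitian-form argument of Lemma~\ref{lem:SUorthog} and \cite{KL}*{Lemma~2.10.5}; $\Omega_{2m-1}(q)^R$ and $\Omega_{2m-2}^-(q)^R$ via \cite{BG}*{Lemma~2.5.10} applied to the residual formed space; the remaining types via the transitivity information recorded in \cite{Factns}. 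For $H$ strictly contained in $B$ one writes $B=HB_U$ and, using the structure of $B_U$ from \cite{Factns}, reduces to a factorisation of a smaller almost simple group with socle $\PSU_m(q)$, $\POmega_{2m-2}^-(q)$, $\POmega_7(q)$, $\G_2(q)$, etc., and applies \cite{Factns}, \cite{Reg}*{Chapter~4}, or the already-established Theorems~\ref{T1}, \ref{T:KleinSU4}, \ref{T9} and~\ref{T4} inductively; residual small cases (mostly $m\in\{3,4,5,6,12\}$, $q\in\{2,3,4\}$) are finished with \textsc{Magma}.

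The principal obstacle is dimension~$8$. Here $|\Aut(L):\PGammaO_8^+(q)|=3$, and the triality automorphism $\tau$ cyclically permutes the families $\cP_1$, $\cP_4^+$, $\cP_4^-$, and likewise cycles the $L$-classes of $\Omega_7(q)$-subgroups $(\Omega_7(q)^I,\Omega_7(q)^I,\Omega_7(q)^R)$, of imprimitive $M_2$'s with $N_2^+$, and of the two $\SU_4(q)$-classes with $N_2^-$ (Remark~\ref{R:tri}). One must therefore transfer the transitivity analysis of $\cP_1$ to $\cP_4^{\eps}$ through $\tau$ while carefully recording which matrix-group preimage actually occurs --- ${}^\wedge\Omega_7(q)^I$ equals $\nonsplit{2}{\Omega_7(q)}$ for $q$ odd but $\Omega_7(q)$ for $q$ even --- and do the analogous bookkeeping for $\cN_2^{\pm}$, including the factorisation $\Omega_8^+(4).2=N_2^-\GO_8^-(2)$ of Remark~\ref{rem:pablo} (and its conjectural $q=16$ analogue), which is the source of the $\GO_8^-(q^{1/2})$ entry in Table~\ref{T:OPlusd}. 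The case $q=3$ in dimension~$8$ is the worst of all, since $\POmega_8^+(3)$ is excluded from Lemma~\ref{lem:maxmin} and has numerous small transitive subgroups; this is precisely why it is isolated in Lemma~\ref{lem:m4q3}.
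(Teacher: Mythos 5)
Your proposal is correct and follows essentially the same route as the paper: King's primitivity results, Lemma~\ref{lem:derived} and Lemma~\ref{lem:maxmin} to reduce to maximal core-free factorisations from \cite{Factns}, the reduction of $\cN_2^+$ with $q=2$ to the $\cN_1$ case, \cite{Reg}*{Lemma~4.5} (corrected) for part~(e), triality bookkeeping in dimension~$8$ including the Remark~\ref{rem:pablo} factorisation, and \textsc{Magma} plus Lemma~\ref{lem:m4q3} for the residual small cases. The only point you leave implicit is that the paper dispatches $m=3$ outright by citing Theorem~\ref{T:KleinSL4} (via $\POmega_6^+(q)\cong\PSL_4(q)$), which is where the infinite family $H\le N_A(\Omega_2^-(q^2))$ in part~(d) and the $m=3$ table entries come from; your framework reaches the same place through the extension-field factor of $\PSL_4(q)$, so this is a presentational rather than a substantive difference.
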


\begin{proof}
Let $G$ be the setwise stabiliser in $\GammaO^+_{2m}(q)$ of $\cU$ and let $H\le G$ act transitively on $\cU$ such that $\Omega_{2m}^{+}(q)\not\le H$. Note that $G=\GammaO^+_{2m}(q)$ unless $\cU=\cP_m^\eps$,  which consists of totally singular $m$-subspaces, or 
$\cU=\cN_k^\eps$, the set  of nondegenerate $k$-subspaces of a given isometry type $\eps$ with $kq$ odd. In these exceptional cases $G$ is an index two subgroup of $\GammaO^+_{2m}(q)$, and $G\geqslant \Omega^+_{2m}(q)$ if $\cU=\P_m^\eps$, or $G\geqslant \GO_{2m}^+(q)$ if $\cU=\cN_k^\eps$. By Theorem~\ref{T:KleinSL4} we may assume that $m\geq4$, and by Lemma~\ref{lem:m4q3} we may assume also that $(m,q)\neq (4,3)$.  If $\cU=\cN_1$ (with $q$ even) or $\cU=\cN_1^\varepsilon$ (with $q$ odd), then part (e) follows from  \cite{Reg}*{Lemma~4.5}, see also Remark~\ref{R:errata}.

Next suppose that  $\cU=\cN_2^+$ and $q=2$. Then $U\in\cN_2^+$ contains a unique nonsingular vector $ v$ and so $G_U\leqslant G_v$. This implies that $H$ acts transitively on $\cN_1$ and so is given by part (e). By~\eqref{E:OPlus2}, $|\cN_2^+|=2^{2m-3}(2^m-1)(2^{m-1}+1)$. Hence if $m>4$ then $|H|$ is divisible by a primitive prime divisor of $2^{2m-2}-1$. It follows that  either $m$ is even and $\SU_m(2)\lhdeq H$, or $m=6$ and $\nonsplit{3}{M_{22}}\lhdeq H$, or $m=4$. A \textsc{Magma} calculation shows that  $N_{G}(\nonsplit{3}{M_{22}})$ is not transitive on $\cN_2^+$ when $m=6$, while another shows that when $m=4$ either 
 $\Alt_9$, $\Omega_7(2)^I$ or $\SU_4(2)$ is normal in $H$. Moreover, Lemma~\ref{lem:SUorthog} shows that $\SU_m(2)$ is transitive on $\cN_2^+$ for all even $m$. Thus $H$ is as listed in Table~\ref{T:OPlusd}.

 We also note that a \textsc{Magma} calculation shows that, when $\cU=\cN_4^+$ and $(m,q)=(4,2)$, there are no possibilities for $H$. 

 The remaining families $\cU$ to be considered are therefore $\cP_k$ for $k< m$, $\cP_m^\eps$, $\cN_k^\eps$ for even 
 $k\leq m$ with $(\cU,q)\ne (\cN_2^+,2)$ and $(\cU,m,q)\ne (\cN_4^+,4,2)$, and
 if $q$ is odd also $\cN_k^\eps$ with $k$ odd  and $3\leq k\leq m$.  Let $U\in \cU$. 
 If $\cU=\cN_m^\eps$ and $U^\perp$ is isometric to $U$ then $G_U$ is an index 2 subgroup of $G_{\{U,U^\perp\}}$, which by \cite{King1981a}  is maximal in $G$. In all other cases, $G_U=G_{\{U,U^\perp\}}$, and moreover $G$ acts primitively on $\cU$: if 
 $(\cU,q)= (\cN_2^+,3)$ this follows from \cites{KL,BHRD} (recalling that here $G=\GammaO_{2m}^+(q)$), and in all other cases it follows from \cite{King1981}, \cite{King1981a} and \cite{King1982} (noting that $G$ contains either $\GO^+_{2m}(q)$ or $\Omega_{2m}^+(q)$). 
 Thus in particular, in all cases  $G_{\{U,U^\perp\}}$ is maximal in $G$.

Denote the image of any subgroup $X$ of $\GammaO^+_{2m}(q)$ in $\PGammaO_{2m}^+(q)$ by $\overline{X}$ and let $L=\POmega_{2m}^+(q)$. Then we have a factorisation $\oG=\oH\,\oG_{\{U,U^\perp\}}$. Since $\Omega_{2m}^+(q)$ is quasisimple, Lemma~\ref{lem:derived} implies that $L\not\leqslant \oH$. Let $\oB$ be maximal among core-free subgroups of $G$ containing $\oH$ and let $G^*=\oB L$. Then by Lemma~\ref{lem:maxmin} and the fact that $(m,q)\neq (4,3)$ we have that $\oB$ is maximal in $G^*$ and we have a core-free maximal factorisation $G^*=\oB(G^*)_{\{U,U\}}$.

Suppose first that $\cU=\cN_m^\varepsilon$   and $U$ is isometric to $U^\perp$.  Since we have a core-free maximal factorisation $G^*=\oB(G^*)_{\{U,U\}}$, \cite{Factns} implies that $m=4$, $q=2$ or $4$, $\cU=\cN_4^-$, and $\oB=N_G(\Omega_7(q)^I)$.  A \textsc{Magma} calculation shows that when $q=2$ the only example we get is $H=\Omega_7(2)^I$. When $q=4$, \cite{BHRD}*{Table~8.50} implies that $N_{\oG}(\Omega_7(4)^I)=\Omega_7(4)^I.2$, where the extra 2 is a field automorphism. Constructing an appropriate $\Omega_8^+(4).2$ as a subgroup of $\Omega_{16}^+(2)$ in \textsc{Magma}, enables us to calculate the intersection of $N_{\oG}(\Omega_7(4)^I)$ with the stabiliser of a subspace in $\cN_4^-$ and show that $N_{\oG}(\Omega_7(4)^I)$ is transitive on $\cN_4^-$ while $\Omega_7(4)^I$ is not. Thus $\Omega_7(4)^I.2\lhdeq H$. This gives part (h).
 
 In all remaining cases  $G_U=G_{\{U,U^\perp\}}$, and $G_U$ is maximal in $G$. Then $\cU$ and the possibilities for $\oB$ are given by \cite{Factns} and are collated in Table~\ref{T:OrthogCases} according to the case of the theorem in which the family $\cU$ is addressed.. Note that  the Case~(g) appears in disguise in~\cite{Factns}*{Table~4, p.\,14}
because when $m=4$ the subgroup $(\PSp_2(q)\otimes\PSp_4(q)).2$ is isomorphic to $ (\POmega_3(q)\times\POmega_5(q)).2$  and is mapped to an $N_3$-subgroup under triality. (In constructing Table~\ref{T:OrthogCases} recall that $(m,q)\ne (4,3)$ and also that our notation for the entries $P_m, P_{m-1}$ in the tables in \cite{Factns} is $\cP_m^\eps$.)

\begin{table}[!ht]
\caption{Remaining possibilities for $\cU$ when $L=\POmega^+_{2m}(q)$.}\label{T:OrthogCases}
\begin{tabular}{rcccccc}
\toprule
\textup{Cases} && ${\rm (a)}$& ${\rm (d)}$& ${\rm (f)}$& ${\rm (g)}$\\
%\midrule
$\cU$&&$\cP_1$&$\cP_m^\eps$&$\cN_2^\eps$&$\cN_3^\eps$\\
\bottomrule
\end{tabular}
\end{table}

By Lemma~\ref{lem:maxmin}, $\oB=N_{\oG}(\oB\cap L)$ and so letting $B$ be the full preimage of $\overline{B}$ in $\GammaO^+_n(q)$, the Correspondence Theorem implies that 
$B=N_G(B\cap \Omega_{2m}^-(q))$.  Table~\ref{T:OCasesParts} lists the groups $B_0$ such that $B=N_G(B_0)$ for each possibility for $\cU$.  We provide the following extra justification for when $m=4$:
By~\cite{Factns}*{Table~4}, if $L=\Omega^+_8(4)$  then there is a 
factorisation $G=AB$ where $A\cong N_2^-$ and $B\cong N_2^+$. However $N_2^-$ 
is not transitive on $\cN_2^+$ and similarly $N_2^+$ is not transitive on 
$\cN_2^-$ as, for a given $U\in \cN_2^-$, there exist $W_1,W_2\in\cN_2^+$ such that $U\cap W_1=0$ and $\dim(U\cap W_2)=1$. Indeed, as observed in Remark~\ref{R:tri}, under 
triality an $N_2^+$ subgroup becomes an $M_2$ subgroup, that is the stabiliser of a decomposition of $V$ into a pair of maximal totally singular $4$-subspaces, and an $N_2^-$ subgroup 
becomes an $\SU_4(q)$ subgroup. 
Thus the factorisation is as listed  in cases
$(\textrm{f}^-2)$ and $(\textrm{f}^+1)$ respectively.
Similarly, in cases (f$^+2$) and (g), the subgroup $B$ must be $N_G({}^\wedge\Omega_7(q)^I)$ as $\Omega_7(q)^R$ does not act transitively on $\cN_2^{+}$ or $\cN_3^\eps$.  Furthermore, \cite{Factns}*{Table~4} lists a factorisation $L=AB$ with $A\cong N_2^-$ and $B=P_1,P_4^+$, or $P_4^-$.  Moreover, there are two possible $P_i$ for each choice of $A$. Since the orthogonal complement of a subspace in $\cN_2^-$ contains totally singular $1$-subspaces, it is not possible for $N_2^-$ to be transitive on $\cP_1$. Thus $N_2^-$ is transitive on $\cP_4^-$ and $\cP_4^+$. This gives case (d2). Moreover, the image of $N_2^-$ under triality is a $\SU_4(q)$ subgroup and so such a subgroup is transitive on $\cP_1$ and either $\cP_4^-$ or $\cP_4^+$. This gives cases (a2) and (d4).

\begin{table}[!ht]
\begin{threeparttable}
\caption{Options for $\cU$, $(m,q)$ and $B=N_G(B_0)$.}\label{T:OCasesParts}
\begin{tabular}{rcccccccc}
\toprule
\textup{Cases} && ${\rm (a1)}$& ${\rm (a2)}$& ${\rm (a3)}$& \\
%\midrule
$\cU$&&$\cP_1$&$\cP_1$&$\cP_1$&\\
%\midrule
$(m,q)$&&$(4,2)$&$(\textup{even},{\rm all})$&$(4,{\rm all})$&\\
%\midrule
$B_0$&&$*$&$\SU_m(q)$&${}^\wedge\Omega_7(q)^I$&\\
\midrule
%\midrule
\textup{Cases} && ${\rm (d1)}$& ${\rm (d2)}$& ${\rm (d3)}$& ${\rm (d4)}$& ${\rm (d5)}$&  \\
%\midrule
$\cU$&&$\cP_m^\eps$&$\cP_m^\eps$&$\cP_m^\eps$&$\cP_m^\eps$&$\cP_m^\eps$&\\
%\midrule
$(m,q)$&&$(\ge4,{\rm all})$&$(\ge4,{\rm all})$&$(4,{\rm all})$&$(4,{\rm all})$&$(4,2)$&\\
%\midrule
$B_0$&&$N_1$&$N_2^{-}$&$\Omega_{7}(q)^I$& $\SU_4(q)$&$\Alt_9$&\\
\midrule
%\midrule
\textup{Cases} && {\rm (f}${}^{+}1$)& {\rm (f}${}^{+}2$)& {\rm (f}${}^{-}1$)&{\rm (f}${}^{-}2$)&{\rm (f}${}^{-}3$) & {\rm (f}${}^{-}4$) & {\rm (f}${}^{-}5$)\\
%\midrule
$\cU$&&$\cN_2^{+}$&$\cN_2^{+}$&$\cN_2^{-}$&$\cN_2^{-}$&$\cN_2^{-}$&$\cN_2^{-}$&$\cN_2^{-}$\\
%\midrule
$(m,q)$&&$(\textup{even},4)$&$(4,>2)$&$(\ge4,{\rm all})$&$(\ge4,2\;{\rm or }\;4)$\tnote{*}&$(4,{\rm all})$ &$(4,2)$& $(4,4 \textup{ or} 16)$\\
%\midrule
$B_0$&&$\SU_m(q)$&${}^\wedge\Omega_7(q)^I$&$P_m$&$\GL_m(q)$&${}^\wedge\Omega_7(q)^I$& $\Alt_9$ & $\Omega_8^-(q^{1/2})$\tnote{**}\\
\midrule
%\midrule
\textup{Cases} && {\rm (g)} \\
$\cU$&&$\cN_3^{\eps}$ \\
%\midrule
$(m,q)$&& $(m,\textup{odd})$&\\
$B_0$ && ${}^\wedge\Omega_7(q)^I$\\
\bottomrule
\end{tabular}
\begin{tablenotes}\footnotesize
\item[*] $(m,q)\neq (4,2)$ as in this case $\GL_4(2).2\leqslant \Omega_7(2)^I$.
\item[**] This is the image under triality of a subfield subgroup and existence is only confirmed for $q=4$. See Remark \ref{rem:pablo}.
\end{tablenotes}
\end{threeparttable}
\end{table}

\smallskip
\textsc{Case $\cU=\cP_1$.} We consider cases (a1) to (a3) in Table~\ref{T:OCasesParts}.

(a1)~Here $\GammaO_8^+(2)=\textup{SO}_8^+(2)$ acts faithfully on $|\cP_1|=135$ singular vectors. We use {\sc Magma} and find that there are eight conjugacy classes of transitive subgroups of $\GO_8^+(2)$. Moreover, if $\Omega_8^+(2)\not\leqslant H$ then $H$ normalises one of $\SU_4(2)\cong\PSp_4(3)$, $\Alt_9$ or $\Omega_7(2)^I$, as listed in  Table~\ref{T:OPlusa}.

(a2)~In this case $B=N_G(\SU_m(q))$ where $m$ is even and $H\le B$ is transitive on $\cP_1$. Consider~\cite{Factns}*{3.6.3(a), p.\,70}, and the
argument on~\cite{Factns}*{3.5.2(a), p.\,60}. Let $k=\F_q$ and $K=\F_{q^2}$.
Here $V=K^m$ admits a $K$-hermitian form $[\, ,\,]$ and the $k$-quadratic form $Q$ preserved by ${\rm O}_{2m}^+(q)$ satisfies $Q(v)=[v,v]$ for $v\in V$.
Let $U\in \cU$. Then $KU:=\{\lambda u\mid \lambda\in K, u\in U\}$
is a singular $1$-subspace of the hermitian space $V=K^m$.  Since $m\ge4$, the subgroup $T=\SU_m(q)$ is transitive on
the set of nonzero singular vectors of $K^m$ by~\cite{KL}*{Lemma~2.10.5,\;p.\,49}. Moreover, the stabilizer $T_{KU}$ is transitive
on the set of $q+1$ singular $1$-dimensional $k$-subspaces of $KU$. Thus $\SU_m(q)$ is transitive on $\cU$ as recorded in Table~\ref{T:OPlusa}.
Suppose now that $\SU_m(q)\not\le H$. Then we have the factorisation $B=B_U\,H$. Since $B_U$ fixes $KU$, which is a 1-dimensional $K$-subspace that is totally singular with respect to the hermitian form, it follow that $H$ acts transitively on the set of totally singular 1-subspaces over $K$. Since $m$ is
even and $(m,q)\neq (4,3)$, it follows from  Theorem~\ref{T1} that there are no such factorisations.

(a3) In this case $m=4$ and $G=P_1\,B$ where $B=N_G({}^\wedge\Omega_7(q)^I)$. Write $P_1=G_U$ where $U\in\cP_1$.
In light of Case~(a1) and our assumption that $(m,q)\neq (4,3)$, we will assume $q\ge4$. By~\cite{Factns}*{Table~4} we see that ${}^\wedge\Omega_7(q)^I$ is transitive on $\cP_1$ and so 
we obtain the example ${}^\wedge\Omega_7(q)^I\lhdeq H$ in Table~\ref{T:OPlusa}. 
Suppose now that ${}^\wedge\Omega_7(q)^I\not\le H$. Since $H\le B$, and $H$ is transitive on $\cP_1$, we have $B=B_U\,H$ and $\overline{B}=\overline{B}_U\oH$. 
Set $S:=\overline{B}^{(\infty)}=\Omega_7(q)^I$ and let $X$ be the natural 7-dimensional module for $S$ over $\F_q$. The formula~\eqref{E:OPlus1} for $|\cP_1|$ gives
\[
  |\cU|=\frac{(q^4-1)(q^3+1)}{q-1}=|G:G_U|=|S:S_U|.
\]
Since $\cU=|S:S_U|$ is coprime to $q$, it follows from \cite{Seitz}*{(1.6)} that any maximal subgroup of $S$ containing $S_U$ is a parabolic subgroup. Comparing $|\cU|$ with the indices of parabolic subgroups given in \eqref{E:OSizeN1Pm}, we deduce that $S_U$ is the stabiliser in $S$ of a totally singular $3$-subspace of $X$. Thus $\oH$ acts transitively on the set of totally singular $3$-subspaces of $X$.

Since $q\geq 4$, Theorem~\ref{T4}(a) (see Table~\ref{T:Oa}) when $q$ is odd and  Theorem~\ref{T:Ooddqeven} when $q$ is even, imply that $\Omega_6^-(q)\lhdeq \oH$. In particular, $H$ fixes a unique nondegenerate subspace $W$ of $X$. Thus letting $\tau$ be a triality automorphism of $\POmega_8^+(q)$ such that $\oB^\tau$ normalises an $\Omega_7(q)^R$ subgroup we have that $\Omega_6^-(q)^\tau\lhdeq \oH^\tau\leqslant N_2^-$.  Since $\tau$ cyclically permutes $N_2^-$ with two $\mathcal{C}_3$-subgroups of type $\SU_4(q)$ (see Remark~\ref{R:tri}), $\Omega_6^-(q)\cong \SU_4(q)/\langle \pm I\rangle$, and ${}^\wedge\Omega_7(q)=\nonsplit{2}{\Omega_7(q)}$ when $q$ is odd, 
it follows that $\SU_4(q)\lhdeq H$. Thus $H$ is already listed in Table~\ref{T:OPlusa}.

\smallskip

\textsc{Case $\cU=\cP_m^\eps$.} There are two orbits of $L$ on the set of totally singular $m$-subspaces:
one is labelled $\cP_m^+$ and the other $\cP_m^-$.  The two orbits are fused in $\PGammaO^+_{2m}(q)$. Hence there is no loss of generality in assuming that  
$\cU=\cP_m^+$ and $U\in\cP_m^+$,  a (maximal) totally singular $m$-subspace of $V$. 
When $m=4$,  triality induces a 3-cycle $(\cP_m^+,\cP_m^-,\cP_1)$ by Remark~\ref{R:tri}. Thus a suitable image of $\oH$ under triality is transitive on $\cP_1$ and so is given in part~(a). Recall that  triality induces the 3-cycles $(\Omega_7(q)^I,\Omega_7(q)^I,\Omega_7(q)^R)$ and $(N_L(K_1), N_L(K_2),N_2^-)$ on subgroups of $\POmega_8^+(q)$, where $K_1$ and $K_2$ are the images in $L=\POmega_8^+(q)$ of subgroups isomorphic to $\SU_4(q)$. Then also using \cite{BHRD}*{Table~8.50} to obtain the correct structure for $H$ in the other cases, we list all possibilities obtained in this way in Table~\ref{T:OPlusb}. 
Thus from now on we may assume that $m\geq 5$, and in particular we only need to deal with the cases (d1) and (d2) in Table~\ref{T:OCasesParts}.

(d1) In this case  $B=G_{\langle v\rangle}$ for some nonsingular vector $v$. Set $W=\langle v\rangle^\perp$.
Observe that $(v,v)=Q(v+v)-Q(v)-Q(v)=2Q(v)=0$ if and only if $q$ is even.
Hence $N_1$ preserves the decomposition 
$V=\langle v\rangle\oplus W$ when $q$ is odd, and preserves the chain $0<\langle v\rangle<W<V$ when
$q$ is even. In both cases the restriction $N_1^W$ of $N_1$ to $W$ 
contains $\Omega_{2m-1}(q)$ by Witt's lemma, and we note that $\Omega_{2m-1}(q)\cong\Sp_{2m-2}(q)$ when $q$ is even. Moreover, $\Omega_{2m-1}(q)=B^{(\infty)}=(B^W)^{(\infty)}$, and so if $\Omega_{2m-1}(q)\leqslant H^W$ then Lemma~\ref{lem:derived} implies that $\Omega_{2m-1}(q)^R\lhdeq H$, as listed in Table~\ref{T:OPlusb}.

Suppose now that $\Omega_{2m-1}(q)\not\le H^W$. 
Now $Y:=\langle v\rangle^\perp\cap U$ is a totally singular
$(m-1)$-subspace that is stabilised by $B\cap P_m$.
Hence $B\cap P_m\le B_Y$ and $(B\cap P_m)^W\le (B_Y)^W=P_{m-1}(B^W)$.
Thus we have a factorisation $B^W=(B\cap P_m)^W\,H^W=P_{m-1}(B^W)\,H^W$ and therefore $H^W$ is transitive on the set of totally singular $(m-1)$-subspaces of $W$.
When $q$ is odd and recalling that $m\geq 5$, we deduce from Theorem~\ref{T4} that $\Omega_{2m-2}^-(q)\lhdeq H^W$. Similarly, when $q$ is even we note that $\Omega_{2m-1}(q)\cong\Sp_{2m-2}(q)$ and we deduce from Theorem~\ref{T3}(b) that again  $\Omega_{2m-2}^-(q)^R\lhdeq H^W$. In both cases, it follows from Lemma~\ref{lem:derived} that $\Omega_{2m-2}^-(q)^R\lhdeq H$, as listed in Table~\ref{T:OPlusb}.

(d2) In this case $B= N_2^{-}=G_{W^\perp}=G_W$ for some
 nondegenerate $(2m-2)$-subspace $W$ such that $W^\perp\in \cN_2^{-}$. In particular, $N_2^{-}$ preserves the
decomposition $V=W\oplus W^\perp$ and by  \cite{KL}*{Lemma~4.1.1}, $N_2^{-}$ contains $\Omega_2^{-}(q)\times \Omega_{2m-2}^{-}(q)$ as a normal subgroup. 
Since $\GO_2^{-}(q)$ is solvable it follows that $B^{(\infty)}=(B^W)^{(\infty)}=\Omega_{2m-2}^-(q)$. Thus by Lemma~\ref{lem:derived}, if $\Omega_{2m-2}^-(q)\leqslant H^W$ it follows that $\Omega_{2m-2}^-(q)^R\lhdeq H$, as listed in Table~\ref{T:OPlusb}.

Suppose next that $\Omega_{2m-2}^-(q)\not\leqslant H^W$, so we have a factorisation $B^W=(B\cap P_m)^WH^W$. 
Observe that $U\cap W^\perp=0$ as $U\in\cP_m$ and $W^\perp\in\cN_2^{-}$. Therefore $\dim(U\oplus W^\perp)=m+2$, and hence also 
$\dim(U\cap W)=m-2$ as $(U\oplus W^\perp)^\perp=U^\perp\cap W=U\cap W$. Since $\dim(W)=2m-2$
and $W$ has Witt index $\frac{2m-2}{2}-1=m-2$, it follows that $Y:=U\cap W$ is a (maximal) 
totally singular $(m-2)$-subspace of $W$. Thus $B\cap P_m\leqslant B_Y$ and we have a factorisation $B^W=(B_Y)^WH^W$. In particular, $H^W$ is a subgroup of $\GammaO^-_{2m-2}(q)$ that is transitive on totally singular $(m-2)$-subspaces and does not contain $\Omega_{2m-2}^-(q)$. Since $m\geq 5$, Theorem~\ref{T9} implies that no such group exists.

\smallskip
\textsc{Case $\cU=\cN_2^+$.} Here $G=\GammaO_{2m}^+(q)$. We follow the case distinction in Table~\ref{T:OCasesParts}, and we note that in each case $q>2$.   

(f${}^{+}$1)~In this case, $m$ is even and  $q=4$. Furthermore, $G$ factorises as $G=G_U\,B$ where $G_U=N_2^{+}$ and $B=N_G(\SU_m(4))$. The case $\SU_m(4)\lhdeq H$ is recorded in Table~\ref{T:OPlusd} so suppose that $\SU_m(4)\not\le H$.
  Then $B=B_U\,H$.
Let $K=\F_{16}$ and $k=\F_4$. It follows from~\cite{Factns}*{p.\,70} that the $K$-subspace $KU$ spanned by $U\in\cN_2^{+}$ has $K$-dimension~2. Hence $KU$ lies
in $\cN_2(\SU_m(4))$ and 
%$(KU)^\perp$ is a nondegenerate  unitary $(m-2)$-subspace over $K$ 
is preserved by $B_U$. Therefore $H$ is a core-free subgroup of $B=N_G(\SU_m(4))$ which is transitive on $\cN_2(\SU_m(4))$, but there is no such subgroup by Theorem~\ref{T1}.

(f${}^{+}$2)~In this case $m=4$ and $q>2$.  We consider the factorisation $G=G_U\,B$ where $G_U=N_2^{+}$ and $B=N_G({}^\wedge\Omega_7(q)^I)$.  
Applying $\tau$ to the factorisation $\oG=N_2^{+}\,\oB$ and recalling that $\oG=\PGammaO_8^+(q)$, gives $\oG=(N_2^{+})^\tau\,(\oB)^\tau$ where $(N_2^{+})^\tau=M_2$ and
$(\oB)^\tau=N_{\oG}(\Omega_7(q)^I)^\tau=N_{\oG}(\Omega_7(q)^R)$. Thus we have a factorisation $(\oB)^\tau=(M_2\cap (\oB)^\tau)(\oH)^\tau$. Now $(\oB)^\tau=\oG_{\langle v\rangle}$ for some nonsingular vector $v$ and we let $W=v^\perp$. When $q$ is odd, $(\oB)^\tau$ fixes the decomposition $V=\langle v\rangle\perp W$ while when $q$ is even, $(\oB)^\tau$ preserves the chain $\langle v\rangle <W<V$. 
Following~\cite{Factns}*{3.6.1(b), pp.\,63--64} we see that $M_2\cap (\oB)^\tau$ fixes a pair $\{U_1,U_2\}$ of complementary totally singular $3$-subspaces of $W$.  Suppose first that $q$ is even. Then $\Omega_7(q)\cong\Sp_6(q)$ and $M_2\cap (\oB)^\tau$ is contained in the stabiliser in $\GammaSp_6(q)$ of a pair of maximal totally singular subspaces. However, \cite{Factns} then implies that $\Sp_6(q)\lhdeq (\oH)^\tau$ and so $\Omega_7(q)^I\lhdeq H$ as listed in Table~\ref{T:OPlusd}. Next suppose that $q$ is odd.  Then $M_2\cap (\oB)^\tau$ fixes the nonsingular $1$-subspace $W\cap U_1^\perp\cap U_2^\perp$. Thus $(\oH)^\tau$ acts transitively on the set $\cN_1^+$ for $W$ and so is given by Theorem~\ref{T4}(c).  
As $m=4$, we have $|\cN_2^{+}|=\frac{q^6(q^4-1)(q^3+1)}{2(q-1)}$ by \eqref{E:OPlus2}.  Letting $q=p^f$ we see that $|\oH^\tau|$ must be divisible by a primitive prime divisor $r$ of $p^{4f}-1$ (one always exists). Note that $r>4f$ and so does not divide the order of the first three groups listed in Table~\ref{T:Oc2}. 
Further, since $3^6$ divides $|\cN_2^{+}|$ when $q=3$, a similar argument
eliminates the last two cases given by Table~\ref{T:Oc2}. Thus we must have  $\Omega_7(q)^R\lhdeq (\oH)^\tau$ and hence ${}^\wedge\Omega_7(q)^I\lhdeq H$ as listed in Table~\ref{T:OPlusd}  along with the example ${}^\wedge\Omega_7(2)^I\lhdeq H$ already obtained when $q=2$.

\smallskip
\textsc{Case $\cU=\cN_2^-$.} We again follow the case distinction in Table~\ref{T:OCasesParts}.

(f${}^{-}$1)~In this case $m\ge4$ and $q=p^f\ge2$. We consider the factorisation $G=G_U\,P_m$ where $G_U=N_2^{-}$. Following~\cite{Factns}*{(3.6.2b), p.\,68}, we write $U\in\cN_2^-$ as
$U=\langle e_1+f_1,e_1+e_2+\kappa f_2\rangle$ where $x^2+x+\kappa$ is an irreducible quadratic over $\F_q$. Also, we may write $P_m=G_E$ where $E=\langle e_1,\dots,e_m\rangle$.
A direct calculation shows that $Y:=E\cap U^\perp=\langle e_3,\dots,e_m\rangle$. Clearly $G_E\cap G_U\le G_Y$ so $G_E\cap G_U\le G_E\cap G_Y$.
As $H\le G_E$ and $H$ is transitive on $\cN_2^-$, we have $G_E=(G_E\cap G_U)H$, that is to say, $P_m=(P_m\cap G_Y)H$.
Now $G_E=P_m$ is an extension of a $p$-group by a subgroup of $\GammaL_m(q)$. The homomorphism $\pi\colon G_E\to\GammaL_m(q)$ such that $\SL_m(q)\lhdeq\pi(G_E)$ maps the factorisation $P_m=(P_m\cap G_Y)H$
to $\GammaL_m(q)=P_{m-2}(\GammaL_{m}(q))\pi(H)$. One possibility is that $\SL_m(q)\le\pi(H)$. Here $H\le P_m$ and modulo its unipotent radical (the aforementioned $p$-group),
$\pi(H)$ contains $\SL_m(q)$ as stated in Theorem~\ref{T10}~(f). Suppose now that $\SL_m(q)\not\le\pi(H)$. Then by~\cite{Factns}*{pp.\,10, 13} we have $(m,q)=(5,2)$ and $\pi(H)\le 31\ldotp5$.
However, there are $155=31\cdot5$ subspaces of $(\F_2)^5$ of codimension~$2$. Thus $\pi(H)=31\ldotp5$, and $\GL_5(2)=P_3(\GL_5(2))\pi(H)$ is an exact factorisation.
A computation with {\sc Magma} shows that if $H$ satisfies
$\GO_{10}^+(2)=N_2^-\,H$ and $\pi(H)=31\ldotp5$, then the unipotent radical of $H$ has order $2^{10}$, and we have $H=[2^{10}]\ldotp31\ldotp5$ as in Table~\ref{T:OPlusd}.

(f${}^{-}$2)~In this case $m\ge4$ and $q=2$ or $4$, with $(m,q)\neq (4,2)$. We consider the factorisation $G=G_U\,B$ where $G_U=N_2^{+}$ and $B=N_G(\GL_m(q))$.  Write $B=G_{\{E,F\}}$ where $E=\langle e_1,\dots,e_m\rangle$ and $F=\langle f_1,\dots,f_m\rangle$. 
We view $B$ as $\GammaL(E)\ldotp2$ acting on $E$ where the $2$ acts as the inverse-transpose automorphism.
 If $\SL_m(q)\le H$, we obtain the example $\SL_m(q)\lhdeq H$ listed in Table~\ref{T:OPlusd}. Suppose now that $\SL_m(q)\not\le H$. We will argue that
this possibility does not arise.

Since $H\le B$ is transitive, we have a factorisation $B=B_U\,H$. Following~\cite{Factns}*{(3.6.2b), p.\,68}, we write $U\in\cN_2^-$ as
$U=\langle e_1+f_1,e_1+e_2+\kappa f_2\rangle$ where $x^2+x+\kappa$ is an irreducible quadratic over $\F_q$.
A direct calculation shows that $F\cap U^\perp=\langle f_3,\dots,f_m\rangle$ and $E_{m-2}:=E\cap U^\perp=\langle e_3,\dots,e_m\rangle$. Since $E':=(F\cap U^\perp)^\perp\cap E=\langle e_1,e_2\rangle$ and $F':=(E\cap U^\perp)^\perp\cap F=\langle f_1,f_2\rangle$,
it follows that $B_U=N_2^-\cap B=G_U\cap G_{\{E,F\}}$ preserves the set $\{E',F'\}$.
Therefore $B_U$ preserves the decomposition $E=E'\oplus E_{m-2}$. Since $B=B_U\,H$ and $B_U\le\textup{Stab}(E'\oplus E_{m-2})$, we have
$\GammaL(E)\ldotp2=\textup{Stab}(E'\oplus E_{m-2})\,H$. However, no such factorisation exists by~\cite{Factns}*{pp.\,10,\,12,\,13}. In summary, the case $\SL_m(q)\not\le H$ does not arise.

(f${}^{-}$3) In this case $m=4$ and $H\leqslant B=N_G({}^\wedge\Omega_7(q)^I)$. The possibility ${}^\wedge\Omega_7(q)^I\lhdeq H$ is listed in Table~\ref{T:OPlusd}. As noted in Remark~\ref{R:tri}, a triality automorphism $\tau$ of $\POmega_8^+(q)$ maps $\oB$ to $N_1$, and $N_2^-$ to an $\SU_4(q)$ subgroup. Thus applying $\tau$ to the factorisation $\oG= \oG_U\,\oB$ we get a factorisation $(\oG)^\tau=(\oG_U)^\tau\, N_1$, where $(\oG)^\tau$ contains $\POmega^+_8(q)$ as a normal subgroup. Moreover, $(\oH)^\tau\leqslant N_1$. Suppose that ${}^\wedge\Omega_7(q)^I\not\leqslant H$. Then by Lemma~\ref{lem:derived}, $\Omega_7(q)^R\not\leqslant (\oH)^\tau$.  Hence we obtain a core-free factorisation $N_1=(\oH)^\tau (N_1\cap (\oG_U)^\tau)$.  Let $v$ be the vector such that $N_1=(\oG)_{\langle v\rangle}$.  Let $K=\F_{q^2}$ and $k=\F_q$. Then following ~\cite{Factns}*{p.\,60, 64} we see that $N_1\cap (\oG_U)^\tau$ fixes the $K$-span of $v$, which is a $1$-subspace over $K$ that is nondegenerate with respect to the hermitian form preserved by $(\oG_U)^\tau$. Thus $N_1\cap (\oG_U)^\tau$ also fixes the 3-dimensional nondegenerate $K$-subspace $(Kv)^\perp$ and hence fixes a 6-dimensional $k$-subspace of $v^\perp$ of minus type. Thus $(\oH)^\tau$ acts transitively on the $\cN_6^-$ subspaces of the 7-dimensional $k$-subspace $v^\perp$. When $q$ is odd this also means that $(\oH)^\tau$ acts transitively on the $\cN_1^-$ subspaces of $v^\perp$. Thus when $q$ is even, $(\oH)^\tau$ is given by Theorem~\ref{T:Ooddqeven}(e) and when $q$ is odd it is given by Theorem~\ref{T4}(b). By~\eqref{E:OPlus2}, 
$$ 
|\cN_2^-|=\frac{q^6(q^3-1)(q^4-1)}{2(q+1)}
$$
and so $|H|$ must be divisible by a primitive prime divisor $r$ of $p^{4f}-1$ where $q=p^f$. Note that $r$ always exists and $r>4f$. Thus none of the possibilities given by Theorem~\ref{T4}(b) for $q$ odd has order divisible by $r$. Hence $q$ is even. However, the only possibilities given by Theorem~\ref{T:Ooddqeven}(e) that have order divisible by $r$ satisfy $\Omega_6^+(q)\lhdeq H^\tau$ when $q=2$ or 4. Thus applying triality we deduce that $\SL_4(q)\lhdeq H$ as listed in Table~\ref{T:OPlusd}.

(f${}^{-}$4) In this case $(m,q)=(4,2)$ and $H\leqslant N_G(\Alt_9)=\Alt_9$. A \textsc{Magma} calculation shows that $H=\Alt_9$ or $\Alt_8$ as recorded in Table ~\ref{T:OPlusd}.  We also note that the $\Alt_8$ provided here is different from the subgroup $\SL_4(2)$ already listed as the $\SL_4(2)$ stabilises two 4- subspaces while $\Alt_8$ only fixes one.

(f${}^{-}$5)  In this case $m=4$, $q=4$ or $16$, and $H\leqslant N_G(\Omega_8^-(q^{1/2}))$ where $\Omega_8^-(q^{1/2})$ is the image under triality of a subfield subgroup. We refer to Remark \ref{rem:pablo} for details about this case. In particular,  $\Omega_8^-(q^{1/2})$ is not transitive on $\cN_2^-$, the group $\GO_8^-(2)$ is transitive on $\cN_2^-$ when $q=2$, and the group $\GO_8^-(4).2$ is possibly transitive on $\cN_2^-$ when $q=4$. We record both these as possibilities in Table~\ref{T:OPlusd}. Suppose now that $\Omega_8^-(q)\not\leqslant H$. Then we have a factorisation $N_G(\Omega_8^-(q^{1/2}))=HX$, where $X$ is the stabiliser in $N_G(\GO_8^-(q^{1/2}))$ of a subspace in $\cN_2^-$. Moreover, $X$ is contained in the stabiliser $Y$ of an elliptic 4-dimensional subspace of the natural module for $\GO_8^-(q^{1/2})$. Thus we have a factorisation $N_G(\GO_8^-(q^{1/2}))=HY$. However, by \cite{Factns}, no such factorisation exists, a contradiction.

\smallskip
\textsc{Case $\cU=\cN_3^\eps.$} In this case $m=4$, $q$ is odd and  $B=N_G({}^\wedge\Omega_7(4)^I)$. As noted earlier, applying an appropriate triality automorphism $\tau$ maps $\oB$ to $N_1$ and $\oG_U$ to a $\mathcal{C}_4$ subgroup of type $\PSp_2(q)\otimes\PSp_4(q)$. (Note that $\PSp_2(q)\cong\PSL_2(q)\cong \POmega_3(q)$ and $\PSp_4(q)\cong \POmega_5(q)$). This gives us a factorisation $(\oG)^\tau= (\oG_U)^\tau\,N_1$, where $(\oG)^\tau$ contains $\POmega_8^+(q)$ as a normal subgroup. Let $v$ be the vector such that $N_1=(\oG)_{\langle v\rangle}$.
By~\cite{Factns}*{p.\,65--66} we see that $N_1\cap (\oG_U)^\tau$ fixes a nondegenerate 4-subspace $W$ containing $v$ and also fixes the 3-subspace $v^\perp\cap W$. Now $H\leqslant B$ and so $(\oH)^\tau\leqslant N_1$ and acts transitively on the set of nondegenerate 3-subspaces of $v^\perp$. Thus by Theorem~\ref{T4}, $\Omega_7(q)^R\lhdeq (\oH)^\tau$. Applying triality and using Lemma~\ref{lem:derived} we deduce that ${}^\wedge\Omega_7(q)^I\lhdeq H$, as in part (g).  
\end{proof}

\subsection{Actions on~\texorpdfstring{$\cP_m$}{} and~\texorpdfstring{$\cN_1$}{}}\label{SS:OPlus2}

First we treat the subspace family $\cP_m=\cP_m^+\cup \cP_m^-$,  the set of all maximal totally singular subspace of $(\F_q)^{2m}$.  Recall that $\cP_m^+, \cP_m^-$ are the two $\Omega_{2m}^+(q)$-orbits in $\cP_m$  and that $\GammaO_{2m}^{+}(q)$ fuses these orbits. A totally singular $(m-1)$-subspace is contained in exactly two totally singular $m$-subspaces -- one from each $\Omega_{2m}^+(q)$-orbit.

\begin{proposition}\label{P:Both}
  Suppose that  $H\le\GammaO_{2m}^{+}(q)$, where $m\geq 3$, such that  $\Omega_{2m}^{+}(q)\not\le H$. Let 
  $V=(\F_q)^{2m}$ and suppose that $H$ acts transitively on $\cP_m$. Then   
either $m=3$ and
  $H\le N_{\GammaO_6^+(q)}(\Omega_2^-(q^2))\leqslant N_4^-$,  or $H_1\lhdeq H$ where $(H_1,m,q)$ is given in
    Table~$\ref{T:OPlusBoth}$
\begin{table}[!ht]
  \caption{Proposition~\ref{P:Both} subgroups $H_1$ with  $H_1\lhdeq H\le\GammaO^+_{2m}(q)$ where $H$ is transitive on $\cP_m$. We exclude all   $H\le N_{\GammaO_6^+(q)}(\Omega_2^-(q^2))\leqslant N_4^-$ with $m=3$.}\label{T:OPlusBoth}
\begin{tabular}{rlcccccccc}
\toprule
$H_1$ &&$\Omega_{2m-1}(q)^R$ &$\Omega^{-}_{2m-2}(q)^R$ &$2^4\colon \C_5$&$2^4\colon\Alt_5$&$\Alt_5$&$\Sym_7$\\
\textup{$m$}&& $\ge3$&$\ge3$&$3$&$3$&$3$&$3$\\
\textup{$q$}&& \textup{all}& \textup{all}&$3$&$3$&$3$&$2$\\
\midrule
$H_1$ &&$2^6\colon\Alt_7$&$2^7\colon\Alt_8$& $\nonsplit{2}{\GO_8^+(2)}$&$\nonsplit{2}{\GO_7(2)}$&$\nonsplit{4}{\PSL_3(4)}$& $\nonsplit{2}{\PSL_3(4)}$&\\
\textup{$m$}&&$4$&$4$&$4$&$4$&$4$&$4$\\
\textup{$q$}&&$3$&$3$&$3$&$3$&$3$&$3$\\
\midrule 
$H_1$ &&$\Alt_8$&$\Alt_9$ &$\Sym_9$&\\
\textup{$m$}&&$4$&$4$&$4$&\\
\textup{$q$}&&$3$&$3$&$2$&\\
\bottomrule
\end{tabular}
\end{table}
\end{proposition}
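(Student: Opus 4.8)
The strategy is to reduce transitivity on $\cP_m=\cP_m^+\cup\cP_m^-$ to transitivity on one of the two orbits $\cP_m^\eps$ together with an extra condition, and then invoke Theorem~\ref{T10}(d). Suppose $H\le\GammaO_{2m}^+(q)$ is transitive on $\cP_m$ with $\Omega_{2m}^+(q)\not\le H$. Since the two $\Omega_{2m}^+(q)$-orbits $\cP_m^+,\cP_m^-$ are interchanged by $\GammaO_{2m}^+(q)$ and have equal size, the subgroup $H^+:=H\cap G$, where $G$ is the index-two subgroup of $\GammaO_{2m}^+(q)$ stabilising each orbit, has index at most $2$ in $H$; if $H\le G$ then $H$ cannot be transitive on $\cP_m$ (it stabilises $\cP_m^+$), so $|H:H^+|=2$ and $H^+$ is transitive on each of $\cP_m^+$ and $\cP_m^-$. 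Thus $H^+$ (hence $H^{+(\infty)}=H^{(\infty)}$) is one of the groups classified in Theorem~\ref{T10}(d): either $m=3$ and $H^+\le N_A(\Omega_2^-(q^2))\le N_4^-$, or $H_0\lhdeq H^+$ with $(H_0,m,q)$ as in Table~\ref{T:OPlusb}. Since $\Omega_{2m}^+(q)\not\le H$, also $\Omega_{2m}^+(q)\not\le H^+$, so these are genuinely the only options for $H^+$.

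\textbf{Assembling the list.} Next I would go through the entries of Table~\ref{T:OPlusb} and decide, for each $H_0$, whether the normaliser (inside the relevant $\GammaO_{2m}^+(q)$) of a subgroup $H^+\rhdeq H_0$ that is transitive on $\cP_m^\eps$ can actually be transitive on the full set $\cP_m$ — equivalently, whether $H_0$ (or a small overgroup of it sitting inside $G$) is normalised by an element of $\GammaO_{2m}^+(q)\setminus G$, i.e.\ an element swapping $\cP_m^+$ and $\cP_m^-$. For the generic infinite families $\Omega_{2m-1}(q)^R$ and $\Omega_{2m-2}^-(q)^R$ this is immediate: a reflection (an element of $\GO_{2m}^+(q)\setminus\SO_{2m}^+(q)$, or when $q$ is even an element of $\SO^+\setminus\Omega^+$) fixing the relevant nonsingular $1$-subspace or the relevant nondegenerate $2$-subspace of minus type normalises $H_0$ and swaps the two $\Omega_{2m}^+(q)$-orbits on $\cP_m$, so $N_{\GammaO_{2m}^+(q)}(H_0)$ is transitive on $\cP_m$; these give the first two columns of Table~\ref{T:OPlusBoth}. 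For $m=3$ the case $H^+\le N_A(\Omega_2^-(q^2))\le N_4^-$ persists essentially unchanged — this is why it is listed separately in the statement — and one checks (using the $\POmega_6^+(q)\cong\PSL_4(q)$ correspondence and Theorem~\ref{T:KleinSL4}, where $\cP_3^\pm\leftrightarrow$ $1$- and $3$-subspaces of the $4$-space $W$) that the relevant normaliser in $\GammaO_6^+(q)$ can swap the two classes. For $q\ge4$ and $m\ge4$ no further sporadic entries arise because Table~\ref{T:OPlusb} has none for those parameters, so the only remaining work is with the small cases.

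\textbf{The small cases.} The bulk of the genuinely case-by-case work is for $(m,q)=(3,2)$, $(3,3)$, $(4,2)$ and $(4,3)$, where Table~\ref{T:OPlusb} lists a long list of sporadic groups ($\Alt_5$, $\Alt_7$, $2^4\colon\C_5$, $2^4\colon\Alt_5$ for $(3,3)$; $\Alt_7$ for $(3,2)$; $\Alt_9$, $2^7:\Alt_8$, $2^6:\Alt_7$, $2^{1+6}_+\ldotp\Alt_{7,8}$, $\nonsplit{\cdot}{\PSL_3(4)}$, $\nonsplit{2}{\Omega_{7,8}^{+}(2)}$, $\Omega_7(2)$, $\nonsplit{2}{\Alt_{8,9}}$, $\Alt_{8,9}$ for $(4,3)$; $\Alt_9$ for $(4,2)$). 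The case $(4,3)$ is already handled by Lemma~\ref{lem:m4q3}(2), whose conclusion (i)--(iv) is exactly the $(4,3)$-portion of Table~\ref{T:OPlusBoth}; so I would simply cite it. For $(m,q)\in\{(3,2),(3,3),(4,2)\}$ I would run the corresponding \textsc{Magma} computation directly in $\GammaO_{2m}^+(q)$ — which is small ($\GammaO_6^+(2)=\SO_6^+(2)\cong\Sym_8$, $\GO_6^+(3)$, $\GammaO_8^+(2)=\SO_8^+(2)$) — enumerating the subgroups $H$ transitive on $\cP_m$ with $\Omega_{2m}^+(q)\not\le H$, and read off the minimal normal subgroups $H_1$: this produces the entries $\Alt_5$, $\Sym_7$ for $m=3$ (note $\Sym_7$ not $\Alt_7$, since the swapping element is even or odd according to the class), $2^4\colon\C_5$, $2^4\colon\Alt_5$ for $(3,3)$, and $\Sym_9$ for $(4,2)$ (again $\Sym_9$, an $\Alt_9$ together with a triality-compatible swapping element). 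I expect the only real subtlety — the main obstacle — to be bookkeeping of exactly which entries of Table~\ref{T:OPlusb} survive and in what form (e.g.\ $\Alt_7\rightsquigarrow\Sym_7$, $\Alt_9\rightsquigarrow\Sym_9$), since passing from "transitive on $\cP_m^\eps$" to "transitive on $\cP_m$" forces the presence of an orbit-swapping outer element and hence sometimes a strictly larger group; this is precisely the point where one cannot simply copy Table~\ref{T:OPlusb} but must recompute, and where the triality picture of Remark~\ref{R:tri} for $m=4$ ($\tau$ acting as the $3$-cycle $(\cP_4^+,\cP_4^-,\cP_1)$) is the conceptual tool that keeps the $(4,q)$ analysis organised.
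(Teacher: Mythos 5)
Your overall strategy is the paper's: pass to the index-two subgroup $H^{+}=H\cap G^{+}$ (where $G^{+}$ stabilises $\cP_m^{+}$), note that $H^{+}$ is transitive on both $\Omega_{2m}^{+}(q)$-orbits and hence is governed by Theorem~\ref{T10}(d), observe that $H_0$ is characteristic in $H^{+}$ and so normal in $H$, and then decide for each entry of Table~\ref{T:OPlusb} whether an orbit-swapping element of $\GammaO_{2m}^{+}(q)\setminus G^{+}$ normalises $H_0$. Your treatment of the two families $\Omega_{2m-1}(q)^R$ and $\Omega_{2m-2}^{-}(q)^R$, of the $m=3$ extension-field case, and of the small parameters via Lemma~\ref{lem:m4q3}(2) and computation, all match the paper.

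However, there is a genuine gap: you assert that for $m\ge4$ and $q\ge4$ ``Table~\ref{T:OPlusb} has none [no further entries] for those parameters,'' and you never address the two infinite families ${}^\wedge\Omega_7(q)^I$ and $\SU_4(q)$, which appear in Table~\ref{T:OPlusb} with $m=4$ and \emph{all} $q$. These must be dealt with for every $q$, and they are precisely the entries that do \emph{not} survive into Table~\ref{T:OPlusBoth}; ruling them out is not bookkeeping but requires an argument. The paper's argument is via triality (Remark~\ref{R:tri}): $\overline{{}^\wedge\Omega_7(q)^I}$ and $\overline{\SU_4(q)}$ are the triality images of $\overline{\Omega_7(q)^R}$ and $\overline{\Omega_6^{-}(q)^R}$ respectively, and since the latter two are transitive on both of $\cP_4^{+},\cP_4^{-}$ but \emph{not} on $\cP_1$ (a stabiliser of a nonsingular vector, or of an elliptic codimension-two subspace, visibly has two orbits on singular $1$-subspaces), and since $\tau$ cyclically permutes $(\cP_1,\cP_4^{+},\cP_4^{-})$, each of ${}^\wedge\Omega_7(q)^I$ and $\SU_4(q)$ is transitive on $\cP_1$ and on exactly one of $\cP_4^{\pm}$. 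Hence no element normalising such an $H_0$ can swap $\cP_4^{+}$ and $\cP_4^{-}$ (it would force $H_0$ to be transitive on both), and these families contribute no examples. Without this step your proof would, at best, leave these two families unresolved and, at worst, wrongly carry them into the final table by analogy with the reducible families.
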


\begin{proof}
  Let $\cU=\cP_m$,  and let $G=\GammaO^+_{2m}(q)$, so $G$ fuses the two 
  $\Omega_{2m}^{+}(q)$-orbits $\cP_m^+$ and $\cP_m^-$ in $\cP_m$, and
 the setwise stabiliser $G^+$ of $\cP_m^+$  has index 2 in $G$.
  Clearly $\{\cP_m^{-},\cP_m^{+}\}$ is a system of imprimitivity for $G$,
  and hence for $H$.
  As $H$ is transitive on $\cP_m$, $H^+:=H\cap G^+$ is an index 2 subgroup of $H$ that fixes each of
 $\cP_m^+$ and $\cP_m^-$ setwise, and is transitive on both.
  Now $\Omega_{2m}^+(q)\not\leqslant H^+$, by our assumptions on $H$, and so $H^+$ is given by Theorem~\ref{T10}(d). 
  One possibility is that $m=3$ and $H^{+}\le N_A(\Omega_2^-(q^2))\leqslant N_4^-$ where $A$ is the stabiliser in $\GammaO_6^+(q)$ of $\cP_3^+$ and so this becomes $N_{\GammaO_6^+(q)}(\Omega_2^-(q^2))\leqslant N_4^-$. Suppose now that this is not the case. Then  by 
  Theorem~\ref{T10}(d), $H^+$
  contains a normal subgroup $H_0$ given by Table~\ref{T:OPlusb}.

  In all but one case $H_0$ is perfect and is equal to the
soluble residual $(H^+)^{(\infty)}$ of $H^{+}$. In particular $H_0$ is a characteristic subgroup of $H^+$ and hence is normal in $H$.  
  In the exceptional case  $H_0=2^4\colon 5$ in $\Omega_6^+(3)$, and again 
  $H_0$ is a characteristic subgroup of $H^+$ and is normal in $H$.  
  Note that $H_0$ in this proof refers to the subgroup $H_0$ listed
  in Table~\ref{T:OPlusb}. The subgroup $H_1$ listed in
  Table~\ref{T:OPlusBoth} contains $H_0$ but may be larger,
  such as $H_1=\langle H_0,h\rangle$ where $h$ is described below.

  The subgroups $H$ determined by Table~\ref{T:OPlusb} are transitive
  on $\cP^{+}_m$ \emph{or} $\cP^{-}_m$, but we require $H$ to be
  transitive on \emph{both}. This holds if and only if there is an
  element  $h\in  G\setminus G^{+}$ such that $h$ normalises $H_0$. 
 Such an element $h$ interchanges $\cP^{+}_m$ and $\cP^{-}_m$, and
  conjugates the stabiliser in $H$ of a subspace of $\cP_m^{+}$
  to the stabiliser of a subspace of $\cP_m^{-}$.

  {\sc Case } $H_0=\Omega_{2m-1}(q)^R$. Here $H_0$ stabilises a nonsingular $1$-subspace. Suppose first that $q$ is even. Then there is a unique $\Omega_{2m}^+(q)$-orbit on nonsingular $1$-subspaces and hence a unique $\Omega_{2m}^+(q)$-conjugacy class of subgroups $H_0$. Hence $H_0$ is normalised by some element $h\in \GO_{2m}^+(q)\backslash \Omega_{2m}^+(q)$. As discussed above, $h$ interchanges $\cP_m^+$ and $\cP_m^-$, and in this case $H_0$ itself is transitive on both $\cP_m^+$ and $\cP_m^-$, so $\langle H_0,h\rangle$ acts transitively on $\cP_m$. When $q$ is odd there are two $\Omega_{2m}^+(q)$-orbits on nonsingular $1$-subspaces and hence two $\Omega_{2m}^+(q)$-conjugacy classes of subgroups $H_0$. These orbits and conjugacy classes are fixed setwise by $\GO_{2m}^+(q)$ but fused in $\CO_{2m}^+(q)$. Hence $H_0$ is normalised by some element $h\in\GO_{2m}^+(q)\backslash\SO_{2m}^+(q)$. Again, $h$ interchanges $\cP_m^+$ and $\cP_m^-$ and $\langle H_0,h\rangle$ is transitive on $\cP_m$, and we record $H_1=H_0$ in Table~\ref{T:OPlusBoth}.

  {\sc Case } $H_0=\Omega_{2m-2}^-(q)^R$. Here there is a unique $\Omega_{2m}^+(q)$-orbit on elliptic subspaces of codimension two and hence a unique $\Omega_{2m}^+(q)$-conjugacy class of subgroups $H_0$.   Hence $H_0$ is normalised by an element of $\GO_{2m}^+(q)\backslash \SO_{2m}^+(q)$, which  interchanges $\cP_m^+$ and $\cP_m^-$ and so $N_G(H_0)$ is transitive on $\cP_m$. We record $H_1=H_0$ in Table~\ref{T:OPlusBoth}.
  
  {\sc Case } $m=4$ and $H_0=\Omega_7(q)^I$.  Here $\overline{H_0}$ is the image under a triality automorphism of $\overline{\Omega_7(q)^R}$. We saw in the first above that $\Omega_7(q)^R$ is transitive on both $\cP_4^+$ and $\cP_4^-$. Moreover,  the stabiliser of a nonsingular vector $\langle u\rangle$ is not transitive on $\cP_1$ as there are singular vectors in $u^\perp$ and $V\backslash u^\perp$. Thus $\Omega_7(q)^R$ is not transitive on $\cP_1$.  As explained in Remark~\ref {R:tri}, a triality automorphism induces a 3-cycle $(\cP_1,\cP_4^+,\cP_4^-)$, and it follows that $H_0$ is transitive on $\cP_1$ and is transitive on exactly one of $\cP_4^+, \cP_4^-$. Thus we do not obtain groups transitive on $\cP_4$ in this case.

  {\sc Case } $H_0=\SU_4(q)$.  Here $\overline{H_0}$ is the image under a triality automorphism of $\overline{\Omega_6^-(q)^R}$. We saw in the second above that $\Omega_6^-(q)^R$ is transitive on both $\cP_4^+$ and $\cP_4^-$. Moreover,  the stabiliser of an elliptic subspace $U$ of codimension two is not transitive on $\cP_1$ as there are singular vectors in $U$ and $V\backslash U$. Thus $\Omega_6^-(q)^R$ is not transitive on $\cP_1$.  Arguing as in the previous case we deduce that there are no groups transitive on $\cP_4$ in this case.

In all other cases $m\in\{3,4\}$ and $q\in\{2,3\}$. 
If $(m,q)=(4,3)$ then the groups occurring in Table~\ref{T:OPlusBoth} are precisely those identified in Lemma~\ref{lem:m4q3}~(2). The remaining cases follow from a \textsc{Magma} calculation informed by Theorem~\ref{T:KleinSL4}(b).
\end{proof}

Now we consider the families $\cN_k$ where both $k$ and $q$ are odd. Recall that in this situation 
$\cN_k=\cN_k^+\cup \cN_k^-$, where $\cN_k^+, \cN_k^-$ are the two $\Omega_{2m}^+(q)$-orbits in $\cN_k$, and that
the group $\CO_{2m}^+(q)$ interchanges $\cN_k^+$ and $\cN_k^-$ and hence is transitive on $\cN_k$.

\begin{proposition}\label{P2}
  Suppose that  $H\le\GammaO_{2m}^{+}(q)$, where $m\geq 3$ and $q$ is odd, such that  $\Omega_{2m}^{+}(q)\not\le H$. Let 
  $V=(\F_q)^{2m}$ and suppose that $H$ acts transitively on the set $\cU=\cN_k$ for some odd $k\leq m$. Then   
either 
  \begin{enumerate}[{\rm (a)}]
  \item $\cU=\cN_1$, $H\leqslant P_m$ fixes a totally singular $m$-subspace $W$ 
 and, modulo
    the unipotent radical, $H$ induces a subgroup of
    $\GammaL_m(q)$ which is transitive  on $1$-subspaces; or
  \item $\cU=\cN_1$, and $H_1\lhdeq H$ where $(H_1, m, q)$  is given in
    Table~$\ref{T:OPlusBothQKOdd}$; or
\begin{table}[!ht]
  \caption{Proposition~\ref{P2}(b) subgroups $H_1$ with $H_1\lhdeq H\le\GammaO^+_{2m}(q)$
    where $H$ is transitive on $\cN_1$. We exclude cases when
    $H\le P_m$.}\label{T:OPlusBothQKOdd}
\begin{tabular}{rlcccccc}
\toprule
$H_1$ &&$\SL_m(q)$ &$\SU_m(q)$&$\Sp_m(q)$&$\Omega_9(q)$&$\nonsplit{2}{\Omega_7(q)}$&$\nonsplit{2}{\Omega_8^{-}(q_0)}$\\
\textup{$m$}&& $\ge3$&$2a\ge4$&$2a\ge4$&$8$&$4$&$4$\\
\textup{$q$}&& \textup{odd}&\textup{odd}&\textup{odd}&\textup{odd}&\textup{odd}&\textup{$q_0^2$ odd}\\
\bottomrule
\end{tabular}
\end{table}
  \item $\cU=\cN_3$, $m=4$, and $\nonsplit{2}{\Omega_7(q)^I}\lhdeq H$.
  \end{enumerate}
\end{proposition}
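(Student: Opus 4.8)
The plan is to reduce the statement to the already-established results on the two $\Omega^+_{2m}(q)$-orbits $\cN_k^+$ and $\cN_k^-$, exactly as in the proof of Proposition~\ref{P1}. Since $q$ and $k$ are both odd, $\GammaO^+_{2m}(q)$ acts on $\cU=\cN_k=\cN_k^+\cup\cN_k^-$ with the two orbits $\cN_k^\pm$ forming a block system; write $G^+$ for the index-two subgroup stabilising each block. If $H\le\GammaO^+_{2m}(q)$ is transitive on $\cU$ then $H^+:=H\cap G^+$ has index two in $H$ and is transitive on each of $\cN_k^+$ and $\cN_k^-$; moreover $\Omega^+_{2m}(q)\not\le H^+$ by hypothesis. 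Hence $H^+$ is one of the groups classified in Theorem~\ref{T10}. First I would run through Theorem~\ref{T10}(g) and the relevant parts of Theorem~\ref{T10}(e),(f) (recalling that $m\ge3$, $k$ odd, so the pertinent families are $\cN_1^\eps$ and, when $m=4$, $\cN_3^\eps$), together with the small-dimensional input from Theorem~\ref{T:KleinSL4}(c) when $m=3$ and Lemma~\ref{lem:m4q3}(3) when $(m,q)=(4,3)$, to produce the list of candidate normal subgroups $H_0\lhdeq H^+$.

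The second step is to decide, for each candidate $H_0$, whether it actually extends to a subgroup of $\GammaO^+_{2m}(q)$ transitive on the whole of $\cU$. This is a normaliser computation: $H$ is transitive on $\cN_k$ precisely when there is an element $h\in\GammaO^+_{2m}(q)\setminus G^+$ normalising $H_0$ (equivalently, $N_{\GammaO^+_{2m}(q)}(H_0)\not\le G^+$), since such an $h$ swaps $\cN_k^+$ and $\cN_k^-$. For the generic families this is handled by conjugacy-class counting using~\cite{KL}: for $\SU_m(q)$ one invokes \cite{KL}*{Proposition~4.3.18} (a single $\Omega_{2m}^+(q)$-class, so its normaliser meets $\CO^+_{2m}(q)\setminus\GO^+_{2m}(q)$, which lies outside $G^+$); for $\SL_m(q)$ and $\Sp_m(q)$ and $\Omega_9(q)$ one argues similarly that there is a unique class, or a pair of classes fused by a similarity outside $G^+$, so the normaliser in $\GammaO^+_{2m}(q)$ is transitive on $\cN_k=\cN_k^+\cup\cN_k^-$. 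For the $m=4$ entries $\nonsplit{2}{\Omega_7(q)}$ and $\nonsplit{2}{\Omega_8^-(q_0)}$ the same class-count is read off \cite{BHRD}*{Table~8.50}. The case $H_0=H\le P_m$ in part~(a) is stable: the parabolic $P_m$ is normalised by the graph-type element when $k=1$ (the complement $U=\langle v\rangle$ and its image under $h$ both lie in general position relative to $W$), so transitivity on $\cN_1=\cN_1^+\cup\cN_1^-$ is inherited, and conversely a subgroup of $P_m$ transitive on $\cN_1$ induces a subgroup of $\GammaL_m(q)$ transitive on $1$-subspaces by the argument in the proof of Theorem~\ref{T10}(e).

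For part~(c), when $k=3$ necessarily $m=4$, and Theorem~\ref{T10}(g) forces ${}^\wedge\Omega_7(q)^I\lhdeq H^+$, i.e.\ $\nonsplit{2}{\Omega_7(q)}\lhdeq H^+$ (as $q$ is odd); since $\Omega_7(q)^I$ has four $L$-classes permuted by triality while $\Omega_7(q)^R$ has two, one checks that the normaliser in $\GammaO^+_8(q)$ of a fixed copy $\Omega_7(q)^I$ contains an element outside $G^+$, giving transitivity on $\cN_3$. Finally I would dispose of the small cases: $m=3$ via Theorem~\ref{T:KleinSL4}(c) (the only $H^+$ there containing $\SL_3(q)$ gives $\SL_m(q)$ in Table~\ref{T:OPlusBothQKOdd}, possibly after checking that a subgroup transitive on just one orbit $\cN_1^\eps$ — such as the $\Alt_5$-type subgroups when $q=3$ — does \emph{not} extend, mirroring the remark in the proof of Proposition~\ref{P1} and Theorem~\ref{T:KleinPSL2q2}), and $(m,q)=(4,3)$ directly from Lemma~\ref{lem:m4q3}(3).

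The main obstacle I anticipate is the second step: matching each candidate $H_0$ from Theorem~\ref{T10} against the correct count of $\Omega^+_{2m}(q)$-conjugacy classes and then tracking how these classes are permuted by the outer automorphisms lying in $\GO^+_{2m}(q)$, $\SO^+_{2m}(q)$, $\CO^+_{2m}(q)$ and (for $m=4$) the triality element — deciding in each instance whether the normaliser escapes the index-two block stabiliser $G^+$. In particular the $\SU_m(q)\le\Omega^+_{2m}(q)$ embedding (only for $m$ even) and the two $m=4$ exceptional entries need careful bookkeeping with \cite{KL} and \cite{BHRD}, and one must be alert to the phenomenon, already visible in Theorem~\ref{T:KleinPSL2q2} and Proposition~\ref{P1}, that a group transitive on a single orbit $\cN_k^\eps$ may fail to be normalised by any class-swapping element and hence contribute no example for $\cN_k$.
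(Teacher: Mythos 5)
Your overall strategy---pass to the index-two subgroup $H^+=H\cap G^+$, which is transitive on both $\Omega^+_{2m}(q)$-orbits $\cN_k^\pm$, read off $H^+$ from Theorem~\ref{T10} (with Lemma~\ref{lem:m4q3}(3) for $(m,q)=(4,3)$), and then decide by conjugacy-class and normaliser arguments whether an orbit-swapping element exists---is exactly the paper's, and your treatment of the table entries, of part~(c), and of the small cases matches the paper's proof in substance.

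There is, however, one genuine gap, in your handling of the parabolic alternative of Theorem~\ref{T10}(e). That theorem only tells you that $H^+$ is contained in $P_m=G_W$ for some totally singular $m$-subspace $W$; it says nothing about $H$ itself. You write ``the case $H_0=H\le P_m$ is stable'', silently replacing $H^+$ by $H$, so your argument never excludes a transitive $H$ with $H^+\le P_m$ but $H\not\le P_m$---and if such an $H$ contained none of the groups in Table~\ref{T:OPlusBothQKOdd}, the dichotomy between conclusions (a) and (b) would fail. The paper closes this as follows: since $H^+$ induces on $W$ a subgroup of $\GammaL_m(q)$ transitive on $1$-subspaces, it acts irreducibly on $W$ and (dually) on $V/W$, so $H^+$ fixes either only $W$ or exactly the pair $\{W,W'\}$ of complementary totally singular $m$-subspaces; as $N_G(H^+)\geq H$ permutes this fixed set, either $H$ fixes $W$ (giving case~(a)) or $H$ stabilises the decomposition $V=W\oplus W'$, hence lies in $N_G(\SL_m(q))$, and the analysis of case $(\delta)$ of the proof of \cite{Reg}*{Lemma~4.5} then forces one of $\SL_m(q)$, $\Sp_m(q)$, $\SU_m(q)$ to be normal in $H^+$, returning you to Table~\ref{T:OPlusBothQKOdd}. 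You need this (or an equivalent) argument to make your case analysis exhaustive.
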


\begin{proof}
  Let $\cU=\cN_k$,  and let $G=\GammaO^+_{2m}(q)$, so $G$ fuses the two 
  $\Omega_{2m}^{+}(q)$-orbits $\cN_k^+$ and $\cN_k^-$ in $\cN_k$, and
 the setwise stabiliser $G^+$ of $\cN_k^+$  has index 2 in $G$.
  Clearly $\{\cN_k^{-},\cN_k^{+}\}$ is a system of imprimitivity for $G$,
  and hence for $H$.
  As $H$ is transitive on $\cN_k$, $H^+:=H\cap G^+$ is an index 2 subgroup of $H$ that
  fixes $\cN_k^+$ and $\cN_k^-$ setwise and is transitive on both, and $H=\langle H^{+},h\rangle$ for any
  $h\in H\setminus H^+$ with $h^2\in H^{+}$.
  Now $\Omega_{2m}^+(q)\not\leqslant H^+$, by our assumptions on $H$, and so $H^+$ is given by
  Theorem~\ref{T10}. 
In particular, $k=1$ or $(m,k)=(4,3)$.
  
  Suppose first that $(m,k)=(4,3)$. Then by Theorem~\ref{T10}~(g) we have $\nonsplit{2}{\Omega_7(q)^I}\lhdeq H^{+}$, and moreover $\nonsplit{2}{\Omega_7(q)^I}$ is transitive on $\cN_3^+$.   By~\cite{BHRD}*{Table~8.50, p.\,402}, the group $\Omega_8^{+}(q)$ has four conjugacy classes of subgroups isomorphic to
  $\nonsplit{2}{\Omega_7(q)}$ and by \cite{K87} these
  are fused by $\GO_8^{+}(q)$.   Hence there is a unique conjugacy
  class of subgroups of $\GO_8^{+}(q)$ isomorphic to $\nonsplit{2}{\Omega_7(q)}$. Thus there exists $h$ in $\CO_8^{+}(q)\setminus\GO_8^{+}(q)$
  normalising $\nonsplit{2}{\Omega_7(q)^I}$, and since $h\notin\GO_8^+(q)$ it interchanges $\cN_3^+$ and $\cN_3^-$. Thus $\nonsplit{2}{\Omega_7(q)^I}$ is also transitive on $\cN_1^-$ and  $\langle \nonsplit{2}{\Omega_7(q)^I},h\rangle$ is transitive on $\cN_1$. Since $\nonsplit{2}{\Omega_7(q)^I}$ is characteristic in $H^+$,  part  (c) holds.

We assume from now on that $k=1$.  If $(m,q)=(4,3)$ then the transitive subgroups $H$ are those given by Lemma~\ref{lem:m4q3}(3). Thus we may in addition assume that $(m,q)\neq(4,3)$. Then by Theorem~\ref{T10}(e) either $H^+$ fixes a totally singular $m$-subspace $W$ and $H^+$ induces a subgroup of $\GammaL_m(q)$ that is transitive on the set of $1$-subspaces of $W$, or  $H^+$ contains a normal subgroup $H_0$ as follows (recall that $q$ is odd):
  \begin{enumerate}[(i)]
  \item  $\SL_m(q)$;
  \item  $\Sp_m(q)$ or $\SU_m(q)$ with $m$ even;
  \item  $\nonsplit{2}{\Omega_7(q)}$ with $m=4$;
  \item$\nonsplit{2}{\Omega_8^-(q^{1/2})}$ with $m=4$;
  \item $\Omega_9(q)$ with $m=8$.
\end{enumerate}  
  
  Suppose first that $H^+$ fixes a totally singular $m$-subspace $W$ and induces a subgroup of $\GammaL_m(q)$ that is transitive on the set of $1$-subspaces of $W$. Thus $H^+$ acts irreducibly on $W$ and hence also on $V/W$. Since the action of $H^+$ on $V/W$ is dual to the action of $H^+$ on $W$, it follows that either $W$ is the only totally singular $m$-subspace fixed by $H^+$ or $H^+$ fixes precisely two totally singular $m$-subspaces, namely $W$ and a complementary space $W'$. Since $N_G(H^+)$ fixes setwise the set of totally singular $m$-subspaces fixed by $H^+$ it follows that either $H$ fixes $W$, or $H$ fixes the decomposition $V=W\oplus W'$. In the first case $H$ is as in part (a) of Proposition~\ref{P2}. In the second case $H$, and hence also $H^+$, is contained in the normaliser in $G$ of $\SL_m(q)$. Then as seen in the analysis of case ($\delta$) of the proof of \cite{Reg}*{Lemma~4.5} the group $H^+$ has a normal subgroup $H_0$ as listed in cases (i) or (ii) above. We deal with these below.

 It remains to consider the cases (i)--(v). In all such instances $H_0$ is characteristic in $H^+$ and hence is normal in $H$. Thus to complete the proof it suffices to show that examples arise in each case.

  For case (i), we see from \cite{Factns}*{Table~1} that the stabiliser  $K$ in $\GO^+_{2m}(q)$ of a pair of complementary totally singular $m$-subspaces is transitive on $\cN_1^+$. Moreover, $\SL_m(q)$ is characteristic in $K$. The group $\CO_{2m}^+(q)$ fixes the $\GO_{2m}^+(q)$-conjugacy class of such subgroups and so there exists $h\in\CO^+_{2m}(q)\backslash\GO_{2m}^+(q)$ that normalises $K$. Such an element $h$ interchanges $\cN_1^+$ and $\cN_1^-$, and it follows that $K$ is transitive on $\cN_1^-$ and $\langle K,h\rangle$ is transitive on $\cN_1$. Thus examples arise in this case.
  
 For case (ii) we see from \cite{Factns}*{Table~1} that the normaliser $K$ of $\SU_m(q)$ in $\GO^+_{2m}(q)$ is transitive on $\cN_1^+$ when $m$ is even. By~\cite{KL}*{Proposition~4.3.18} there is only one conjugacy class in $\GO^+_{2m}(q)$ of such subgroups and so arguing as in the previous case we again find an example.  Next suppose that $\Sp_m(q)$ is normal in $H^+$. Then by the proof of \cite{Reg}*{Lemma~4.5}, the group $\Sp_{m}(q)$ is contained in the stabiliser $K$ of a pair of totally singular $m$-subspaces. As $K$ contains $\GL_m(q)$, there is only one $\GO^+_{2m}(q)$-conjugacy class, and hence only one $\CO_{2m}^+(q)$-conjugacy class, of such subgroups $\Sp_m(q)$. 
Thus there exists $h\in\CO^+_{2m}(q)\backslash\GO_{2m}^+(q)$ that normalises the subgroup $H_0=\Sp_m(q)$, and such an element $h$ interchanges $\cN_1^+$ and $\cN_1^-$. So if $H^+$ is transitive on $\cN_1^+$ it is also transitive on $\cN_1^-$ and  $\langle H^+,h\rangle$ is transitive on $\cN_1$,  and we again get examples.

Finally we deal with cases (iii)-(v). If $m=4$ and $H_0=\nonsplit{2}{\Omega_7(q)}^I$ then we have already seen in the case $(m,k)=(4,3)$ that there is a unique conjugacy class of subgroups of $\GO_8^+(q)$ isomorphic to $\nonsplit{2}{\Omega_7(q)}^I$.  Since $\nonsplit{2}{\Omega_7(q)}^I$ is transitive on $\cN_1^+$ then the same argument as before yields an example here as well.
If $m=4$ and $H_0=\nonsplit{2}{\Omega_8^-(q^{1/2})}$  then by \cite{BHRD}*{Table~8.50, p.\,402}, the group $\Omega_8^{+}(q)$ has four conjugacy classes of subgroups isomorphic to  $\nonsplit{2}{\Omega_8^-(q^{1/2})}$ and by \cite{K87} these are all fused in $\GO^+_8(q)$. Hence the argument above yields an example here.
Finally suppose that $m=8$.  Then it is seen in the last line of the the proof of the proposition of \cite{Factns}*{Appendix~3} that $N_G(\Omega_9(q))$ is transitive on $\cN_1$.
\end{proof}

\section{Transitive generalised quadrangles}\label{S:ExceptIsos2}

A \emph{generalised quadrangle} $\mathcal{Q}=(\cP,\mathcal{L})$ of
\emph{order} $(s,t)$ is a point-line incidence
relation satisfying:
\begin{enumerate}
\item[(GQ1)] each $\ell\in\mathcal{L}$ is incident with $s+1$ points
  $p\in\cP$,
  \item[(GQ2)] each $p\in\cP$ is incident with $t+1$ lines
  $\ell\in\mathcal{L}$, and 
  \item[(GQ3)] for each point $p$ not on a line $\ell$, there is
    a unique point $p'\in\cP$ and a unique line $\ell'\in\mathcal{L}$, such that
    $p$ is on $\ell'$, and $p'$ is on $\ell$ and $\ell'$.
\end{enumerate}
To avoid degeneracies we henceforth assume that $\mathcal{Q}$ is \emph{thick}, that is,  $s,t>1$.
It well known that $|\cP|=(s+1)(st+1)$, $|\mathcal{L}|=(t+1)(st+1)$ and
$\sqrt{t}\le s\le t$, see~\cite{BLS}*{Lemma~2.1}. 

A beautiful theorem of Buekenhout and Lef\'evre~\cite{BF} says that
if the point set $\cP$ and line set
$\mathcal{L}$ of a generalised quadrangle $\mathcal{Q}$ are chosen from the
points and lines of a projective space $\textup{PG}(d,q)$, then $\mathcal{Q}$
is a \emph{classical generalised quadrangle}. By this we mean that
$(\cP,\mathcal{L})=(\cP_1,\cP_2)$ where $\cP_i$ is the set of
totally singular $i$-subspaces obtained from the natural module
for $\Sp_4(q)$,
$\GO_5(q)$,
$\GO_6^{-}(q)$,
$\GU_4(q)$, or
$\GU_5(q)$, and incidence is containment. The five thick classical generalised quadrangles are listed in Table~\ref{tab:GQ} along with their automorphism groups. 

\begin{table}[!ht]\renewcommand{\arraystretch}{1.2}
\caption{The classical generalised quadrangles $\mathcal{Q}$. Let $q_i=q^i+1$.}\label{tab:GQ}
\begin{tabular}{rccccc}
\toprule
$\mathcal{Q}$&$W_3(q)$&$Q_4(q)$&$Q^{-}_5(q)$&$H_3(q^2)$&$H_4(q^2)$\\
$(s,t)$&$(q,q)$&$(q,q)$&$(q,q^2)$&$(q^2,q)$&$(q^2,q^3)$\\
$\Aut(\mathcal{Q})$&$\textup{P}\Gamma\Sp_4(q)$&$\textup{P}\Gamma\textup{O}_5(q)$&$\textup{P}\Gamma\textup{O}_6^{-}(q)$&$\textup{P}\Gamma\textup{U}_4(q)$&$\textup{P}\Gamma\textup{U}_5(q)$\\ 
\bottomrule
\end{tabular}
\end{table}

We note that the axioms for a generalised quadrangle are symmetric and so we can interchange the role of points and lines, that is, if $\mathcal{Q}=(\cP,\mathcal{L})$ is a generalised quadrangle then $(\mathcal{L},\cP)$ is also a generalised quadrangle and is referred to as the \emph{dual} of $\mathcal{Q}$.
The dual of $W_3(q)$ is isomorphic to $Q_4(q)$ and the dual of $Q_5^-(q)$ is isomorphic to $H_3(q^2)$. The dual of $H_4(q^2)$ is not usually regarded as being a classical generalised quadrangle.

From our results we are able to determine all the point- or line-transitive 
subgroups of $\Aut(\mathcal{Q})$ where $\mathcal{Q}$ is a classical generalised
quadrangle. In each case $\soc(\Aut(\mathcal{Q}))$ is transitive on the
set $\cP$ of points and the set $\mathcal{L}$ of lines of $\mathcal{Q}$, so it suffices to determine the
transitive subgroups not containing $\soc(\Aut(\mathcal{Q}))$. 

 The groups that act regularly on the set of points of a classical generalised quadrangle were determined in \cite{BamG} and the only examples are $3^{1+2}_{\pm}$ acting on $Q_5^-(2)$ and $\C_{513}\rtimes \C_9$ acting on $Q_5^-(8)$. (Here $3_{\pm}^{1+2}$ denote extraspecial groups of order
$27$ where $3_{+}^{1+2}$ has exponent~$3$ and $3_{-}^{1+2}$ has exponent~$9$.)

\begin{theorem}\label{C:GQ}
  Let $\mathcal{Q}=(\cP,\mathcal{L})$ be a classical generalised quadrangle as
  in Table~\textup{\ref{tab:GQ}}. Suppose $H\le\Aut(\mathcal{Q})$ where
  $\soc(\Aut(\mathcal{Q}))\not\leq H$.
  \begin{enumerate}[{\rm (a)}]
    \item If $H\le\Aut(\mathcal{Q})$ is transitive on the points $\cP$, then
    there exists $H_0\lhdeq H$ where $(\mathcal{Q},H_0,q)$ is given in
    Table~$\ref{tab:transGQP1}$.
    \item If $H\le\Aut(\mathcal{Q})$ is transitive on the lines $\mathcal{L}$, then
    there exists $H_0\lhdeq H$ where $(\mathcal{Q},H_0,q)$ is given in
    Table~$\ref{tab:transGQP2}$.
  \end{enumerate}
\end{theorem}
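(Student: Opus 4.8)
\textbf{Proof plan for Theorem~\ref{C:GQ}.}

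The plan is to translate the point- and line-transitivity conditions on $H\le\Aut(\mathcal{Q})$ into transitivity conditions on subspace families handled by the theorems already proved, and then simply read off the answer. Recall from the discussion preceding the statement that each classical generalised quadrangle $\mathcal{Q}$ has $(\cP,\mathcal{L})=(\cP_1,\cP_2)$, the totally singular $1$- and $2$-subspaces of the natural module for one of $\Sp_4(q)$, $\GO_5(q)$, $\GO_6^-(q)$, $\GU_4(q)$, or $\GU_5(q)$, with $\Aut(\mathcal{Q})=\PGammaSp_4(q)$, $\PGammaO_5(q)$, $\PGammaO_6^-(q)$, $\PGammaU_4(q)$, or $\PGammaU_5(q)$ respectively, and in each case $\soc(\Aut(\mathcal{Q}))$ is transitive on both $\cP$ and $\mathcal{L}$. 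So for part~(a): given $H\le\Aut(\mathcal{Q})$ transitive on $\cP=\cP_1$ with $\soc(\Aut(\mathcal{Q}))\not\le H$, let $\widehat H$ be the full preimage of $H$ in the relevant semisimilarity group $\GammaSp_4(q)$, $\GammaO_5(q)$, $\GammaO_6^-(q)$, $\GammaU_4(q)$, or $\GammaU_5(q)$. Then $\widehat H$ is transitive on $\cP_1$ and, by Lemma~\ref{lem:derived} applied to the quasisimple classical group in question (or a direct argument when the relevant group is not quasisimple, e.g. small cases), $\widehat H$ does not contain the corresponding quasisimple classical group. Hence the possibilities for $\widehat H$, and therefore for $H_0\lhdeq H$, are exactly those furnished by Theorem~\ref{T3}(a) for $W_3(q)$, Theorem~\ref{T:KleinSp4}(b) for $Q_4(q)$, Theorem~\ref{T:KleinSU4}(a) for $Q_5^-(q)$, Theorem~\ref{T1}(b) with $n=4$ for $H_3(q^2)$, and Theorem~\ref{T1}(b) with $n=5$ for $H_4(q^2)$. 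Assembling these lists into a single table gives Table~\ref{tab:transGQP1}.

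For part~(b), transitivity on $\mathcal{L}=\cP_2$ is handled the same way but with the roles of $\cP_1$ and $\cP_2$ swapped. For $W_3(q)=W_3(q)$ one uses that when $q$ is even $\PSp_4(q)$ admits a graph automorphism $\tau$ interchanging $\cP_1$ and $\cP_2$ (so line-transitive subgroups are images under $\tau$ of point-transitive ones, cf.\ the argument in case $\cU=\cP_m$ of the proof of Theorem~\ref{T3}), while for $q$ odd $\cP_2$ for $\Sp_4(q)$ corresponds under $\POmega_5(q)\cong\PSp_4(q)$ to a family handled by Theorem~\ref{T:KleinSp4}(a); indeed one can instead just invoke Theorem~\ref{T3}(b) (the $\cU=\cP_m$ case with $m=2$) directly, since $\cP_2=\cP_m$ here. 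For $Q_4(q)$ (dual to $W_3(q)$) the line set corresponds to $\cP_1$ of $\Sp_4(q)$ via duality, so Theorem~\ref{T3}(a) applies. For $Q_5^-(q)$, whose dual is $H_3(q^2)$, transitivity on lines $\cP_2$ of $\GO_6^-(q)$ corresponds to transitivity on $\cP_1$ of $\GU_4(q)$ (using $\POmega_6^-(q)\cong\PSU_4(q)$ and the correspondence of subspace stabilisers used throughout Subsection~\ref{SS:O6-}); this is Theorem~\ref{T1}(b) with $n=4$, equivalently the $\cU=\cP_2$ case of Theorem~\ref{T:KleinSU4}(b). For $H_3(q^2)=H_3(q^2)$ the lines are $\cP_2=\cP_m$ of $\GU_4(q)$, so Theorem~\ref{T1}(c) with $m=2$ applies. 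Finally for $H_4(q^2)$ the lines are $\cP_2$ of $\GU_5(q)$; since $5$ is odd there is no duality or graph automorphism to exploit, so one must appeal to the relevant case of Theorem~\ref{T1} (here $\cN_1$ or a $\cP_2$-action of $\GammaU_5$), noting that by Witt's Lemma $\SU_5(q)$ is transitive on $\cP_2$ and the factorisation machinery of \cite{Factns} pins down the proper transitive subgroups. Collecting these yields Table~\ref{tab:transGQP2}.

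The main obstacle I expect is purely bookkeeping: correctly tracking the several small exceptional isomorphisms and outer automorphisms (the $\POmega_5\cong\PSp_4$ interchange of $\cP_1\leftrightarrow\cP_2$, the $\POmega_6^-\cong\PSU_4$ correspondences, the triality-free status of the odd-dimensional cases) so that each subspace family of $\mathcal{Q}$ is matched with the correct part of the correct earlier theorem, and then verifying that every group appearing in the cited lists genuinely is transitive on the relevant family (so that the tables list examples, not merely a superset). Since each individual transitivity claim either was already checked in the proof of the cited theorem or follows from a short order comparison (using the formulas for $|\cP_1|$, $|\cP_2|$ in the unitary, symplectic and orthogonal cases recorded earlier), no new idea is needed; the work is to present the case division by quadrangle $\mathcal{Q}$ cleanly and to state the resulting tables. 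A handful of genuinely small cases (e.g.\ $q=2,3$ for the sporadic-type examples such as $3^{1+2}_\pm$ on $Q_5^-(2)$ and $\C_{513}\rtimes\C_9$ on $Q_5^-(8)$, recovering the regular subgroups of \cite{BamG} as special cases) will be dispatched by the \textsc{Magma} computations already invoked in the proofs of Theorems~\ref{T1}, \ref{T3}, \ref{T:KleinSp4}, \ref{T:KleinSU4}.
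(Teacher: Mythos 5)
Your proposal is correct and follows essentially the same route as the paper: the point and line sets of each classical quadrangle are identified with the appropriate families $\cP_1$ or $\cP_2$ (via the exceptional isomorphisms and duality), the entries are read off from Theorems~\ref{T1}, \ref{T:KleinSU4}, \ref{T3}, \ref{T:KleinSp4} and~\ref{T:Ooddqeven} after passing to projective versions, and the small sporadic cases ($3^{1+2}_{\pm}$, $(\C_3)^3$, $\C_{513}\rtimes\C_9$) are settled by the \textsc{Magma} computations already performed in those proofs. Only trivial bookkeeping slips remain (for instance, point-transitivity on $Q_4(q)$ with $q$ odd is Theorem~\ref{T:KleinSp4}(a), the $\cU=\cP_1$ case, not part~(b)), and like the paper you would need to make explicit the check that groups such as $3^{1+2}_{\pm}$ are not partially absorbed by scalars when projectivising.
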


\begin{table}[!ht]\renewcommand{\arraystretch}{1.2}
  \caption{Subgroups $H_0\lhdeq H\le\Aut(\mathcal{Q})$ where $H$ is transitive
   on the set $\cP$ of points of the classical generalised
   quadrangle~$\mathcal{Q}$.}\label{tab:transGQP1}
\begin{tabular}{ccccccccc}
\toprule
$\mathcal{Q}$&$W_3(q)$ &$W_3(q)$&$W_3(q)$&$W_3(q)$&$Q_4(q)$ &$Q_4(q)$&$Q_4(q)$&$H_3(q^2)$\\
$H_0$ &$\PSp_2(q^2)$&$2^4\ldotp\C_5$&$2^4\ldotp\Alt_5$&$\Alt_5$&$\Sp_2(q^2)$&$2^4\ldotp F_{20}$&$2^4\ldotp\Alt_5$&$\PSL_3(4)$\\
\textup{$q$}&\textup{all}&$3$&$3$&$3$&\textup{even}&$3$&$3$&$3$\\ \midrule
$\mathcal{Q}$&$Q^{-}_5(q)$&$Q^{-}_5(q)$&$Q^{-}_5(q)$&$Q^{-}_5(q)$&$Q^{-}_5(q)$&$Q^{-}_5(q)$&$Q^{-}_5(q)$ \\
$H_0$ &$\SU_3(q)$&$3_\pm^{1+2}$&$(\C_3)^3$&$\PSL_2(7)$&$\PSL_3(4)$&$\nonsplit{3}{\Alt_7}$&$\C_{513}\rtimes\C_9$\\
\textup{$q$}&\textup{all}&$2$&$2$&$3$&$3$&$5$&$8$\\
\bottomrule
\end{tabular}
\end{table}

\begin{table}[!ht]
  \caption{Subgroups $H_0\lhdeq H\le\Aut(\mathcal{Q})$ where $H$ is transitive
    on the set~$\mathcal{L}$ of lines of the classical generalised
    quadrangle~$\mathcal{Q}$.}\label{tab:transGQP2}
\begin{tabular}{ccccccccc}
\toprule
$\mathcal{Q}$&$W_3(q)$ &$W_3(q)$&$W_3(q)$&$Q_4(q)$&$Q_4(q)$&$Q_4(q)$&$Q_4(q)$&$Q^{-}_5(q)$\\
$H_0$ &$\Omega_4^{-}(q)$&$2^4\ldotp F_{20}$&$2^4\ldotp\Alt_5$&$\Omega_4^{-}(q)$&$2^4\ldotp \C_5$&$2^4\ldotp\Alt_5$&$\Alt_5$&$\PSL_3(4)$\\
\textup{$q$}&\textup{even}&$3$&$3$&\textup{all}&$3$&$3$&$3$&$3$\\ \midrule
$\mathcal{Q}$&$H_3(q^2)$&$H_3(q^2)$&$H_3(q^2)$&$H_3(q^2)$&$H_3(q^2)$&$H_3(q^2)$&$H_3(q^2)$ \\
$H_0$ &$\SU_3(q)$&$3_\pm^{1+2}$&$(\C_3)^3$&$\PSL_2(7)$&$\PSL_3(4)$&$\nonsplit{3}{\Alt_7}$&$\C_{513}\rtimes \C_9$\\
\textup{$q$}&\textup{all}&$2$&$2$&$3$&$3$&$5$&$8$\\
\bottomrule
\end{tabular}
\end{table}

\begin{proof}
  The entries in Tables~\ref{tab:transGQP1} and ~\ref{tab:transGQP2}  are obtained from
  Theorems~\ref{T1}, \ref{T:KleinSU4}, \ref{T3}, \ref{T:KleinSp4}, and~\ref{T:Ooddqeven},   and by taking $\cU$ to be the appropriate set of totally singular subspaces and taking the projective versions of the matrix groups listed. For $Q_5^-(8)$ and $H_3(8^2)$ we consult the proofs of Theorem~\ref{T:KleinSU4} and~\ref{T1}, respectively, to obtain the extra information provided for the projective groups involved when $\C_{19}\lhdeq H$. Also for $Q^-_5(2)$ and $H_3(2^2)$ we use \textsc{Magma} to see that the groups $3^{1+2}_{\pm}$ and $(\C_3)^3$ remain in the projective setting and are not `quotiented out' by a subgroup of order three.
\end{proof}

\begin{remark}\label{rem:dual}
We note that since $W_3(q)$ and $Q_4(q)$ are dual, any group that is transitive on the set of points of $W_3(q)$ should also appear as a group that is transitive on the set of lines of $Q_4(q)$ and vice versa. Similarly,  any group that is transitive on the set of lines of $W_3(q)$ will also appear as a group that is transitive on the set of points of $Q_4(q)$ and vice versa. Since $\PSp_2(q^2)\cong\PSL_2(q^2)\cong\Omega_4^-(q)$, and $\Omega_2(q^2)\cong \PSL_2(q^2)\cong\Omega_4^-(q)$, Tables~\ref{tab:transGQP1} and~\ref{tab:transGQP2} satisfy this condition. The same duality occurs between $H_3(q^2)$ and $Q_5^-(q)$.

We also note that for $q$ even,   the groups $\PSL_2(q^2)$ acting transitively on the set of points of $W_3(q)$ are distinct from the groups $\Omega_4^-(q)$ that are transitive on the set of lines of $W_3(q)$. They are interchanged by a graph automorphism of $\PSp_4(q)$ \cite{Asch}*{Section~14}. Similarly, the groups $\Sp_2(q^2)$ that are transitive on the set of points of $Q_4(q)$ for $q$ even are distinct from the groups $\Omega_4^-(q)$ that acts transitively on the set of lines.
\end{remark}

Recall that $(p,\ell)\in \cP\times\mathcal{L}$ is a \emph{flag} if $p$ lies on $\ell$, and
an~\emph{antiflag} otherwise.  The sets $\mathcal{F}$ of flags and the set
$\mathcal{A}$ of antiflags have cardinalities:
\begin{align*}
  |\mathcal{F}|&=|\cP|(t+1)=(s+1)(t+1)(st+1),\quad\textup{and}\\
  |\mathcal{A}|&=(|\cP|-(s+1))|\mathcal{L}|=st(s+1)(t+1)(st+1).
\end{align*}

We note that if $G$ acts flag-transitively or antiflag-transitively on a generalised quadrangle $\mathcal{Q}$ then it also acts flag-transitively or antiflag-transitively on its dual.  It is well known that the actions of $\PSp_4(q)$, $\PSU_4(q)$ and $\PSU_5(q)$ on totally isotropic 1-subspaces have rank~3, see for example \cites{KLrank3,L1,L2}. Thus if $G$ is one of these three groups and $p$ is a point of the corresponding generalised quadrangle $\mathcal{Q}$ then $G_p$ has two orbits on the remaining points: the points collinear with $p$ and the points not collinear with $p$. Since each pair of points lies on at most one line, it follows that $G_p$ acts transitively on the set of lines of $\mathcal{Q}$ incident with $p$ and so $G$ acts flag-transitively on $\mathcal{Q}$.
Kantor \cite{Kantor2} conjectured that, up to duality, the only flag-transitive generalised quadrangles are the classical ones and the unique generalised quadrangles of order $(3,5)$ and $(15,17)$. This conjecture is still wide-open but recently, Bamberg, Li and Swartz \cite{BLS}  showed that, up to duality, the only antiflag-transitive generalised quadrangles are the classical ones and the unique generalised quadrangle of order $(3,5)$.

\begin{theorem}\label{T:flag}
  Let $\mathcal{Q}$ be a classical generalised quadrangle as
  in Table~\textup{\ref{tab:GQ}} and let $H\leqslant \Aut(\mathcal{Q})$ with $\soc(\Aut(\mathcal{Q}))\not\leqslant H$.
  \begin{enumerate}[{\rm (a)}]
  \item If $H$ is flag-transitive,
    then either
    \begin{enumerate}[{\rm (i)}]
    \item   $\mathcal{Q}$ is $W_3(3)$ or $Q_4(3)$, and
    $H\in\{2^4\ldotp F_{20}, 2^4\ldotp\Alt_5, 2^4\ldotp\Sym_5\}$; or
    \item $\mathcal{Q}$ is $H_3(3^2)$ or $Q_5^-(3)$, and $H=\PSL_3(4).2$ (two possibilities) or $\PSL_3(4).2^2$, where in the first case the extension is by a field or graph automorphism.
    \end{enumerate}
    Moreover, all six groups listed above are flag-transitive.
  \item The group $H$ is not antiflag-transitive.
    \end{enumerate}
\end{theorem}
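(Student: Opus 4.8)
\textbf{Proof proposal for Theorem~\ref{T:flag}.}

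The plan is to combine the point-transitive classification from Theorem~\ref{C:GQ}(a) (equivalently its dual form for lines) with the additional requirement that the point-stabiliser act transitively on the lines through a point. First I would observe that, by the symmetry of the generalised quadrangle axioms and the remarks on duality preceding the statement, flag-transitivity of $H$ on $\mathcal{Q}$ is equivalent to flag-transitivity on the dual of $\mathcal{Q}$; this reduces the five classical quadrangles to the dual pairs $\{W_3(q),Q_4(q)\}$, $\{Q_5^-(q),H_3(q^2)\}$, and the self-dual-up-to-nothing case $H_4(q^2)$. A flag-transitive $H$ is in particular point-transitive, so $H_0\lhdeq H$ with $(\mathcal{Q},H_0,q)$ drawn from Table~\ref{tab:transGQP1}; for each such candidate I would check whether the full group $H$ (an almost simple or affine-type overgroup of $H_0$ inside $\Aut(\mathcal{Q})$) can have a point stabiliser that is transitive on the $t+1$ lines through that point. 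Equivalently, since $|\mathcal{F}|=|\cP|(t+1)$, the index $|H:H_{(p,\ell)}|$ must equal $|\mathcal{F}|$, i.e. $(t+1)$ must divide $|H_p|$ and $H_p$ must act transitively on the line-pencil.

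The main case analysis goes family by family. For $\mathcal{Q}=H_4(q^2)$, the socle is $\PSU_5(q)$ and Table~\ref{tab:transGQP1} contains no entry, so there is nothing to do; similarly no entry for $H_4$ on the line side via Theorem~\ref{T1} for $n=5$ acting on $\cP_2$ — but I should double-check this against Remark~\ref{rem:KL} and Theorem~\ref{T1}(b),(c) for $n=5$, recording that the only transitive proper subgroup of $\GammaU_5(q)$ on $\cP_1$ or $\cP_2$ occurs only for tiny $q$ and does not survive. For $W_3(q)$ and $Q_4(q)$: the entries $\PSp_2(q^2)\cong\Omega_4^-(q)$ and $\Sp_2(q^2)$ act transitively on one of $\cP_1,\cP_2$ but, being the stabilisers (up to the duality) of the "other" type of object, their point stabilisers are reducible and I expect them not to be transitive on line-pencils except possibly in small cases; the genuinely interesting candidates are the affine-type groups $2^4\ldotp X$ with $q=3$ and $X\le\Sym_5$, and $\Alt_5$ with $q=3$. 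A direct order/\textsc{Magma} check shows that $2^4\ldotp F_{20}$, $2^4\ldotp\Alt_5$, $2^4\ldotp\Sym_5$ are flag-transitive while $\Alt_5$ and $2^4\ldotp C_5$ are not (the line-pencil has $4$ lines and one needs a transitive action of the point-stabiliser on those four), giving case (a)(i). For $Q_5^-(q)$ and $H_3(q^2)$ with socle $\PSU_4(q)$: the entries are $\SU_3(q)$ (always), $3_\pm^{1+2}$ and $(C_3)^3$ for $q=2$, $\PSL_2(7)$ and $\PSL_3(4)$ for $q=3$, $\nonsplit{3}{\Alt_7}$ for $q=5$, and $C_{513}\rtimes C_9$ for $q=8$. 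Here $\SU_3(q)$ yields the rank-$3$ situation discussed before the statement and is always flag-transitive for the socle but we are excluding the socle; a short argument shows no proper overgroup-free candidate with $\SU_3(q)\lhdeq H$ gives new flag-transitive examples beyond what lies in the socle, so these drop out. The surviving genuinely exceptional family is $q=3$: one must determine which almost simple groups with socle $\PSL_3(4)$ sitting inside $\PGammaU_4(3)$ (or $\PGammaO_6^-(3)$) are flag-transitive; checking $\PSL_3(4)$, $\PSL_3(4).2$ in its various forms, $\PSL_3(4).3$, $\PSL_3(4).6$, $\PSL_3(4).2^2$ against the divisibility $(q^2+1)\mid |H_p|$ and transitivity on the pencil, one finds exactly $\PSL_3(4).2$ (two classes, field or graph), and $\PSL_3(4).2^2$, giving (a)(ii); the remaining small-$q$ exceptional groups ($3_\pm^{1+2}$, $(C_3)^3$, $\nonsplit{3}{\Alt_7}$, $C_{513}\rtimes C_9$ and $\PSL_2(7)$) fail flag-transitivity by a \textsc{Magma} check or an order count on the pencil.

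For part (b), antiflag-transitivity: I would invoke the theorem of Bamberg, Li and Swartz~\cite{BLS} quoted just before the statement, which classifies all antiflag-transitive thick generalised quadrangles as the classical ones together with the unique quadrangle of order $(3,5)$. Since $\mathcal{Q}$ is assumed classical, antiflag-transitivity of $H$ forces $H$ to act on a classical $\mathcal{Q}$; then I would argue that any antiflag-transitive group is in particular transitive on antiflags and hence (using $|\mathcal{A}|=st|\mathcal{F}|$) highly transitive in a way that forces $\soc(\Aut(\mathcal{Q}))\le H$ — concretely, one checks that none of the point-transitive proper subgroups from Table~\ref{tab:transGQP1} has order divisible by $st(s+1)(t+1)(st+1)/\gcd$-factors, because $|\mathcal{A}|$ is much larger and in every exceptional entry $|H|$ is too small (the affine-type and small-exceptional groups) or $H$ has the wrong orbit structure on antiflags (the $\PSL_3(4)$ and $\SU_3(q)$ cases, where the rank-$3$ structure of the socle shows the antiflags split into at least two $H$-orbits). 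This contradiction with $\soc(\Aut(\mathcal{Q}))\not\le H$ finishes (b).

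The main obstacle I anticipate is part (a)(ii): pinning down \emph{exactly} which extensions of $\PSL_3(4)$ inside $\PGammaU_4(3)$ are flag-transitive, since $\PSL_3(4)$ has a rich outer automorphism group $\Out(\PSL_3(4))\cong D_{12}$ and several of the index-two and index-three overgroups embed in $\Aut(\mathcal{Q})$; distinguishing the two classes of $\PSL_3(4).2$ (field versus graph automorphism extension) and verifying that $\PSL_3(4).3$ is \emph{not} flag-transitive while $\PSL_3(4).2^2$ \emph{is} requires careful bookkeeping with \cite{BHRD}*{Tables 8.10, 8.34} and a \textsc{Magma} computation of the line-pencil action of the relevant point stabilisers. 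The rest of the argument is a finite collection of divisibility checks and small \textsc{Magma} verifications, all routine given Theorem~\ref{C:GQ} and \cite{BLS}.
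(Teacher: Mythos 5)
Your overall framework is sound, but you decompose flag-transitivity differently from the paper, and that choice is where your argument develops a real gap. The paper's proof uses only the implication ``flag-transitive $\Rightarrow$ transitive on $\cP$ \emph{and} on $\mathcal{L}$'' and then simply intersects Tables~\ref{tab:transGQP1} and~\ref{tab:transGQP2} for the same $\mathcal{Q}$ (using Remark~\ref{rem:dual}); this intersection instantly kills every infinite family, leaving only $2^4\ldotp F_{20}$, $2^4\ldotp\Alt_5$ for $W_3(3)$, $Q_4(3)$ and $\PSL_3(4)$ for $H_3(3^2)$, $Q_5^-(3)$, after which everything is a finite order count plus \textsc{Magma}. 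You instead use the equivalence ``flag-transitive $\Leftrightarrow$ point-transitive and $H_p$ transitive on the pencil at $p$,'' starting from Table~\ref{tab:transGQP1} alone. That forces you to dispose of the infinite families $\PSp_2(q^2)$, $\Sp_2(q^2)$ and $\SU_3(q)$ for all $q$, and here you only write ``I expect them not to be transitive on line-pencils'' and ``a short argument shows \dots these drop out.'' That is the gap: no such argument is supplied, and the pencil at a point is not obviously intransitive for these stabilisers. The fix is essentially to re-import the paper's observation: if $H_p$ were transitive on the pencil then $H$ would be line-transitive, and Theorem~\ref{C:GQ}(b) (Table~\ref{tab:transGQP2}) shows that $N(\SU_3(q))$ is not line-transitive on $Q_5^-(q)$, nor $N(\PSp_2(q^2))$ on $W_3(q)$ for $q$ odd, etc. Once you make that explicit, your route and the paper's coincide.

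Two smaller points. In part (b) your appeal to Bamberg--Li--Swartz is a non sequitur: their theorem classifies the \emph{quadrangles} admitting an antiflag-transitive group, and the classical ones are on that list, so it says nothing about which subgroups of $\Aut(\mathcal{Q})$ are antiflag-transitive. What actually does the work — and is the paper's entire argument — is that an antiflag-transitive $H$ is point- and line-transitive, hence on the short list, and then $|\mathcal{A}|\nmid|H|$ in every case ($1440\nmid 1920$ and $30240\nmid 80640$). Your fallback divisibility argument is exactly this, so (b) is fine once the BLS citation is dropped. Finally, in case (a)(i) you should be testing $\Sym_5$ rather than $\Alt_5$ (the table entry $H_0=\Alt_5$ only asserts $\Alt_5\lhdeq H$, and $\Alt_5$ itself has order $60<40\cdot 4=|\mathcal{F}|$, indeed $40\nmid 60$ so it is not even point-transitive); the conclusion is unchanged since $|\Sym_5|=120<160$.
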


\begin{proof}
  A flag- or antiflag-transitive subgroup
  $H\le\Aut(\mathcal{Q})$ must be transitive on both $\cP$ and $\mathcal{L}$,
  and hence must occur in both Tables~\ref{tab:transGQP1}
  and~\ref{tab:transGQP2} (for the same generalised quadrangle $\mathcal{Q}$). Thus noting Remark~\ref{rem:dual} we see that if $\soc(\Aut(\mathcal{Q}))\not\leqslant H$, then one of the following occurs:
  \begin{enumerate}
  \item $\mathcal{Q}=W_3(3)$ and either $2^4\ldotp F_{20}\lhdeq H$ or $2^4\ldotp\Alt_5\lhdeq H$;
  \item $\mathcal{Q}=Q_4(3)$ and either $2^4\ldotp F_{20}\lhdeq H$ or $2^4\ldotp\Alt_5\lhdeq H$;
\item $\mathcal{Q}=H_3(3^2)$ and $\PSL_3(4)\lhdeq H$;
\item $\mathcal{Q}=Q_5^-(3)$ and  $\PSL_3(4)\lhdeq H$.
  \end{enumerate}

  Suppose first that $\mathcal{Q}=W_3(3)$ or $Q_4(3)$.  The possibilities for $H$ are $2^4\ldotp F_{20}$, $2^4\ldotp\Alt_5$ and $2^4\ldotp\Sym_5$. Now $(|\mathcal{F}|,|\mathcal{A}|)=(160,1440)$ and 1440 does not divide $|2^4\ldotp\Sym_5|$ and so none of these groups are antiflag-transitive. A \textsc{Magma} calculation shows that all three groups are flag-transitive, as stated in part (a)(i).
  
Next suppose that $\mathcal{Q}=H_3(3^2)$ or $Q_5^-(3)$. Then $\PSL_3(4)\lhdeq H\leqslant \PSL_3(4).2^2$ and $(|\mathcal{F}|,|\mathcal{A}|)=(1120,30240)$. Order arguments again eliminate the possibility of an antiflag-transitive group.  A \textsc{Magma} calculation shows  $\PSL_3(4)$ is not transitive on lines of $H_3(3^2)$ but $\PSL_3(4).2$ is, where the extension is by a field or graph automorphism, but not a graph-field automorphism. Moreover, both are flag-transitive.
\end{proof}

The proof of Theorem~\ref{T:flag} actually shows something quite remarkable: any group that is both point-transitive and line-transitive is also flag-transitive.

\begin{corollary}
 Let $\mathcal{Q}$ be a classical generalised quadrangle as
  in Table~\textup{\ref{tab:GQ}} and let $H\leqslant \Aut(\mathcal{Q})$ be both point-transitive and 
  line-transitive. Then $H$ is flag-transitive.
\end{corollary}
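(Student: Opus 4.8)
The plan is to leverage the explicit classifications already established in Theorem~\ref{C:GQ} and Theorem~\ref{T:flag} rather than re-proving anything geometric. First I would dispose of the case $\soc(\Aut(\mathcal{Q}))\le H$: here $\soc(\Aut(\mathcal{Q}))$ is one of $\PSp_4(q)$, $\PSU_4(q)$ or $\PSU_5(q)$ (the socles arising from the five generalised quadrangles in Table~\ref{tab:GQ}, bearing in mind the dualities $W_3(q)\cong Q_4(q)^{\mathrm{dual}}$ and $Q_5^-(q)\cong H_3(q^2)^{\mathrm{dual}}$), and as noted in the discussion preceding Theorem~\ref{T:flag} each of these groups acts with rank~$3$ on the points of $\mathcal{Q}$, so the point-stabiliser is transitive on the lines through a point; hence $\soc(\Aut(\mathcal{Q}))$ is flag-transitive, and so is any overgroup. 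Thus it suffices to treat $H$ with $\soc(\Aut(\mathcal{Q}))\not\le H$.

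Next, for such $H$, being simultaneously point-transitive and line-transitive forces $H$ (and, correspondingly, its normal subgroup $H_0$) to appear in \emph{both} Table~\ref{tab:transGQP1} and Table~\ref{tab:transGQP2} for the same quadrangle $\mathcal{Q}$. I would run through the two tables and, using Remark~\ref{rem:dual} to match groups on dual quadrangles (so that e.g. $\PSp_2(q^2)$ on points of $W_3(q)$ corresponds to $\Omega_4^-(q)$ on lines of $Q_4(q)$, which are genuinely different groups for $q$ even and must not be conflated), extract exactly the list displayed inside the proof of Theorem~\ref{T:flag}: $\mathcal{Q}\in\{W_3(3),Q_4(3)\}$ with $2^4{\cdot}F_{20}\lhdeq H$ or $2^4{\cdot}\Alt_5\lhdeq H$, and $\mathcal{Q}\in\{H_3(3^2),Q_5^-(3)\}$ with $\PSL_3(4)\lhdeq H$. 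This is precisely the intersection computation already carried out at the start of the proof of Theorem~\ref{T:flag}, so I would simply cite it.

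Finally, for each surviving possibility I would invoke the second half of Theorem~\ref{T:flag}(a): in case (i) the \textsc{Magma} calculation there shows all of $2^4{\cdot}F_{20}$, $2^4{\cdot}\Alt_5$, $2^4{\cdot}\Sym_5$ are flag-transitive, so any $H$ with $2^4{\cdot}F_{20}\lhdeq H$ or $2^4{\cdot}\Alt_5\lhdeq H$ (which then lies between one of these and $2^4{\cdot}\Sym_5$) is flag-transitive; in case (ii), $\PSL_3(4)$ itself is \emph{not} line-transitive on $H_3(3^2)$, so line-transitivity forces $\PSL_3(4).2\le H$ (with the field or graph extension), and Theorem~\ref{T:flag}(a)(ii) records that $\PSL_3(4).2$ and $\PSL_3(4).2^2$ are all flag-transitive. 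In every case $H$ is flag-transitive, completing the proof. The only mild subtlety — and the single point I would be careful about — is the bookkeeping with dualities and with the distinction between the several groups that happen to be abstractly isomorphic (the various $\PSL_2(q^2)$, $\Omega_4^-(q)$, $\Omega_2(q^2)$ incarnations), so that the cross-referencing between Tables~\ref{tab:transGQP1} and~\ref{tab:transGQP2} is done correctly; but all of this is already handled in Remark~\ref{rem:dual} and the proof of Theorem~\ref{T:flag}, so the argument reduces to citing those.

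\begin{proof}
If $\soc(\Aut(\mathcal{Q}))\le H$ then, as observed before Theorem~\ref{T:flag}, $\soc(\Aut(\mathcal{Q}))$ acts with rank $3$ on the points of $\mathcal{Q}$ and hence is flag-transitive; so $H$ is flag-transitive. Assume then that $\soc(\Aut(\mathcal{Q}))\not\le H$. Since $H$ is both point-transitive and line-transitive, $H$ occurs in both Table~\ref{tab:transGQP1} and Table~\ref{tab:transGQP2} (for the same $\mathcal{Q}$), and so, exactly as at the start of the proof of Theorem~\ref{T:flag} (using Remark~\ref{rem:dual} to match groups on dual quadrangles), one of the following holds: $\mathcal{Q}\in\{W_3(3),Q_4(3)\}$ with $2^4{\cdot}F_{20}\lhdeq H$ or $2^4{\cdot}\Alt_5\lhdeq H$; or $\mathcal{Q}\in\{H_3(3^2),Q_5^-(3)\}$ with $\PSL_3(4)\lhdeq H$. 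In the first case $H$ lies between $2^4{\cdot}F_{20}$ (or $2^4{\cdot}\Alt_5$) and $2^4{\cdot}\Sym_5$, and all three of these groups are flag-transitive by Theorem~\ref{T:flag}(a)(i); hence $H$ is flag-transitive. In the second case, since $\PSL_3(4)$ is not line-transitive on $H_3(3^2)$ (see the proof of Theorem~\ref{T:flag}), line-transitivity of $H$ forces $\PSL_3(4).2\le H\le \PSL_3(4).2^2$ with the extension by a field or graph automorphism, and by Theorem~\ref{T:flag}(a)(ii) every such $H$ is flag-transitive. In all cases $H$ is flag-transitive.
\end{proof}
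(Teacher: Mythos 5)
Your proposal is correct and follows essentially the same route as the paper, which states the corollary as a direct consequence of the proof of Theorem~\ref{T:flag} (the intersection of Tables~\ref{tab:transGQP1} and~\ref{tab:transGQP2} plus the flag-transitivity verifications there), with the socle-containing case handled by the rank-3 observation preceding that theorem. You have merely written out explicitly what the paper leaves implicit, and your bookkeeping with the dualities and with the $\PSL_3(4)$ versus $\PSL_3(4).2$ subtlety matches the paper's.
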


The full automorphism group of a classical generalised quadrangle acts primitively on the set of points. Moreover, there has been considerable recent effort devoted to classifying the generalised quadrangles with a group of automorphisms that acts primitively on the set of points: it is known that, if $H\leqslant\Aut(\mathcal{Q})$ acts primitively on both the set of points and the set of lines then it must be an almost simple group \cite{GQSym}. If $H$ acts primitively on points and transitively on lines then it has a unique minimal normal subgroup $N$ \cite{BPP3}, and if the group $N$ is elementary abelian then $\mathcal{Q}$ must be one of the unique generalised quadrangles of order $(3,5)$ or $(15,17)$ \cite{BGPP}. Detailed information about a subgroup $H$ of automorphisms which is assumed to act primitively on points (without any other assumptions),  is given in \cite{BPP2}. We shed light on these questions, for the classical examples, by determining all groups that act primitively on the set of points of a classical generalised quadrangle.

\begin{theorem}
 Let $\mathcal{Q}$ be a classical generalised quadrangle as
  in Table~\textup{\ref{tab:GQ}}, and suppose
that $H\leqslant \Aut(\mathcal{Q})$ acts primitively on the set of points of $\mathcal{Q}$ and $\soc(\Aut(\mathcal{Q}))\not\leqslant H$. Then $\mathcal{Q}=H_3(3^2)$ and $\PSL_3(4)\lhdeq H$.
\end{theorem}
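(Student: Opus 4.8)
The plan is to leverage the classification of point-transitive subgroups already established in Theorem~\ref{C:GQ}(a), namely Table~\ref{tab:transGQP1}, and simply check which of the listed groups $H$ (with $\soc(\Aut(\mathcal{Q}))\not\le H$) are primitive on the point set $\cP$. Since a primitive group is in particular transitive, every primitive example must occur in Table~\ref{tab:transGQP1}, so this is a finite case-check. First I would recall from the proof of Theorem~\ref{C:GQ} that each listed $H$ has a normal subgroup $H_0$, and observe that primitivity of $H$ forces the point stabiliser $H_p$ to be maximal in $H$; in particular $H$ cannot preserve a nontrivial block system on $\cP$.

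The key mechanism for ruling out almost all cases is the presence of a nontrivial intransitive normal subgroup, or more simply a proper nonzero subspace of the natural module fixed (as a set of subspaces) by a normal subgroup of $H$. For each of the generalised quadrangles $W_3(q)$, $Q_4(q)$, $Q_5^-(q)$, $H_3(q^2)$ I would go through the rows of Table~\ref{tab:transGQP1}: for the infinite families $\PSp_2(q^2)$ on $W_3(q)$, $\Sp_2(q^2)$ on $Q_4(q)$, and $\SU_3(q)$ on $Q_5^-(q)$, the group $H_0$ (hence $H$, since $H_0\lhdeq H$) normalises a nondegenerate or imprimitive decomposition of $V$ arising from the extension-field or reducible embedding, which yields a block system on $\cP$; alternatively one can quote that these subspace stabilisers in the ambient classical group are not maximal in $\Aut(\mathcal{Q})$ over these subspace families, so $H$ lies inside a proper overgroup acting transitively on $\cP$, contradicting primitivity. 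For the sporadic small-$q$ rows — $2^4\ldotp\C_5$, $2^4\ldotp\Alt_5$, $\Alt_5$ on $W_3(3)$ and $Q_4(3)$, the extraspecial and $(\C_3)^3$ examples on $Q_5^-(2)$, $\PSL_2(7)$ on $Q_5^-(3)$, $\nonsplit{3}{\Alt_7}$ on $Q_5^-(5)$, $\C_{513}\rtimes\C_9$ on $Q_5^-(8)$, and $\PSL_3(4)$ on $H_3(3^2)$ — I would invoke a direct \textsc{Magma} computation (of the sort used throughout the paper) to decide primitivity; all of them except $\PSL_3(4)$ on $H_3(3^2)$ turn out to preserve a nontrivial block system on $\cP$ (for instance the soluble groups on $W_3(3)$, $Q_4(3)$ and $Q_5^-(2)$ obviously do, being far too small to be primitive of the relevant degree, and the remaining ones fail by computation), while $\PSL_3(4)\le\PGammaU_4(3)$ acting on the $1120$ points of $H_3(3^2)$ does act primitively, with maximal point stabiliser.

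Concretely the steps are: (1) quote Theorem~\ref{C:GQ}(a) so that any primitive $H$ appears in Table~\ref{tab:transGQP1}; (2) dispose of the three infinite families by exhibiting the invariant decomposition (or by maximality considerations using \cite{BHRD,KL} together with \cite{Factns}), noting that $Q_5^-(q)$'s $\SU_3(q)$ example fixes the nonsingular $1$-subspace $\langle v\rangle$ in the $\cN_1$-setup of the proof of Theorem~\ref{T:KleinSU4}, and similarly for $W_3(q)$, $Q_4(q)$; (3) handle the finitely many remaining small-order pairs $(\mathcal{Q},H_0,q)$ by \textsc{Magma}, checking whether the permutation action on $\cP$ is primitive; (4) conclude that the only surviving case is $\mathcal{Q}=H_3(3^2)$ with $\PSL_3(4)\lhdeq H$, and note that $H\le\PSL_3(4).2^2$ here, all such $H$ being primitive on the $1120$ points since the point stabiliser $\PSL_3(4)_p$ (of order $360$, isomorphic to $\Alt_6$) is maximal in $\PSL_3(4)$ and this maximality persists in the overgroups.

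The main obstacle I anticipate is not conceptual but organisational: making sure that for each infinite family the ``obvious'' invariant structure really does give a \emph{block system} on the point set $\cP$ (equivalently, that $H$ sits inside a transitive proper overgroup), rather than merely an invariant subspace of $V$ of the wrong type to obstruct primitivity on $\cP=\cP_1$ specifically; one must be careful because, e.g., the stabiliser of a totally isotropic subspace can still be maximal. The cleanest way around this is to argue via maximality: $\Aut(\mathcal{Q})_p$ is a maximal parabolic, and for each infinite-family $H$ one locates a proper subgroup $M$ with $H\le M<\Aut(\mathcal{Q})$ and $\Aut(\mathcal{Q})=M\,\Aut(\mathcal{Q})_p$ (so $M$ is transitive on $\cP$), forcing $H$ imprimitive; such an $M$ is read off from the $\mathcal{C}_i$-structure of the embedding, using \cite{BHRD}*{Tables~8.6, 8.10, 8.12, 8.17} as in the earlier proofs. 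The sporadic cases are then just bookkeeping with \textsc{Magma}.
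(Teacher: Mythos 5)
Your overall strategy --- reduce to Table~\ref{tab:transGQP1} via Theorem~\ref{C:GQ}(a), eliminate the extension-field families, and dispatch the remaining sporadic rows --- is the paper's strategy, but three concrete points need repair. The most serious is that your proposed ``cleanest'' argument for the infinite families is logically invalid: finding a proper subgroup $M$ with $H\le M<\Aut(\mathcal{Q})$ and $M$ transitive on $\cP$ does \emph{not} force $H$ to be imprimitive; containment in a proper transitive subgroup says nothing about block systems. Indeed your one surviving example $\PSL_3(4)$ lies inside the proper transitive subgroup $\PSU_4(3)$ of $\PGammaU_4(3)$, so this criterion would wrongly eliminate it as well. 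The correct mechanism (which you also mention, and which the paper uses) is that the $\F_{q^2}$-structure normalised by $H_0$, hence by $H$, partitions $\cP=\cP_1$ into blocks of size $q+1$ --- a genuine nontrivial block system. The paper supplements this with the fact that every nontrivial normal subgroup of a primitive group is transitive: this disposes of the soluble and quasisimple rows at once ($\soc(H_0)$ is abelian and far too small to be regular on $\cP$) and kills $\Alt_5$ on $W_3(3)$ and $\PSL_2(7)$ on $Q_5^-(3)$ by the divisibility failure $|\cP|\nmid|H_0|$, with no computation.

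Two further slips: your explicit case list omits the row $\PSL_3(4)\lhdeq H$ acting on the $112$ points of $Q_5^-(3)$ (the paper rules it out by noting this is the action on the lines of the dual $H_3(3^2)$, on which $\PSL_3(4)$ is intransitive, so its orbits form blocks --- or simply because $112\nmid|\PSL_3(4)|$). And the numerology in your surviving case is wrong: $H_3(3^2)$ has $(q^2+1)(q^3+1)=280$ points ($1120$ is the number of flags), so the point stabiliser in $\PSL_3(4)$ has order $20160/280=72$ (it is the maximal subgroup $3^2{:}Q_8$), not $\Alt_6$ of order $360$. The conclusion --- $\PSL_3(4)$ is primitive of degree $280$, and any transitive overgroup of a primitive group is automatically primitive --- is correct, but the justification as you wrote it would not survive checking.
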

\begin{proof}
Note that $H$ acts faithfully on  the set  $\cP$ of points of $\mathcal{Q}$. Since $H$ is primitive 
on $\cP$, each of its nontrivial normal subgroups is transitive on $\cP$. In particular  $H$ is transitive on $\cP$, and so has a normal subgroup $H_0$ given in Table~\ref{tab:transGQP1}. 
Since $\soc(H_0)$ is characteristic in $H_0$, it is normal in $H$, and hence is 
transitive on $\cP$. Thus also 
$\soc(H_0)$ has a normal subgroup, say $H_1$ appearing in Table~\ref{tab:transGQP1}.
  If $\soc(H_0)$ is abelian then  $\soc(H_0)$ must be   elementary abelian
  and regular on $\cP$. However the only abelian group appearing in Table~\ref{tab:transGQP1} is $(\C_3)^3$ acting on $Q_5^-(2)$, and by \cite{BamG} this group is not point-regular on $Q_5^-(2)$.  
Thus $\soc(H_0)$ is nonabelian, and hence is a direct product of isomorphic nonabelian simple groups. 
If $H_0=\PSp_2(q^2)$, $\Omega_3(q^2)$ or $\SU_3(q)$ then $H$ preserves an extension field structure on the underlying vector space and so preserves the partition of $\cP$ given by the 1-dimensional subspaces over $\F_{q^2}$. Hence these examples are not primitive. 

This leaves four almost simple examples to consider. Suppose first that $\soc(H_0)=\PSL_3(4)$. If $\mathcal{Q}=Q_5^-(3)$, then the $H$-action on $\cP$ is equivalent to its action on the set of lines of the dual quadrangle $H_3(3^2)$. However, we saw in the proof of Theorem~\ref{T:flag} that $\PSL_3(4)$ is not transitive on the lines of $H_3(3^2)$, so its orbits form a system of imprimitivity for the $H$-action on these lines. Thus $H$ is not primitive on the lines of $H_3(3^2)$, and hence is not primitive on the point set $\cP$, which is a contradiction. Hence in this case $\mathcal{Q}=H_3(3^2)$, and a \textsc{Magma} calculation shows that $\PSL_3(4)$ acts primitively on the set points of $H_3(3^2)$, as in the statement.
If $\mathcal{Q}=Q_5^-(3)$ and $H_0=\PSL_2(7)$, then the number of points is $112$ which does not divide 
$|H_0|$,  so $H_0$ is not transitive on points (though $\PGL_2(7)$ is), which is a contradiction. Finally, if $\mathcal{Q}=W_3(3)$ and $H_0=\Alt_5$, then the number of points is $40$, and so $H_0$ is not transitive on points (though $\Sym_5$ is), ruling this case out as well.
    \end{proof}

%\bibliographystyle{plain}
%\bibliography{subspaces}

\end{document}